\documentclass{article}
\usepackage[T1]{fontenc}
\usepackage[utf8]{inputenc}
\usepackage{geometry}
\geometry{verbose,margin=1in}
\usepackage{amsmath}
\usepackage{amsthm}
\usepackage[dvipsnames]{xcolor} 
\usepackage[color=Orange!50!white, textwidth=22mm]{todonotes}
\usepackage[unicode=true,
 bookmarks=false,
 breaklinks=false,pdfborder={0 0 1},backref=section,colorlinks=false]
 {hyperref}
\usepackage{comment}
\makeatletter
\theoremstyle{plain}
\newtheorem{thm}{\protect\theoremname}[section]
\theoremstyle{plain}
\newtheorem{lem}[thm]{\protect\lemmaname}
\theoremstyle{remark}
\newtheorem{claim}[thm]{\protect\claimname}
\theoremstyle{plain}
\newtheorem{cor}[thm]{\protect\corollaryname}
\theoremstyle{plain}

\theoremstyle{plain}
\newtheorem{conjecture}[thm]{\protect\conjecturename}%

\usepackage{amssymb}

\makeatother

\providecommand{\claimname}{Claim}
\providecommand{\corollaryname}{Corollary}
\providecommand{\lemmaname}{Lemma}
\providecommand{\theoremname}{Theorem}
\providecommand{\problemname}{Problem}
\providecommand{\conjecturename}{Conjecture}

\newcommand{\is}{\mathrm{IS}}

\begin{document}
\title{On the clique number of random Cayley graphs and related topics} 
\author{David Conlon\thanks{Department of Mathematics, California Institute of Technology, Pasadena, CA 91125. Email:  {\tt dconlon@caltech.edu.} Research supported by NSF Awards DMS-2054452 and DMS-2348859.} \and Jacob Fox\thanks{Department of Mathematics, Stanford University, Stanford, CA 94305. Email: {\tt jacobfox@stanford.edu}. Research supported by NSF Award DMS-2154129.} \and Huy Tuan Pham\thanks{School of Mathematics, Institute for Advanced Study, Princeton, NJ 08540. Email: {\tt htpham@caltech.edu}. Research supported by a Clay Research Fellowship and a Stanford Science Fellowship.} \and Liana Yepremyan\thanks{Department of Mathematics, Emory University, 
Atlanta, GA 30322. Email: {\tt lyeprem@emory.edu}. Research supported by NSF Award DMS-2247013.}}
\date{}
\maketitle

\begin{abstract}
We prove that a random Cayley graph on a group of order $N$ has clique number $O(\log N \log \log N)$ with high probability. This bound is best possible up to the constant factor for certain groups, including~$\mathbb{F}_2^n$, and improves the longstanding upper bound of $O(\log^2 N)$ due to Alon. Our proof does not make use of the underlying group structure and is purely combinatorial, with the key result being an essentially best possible upper bound for the number of subsets of given order that contain at most a given number of colors in a properly edge-colored complete graph. As a further application of this result, we study a conjecture of Alon stating that every group of order $N$ has a Cayley graph whose clique number and independence number are both $O(\log N)$, proving the conjecture for all abelian groups of order $N$ for almost all $N$. For finite vector spaces of order $N$ with characteristic congruent to $1 \pmod 4$, we prove the existence of a self-complementary Cayley graph on the vector space whose clique number and independence number are both at most $(2+o(1))\log N$. This matches the lower bound for Ramsey numbers coming from random graphs and solves, in a strong form, a problem of Alon and Orlitsky motivated by information theory. 
\end{abstract}

\section{Introduction}

Given a group $G$ and a subset $S$ of $G$ with the property that $s^{-1} \in S$ whenever $s \in S$, the \emph{Cayley graph} $G_S$ is the graph with vertex set $G$ where two vertices $x$ and $y$ are joined by an edge if and only if $xy^{-1} \in S$. For example, for $N \equiv 1 \pmod 4$ prime, the \emph{Paley graph} $P_N$ is the Cayley graph on $\mathbb{Z}_N$ whose generating set $S$ is the set of quadratic residues, so that vertices $x$ and $y$ are joined by an edge if and only if $x-y$ is a quadratic residue.  

Our first concern in this paper will be with estimating the \emph{clique number}, the order of the largest complete subgraph, of random Cayley graphs. For the usual \emph{binomial random graph} $G_{N,1/2}$, the graph with $N$ vertices where each edge is chosen independently with probability $1/2$, it has been known since seminal work of Erd\H{o}s~\cite{Er47} in the 1940s that the clique number is at most $2 \log_2 N$ with high probability and this bound was later shown to be asymptotically tight (see, for example,~\cite{BE, MATULA} and their references). 

Building on work of Green~\cite{G05}, Green and Morris~\cite{GM16} showed that the same upper bound holds with high probability for the clique number of the \emph{uniform random Cayley graph} on $\mathbb{Z}_N$ with $N$ prime, where each set $\{s, -s\}$ is included in the generating set $S$ independently with probability $1/2$. One of our main results says that there is an absolute constant $C$ such that the uniform random Cayley graph on any group $G$ with $N$ elements has clique number at most $C \log N \log \log N$ with high probability, which improves on an earlier bound of $C \log^2 N$ due to Alon~\cite{A95} and, for some groups such as $\mathbb{F}_2^n$, is best possible up to the constant. 

Surprisingly, the proof of this result does not make use of the structure of groups in any meaningful way and applies in a much more general combinatorial setting, to what we call random entangled graphs. The main input, and arguably our real main result, is an essentially best possible upper bound for the number of subsets of order $n$ that contain at most $Kn$ colors in a proper edge-coloring of the complete graph $K_N$. 

To show the power of this result, we also use it to make progress on an important conjecture of Alon~\cite{A95} on the existence of Ramsey Cayley graphs. A graph on $N$ vertices is said to be {\it $C$-Ramsey} if both its clique number and its \emph{independence number}, the clique number of its complement, are at most $C\log N$. This definition is motivated by an old result of Erd\H{o}s and Szekeres~\cite{ES35} saying that there are no $1/2$-Ramsey graphs (see also the recent breakthrough by Campos, Griffiths, Morris and Sahasrabudhe~\cite{CGMS} which improves this $1/2$ to $1/2 + \varepsilon$ for some absolute constant $\varepsilon>0$), implying that Ramsey graphs, if they exist, are essentially best possible. In the other direction, Ramsey graphs do exist, since the result of Erd\H{o}s~\cite{Er47} on the clique number of random graphs implies that almost all graphs on $N$ vertices are $2$-Ramsey. It has also long been conjectured that the family of Paley graphs should be a good source of Ramsey graphs,  though the difficulty in proving this was one of the sparks that initiated the closer study of both the clique number and the Ramsey properties of random Cayley graphs.

With this notation in hand, 
Alon's conjecture states that there is an absolute constant $C$ such that every finite group has a $C$-Ramsey Cayley graph. The results of Green~\cite{G05} and Green and Morris~\cite{GM16} mentioned earlier imply that this conjecture is true over $\mathbb{Z}_N$. There it is sufficient to consider uniform random Cayley graphs, but over other groups such as $\mathbb{F}_2^n$, where the clique number of the random Cayley graph is $\Omega(\log N \log \log N)$ with high probability, this cannot work. Nevertheless, by using our counting result together with some additional ideas, we are able to prove Alon's conjecture for a range of groups, including abelian groups of order $N$ for almost all $N$ and finite vector spaces of characteristic at least $5$. Arguably our strongest result in this direction says that if $G$ is a finite vector space of order $N$ with characteristic congruent to $1 \pmod 4$, then it has a self-complementary Cayley graph whose clique number and independence number are both at most $(2+o(1))\log N$. This solves, in strong form, a problem of Alon and Orlitsky~\cite{AO95} arising from information theory asking whether self-complementary Ramsey Cayley graphs exist.

We will describe our work on Alon's conjecture in detail below, but first we look more closely at our results on the clique number of random Cayley graphs and the more general combinatorial results from which they follow.

\subsection{Clique number of random Cayley graphs} \label{sec:IntroClique}

Given a group $G$, the \emph{random Cayley graph} $G_p$ is obtained by including, for each $s \neq 1$, the set $\{s,s^{-1}\}$ in the generating set $S$ with probability $p$ independently of all other such sets. In particular, $G_{1/2}$ is the \emph{uniform random Cayley graph} on $G$. The first question of concern to us here is that of estimating the clique number of random Cayley graphs, a natural problem with applications and connections in several fields, including discrete geometry, graph Ramsey theory and information theory (see, for instance, \cite{AAAS} and \cite{AO95}).

The main general result on the clique number of uniform random Cayley graphs to date is due to Alon~\cite{A95},  
who showed that there is an absolute constant $C$ such that if $G$ is a group of order $N$, then the uniform random Cayley graph on $G$ has clique number at most $C\log^2 N$ with high probability (see~\cite{AAAS, A13, AO95, CM} for proofs of various special cases and variants of this result). For the particular case of cyclic groups, this result was improved by Green~\cite{G05}, who showed that there is an absolute constant $C$ such that the clique number of the uniform random Cayley graph on $\mathbb{Z}_N$ is at most $C\log N$ with high probability. This result was later improved by Green and Morris~\cite{GM16} for $N$ prime, in which case they showed that the clique number is at most $(2+o(1))\log N$ with high probability, asymptotically matching the bound for the clique number of the uniform random graph on $N$ vertices.

In the other direction, Green~\cite{G05} observed that the uniform random Cayley graph on $\mathbb{F}_2^n$ has clique number $\Theta(\log N \log \log N)$ with high probability (see also~\cite{Mr17} for more precise estimates). The reason for this is that $\mathbb{F}_2^n$ has many subgroups and if all the nonidentity elements of a subgroup $H$ are included in the generating set $S$, then $H$ forms a clique in the Cayley graph. Over $\mathbb{F}_2^n$ and other groups with many subgroups of an appropriate size, such as abelian groups of bounded exponent, this becomes the dominant source of cliques in random Cayley graphs. 

In light of such examples, our first result gives an upper bound for the clique number of uniform random Cayley graphs over a general finite group which is tight up to the value of the absolute constant $C$. Since the result also holds for $G_p$ for any fixed $0 < p < 1$, we will state it in that level of generality.
 
\begin{thm}\label{main}
For every $0 < p < 1$, there exists a constant $C$ such that, for any finite group $G$ of order $N$, the clique number of the random Cayley graph $G_p$ is at most $C \log N \log \log N$ with high probability. 
\end{thm}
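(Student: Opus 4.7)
The plan is to use a union bound in concert with the main counting theorem on proper edge colorings highlighted in the introduction. Identify the vertex set of $K_N$ with $G$, and to each edge $\{x,y\}$ associate the \emph{generator pair} $\{xy^{-1},\,yx^{-1}\} \subseteq G \setminus \{1\}$. The random Cayley graph $G_p$ includes each such pair into the generating set independently with probability $p$, and $\{x,y\}$ is an edge of $G_p$ precisely when its generator pair is selected. For an $n$-subset $T \subseteq G$, let $c(T)$ denote the number of distinct generator pairs that appear on edges spanned by $T$. By independence,
$$
\Pr\bigl[T \text{ is a clique in } G_p\bigr] \;=\; p^{c(T)},
$$
so the union bound yields
$$
\Pr\bigl[\omega(G_p) \geq n\bigr] \;\leq\; \sum_{|T|=n} p^{c(T)} \;=\; \sum_{k} N_k\, p^{k},
$$
where $N_k$ counts $n$-subsets $T \subseteq G$ with $c(T) = k$.

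The generator-pair coloring is not in general a proper edge coloring of $K_N$: for a non-involution $g$, the color class $\{g,g^{-1}\}$ contains both edges $\{x,xg\}$ and $\{x,xg^{-1}\}$ at every vertex $x$. However, each color class is a $2$-regular subgraph and can be properly edge-colored with two colors. Applying this refinement globally produces an honest proper edge coloring $\chi$ of $K_N$ in which every $n$-subset $T$ spans at most $2c(T)$ colors. Hence the main counting theorem, applied to $\chi$, bounds $N_k$ by the essentially best possible count of $n$-subsets using at most $2k$ colors in a properly edge-colored $K_N$.

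With this input, Theorem~\ref{main} reduces to a direct optimization. Split the union bound at $k = K n$ for a large constant $K = K(p)$. When $k \geq Kn$, the factor $p^k$ is so small that the trivial bound $N_k \leq \binom{N}{n}$ already makes the contribution negligible once $n$ is a large enough multiple of $\log N$. When $k \leq Kn$, the counting theorem controls $N_k$ in an essentially tight way, matching the extremal examples provided by subgroup cliques in $\mathbb{F}_2^n$ which give $N_k$ as large as $N^{\Theta(\log n)}$ in that regime. Choosing $n = C \log N \log \log N$ with $C = C(p)$ sufficiently large then forces $\sum_k N_k\, p^k \to 0$ as $N \to \infty$, and Markov's inequality concludes.

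The principal obstacle is the counting theorem itself, which is the combinatorial heart of the paper and has to be established independently; this is precisely where the $\log N \log \log N$ behavior, matched by the $\mathbb{F}_2^n$ construction, is genuinely captured. Given that result, the deduction of Theorem~\ref{main} is a clean union bound, together with the minor observation about splitting $2$-regular color classes into matchings that bridges the gap between the non-proper Cayley-type coloring and an honest proper edge coloring of $K_N$. Notably, after this reduction no further use is made of the group structure of $G$, in line with the purely combinatorial nature of the argument advertised in the introduction.
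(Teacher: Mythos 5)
Your framework — union bound over potential cliques, with the counting theorem controlling the number of low-color subsets — is the right starting point, and the reduction from the generator-pair coloring to a proper edge-coloring via splitting $2$-regular color classes is exactly what the paper does (Theorem~\ref{complete} applies Vizing's theorem to a $\Delta$-bounded coloring). However, the direct union bound over all $n$-subsets, as you sketch it, has a genuine gap: it does not converge for all $0<p<1$.

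To see the problem, examine the low-color end of the sum. After passing to a proper coloring, every $n$-subset spans at least $n-1$ colors, so the smallest possible value of $k$ is about $n$, i.e.\ $K = k/n \approx 1$. Theorem~\ref{thm:improvedcounting} (or the weaker Theorem~\ref{proper}) bounds the number of $n$-subsets with at most $Kn$ colors by $N^{C_1(K + \ln n)}(C_1 K)^n$ for some absolute constant $C_1$. The corresponding term in your union bound is roughly $N^{C_1 \ln n} \, C_1^n \, p^n$. Setting $n = C' \log N \log\log N$ makes $N^{C_1 \ln n} = \exp(\Theta((C_1/C')n))$, so by taking $C'$ large you can absorb that factor into $e^{\epsilon n}$ for any $\epsilon > 0$; but you are still left with $(C_1 p)^n$ up to an $e^{\epsilon n}$ fudge, and this diverges whenever $p > 1/C_1$. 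Since $C_1$ is a fixed absolute constant coming from the counting theorem and $p$ is an arbitrary fixed value in $(0,1)$, this breaks down for $p$ close to $1$. Choosing $n$ larger does not help, since the problematic factor is exponential in $n$. In other words, the counting theorem's bound has an intrinsic $(C_1K)^n$ slack which at $K\approx 1$ your $p^{Kn}$ cannot beat unless $p$ is already small.

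The paper circumvents this by \emph{not} union-bounding directly over $n$-subsets. Instead, Lemma~\ref{subsetmanycolors} shows that in any $\Delta$-bounded coloring of $K_n$, any set of $n$ vertices contains a $t$-vertex subset spanning at least $n/(20\Delta)$ colors, for a suitably chosen $t$. Choosing $t = n/(20\Delta K_0)$, this sub-subset spans at least $K_0 t$ colors, where $K_0$ can be taken to be a \emph{large} constant (of order $q^{-1}\ln(\Delta/q)$ with $q = \ln(1/p)$). Then Lemma~\ref{specialcliques} union-bounds only over $t$-subsets spanning at least $K_0 t$ colors. Because $K_0$ is large, the factor $p^{K_0 t} = e^{-qK_0 t}$ decays much faster than the counting bound $(C_1 K_0)^t$ grows, and now $qK_0 > \ln(C_1 K_0)$ can be arranged by choosing $K_0$ large enough depending on $p$ and $\Delta$. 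This ``pass to a dense-color sub-subset'' step is the essential idea that your proposal is missing, and it is why the argument requires both a counting theorem \emph{and} a separate combinatorial lemma (Lemma~\ref{subsetmanycolors}, proved via Lemma~\ref{simp}) guaranteeing that large vertex sets always contain subsets with many colors.
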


We prove Theorem \ref{main} via the first moment method, for which we need to calculate the expected number of cliques of given order $n$ in a random Cayley graph. This in turn leads to the following deterministic question: given a finite group $G$ of order $N$, how many subsets $A$ of $G$ of size $n$ satisfy $|AA^{-1}| \le K|A|$? Note that here $A^{-1}=\{a^{-1}:a \in A\}$ and $AB = \{ab:a \in A,b \in B\}$. One of our main results is the following general upper bound on this count.

 \begin{thm}\label{thm:count-group}
There is an absolute constant $C$ such that, for any finite group $G$ of order $N$, the number of subsets $A$ of size $n$ with $|AA^{-1}| \le Kn$ is at most 
$N^{C( K+\ln n)}(C K)^n$. 
\end{thm}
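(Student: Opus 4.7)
The plan is to reduce Theorem~\ref{thm:count-group} to the combinatorial statement announced in the introduction, namely that for any proper edge coloring of $K_N$, the number of $n$-subsets whose induced edges carry at most $Kn$ distinct colors is at most $N^{C(K+\ln n)}(CK)^n$. The reduction uses the natural proper edge coloring of $K_G$ in which the edge $\{x,y\}$ receives the color $\{xy^{-1},yx^{-1}\}$; the colors used inside $A$ are then in bijection with the symmetric pairs in $AA^{-1}\setminus\{1\}$, so the hypothesis $|AA^{-1}|\le Kn$ corresponds to $A$ using at most $Kn/2$ colors, and Theorem~\ref{thm:count-group} follows from the combinatorial statement after adjusting $K$ by a constant.

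To prove the combinatorial statement, my plan is a fingerprint/container argument. For each admissible $A$, I greedily build a fingerprint $F\subseteq A$: starting from $F=\emptyset$, while $A\not\subseteq\mathcal{C}(F)$, I append the canonically least vertex of $A\setminus\mathcal{C}(F)$ to $F$. The container $\mathcal{C}(F)\subseteq V(K_N)$ will consist of vertices $v$ whose edges to $F$ are sufficiently explained by the color set $Y_F$ of internal $F$-edges, so that $\mathcal{C}(F)$ depends only on $F$ (not on $A$). At termination $A\subseteq\mathcal{C}(F)$, and the count factorises: at most $N^{|F|}$ fingerprints, and for each fingerprint at most $\binom{|\mathcal{C}(F)|}{n}$ choices of $A$. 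The targets are $|F|\le C(K+\ln n)$ and $|\mathcal{C}(F)|\le CKn$, which together give $N^{C(K+\ln n)}\binom{CKn}{n}\le N^{C(K+\ln n)}(CK)^n$.

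The termination analysis will rely on two ingredients. Whenever a new vertex $a\in A\setminus\mathcal{C}(F)$ is appended, the defining condition forces many edges $\{a,f\}$ with $f\in F$ to carry colors outside $Y_F$, so $Y_F$ gains many new colors at that step. On the other hand, $Y_F\subseteq X_A$ where $X_A$ is the full color set used inside $A$, and by hypothesis $|X_A|\le Kn$, capping the total number of new colors that can ever be produced. Balancing these two yields $|F|=O(K+\ln n)$, with the $\ln n$ term arising from an exponential decay of the uncovered portion of $A$ and the additional $K$ term absorbing the tail of the process.

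The main obstacle will be the precise design of $\mathcal{C}(F)$ so that the two size targets hold simultaneously. A naive choice ($v\in\mathcal{C}(F)$ iff every edge $\{v,f\}$ has color in $Y_F$) makes each greedy step contribute at most $|F|$ new colors to $Y_F$, forcing $|F|=\Omega(\sqrt{Kn})$; conversely, an overly permissive container fails to stay within $CKn$. The right definition should be a soft threshold requiring that a controlled fraction of edges from $v$ to $F$ lie in $Y_F$, tuned so that $|\mathcal{C}(F)|$ remains $O(Kn)$ while each step of the greedy procedure shrinks the uncovered part of $A$ by a constant factor, producing the logarithmic rate. Verifying that a single threshold achieves both bounds — and handling the $K$-sized tail where the geometric decay stalls — is the crux of the argument.
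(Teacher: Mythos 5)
Your reduction to the purely combinatorial counting statement is correct in spirit and matches the paper's (you do need a small fix: the coloring $\{x,y\}\mapsto\{xy^{-1},yx^{-1}\}$ is $2$-bounded rather than proper, since $\{x,y\}$ and $\{x,xy^{-1}x\}$ receive the same color; the paper handles this by splitting color classes via Vizing's theorem, which only changes constants). The substantive issue is with the combinatorial part of your plan.

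The fingerprint/container scheme as sketched does not reach the target exponent, and you have essentially diagnosed the obstruction yourself without resolving it. The difficulty is not merely one of ``tuning a threshold.'' With any container of the form $\mathcal{C}(F)=\{v:$ at least a $\theta$-fraction of edges from $v$ to $F$ have color in $Y_F\}$, each greedy step appends a single vertex to $F$, so $\mathcal{C}(F)$ can change by only a bounded amount and the number of new colors gained per step is at most $|F|$; summing gives $|F|=\Omega(\sqrt{Kn})$, which already for $K=n^{1/2}$ is $n^{3/4}\gg K+\ln n$. Your proposed repair --- that a soft threshold should make the uncovered part of $A$ shrink by a constant factor per vertex added --- has no mechanism behind it: whether $v$ lies in $\mathcal{C}(F)$ depends only on $v$'s edges to $F$, and one new vertex in $F$ contributes at most one new edge per $v$, so geometric decay of $|A\setminus\mathcal{C}(F)|$ in $|F|$ cannot occur for any such local definition. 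Replacing the threshold condition by a closure-type container (all vertices reachable from $F$ using colors in $Y_F$) restores the possibility of fast growth, but then $|\mathcal{C}(F)|$ is no longer controlled by $O(Kn)$, and the clean $N^{|F|}\binom{|\mathcal{C}(F)|}{n}$ count breaks down. So the crux you flag --- a single container definition delivering both $|F|=O(K+\ln n)$ and $|\mathcal{C}(F)|=O(Kn)$ --- is a genuine unfilled gap, not a routine verification.

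The paper's actual argument spends its $N^{O(K+\ln n)}$ budget on choosing \emph{colors} rather than a set of fingerprint vertices, and organizes the count around a structured object (an ``efficiently colored tree'' on a subset $A_c\subseteq A$ of order $\geq n/4$) rather than a container. The tree is built via a two-phase random process: roughly $O(K)$ sampled colors grow each vertex's component until the component has a constant fraction of all colors on its boundary (Lemma~\ref{lem:ball}), and then $O(\ln n)$ additional colors coalesce components geometrically (Lemma~\ref{lem:matching}). Because the edge-coloring is proper, a rooted colored tree is reconstructed from its abstract shape, its edge-colors, and the image of one root vertex in $K_N$; the $O(K+\ln n)$ colors are chosen in $N^{O(K+\ln n)}$ ways, the unlabelled tree in $3^n$ ways, and the root in $N$ ways, which is where the exponent comes from. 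Extending $A_c$ to $A$ costs another $N^{O(K)}\cdot\binom{O(Kn)}{n-|A_c|}$ by a separate covering argument. This two-phase color-exposure process and the tree-counting are the essential ideas your proposal is missing; the first phase in particular is what makes the exponent $K+\ln n$ rather than $K\ln n$ or $\sqrt{Kn}$.
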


The bound in this theorem is again tight up to the value of the absolute constant $C$. Indeed, if $G=\mathbb{F}_2^m$, which has order $N=2^m$, the number of subspaces of order $Kn=2^d$ is 
$$\prod_{i=0}^{d-1} \frac{2^m-2^i}{2^d-2^i} \geq 2^{md-d^2},$$
which is at least $N^{c\log(Kn)}$ provided $Kn \leq N^{1-c}$. The number of subsets of size $n$ in each such subspace is $\binom{Kn}{n} \geq K^n$.  As long as $n=d+\omega(1)$, most such subsets span the subspace and so are subsets of only one such subspace. Thus, under these mild assumptions, we get at least $(1-o(1))N^{c\log(Kn)}K^n$ subsets $A \subset \mathbb{F}_2^m$ with $|A|=n$ and $|A-A| \le Kn$. To show that the factor $N^{CK}$ is also necessary, let $G$ be an abelian group which has a subgroup of order $n-K$. This subgroup, together with any $K$ additional elements, gives a subset $A$ of $n$ elements with $|A-A| \leq K(K-1) + 1 + 2K(n-K)\leq 2Kn$. But the number of such subsets is $\binom{N-n+K}{K}$, which is at least $(N/2K)^K$ provided $n \le N/2$. 

Over certain abelian groups, and particularly $\mathbb{Z}_N$, beginning with the work of Green~\cite{G05} and Green and Morris~\cite{GM16} on the clique number of uniform random Cayley graphs, there has been a large amount of work devoted to understanding the bounds for the number of subsets $A$ of $G$ of size $n$ satisfying $|A+A| \le K|A|$ (and the methods generally extend to counting those $A$ of size $n$ satisfying $|A-A|\leq K|A|$, the problem of interest to us here). For example, if $K = o(n/\log^3 N)$, one can obtain very precise results, accurate up to a multiplicative error of $\exp(o(n))$, through a delicate application of the hypergraph container method (see, for instance,~\cite{Cam, CCMMS, CCSW}). Unfortunately, these results are quite specific to the particular abelian groups under consideration and it seems that entirely different approaches are needed for the general case.  
Our approach here, which we now describe in more detail, is to pass to a more general combinatorial framework and to prove results in this setting which imply, and considerably generalize, our stated results for groups.

\subsubsection*{Combinatorial generalizations of Cayley graphs}

Consider an edge-coloring $c$ of the complete graph $K_N$. An {\it entangled graph} with respect to $c$ is a spanning subgraph of $K_N$ obtained from a subset $S$ of the colors 
whose edge set is the union of all edges whose color is in $S$. 
We say that an edge-coloring is {\it $\Delta$-bounded} if each color class has maximum degree at most $\Delta$ and, more informally, \emph{locally bounded} if it is $\Delta$-bounded for some constant $\Delta$. We will also use the standard phrase \emph{proper edge-coloring} as a synonym for a $1$-bounded edge-coloring. Observe that Cayley graphs can be thought of in these terms, since we can color each edge $xy$ with the set $\{xy^{-1}, yx^{-1}\}$ and each vertex is then incident with the same color at most twice. That is, Cayley graphs are $2$-bounded or, if the exponent of the group is $2$, $1$-bounded entangled graphs. 

Given an edge-coloring $c$ of $K_N$ and $0<p<1$, the \emph{random entangled graph} $G_{c}(p)$ is the entangled graph with respect to $c$ formed by including each color in the defining set $S$ with probability $p$ independently of all other colors. Several important random graph models, including Erd\H{o}s--R\'enyi random graphs and random Cayley graphs, are special cases of random entangled graphs coming from locally-bounded edge-colorings. Indeed, the random Cayley graph $G_p$ is $G_c(p)$ with $c$ the $2$-bounded edge-coloring coming from the group $G$, while the Erd\H{o}s--R\'enyi random graph $G(N,p)$ is $G_c(p)$ with $c$ the $1$-bounded edge-coloring of $K_N$ where all edges have a different color. 

Theorem~\ref{main} is therefore a simple corollary of the following result, which gives an upper bound for the clique number of random entangled graphs coming from locally-bounded edge-colorings. 

\begin{thm}\label{main3}
For every natural number $\Delta$ and $0 < p < 1$, there exists a constant $C$ such that, for any $\Delta$-bounded edge-coloring $c$ of $K_N$, the clique number of the random entangled graph $G_{c}(p)$ is at most  $C \log N \log \log N$ with high probability. 
\end{thm}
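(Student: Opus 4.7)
The plan is to apply the first-moment method, using as key input the $\Delta$-bounded analogue of Theorem~\ref{thm:count-group}: in any $\Delta$-bounded edge-coloring of $K_N$, the number of $n$-subsets $A$ whose internal edges carry at most $Kn$ distinct colors is at most $N^{C_1(K+\ln n)}(C_1 K)^n$ for some $C_1 = C_1(\Delta)$. (If the counting theorem in the paper is stated only for $1$-bounded colorings, one reduces to that case by decomposing each color class into at most $\Delta+1$ matchings via Vizing's theorem, inflating $K$ by a factor of $\Delta+1$.) Granting this counting input, Markov's inequality reduces Theorem~\ref{main3} to showing that the expected number $\Ex[X_n]$ of $n$-cliques in $G_{c}(p)$ satisfies $\Ex[X_n] = o(1)$ for $n = C_*\log N \log\log N$ with $C_* = C_*(p,\Delta)$ sufficiently large.

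For a subset $A$ of size $n$, let $m(A)$ denote the number of distinct colors on edges within $A$. Then $A$ is a clique in $G_{c}(p)$ iff each of these $m(A)$ colors lies in the random set $S$, an event of probability $p^{m(A)}$, so $\Ex[X_n] = \sum_{|A|=n} p^{m(A)}$. The $\Delta$-boundedness forces $m(A)\geq (n-1)/\Delta$. I would then decompose this sum dyadically over $K := m(A)/n$: grouping subsets with $m(A)\in [K_j n, 2K_j n)$ for $K_j = 2^j/\Delta$ and applying the counting bound yields
$$
\Ex[X_n] \;\leq\; \sum_{j\geq 0} N^{C_1(2K_j+\ln n)}\,(2C_1 K_j)^n\,p^{K_j n},
$$
a sum of at most $O(\log n)$ nontrivial terms. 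For $K_j$ above a threshold $K_0 = K_0(p,\Delta)$, the probability savings $p^{K_j n}$ dominate both $(2C_1 K_j)^n$ (since $K\log(1/p) \gg \log(C_1 K)$ for $K\geq K_0$) and $N^{C_1 K_j}$ (once $n \gg \log N/\log(1/p)$). For $K_j \leq K_0$, the dominant competing factor from the counting bound is $N^{C_1 \ln n}$, and one uses the pointwise savings $p^{m(A)}\leq p^{K_j n}\leq p^{(n-1)/\Delta}$; the requirement then reduces, after substituting $n = C_*\log N\log\log N$, to $C_*\log(1/p)/\Delta > C_1(1+o(1))$, easily achievable for large $C_*$. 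Summing the $O(\log n)$ dyadic terms and applying Markov's inequality finishes the proof.

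The main obstacle is not the first-moment bookkeeping above -- which is routine once the counting bound is in hand -- but in establishing the counting estimate itself, the combinatorial heart of the paper. Notably, the $\log\log N$ overhead in the conclusion is forced precisely by the $N^{C_1 \ln n}$ factor of the counting bound: were that factor uniformly polynomial in $N$, the argument would yield clique number $O(\log N)$, but the $\mathbb{F}_2^n$ construction following Theorem~\ref{thm:count-group} shows that $N^{\Omega(\log n)}$ is necessary. Hence the resulting $O(\log N\log\log N)$ bound is optimal for the first-moment method and, by the same example, essentially tight as a theorem.
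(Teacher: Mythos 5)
There is a genuine gap in the small-$K$ regime. The counting input you invoke, namely that the number of $n$-subsets with at most $Kn$ colors is at most $N^{C_1(K + \ln n)}(C_1 K)^n$, carries an exponential-in-$n$ factor $(C_1 K)^n$ which you appear to drop when bounding the buckets with $K_j \leq K_0$. In that range the probability savings $p^{K_j n}$ does \emph{not} defeat this factor: to win one would need $\ln(C_1 K_j) < K_j\ln(1/p)$, which fails for all $K_j$ below roughly $\ln(C_1\Delta)/\ln(1/p)$. Concretely, with $C_1$ of the form $C\Delta$ for the absolute constant $C$ of Corollary~\ref{Deltaboundedcor} and $K_j = 1/\Delta$ (the minimum possible color density in a $\Delta$-bounded coloring), the bucket contribution is of order $N^{C\ln n}\cdot C^n p^{n/\Delta}$, and $C^n p^{n/\Delta} = \exp\bigl(n(\ln C - \ln(1/p)/\Delta)\bigr)$ grows exponentially in $n$ whenever $\Delta\ln C > \ln(1/p)$; enlarging $C_*$ only makes this worse. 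Nor can you hope to save the argument by sharpening the counting bound, since the $\mathbb{F}_2^m$ example following Theorem~\ref{thm:count-group} shows it is essentially tight for constant $K$.

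The paper circumvents this by an intermediate reduction you are missing: rather than applying the first moment to all $n$-subsets, it first shows (Lemma~\ref{subsetmanycolors}) that every $n$-subset of a $\Delta$-bounded coloring contains a subset of size $t = n/(20\Delta K_0)$ carrying at least $K_0 t$ colors, where $K_0$ is a large constant depending only on $p$ and $\Delta$ (of order $q^{-1}\ln(\Delta/q)$ with $q=1-p$). Since a clique restricted to a subset is still a clique, it suffices to rule out $t$-cliques with color density at least $K_0$, and the first moment applied to \emph{those} (Lemma~\ref{specialcliques}) converges precisely because $K_0$ is chosen large enough that $(C\Delta K)^t p^{Kt}\le e^{-qKt/3}$ for all $K\ge K_0$. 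Your treatment of the large-$K$ buckets and your observation that the $N^{C\ln n}$ factor forces the $\log\log N$ overhead are both correct; the small-$K$ buckets are the missing piece, and Lemma~\ref{subsetmanycolors} is how the paper handles them.
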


The assumption that each color class has bounded degree is essential for the bound in Theorem~\ref{main3} to hold. 
For example, consider the edge-coloring $c$ of the complete graph on vertex set $[N]$ where each edge $(i,j)$ with $i<j$ is given color $i$. 
In particular, each color class is sparse, but the coloring is not locally bounded. However, for any subset $S$ of the set of colors, which may be identified with $[N-1]$, the colors in the clique on vertex set $S$ are all from $S$, so the clique number of the random entangled graph $G_c(p)$ will be $(p + o(1))N$ with high probability for $p$ fixed and $N$ tending to infinity. 

As we already indicated in the Cayley graph case, the real result underlying Theorem~\ref{main3} is a counting result, which, in our combinatorial setting, says that in any proper edge-coloring of $K_N$, there are few vertex subsets of a given order that have few colors on their edges. A similar result for locally-bounded edge-colorings follows in a straightforward fashion (see, for instance, Theorem~\ref{complete}) from this result and an application of Vizing's theorem.

\begin{thm}\label{thm:improvedcounting}
There is an absolute constant $C$ such that, for any proper edge-coloring of $K_N$, the number of subsets with $n$ vertices in which at most $Kn$ colors appear is at most $N^{C( K+\ln n)}(C K)^n$. 
\end{thm}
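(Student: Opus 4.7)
My plan is to prove the bound via a container-style iterative encoding argument. For each ``bad'' subset $A \subseteq V(K_N)$ with $|A| = n$ and at most $Kn$ colors among its internal edges, the goal is to construct a short certificate consisting of (i) a seed set $B \subseteq V(K_N)$ and (ii) a palette of colors $P$, with $|B| + |P| = O(K + \ln n)$, such that $A$ is contained in a container $X = X(B,P) \subseteq V(K_N)$ of size at most $CKn$. The container would be defined as the set of vertices reachable from $B$ via short paths whose edge-colors lie in $P$; the proper edge-coloring makes each color-``step'' from a given vertex well-defined (at most one partner). Summing over certificates then yields at most $N^{O(K+\ln n)} \binom{CKn}{n} \leq N^{O(K+\ln n)}(CK)^n$, matching the target.

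The key structural input is a simple pigeonhole lemma: any bad $A$ admits a color whose monochromatic matching within $A$ has size at least $n/(4K)$. Indeed, the $\binom{n}{2}$ internal edges of $A$ are distributed among at most $Kn$ color classes (each a matching by the proper-coloring assumption), so the average color contributes at least $(n-1)/(2K)$ edges in $A$. This ``popular-color'' lemma is what drives the greedy construction of $(B, P)$.

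I would construct $(B,P)$ greedily. Initialize $B = \{v_0\}$ for an arbitrary $v_0 \in A$ and $P = \emptyset$. At each step, let $R$ denote the current reach of $(B, P)$. If $A \subseteq R$, output $(B, P)$. Otherwise, either (a) add to $P$ a color appearing in $A$ whose inclusion at least doubles $|R \cap A|$, or (b) add a new seed $b \in A \setminus R$ to $B$ when no such color exists. Type-(a) steps would occur at most $\log_2 n$ times since each doubles $|R \cap A| \leq n$. The count of type-(b) events should be bounded by $O(K)$ via a potential argument exploiting the popular-color lemma applied to $A \setminus R$: in any stuck state a large monochromatic matching inside $A \setminus R$ exists but would fail to double $R$, which constrains the structure and should limit the number of such events.

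The hardest step, and the main obstacle I anticipate, is the $O(K)$ bound on type-(b) seed additions. A naive potential argument may only give $O(K \ln n)$, so sharpening it likely requires a more clever doubling criterion --- perhaps a weighted notion of reach, or measuring expansion relative to $A \setminus R$ rather than $R$ --- together with a tailored potential function. A secondary technical concern is ensuring the container $X$ has size at most $CKn$: this may force bounding the ``depth'' of reach paths (on the order of $O(\ln n/\ln\ln n)$), which must be reconciled with the greedy expansion procedure.
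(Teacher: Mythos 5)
Your proposal shares the paper's high-level target---an encoding of cost $N^{O(K+\ln n)}(CK)^n$ split into an $O(K)$-part and an $O(\ln n)$-part---but the mechanism is genuinely different, and the two places you flag as gaps are precisely where the real work lies; as written, the proposal does not close them. The paper does \emph{not} build a container for all of $A$. Instead it constructs, for each bad $A$, a small ``efficiently colored tree'' on a subset $A_c\subseteq A$ of size $\geq n/4$, using a list of $900K$ colors to grow components followed by $900\ln n$ colors to coalesce them, with the number of components shrinking geometrically. The tree \emph{directly} encodes $A_c$ (root plus proper-coloring determinism), so no container-size bound is needed for $A_c$; only the remaining $\leq 3n/4$ vertices of $A$ are handled by a degree-threshold container $Y$ of size $\leq 8Kn$ (those vertices with $\geq n/8$ incident colors in $N(A')$), which is cheap. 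Your container $X(B,P)$ defined by reachability has no such automatic size control: with a proper coloring, even two colors allow walks traversing $\Omega(n)$-length alternating paths, so without a depth cutoff $|X|$ can be $\Omega(N)$; with a depth cutoff, coverage of $A$ becomes unclear. Your ``popular-color'' lemma is correct but too weak---a large monochromatic matching in $A\setminus R$ may lie entirely outside the current reach and contribute nothing to doubling $|R\cap A|$---so it does not by itself constrain the number of stuck states.

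The central missing idea is the paper's Lemma~\ref{lem:ball-key}: a two-phase \emph{random} color-exposure process, not a greedy one. In the first phase, for each vertex $v$ one tracks a growing ``ball'' $B_j(v)$ of exponentially doubling size $j\le \delta K$, and proves via a careful concentration/union-bound argument over exposure times that after only $O(K)$ random colors, for $90\%$ of vertices the component containing $v$ already has $\Omega(Kn)$ distinct colors leaving it. Only then does a $O(\ln n)$-step coalescence phase (a geometric decrease in the number of ``good'' components, analogous to your type-(a) doubling) kick in. This averaging over vertices and over random colors is what removes the $\log n$ from the $O(K\log n)$ that a deterministic greedy---like yours, or like the paper's own warm-up Theorem~\ref{proper}---naturally produces. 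A deterministic doubling criterion on $|R\cap A|$ with seed restarts does not appear to have an $O(K)$ bound on restarts: one can get stuck $\Theta(K)$ times, and in between, each seed may need its own $\Theta(\log(n/K))$ colors before the next stall, giving back $\Theta(K\log n)$ unless the color sets can be shared, which the greedy does not enforce. So the proposal as stated would at best re-derive the weaker $N^{O(K\ln n)}$ bound, and closing the gap to $N^{O(K+\ln n)}$ requires a substantively new idea of the kind Lemma~\ref{lem:ball-key} provides.
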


In fact, a weaker version of Theorem \ref{thm:improvedcounting}, saying that the number of subsets with $n$ vertices in which at most $Kn$ colors appear is at most $N^{CK\ln n}(C K)^n$, is already sufficient to prove Theorem \ref{main3}. Since the proof of this weaker bound is considerably simpler, we give it in full in Section~\ref{sec:simple-count} as a warm up to the proof of Theorem \ref{thm:improvedcounting}. 
The main idea in the proof is to show that for each $n$-vertex subset $A$ that contains few colors, there is a spanning tree in $A$ using very few colors. We show the existence of such a spanning tree through a greedy growth procedure. Starting from a vertex $v_1$ of $A$, we select a sequence of colors $c_i$ and let $V_i$ denote the connected component of $A$ containing $v_1$ in the graph whose edges are all those whose color is in $\{c_0, c_1, \dots, c_{i-1}\}$. In each step, we select the color $c_i$ which maximizes $|V_{i+1}|-|V_i|$, which allows us to conclude that $V_s = A$ for some $s=O(K\log n)$.

The proof of the stronger estimate in Theorem \ref{thm:improvedcounting} requires a more sophisticated growth procedure. The target structure for this procedure is no longer a spanning tree on $A$, but rather a tree spanning a large subset of $A$. This is not just a technical device, because, as shown by the following theorem, $\Omega(K \log n)$ colors are sometimes needed to build a spanning tree of $A$.

\begin{thm}\label{spanningcomplete}
There is an absolute constant $C$ such that, for any proper edge-coloring of $K_n$ which uses at most $Kn$ colors, there is a spanning tree using at most $C K\log(n/K)$ colors. Moreover, provided $K \leq n/(\log n)^8$, this bound is tight up to the constant factor.
\end{thm}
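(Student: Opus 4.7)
The plan is to prove Theorem~\ref{spanningcomplete} in two parts, upper and lower bound.

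For the upper bound, I would follow the greedy growing procedure already previewed in the paper. Start from any vertex $v_1$ and iteratively pick colors $c_0, c_1, \ldots$, letting $V_i$ denote the component of $v_1$ in the subgraph formed by edges of colors $c_0, \ldots, c_{i-1}$; at each step choose $c_i$ to maximize $|V_{i+1}| - |V_i|$. With $m = |V_i|$, the cut $E(V_i, V\setminus V_i)$ has $m(n-m)$ edges and uses at most $Kn$ colors, so by pigeonhole some unused color $c$ has at least $m(n-m)/(Kn)$ cut edges; since the coloring is proper each color is a matching, so these cut edges contribute distinct new vertices and the per-step growth is at least $\max\bigl(1,\, m(n-m)/(Kn)\bigr)$. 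The analysis splits into four regimes: additive growth (at least one per step) when $m \le 2K$ or $n - m \le 2K$, each taking $O(K)$ steps, and multiplicative growth by factor $1 + \Theta(1/K)$ when $2K \le m \le n - 2K$, with component size growing or deficit shrinking, each multiplicative regime taking $O(K \log(n/K))$ steps. Summing gives $O(K \log(n/K))$ colors forming a connected spanning subgraph, and any spanning tree thereof inherits the color bound.

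For the lower bound, assuming $K \le n/(\log n)^8$, I would construct a proper edge-coloring with at most $Kn$ colors by a random refinement of a $1$-factorization. For $n$ even, fix a $1$-factorization $F_1, \ldots, F_{n-1}$ of $K_n$ (for example the Cayley factorization of $\mathbb{F}_2^m$ when $n = 2^m$) and independently for each $i$ partition $F_i$ uniformly at random into $\lceil K \rceil$ submatchings of size $\Theta(n/K)$. The key estimate, proved via Chernoff bounds on each (hypergeometric) per-color cut count followed by a union bound, is that with positive probability, every vertex subset $A$ of size $m$ and every color $c$ satisfy $|\{e \in c : |e \cap A| = 1\}| = O\bigl(m(n-m)/(Kn) + \log n\bigr)$. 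The $K \le n/(\log n)^8$ hypothesis provides precisely the slack needed so that the Chernoff tail dominates the union-bound factor over all relevant subsets and colors.

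Granting this estimate, for any set of $k$ colors $c_1, \ldots, c_k$ whose union is connected, ordering them so that each $c_i$ strictly grows a fixed component gives the recurrence $|V_{i+1}| - |V_i| = O\bigl(|V_i|(n-|V_i|)/(Kn) + \log n\bigr)$, essentially dual to the upper-bound recurrence. The additive $\log n$ term is dominated once $|V_i| \gtrsim K \log n$, after which multiplicative growth of rate $1 + O(1/K)$ takes over and forces $k \ge \Omega(K \log(n/K))$. I expect the main obstacle to be establishing the uniform per-color crossing bound: calibrating the random construction so that the Chernoff concentration beats the union bound over the exponentially many relevant subsets is what dictates the $(\log n)^8$ restriction.
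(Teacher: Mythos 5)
Your upper bound is correct, and in fact takes a more elementary route than the paper. The paper's proof of the upper bound (Theorem~\ref{thm:spanning-tree}) invokes Lemma~\ref{lem:connect}, which finds a component of order $n/4$ using $900(K+\ln n)$ colors via the considerably more involved random-process analysis of Section~\ref{sec:compk}, and then finishes with Lemma~\ref{lem:grow-comp}. Your direct greedy argument with the four-regime analysis (additive when $\min(m,n-m)\le 2K$, multiplicative in between) gives the same $O(K\log(n/K))$ bound with much less machinery; it is essentially a tightened version of the warm-up analysis in Theorem~\ref{proper}, sharpened by noticing that each step gains at least one vertex even when $m(n-m)/(Kn)<1$.

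Your lower bound has a genuine gap, and it is the central claim rather than a technicality. The uniform per-color crossing estimate --- that \emph{every} vertex subset $A$ of size $m$ and every color $c$ satisfy $|\{e\in c:|e\cap A|=1\}|=O(m(n-m)/(Kn)+\log n)$ --- is false for \emph{every} proper edge-coloring with color classes of size $\Theta(n/K)$, not just for the proposed random refinement. Fix any color $c$ with matching of size $\Theta(n/K)$ and let $A$ be one endpoint of each of its edges. Then $|A|=m=\Theta(n/K)$ and all $\Theta(n/K)$ edges of $c$ cross $A$, which is $\omega\bigl(m(n-m)/(Kn)+\log n\bigr)=\omega\bigl(n/K^2+\log n\bigr)$ for any $K$ that is at least a sufficiently large constant and at most $n/(\log n)^8$. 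The failure is deterministic, so no calibration of the Chernoff tail against the union bound can save the estimate. The recurrence you derive from it, and hence the lower bound, does not follow. The paper's Lemma~\ref{lem:manycolors} avoids this entirely by working with an asymmetric bipartite construction: it attaches a small side $A_s$ of size $\sqrt{Kn}$, colors $K_{s,n}$ by giving each vertex of $A_s$ an independent random proper coloring of its incident edges (then patches the few conflicts), and shows that with positive probability no set of $M=\Theta(K\log(n/K))$ colors covers all of $A_s$ from $A_n$. This sidesteps any need for a per-color crossing bound over all subsets, which, as shown above, cannot exist.
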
 

To build a tree on a large subset of $A$ using very few colors,  
we expose a random sequence of colors and keep track of the evolution of the connected components of the graph on $A$ whose edge set is the union of these colors. For our purposes, the evolution of these components will split into two phases. In the first phase, where we expose $O(K)$ colors, the components grow steadily until a typical component is adjacent to $\Omega(Kn)$ different colors, a constant fraction of all the colors in $A$. Then, in the second phase, the components coalesce, the total number of components decreasing by a multiplicative factor for each new exposed color. Therefore, after $O(\log n)$ more steps, we will have connected a component of order $\Omega(|A|)$, as required. The actual counting result then follows from carefully tracking the number of different colored trees that can arise from this growth procedure together with some additional ideas.

\subsubsection*{Connections to additive dimension}

Given a sequence of elements $S = (c_1,\dots,c_t)$ in a group $G$, define $\mathrm{span}(S)$ to be the set of all elements of the form $\prod_{i=1}^{t} c_i^{w_i}$ for $w_i \in \{-1,0,1\}$. Given a subset $A$ of $G$, we define the \emph{additive (or multiplicative) dimension} $d_*(A)$ of $A$ to be the minimum size of a sequence $S$ such that $A$ is contained in $\mathrm{span}(S)$. There have been numerous previous results regarding the additive dimension of subsets with small doubling in an abelian group $G$. Notably, Sanders \cite{San} (see also Schoen and Shkredov \cite{SS}) showed that, for $G$ abelian, if $A$ is a subset of $G$ of size $n$ with doubling at most $K$, that is, with $|A+A| \leq K|A|$, then $d_*(A) = O(K\log n)$. 

There are other notions of additive/multiplicative dimension which are closely related to $d_*(A)$ in the abelian case. 
A set $S$ in a group $G$ is said to be \emph{dissociated} if the identity element is not equal to any product of the form $\prod_{i=1}^{t} c_i^{w_i}$ with $c_1,\dots,c_t$ a nonempty sequence of distinct elements of $S$ and $w_i \in \{-1,1\}$ for $1 \leq i \leq t$. We may then define $\dim(A)$ to be the maximum size of a dissociated subset $S$ of $A$. In an abelian group, any maximal dissociated subset $S$ of $A$ satisfies $A\subseteq \mathrm{span}(S)$ and, hence, $\dim(A) \ge d_*(A)$. Moreover, Schoen and Shkredov~\cite{SS} observed that $\dim(A) = O(d_*(A)\log d_*(A))$ in abelian groups. The results of \cite{San} and \cite{SS} actually say that if $A$ is a subset of an abelian group $G$ with doubling at most $K$, then $\dim(A) = O(K\log n)$, which then implies the result highlighted in the previous paragraph. In recent work, Alon, Buci\'c, Sauermann, Zakharov and Zamir \cite{ABSZZ} proved a bound for general groups $G$, showing that if $A$ is a subset of $G$ of size $n > 2$ with $|AA|\le K|A|$, then $\dim(A) = O(K\log n\log \log n)$.

Here we show that the upper bound $d_*(A) = O(K\log n)$ extends from abelian groups to general groups and, moreover, that an analogous result holds in the following broader combinatorial setting. 
Consider a proper edge-coloring of the complete graph $K_n$. For a vertex $v$ of this graph and a sequence $S$ of colors, let $\mathrm{span}(S; v)$ be the set of vertices which can be reached from $v$ via a path whose edge colors form a subsequence of $S$. Our combinatorial generalization of the bound $d_*(A) = O(K\log n)$, which is an easy corollary of the proof of our weak version of Theorem~\ref{thm:improvedcounting}, is then as follows.

\begin{cor}\label{cor:gen}
    For any proper edge-coloring of $K_n$ using at most $Kn$ colors and any vertex $v$, there is a sequence $S$ of $O(K\log n)$ colors such that all vertices are contained in $\mathrm{span}(S; v)$. 
\end{cor}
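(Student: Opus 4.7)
The plan is to apply a BFS-flavored variant of the greedy growth procedure from the proof of the weak version of Theorem~\ref{thm:improvedcounting}, designed so that the subsequence requirement in the definition of $\mathrm{span}(S; v)$ is automatically satisfied. Set $W_0 = \{v\}$ and, at step $i \geq 1$ with $W_{i-1} \neq V(K_n)$, choose a color $c_{i-1}$ maximizing the number of $c_{i-1}$-edges crossing the cut $(W_{i-1},\, V(K_n)\setminus W_{i-1})$, then set
\[
W_i = W_{i-1} \cup \{u \notin W_{i-1} : uw \text{ has color } c_{i-1} \text{ for some } w \in W_{i-1}\}.
\]
Because each color class is a matching, an easy induction on $i$ shows that every vertex of $W_i$ is reachable from $v$ by a path whose sequence of edge colors, read along the path, forms a subsequence of $(c_0, \ldots, c_{i-1})$: a newly added vertex at step $i$ is joined by a single $c_{i-1}$-edge to some $w \in W_{i-1}$ that, inductively, is already reachable from $v$ via a subsequence of $(c_0, \ldots, c_{i-2})$.

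To bound the number of steps until $W_s = V(K_n)$, I use pigeonhole: with $m = |W_{i-1}|$, there are $m(n-m)$ edges across the cut, distributed among at most $Kn$ colors, so some color achieves at least $m(n-m)/(Kn)$ crossing edges, and since these edges form a sub-matching, $|W_i| - |W_{i-1}| \geq m(n-m)/(Kn)$, and $\geq 1$ whenever $W_{i-1} \neq V(K_n)$. This yields two geometric regimes: while $2K \leq m \leq n/2$, $|W_i| \geq |W_{i-1}|(1 + 1/(2K))$; while $n/2 < m$ and $n - m \geq 2K$, $n - |W_i| \leq (n - |W_{i-1}|)(1 - 1/(2K))$. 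Each of these regimes takes $O(K \log n)$ steps, while the two boundary regimes ($m < 2K$ or $n - m < 2K$), in which the multiplicative bound drops below $1$ and only the integer-floor $\geq 1$ gives progress, contribute at most $O(K)$ extra steps each. Hence $s = O(K \log n)$, and $S = (c_0, \ldots, c_{s-1})$ satisfies $V(K_n) \subseteq \mathrm{span}(S; v)$.

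The main obstacle is conceptual rather than computational: the connected-component version of $V_i$ used in the proof of Theorem~\ref{thm:improvedcounting} does \emph{not} suffice here, because a vertex lying in the component of $v$ in the graph with edges of colors $\{c_0, \ldots, c_{i-1}\}$ may only be reachable by paths whose color sequence visits those colors out of order (for instance, a length-$3$ path with colors $(c_0, c_1, c_0)$ is not a subsequence of $(c_0, c_1)$). Replacing the component update by the BFS-style one above, which at step $i$ adds only direct $c_{i-1}$-matching partners of $W_{i-1}$, repairs the inductive ordering claim without affecting the pigeonhole growth bound; apart from this substitution, the argument is a direct transcription of the weak-form proof of Theorem~\ref{thm:improvedcounting}.
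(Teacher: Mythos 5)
Your proof is correct and matches the paper's approach: the paper derives Corollary~\ref{cor:gen} directly from the greedy growth procedure in the proof of Theorem~\ref{proper}, where $V_{i+1}$ is formed by adding vertices joined to $V_i$ by a $c_i$-edge, which is precisely your BFS-style update. One small note: the "conceptual obstacle" you flag is not actually present in the weak-form proof — despite the introduction's loose description in terms of connected components, Theorem~\ref{proper} already defines the $V_i$ via one-step matching expansion rather than full connected components, so no substitution is needed; your concern would only apply if one tried to use the component-tracking process from Section~\ref{sec:compk}.
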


The promised upper bound on $d_*(A)$ for sets $A$ with small doubling in general groups is in turn a simple corollary of this result. 

\begin{cor}\label{cor:dim-weak}
    For any subset $A$ of a group $G$ of size $n$ with $|AA^{-1}| \le K|A|$, there is a set $S$ of size $O(K\log n)$ such that $A\subseteq \mathrm{span}(S)$. That is, $d_*(A) = O(K\log n)$. 
\end{cor}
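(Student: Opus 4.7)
The plan is to reduce Corollary \ref{cor:dim-weak} to the combinatorial statement Corollary \ref{cor:gen} by putting the natural group-theoretic edge-coloring on the complete graph with vertex set $A$ and then unwinding the resulting graph span back into a group product.

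First, I would define an edge-coloring $c$ on $K_A$ by assigning to each edge $\{a, b\}$ the unordered pair $\{ab^{-1}, ba^{-1}\}$. Since $AA^{-1}$ is closed under inversion, $c$ uses at most $|AA^{-1}| \le Kn$ distinct colors. A quick check shows $c$ is $2$-bounded: the edges at a vertex $a$ with color $\{g, g^{-1}\}$ are precisely (at most) the edges to $g^{-1}a$ and $ga$. Because Corollary \ref{cor:gen} is stated for proper colorings, I would then apply Vizing's theorem to each color class (a graph on $A$ of maximum degree at most $2$) to split it into three proper classes, producing a proper edge-coloring $c'$ of $K_A$ with at most $3Kn$ colors. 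Corollary \ref{cor:gen} applied to $c'$ with any fixed base vertex $v \in A$ then produces a sequence $S' = (c'_1, \dots, c'_T)$ of refined colors with $T = O(K \log n)$ such that every element of $A$ is reachable from $v$ via a path whose refined colors form a subsequence of $S'$.

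Next, I would translate $S'$ back to the group. For each refined color $c'_i$, fix a representative $g_i \in G$ of the underlying pair $\{g_i, g_i^{-1}\}$ and set
\[
S \;=\; (g_T, g_{T-1}, \dots, g_1, v),
\]
of length $T+1 = O(K \log n)$. For any $a \in A$, choose a path $v = a_0, a_1, \dots, a_t = a$ in $K_A$ with refined colors $c'_{i_1}, \dots, c'_{i_t}$ for some $i_1 < \dots < i_t$. The definition of $c$ forces $a_j = g_{i_j}^{\epsilon_j} a_{j-1}$ for some $\epsilon_j \in \{-1, +1\}$, and iterating gives
\[
a \;=\; g_{i_t}^{\epsilon_t}\, g_{i_{t-1}}^{\epsilon_{t-1}} \cdots g_{i_1}^{\epsilon_1}\, v.
\]
Because the $g_i$ occur in $S$ in the reverse of their $S'$-order and $v$ appears at the end, this expression is a product $\prod_{k=1}^{T+1} s_k^{w_k}$ with $s_k \in S$ and $w_k \in \{-1, 0, +1\}$, witnessing $a \in \mathrm{span}(S)$, as needed.

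The only potentially delicate point is the non-commutativity of $G$: one must arrange the sequence $S$ so that the order in which group elements are multiplied in the definition of $\mathrm{span}$ agrees with the order in which the corresponding edge colors are traversed along the graph-theoretic path, which is what forces the reversal of $S'$ and the adjunction of $v$ at the end. Beyond this bookkeeping, Corollary \ref{cor:dim-weak} is a routine combination of the natural coloring on $A$, Vizing's theorem, and Corollary \ref{cor:gen}.
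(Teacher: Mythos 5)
Your proposal follows essentially the same route as the paper: color $K_A$ by $\{a,b\}\mapsto\{ab^{-1},ba^{-1}\}$, split each ($1$- or $2$-regular) color class into at most three matchings to get a proper coloring with $\le 3Kn$ colors, invoke Corollary~\ref{cor:gen}, and translate the path back into a group product. Your version is a bit more careful than the paper's terse "easy to check" step — you correctly note that in a non-abelian group the group elements must be multiplied in the reverse of the order in which the colors were exposed (with the base vertex $v$ appended), whereas the paper's written formula $v_1\prod_{i=1}^s c_i^{w_i}$ tacitly assumes the opposite orientation of the coloring; this is a cosmetic discrepancy in the paper, not a different method.
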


One might notice that Corollary \ref{cor:dim-weak} considers sets $A$ with $|AA^{-1}| \le K|A|$ rather than $|AA|\le K|A|$. However, the analogous result does also hold for $AA$. The proof requires some minor modifications, mainly, that instead of coloring the edges of the complete graph on $A$, we must color the edges of the complete bipartite graph whose parts are copies of $A$. We say more about this after Theorem~\ref{expand}.

If we make use of ideas from the proof of the full strength version of Theorem \ref{thm:improvedcounting}, we may, in the case of abelian groups, improve the upper bound on $d_*(A)$ in Corollary~\ref{cor:dim-weak} to $d_*(A) = O(K\log \log n+ \log n)$. Moreover, if the group has exponent at most $r$, we may push the bound to $d_*(A) = O(K \log r + \log n)$. These bounds get much closer to the lower bound of $\Omega(K+\log n)$ that comes from considering the union of $K$ generic parallel subspaces of size $n/K$ in $\mathbb{F}_2^m$.    

\begin{thm}\label{thm:dim-strong}
    If $G$ is an abelian group and $A$ is a subset of $G$ of size $n$ with $|A-A| \le K|A|$, then $d_*(A) = O(K\log \log n+ \log n)$. Furthermore, if $G$ has exponent $r$, then $d_*(A) = O(K \log r + \log n)$. 
\end{thm}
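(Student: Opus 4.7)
For the exponent-$r$ bound, the plan is to invoke a torsion-group Freiman--Ruzsa-type inequality. If $G$ is abelian of exponent $r$ and $|A-A|\le Kn$, then by Ruzsa's classical result (with subsequent refinements by Even-Zohar and others) there is a subgroup $H\le G$ containing $A$ with $|H|\le r^{O(K)}n$, so $\log|H|=O(K\log r+\log n)$. It then suffices to show that any finite abelian group $H$ satisfies $d_*(H)=O(\log|H|)$: by the structure theorem $H\cong\bigoplus_i\mathbb{Z}_{n_i}$, and within each cyclic factor $\mathbb{Z}_{n_i}$ the signed powers of three, $(1,3,9,\ldots,3^{\lceil\log_3 n_i\rceil-1})$, form a spanning sequence via signed base-$3$ expansions using $O(\log n_i)$ elements. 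Concatenating across factors gives $d_*(A)\le d_*(H)=O(K\log r+\log n)$, as required.

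For the general abelian bound, my plan is to adapt the two-phase growth procedure from the proof of Theorem~\ref{thm:improvedcounting} and iterate, gluing successive rounds together via Ruzsa's covering lemma. Properly edge-color the complete graph on $A$ by $c(\{a,b\})=\{a-b,b-a\}$; this is $2$-bounded and refines to a proper coloring via Vizing's theorem at constant cost in the number of colors. Applying the two-phase growth rooted at a vertex $v_0$ then exposes $O(K)$ colors in Phase~1 (growing components until a typical one sees $\Omega(Kn)$ boundary colors) and $O(\log n)$ further colors in Phase~2 (each reducing the component count by a constant factor), producing a tree on a subset $A'\subseteq A$ with $|A'|\ge\Omega(n)$ that uses $O(K+\log n)$ distinct colors; in the abelian setting this yields a compatible additive spanning sequence for $A'$. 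Applying Ruzsa's covering, and using $|A+A'|\le|A-A|\le Kn$ together with $|A'|\ge n/2$, we obtain $X\subseteq A$ with $|X|\le 2K$ such that $A\subseteq X+A'-A'$; adjoining $X$ together with a doubled copy of the spanning sequence for $A'$ (to accommodate coefficients in $\{-2,-1,0,1,2\}$) extends the span to all of $A$. Iterating this construction $O(\log\log n)$ times, with each round contributing only $O(K)$ fresh growth-phase colors while the coalescence contribution of $O(\log n)$ is paid only once and amortized across iterations, yields $d_*(A)=O(K\log\log n+\log n)$.

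The main difficulty is ensuring that the iteration terminates in only $O(\log\log n)$ rounds and that the spanning sequence does not grow multiplicatively across iterations: a naive application of Ruzsa's covering doubles the spanning sequence at each round (to realize $A'-A'$ from a sequence for $A'$), which over $\log\log n$ rounds would cause exponential blow-up. Resolving this requires arguing either that the residual set shrinks doubly-exponentially per round (so that $O(\log\log n)$ rounds suffice while each round uses only $O(K)$ new additive generators) or, equivalently, that the nested spanning structures $A\supseteq A'\supseteq A''\supseteq\cdots$ can be amalgamated compactly rather than doubled; this is the delicate point where the Phase~1/Phase~2 structure of the growth procedure, together with the abelian additive structure, must be exploited to keep the final sequence length at $O(K\log\log n+\log n)$.
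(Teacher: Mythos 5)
Your proposal diverges substantially from the paper's argument and has gaps in both halves.

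\textbf{Exponent-$r$ bound.} You propose to enclose $A$ in a subgroup $H\le G$ of size $r^{O(K)}n$ and then span $H$ with $O(\log|H|)$ signed powers of three. The ternary-spanning step is fine, but the containment $|H|\le r^{O(K)}n$ for a general abelian group of exponent $r$ is not an available black box: Ruzsa's 1999 theorem gives only $|H|\le K^2r^{K^4}n$, yielding the much weaker $d_*(A)=O(K^4\log r+\log n)$, and the Even-Zohar--Lovett refinement applies only to $\mathbb{F}_p^n$ with $p$ prime, not to arbitrary exponent-$r$ groups. The $r^{O(K)}n$ bound you want is precisely Ruzsa's conjecture, which (as the paper notes) was established only in subsequent work~\cite{FP} \emph{using} the combinatorial machinery of the present paper --- so invoking it here would be circular. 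In fact the paper points out that the theorem you are proving is a consequence-wise \emph{weaker} intermediate step on the way to the Ruzsa conjecture, not a corollary of it.

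\textbf{General abelian bound.} Your plan is to iterate the two-phase growth, cover $A$ by translates of $A'-A'$ via Ruzsa covering, and repeat $O(\log\log n)$ times. You correctly identify the fatal obstacle yourself: realizing $A'-A'$ from a $\{-1,0,1\}$-span of $A'$ requires doubling the spanning sequence, which over $\log\log n$ rounds multiplies the length by $\Theta(\log n)$, destroying the claimed bound. You then wave at ``doubly-exponential shrinkage'' or ``compact amalgamation'' without supplying either, so the crux of the argument is missing. The paper's actual proof does \emph{not} iterate. It applies Lemma~\ref{lem:ball-key} once to produce, for most $v$, a set $B(v)$ of size $\delta K$ whose vertices are reachable from $v$ by paths of length only $O(\log K)$ using a fixed color set $S_1$ of size $O(K)$; it then uses a packing/covering argument on these balls (Claim~\ref{claim:log-n}) to capture a quarter of $A$ with $O(\log n)$ extra colors $S_2$, covers the rest with $O(K)$ colors $S_3$, and finally, only at the very end, specializes to abelian groups: every element of $A$ is a $\pm$-combination of $S_1\cup S_2\cup S_3$ with multiplicity $O((\log n)(\log K))$ on $S_1$ and $O(1)$ elsewhere, so replacing each $s_1\in S_1$ by the $O(\log\log n)$ elements $2^b s_1$ kills the multiplicities and gives a genuine $\{-1,0,1\}$-spanning sequence of size $O(K\log\log n+\log n)$. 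The short-path property of the balls --- not merely using few colors --- is what controls the multiplicities, and this is the key idea your proposal does not use. The exponent-$r$ case then follows by the same argument, truncating the $2^b$ multiples at $2^b\le r$.

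Minor point: your covering step writes $|A+A'|\le|A-A|$, which is not correct as stated (one should instead use $|A-A'|\le|A-A|$ with the $A-B$ form of the covering lemma), but this is a small slip compared with the structural issues above.
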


As was the case for Corollary~\ref{cor:dim-weak}, a result analogous to Theorem~\ref{thm:dim-strong} also holds for sets $A$ with $|A+A| \le K|A|$. This again follows from working with respect to the complete bipartite graph whose parts are copies of $A$, though we will not go into this in detail. 

In subsequent work~\cite{FP}, using as input some of the combinatorial ideas behind the proof of Theorem~\ref{thm:improvedcounting}, the second and third authors establish a conjecture of Ruzsa which sharpens the above upper bound on $d_*(A)$ for abelian groups with bounded exponent. More precisely, Ruzsa~\cite{R99} conjectured that, in any abelian group $G$ with exponent $r$, any subset $A$ of $G$ of size $n$ with $|A-A|\le K|A|$ must be contained in a subgroup of size at most $\exp(O_r(K))n$. The result in \cite{FP} confirms this conjecture, showing that any such $A$ must be contained in a subgroup of size at most $r^{(2+o_K(1))K}n$, which is sharp up to the $o_K(1)$ term. Note that this implies and gives a stronger conclusion than the upper bound $d_*(A) \le O(K\log r+\log n)$, which would only yield that $A$ is contained in a subgroup of size at most $r^{O(K \log r + \log n)} = r^{O(K \log r)}n^{O(\log r)}$. 

\subsection{Alon's conjecture on Ramsey Cayley graphs} \label{sec:IntroAlon}

Our other main results, for which Theorem~\ref{thm:count-group} is an essential input, concern the following striking conjecture of Alon~\cite{A95} (see also~\cite{A02,AOpen}).

\begin{conjecture}[Alon]
There is a constant $C$ such that every finite group has a $C$-Ramsey Cayley graph.
\end{conjecture}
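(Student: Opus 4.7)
The plan is to construct, for each finite group $G$ of order $N$, a Cayley graph with both clique and independence number $O(\log N)$. The natural starting point is the uniform random Cayley graph $G_{1/2}$, whose clique number, and, by complementation, independence number, is $O(\log N \log \log N)$ by Theorem~\ref{main}. The obstruction to improving this to $O(\log N)$ is structural: if $H \leq G$ is a subgroup of order $m$ and $H \setminus \{1\} \subseteq S$, then $H$ forms a clique of size $m$ in $G_S$, and for groups such as $\mathbb{F}_2^n$ which have many medium-sized subgroups, such cliques push the clique number up to $\Theta(\log N \log \log N)$ with high probability. So a first strategy is to identify the groups for which the uniform random model already succeeds (essentially those with few medium-sized subgroups, such as cyclic groups of prime order, treated by Green and Morris) and to modify the random model for the others.

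For the modification, let $\mathcal{H}$ be the collection of subgroups of $G$ of size exceeding $C_0 \log N$ for a large constant $C_0$, and condition the uniform distribution on the event that $\bigl| |S \cap H| - |H|/2 \bigr| \leq \sqrt{|H|}\, \log N$ for every $H \in \mathcal{H}$. Standard concentration shows each such constraint holds with probability at least $1 - N^{-\omega(1)}$ under the uniform model, so the conditioning loses at most a sub-polynomial factor in the probability of any fixed clique event. The key step is then to deploy Theorem~\ref{thm:count-group}: for a clique $A$ of size $n$ with $|AA^{-1}| \leq Kn$, the probability (in the unconditional $G_{1/2}$) that all colors of $AA^{-1}$ are selected is at most $2^{-\Omega(Kn)}$, and Theorem~\ref{thm:count-group} bounds the number of such $A$ by $N^{C(K+\ln n)}(CK)^n$. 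Summing over $K$, a first-moment calculation pushes the typical clique size in the unconditioned model down to $O(\log N)$ \emph{except} for the contribution of cliques coming from small-doubling sets $A$ whose $AA^{-1}$ is essentially a subgroup of $\mathcal{H}$; these are precisely what the conditioning on $\mathcal{H}$ is designed to eliminate.

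The main obstacle is the endpoint of this argument: classifying the cliques that arise at the threshold $n \sim C\log N$ in terms of the subgroup lattice of $G$. For abelian groups, Theorem~\ref{thm:dim-strong} shows that any such set lies in the span of few generators, and one can hope to balance these contributions by refining the conditioning to each generating span; this is the route toward the abelian case for almost all $N$ claimed in the abstract. For general nonabelian groups, no structural analogue of Theorem~\ref{thm:dim-strong} is yet available, and handling coset-like near-cliques inside nonabelian $p$-groups of bounded exponent appears to require a genuinely new input. An alternative route, when $G$ admits an automorphism swapping $S$ with $S^{-c}$, is to build a self-complementary Cayley graph directly, which is how the paper handles vector spaces of characteristic $\equiv 1 \pmod 4$; but this strategy clearly does not extend to arbitrary groups and so cannot yield the full conjecture on its own.
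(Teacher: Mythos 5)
This is Alon's conjecture, which remains open in general; the paper proves it only for special classes of groups (abelian groups of order coprime to~$6$, groups with a large abelian subgroup of such order, finite vector spaces of characteristic at least~$5$, and almost all abelian orders), and you correctly flag at the end that your sketch cannot reach the full statement. But there is also a more basic problem in the middle of your proposal, independently of that final gap.

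Conditioning the uniform model on $\bigl||S\cap H| - |H|/2\bigr|\leq \sqrt{|H|}\log N$ for all subgroups $H$ with $|H|>C_0\log N$ only excludes cliques $A$ for which $AA^{-1}$ \emph{contains} a large subgroup. The dominant contribution to the clique number in groups such as $\mathbb{F}_2^m$ instead comes from small-doubling sets whose difference set is a generic subset of a subspace rather than a subspace itself: take $A$ to consist of $n$ generic points in a subspace $V$ of size $Kn$, so that $|A-A|\leq Kn$ while $A-A$ contains no subgroup of size $\gg 1$. The number of such $A$ is at least $N^{c\log(Kn)}K^n$ (this is precisely the lower-bound construction in the discussion after Theorem~\ref{thm:count-group}), and multiplying by the clique probability $2^{-\Omega(Kn)}$ gives a first moment of order $N^{c\log n}$ at $n=\Theta(\log N)$ and $K=O(1)$. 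These configurations satisfy your conditioning event, so the conditioned first moment is essentially unchanged and you do not break the $\log N\log\log N$ barrier. Moreover, conditioning on a probability-$1-o(1)$ event changes no individual clique probability by more than a constant factor, so it cannot in general suppress the problematic terms; you need to change the model, not condition it.

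What the paper actually does, in the cases it handles, is \emph{construct} a correlated random $S$ (not a conditioned uniform $S$) with two designed-in properties. First, by Claim~\ref{fourconsecutive}, it is \emph{deterministically impossible} for $S$ to contain all of $\{y,2y,4y,8y\}$ for any nonzero $y$; then Lemma~\ref{PRineq} (an iterated Pl\"unnecke--Ruzsa argument) forces every potential clique $A$ to have $|A-A|\geq|A|^{1+c}$, i.e.\ polynomially large doubling, so the first moment only ever ranges over $K\geq n^c$. Second, by Claim~\ref{negativecorr}, the model retains the negative-correlation property $\Pr[X\subseteq S]\leq 2^{-|X|/2}$, so the probability bound survives. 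Against these two facts, the count $N^{C(K+\ln n)}(CK)^n$ from Theorem~\ref{thm:count-group} wins outright once $K\geq n^{c}$ and gives $O(\log N)$. The structural obstruction you gesture at is real and is what limits the paper to the cases above: the construction needs a symmetric $2$-coloring of $G\setminus\{1\}$ in which no geometric progression $\{y,2y,4y,8y\}$ is monochromatic, which fails for $\mathbb{F}_2^n$ and $\mathbb{F}_3^n$ and is why the conjecture remains open there (cf.\ Conjecture~\ref{subspaceconjecture}).
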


While this conjecture was verified by Green~\cite{G05} in the particular case of cyclic groups by studying the clique number of the uniform Cayley graph, essentially nothing was previously known in the general case. Indeed, as we have seen above, even when $G$ is an abelian group, it is not always the case that a random Cayley graph on $G$ is Ramsey with high probability, a fact which led Green~\cite{G05} to remark that `one should not expect the full conjecture, if it is true, to be easy to prove'. Despite this difficulty, by analyzing the clique number of appropriate non-uniform random Cayley graphs, we are able to prove Alon's conjecture for a broad range of groups. For instance, we have the following result.

\begin{thm}\label{main2}
There is a constant $C$ such that if the largest factor of $N$ which is a product of powers of $2$ and $3$ is at most $(\log N)^{1/1000}$, then, for every abelian group $G$ of order $N$, there is a $C$-Ramsey Cayley graph on $G$. 
\end{thm}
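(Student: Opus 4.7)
The plan is to apply the first-moment method to a non-uniform random Cayley graph on $G$. The naive uniform choice $G_{1/2}$ is insufficient here because groups such as $\mathbb{F}_p^m$ (for any prime $p$, including primes $\ge 5$ which are allowed by our hypothesis) admit many subgroups of moderate order, and these typically yield cliques of size $\Theta(\log N \log \log N)$ in $G_{1/2}$. Instead, we decompose $G = H \oplus G'$ where $H$ is the product of the Sylow $2$- and Sylow $3$-subgroups, so $|H| \le (\log N)^{1/1000}$ by hypothesis, and $G'$ has order coprime to $6$. We then include each pair $\{g,-g\}$ in the generating set $S$ independently with a probability $p_g$ that depends on the $H$-component of $g$, the idea being to avoid the amplification of cliques coming from small subgroups of $G$.

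Fix $n = C \log N$ for a large constant $C$ to be chosen. For an $n$-subset $A \subseteq G$, the probability that $A$ is a clique in the random Cayley graph $G_S$ is the product of $p_g$ over representatives of the $\pm$-orbits in $(A-A) \setminus \{0\}$, with the analogous expression using $1 - p_g$ for independent sets. To bound $E[\text{number of cliques of size } n]$ we partition the relevant $A$ by their doubling ratio $K = |A-A|/n$ and apply Theorem~\ref{thm:count-group}, which gives at most $N^{C'(K+\ln n)}(C'K)^n$ subsets in each class. The corresponding clique probability, after averaging over the non-uniform probabilities, is comparable to $p^{Kn/2}$. This would already yield $n = O(\log N \log \log N)$ as in Theorem~\ref{main}, but it does not directly give $n = O(\log N)$ because of the $\ln n$ factor in the exponent of $N$.

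To close this gap we exploit the $2,3$-part hypothesis. Structurally, the $\ln n$ term in Theorem~\ref{thm:count-group} is driven by subgroup-like structures of $G$ analogous to the subspace construction showing tightness in $\mathbb{F}_2^m$; in an abelian group with $|H|$ small, however, any small-doubling subset $A$, after projection to $G/H \cong G'$, must be close to a coset of a subgroup of $G'$, and a refined count together with a suitable choice of $p_g$ effectively replaces the factor $N^{C'\ln n}$ by $|H|^{O(\ln n)} = N^{o(1)}$. Balancing the probabilities $p_g$ then allows both $E[\text{number of cliques of size } n]$ and $E[\text{number of independent sets of size } n]$ to be less than $1/2$, so that $G_S$ is $C$-Ramsey with positive probability. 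The principal obstacle is establishing this refined structural count while simultaneously choosing the probabilities $p_g$ so that neither the clique nor the independence expectation blows up; this will likely require a multi-scale argument that tracks the contributions of cosets of various subgroups of $G$ and uses both the counting bound of Theorem~\ref{thm:count-group} and the dimension bound of Theorem~\ref{thm:dim-strong} as inputs.
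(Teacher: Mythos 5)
Your high-level framing is partly right (first moment over a non-uniform random Cayley graph, decompose by the $\{2,3\}$-part, and use Theorem~\ref{thm:count-group}), and you correctly identify that the $\ln n$ term in the exponent of $N$ is the obstruction to getting $O(\log N)$ directly. But both the construction you propose and the mechanism you propose for closing the gap are off.

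First, the construction. You suggest including each pair $\{g,-g\}$ independently with a probability $p_g$ depending only on the $H$-component of $g$, where $H$ is the $\{2,3\}$-part. This cannot work: the dangerous subgroups live in $G'$, not in $H$ (e.g.\ take $G = \mathbb{F}_5^m$, so $H$ is trivial and all $p_g$ are equal, recovering the uniform model that fails). What the paper does is entirely different. On the coprime-to-$6$ part $G'$ it partitions $G'\setminus\{0\}$ into pairs $\{x,-x\}$, then into orbits of $x\mapsto 2x$, then groups consecutive pairs $\{x,2x\},\{4x,8x\},\dots$ and chooses exactly one equivalence class from each such two-element set. The crucial property of this coupled (not independent) distribution is that $S$ never contains a quadruple $\{y,2y,4y,8y\}$; consequently any clique $A$ has no $y$ with $ty\in A-A$ for all $t\in\{1,2,4,8\}$. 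The remaining elements outside (a coset of) $H$ are included uniformly and independently.

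Second, the mechanism. You propose to improve the count, replacing $N^{C'\ln n}$ by $|H|^{O(\ln n)}$ via a structural claim that small-doubling sets, projected to $G/H$, are close to cosets. The paper does not refine the counting bound at all. Instead, from the constraint above, Lemma~\ref{PRineq} (a Pl\"unnecke--Ruzsa pigeonhole iteration) forces every potential clique $B$ inside a fixed $H$-coset to satisfy $|B-B|\ge |B|^{1+1/144}$. By pigeonhole, any $n$-set $A$ has such a $B$ of size at least $n/(N/|H|) \ge n^{0.999}$, hence $|AA^{-1}|\ge n^{1.005}$, i.e.\ $K \ge n^{0.005}$. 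This makes the $\ln n$ term in Theorem~\ref{thm:count-group} negligible relative to $K$, so $N^{C(K+\ln n)}(CK)^n \cdot 2^{-(m-1)/2}$ sums to $o(1)$ with $n = C'\log N$. Theorem~\ref{thm:dim-strong} and a ``multi-scale'' refinement of the counting bound are not used here. So the missing idea is: design $S$ so cliques must avoid fixed linear patterns, deduce a polynomial lower bound on the doubling of any potential clique via Pl\"unnecke--Ruzsa, and let that lower bound absorb the $\ln n$ term, rather than trying to improve the count itself.
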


In particular, this theorem applies when $G = \mathbb{F}_5^n$, so there are groups for which the uniform random Cayley graph fails to be Ramsey with high probability, but for which Ramsey Cayley graphs nevertheless exist. One takeaway from Theorem~\ref{main2} which we highlight is that Alon's conjecture holds for abelian groups of order $N$ for almost all $N$.

\begin{cor}\label{cor3}
There is a constant $C$ such that, for almost all $N$, every abelian group $G$ of order $N$ has a $C$-Ramsey Cayley graph on $G$. 
\end{cor}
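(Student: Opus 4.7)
The plan is to deduce Corollary \ref{cor3} directly from Theorem \ref{main2} by verifying that the arithmetic hypothesis of Theorem \ref{main2} is met for a density-one set of positive integers $N$. Writing $f(N) = 2^{\nu_2(N)} 3^{\nu_3(N)}$ for the largest divisor of $N$ which is a product of powers of $2$ and $3$ (here $\nu_p$ denotes the $p$-adic valuation), it suffices to prove that the set of $N$ with $f(N) > (\log N)^{1/1000}$ has natural density zero. All the combinatorial content has already been packaged into Theorem \ref{main2}, so no genuine obstacle remains; the remaining work is a short density calculation, and the main thing to watch is only that the threshold $(\log N)^{1/1000}$ depends on $N$ rather than being a fixed parameter.

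To carry this out, I would fix $X$ large and bound the count of bad $N \in (\sqrt{X}, X]$, noting that the contribution of $N \le \sqrt{X}$ is a trivially negligible $O(\sqrt{X})$. For $N > \sqrt{X}$ one has $(\log N)^{1/1000} \ge T_X := ((\log X)/2)^{1/1000}$, so if $f(N) > (\log N)^{1/1000}$ then $f(N) > T_X$, and hence at least one of $2^{\nu_2(N)} > \sqrt{T_X}$ or $3^{\nu_3(N)} > \sqrt{T_X}$ must hold (otherwise their product is at most $T_X$). For any prime $p$ and integer $k \ge 0$ we have the elementary estimate $\#\{N \le X : p^k \mid N\} \le X/p^k$, so the number of $N \le X$ with $p^{\nu_p(N)} > \sqrt{T_X}$ is $O(X/\sqrt{T_X})$ for each $p \in \{2,3\}$. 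Combining the two primes and the $\sqrt{X}$ leftover yields $O(X/(\log X)^{1/2000}) + O(\sqrt{X}) = o(X)$ bad $N$ in $[1,X]$, so the bad set indeed has density zero. Applying Theorem \ref{main2} to every $N$ in the complementary density-one set then produces a $C$-Ramsey Cayley graph on each abelian group of order $N$, which is precisely the corollary.
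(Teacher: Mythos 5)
Your proof is correct and follows essentially the route the paper intends: the paper states that Corollary~\ref{cor3} follows ``immediately'' from the preceding material without spelling out the density argument, and you supply exactly the standard calculation (split at $\sqrt{X}$, compare against the fixed threshold $T_X$, and bound each of the two prime contributions by $X/\sqrt{T_X}$) needed to show the exceptional set has density zero.
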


For finite vector spaces of characteristic at least $5$, we can prove a much more precise result, saying that there are Cayley graphs whose clique and independence number both asymptotically match those in the binomial random graph $G_{N, 1/2}$. 

\begin{thm}\label{thm:Fp}
For every vector space $V$ of order $N$ whose characteristic is at least $5$, there is a Cayley graph on $V$ whose clique and independence numbers are both at most $(2+o(1))\log_2 N$.
\end{thm}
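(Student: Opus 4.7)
The plan is to apply the first moment method to the uniform random Cayley graph $G_{1/2}$ on $V = \mathbb{F}_p^n$ with $p \ge 5$. In this model, each antipodal pair $\{s,-s\} \subseteq V \setminus \{0\}$ is independently included in the generating set $S$ with probability $1/2$. For an $n$-subset $A$ of $V$, since $p$ is odd, the set $A - A \setminus \{0\}$ decomposes into exactly $(|A-A|-1)/2$ antipodal pairs, so
\[
\Prob[A \text{ is a clique}] = \Prob[A - A \setminus \{0\} \subseteq S] = 2^{-(|A-A|-1)/2}.
\]
For ``generic'' $A$ with $|A - A|$ close to its maximum $n(n-1)+1$ this matches the binomial random graph probability $2^{-\binom{n}{2}}$, giving a realistic chance of hitting the sharp threshold $(2+o(1))\log_2 N$.

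To execute the first moment, I would partition $n$-subsets by dyadic ranges of the doubling parameter $K = |A - A|/n$ and apply Theorem~\ref{thm:count-group} to each range, yielding an expected $n$-clique count bounded by a sum over $K$ of terms of the form $N^{C(K + \ln n)}(CK)^n \cdot 2^{-(Kn-1)/2}$. Taking $n = (2+\epsilon)\log_2 N$ and analyzing the dominant behaviour at large $K$, each term scales like $N^{K(C - 1 - \epsilon/2) + o(1)}$, which tends to $0$ precisely when the constant in front of $K$ in the $N^{CK}$ factor can be taken to be $1 + o(1)$. That this is the truly sharp value is indicated by the lower-bound construction following Theorem~\ref{thm:count-group}, where $K$ extra elements appended to an $(n-K)$-subgroup produce $(N/2K)^K \sim N^K$ valid subsets. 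To make this rigorous, I would augment Theorem~\ref{thm:count-group} with the bounded-exponent refinement of Theorem~\ref{thm:dim-strong}: any $A \subseteq \mathbb{F}_p^n$ with $|A - A| \le K|A|$ has additive dimension $O(K + \log n)$, so it lies in the span of $O(K + \log n)$ elements, which can be enumerated much more tightly in the regime $K, n = O(\log N)$ than the general bound permits.

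The independence-number bound is then obtained for free from the distributional symmetry of $G_{1/2}$: the complement $V \setminus (S \cup \{0\})$ has the same distribution as $S$, since each antipodal pair lies in $S$ or in its complement with equal probability, independently of all others. Consequently, the clique-number bound applied to the complement Cayley graph yields the identical bound for the independence number of $G_{1/2}$, and a union bound produces a single realization whose clique number and independence number are both at most $(2+o(1))\log_2 N$, as required.

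The main obstacle is extracting the sharp leading constant $(2+o(1))$ rather than a larger constant multiple of $\log_2 N$. A direct application of Theorem~\ref{thm:count-group} and the first moment would only yield a bound of the form $C' \log_2 N$ with $C'$ depending on the implicit constant in the counting bound, so bridging the gap to $(2+o(1))\log_2 N$ requires a count of small-doubling subsets of $\mathbb{F}_p^n$ that is essentially sharp in the leading exponent of $N$. This is the delicate step where the vector-space structure enters most crucially, via Theorem~\ref{thm:dim-strong} and related Freiman--Ruzsa-type results locating small-doubling sets inside cosets of low-dimensional subspaces.
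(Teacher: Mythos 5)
There is a fundamental gap at the very first step: the uniform random Cayley graph $G_{1/2}$ on $\mathbb{F}_p^n$ (with $p$ fixed) has clique number $\Theta(\log N \log \log N)$ with high probability, not $O(\log N)$, so no first-moment argument applied to $G_{1/2}$ can establish the theorem. The paper itself flags this explicitly when introducing Section~\ref{sec:Orlitsky}: ``in order to obtain a Ramsey Cayley graph over $\mathbb{Z}_5^d$ we cannot simply take a uniform random Cayley graph, since such graphs have clique number $\Theta(\log N \log \log N)$ with high probability.'' The obstruction is precisely the small-$K$ regime that your analysis glosses over: a subspace $H$ of size $n$ has $|H-H|=n$, so $K=1$ and the clique probability is $2^{-(n-1)/2}$, while the number of such subspaces is $N^{\Theta(\log_p n)}$. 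For $n = 2\log_2 N$ the expected number of subspace-cliques is $N^{\Theta(\log\log N)-1}$, which blows up, and no sharpening of the counting bound in the $N^{CK}$ factor can save you, since the problem is in the $N^{C\ln n}$ factor (which is genuinely necessary, as the coset construction following Theorem~\ref{thm:count-group} shows). Your focus on ``the dominant behaviour at large $K$'' misses that small $K$ is what kills the argument.

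The paper's actual proof (Theorem~\ref{thm:sharp-clique} with $r=2$, $s=1$) uses a non-uniform distribution engineered so that subspaces cannot become cliques: for each line $L$ through the origin, the nonzero elements of $L$ are split between the two color classes so that neither class contains all of $L\setminus\{0\}$. Via Lemma~\ref{PRineq} this forces any potential clique $A$ to satisfy $|A-A|\ge |A|^{1+c}$ for some constant $c>0$, which effectively cuts off the entire dangerous regime $K \le n^{c}$ from the first-moment sum. Only then does a sharp count enter, and it is Lemma~\ref{lem:count-K} (proved via the Freiman-dimension bound of Theorem~\ref{thm:EZL}, an Even-Zohar--Lovett type result) rather than Theorem~\ref{thm:count-group} or Theorem~\ref{thm:dim-strong} that supplies the leading constant $1$ in the exponent $N^{K+\log_p t+1}$. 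Your closing observation that one needs Freiman--Ruzsa-type structural input and distributional self-complementarity is on the right track, but the essential missing idea is the pseudorandom (rather than uniform) choice of generating set that guarantees the doubling lower bound on all potential cliques.
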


Our methods also allow us to solve a longstanding problem of Alon and Orlitsky~\cite{AO95} asking whether there are Ramsey Cayley graphs which are self-complementary, where a graph is \emph{self-complementary} if it is isomorphic to its complement. Without the extra stipulation that the graph be Cayley, self-complementary graphs whose clique and independence numbers are both at most $(2+o(1))\log_2 N$ were constructed by McDiarmid and Steger \cite{MS}. We prove a similar result in the Cayley setting.

\begin{thm}\label{thm:Fq}
For every vector space $V$ of order $N$ whose characteristic is congruent to $1 \pmod 4$, there is a self-complementary Cayley graph on $V$ whose clique and independence numbers are both at most $(2+o(1))\log_2 N$.
\end{thm}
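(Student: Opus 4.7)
The plan is to exhibit a random self-complementary Cayley graph on $V$ and bound its clique number by the first moment method, using Theorems~\ref{thm:count-group} and~\ref{thm:dim-strong} as the main combinatorial inputs. Set $p = \mathrm{char}(V)$ and identify $V$ with $\mathbb{F}_p^m$, so that $N = p^m$. Since $p \equiv 1 \pmod 4$, there exists $i \in \mathbb{F}_p$ with $i^2 = -1$; multiplication by $i$ defines an $\mathbb{F}_p$-linear automorphism $\phi$ of $V$ of order $4$, with $\phi^2 = -\mathrm{id}$ and no fixed point on $V \setminus \{0\}$. The orbits of $\langle \phi \rangle$ on $V \setminus \{0\}$ are therefore $4$-element sets $\{v, iv, -v, -iv\}$, each partitioned into two symmetric pairs $\{v, -v\}$ and $\{iv, -iv\}$. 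For the random construction, I would independently flip a fair coin for each $4$-orbit to decide which of its two symmetric pairs is included in $S$. Then $S = -S$, so $V_S$ is a Cayley graph, and $\phi(S) = V \setminus (\{0\} \cup S)$, so $\phi$ induces a graph isomorphism $V_S \to \overline{V_S}$, making $V_S$ self-complementary. In particular $\omega(V_S) = \alpha(V_S)$, and it suffices to bound the clique number.

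Fix $k = \lceil (2 + \varepsilon)\log_2 N \rceil$ and, for $A \subseteq V$, write $D_A = (A - A) \setminus \{0\}$. The first key observation is that $A$ with $|A| = k$ is a clique in $V_S$ iff $D_A \subseteq S$, which is feasible only when $D_A \cap \phi(D_A) = \emptyset$ (no $4$-orbit is fully contained in $D_A$); when this holds, each of the $|D_A|/2$ symmetric pairs inside $D_A$ lies in $S$ independently with probability $1/2$, giving $\Prob[A \text{ is a clique}] = 2^{-(|A-A|-1)/2}$. Hence the expected number of $k$-cliques is
\begin{equation*}
\Ex[\#k\text{-cliques}] = \sum_{\substack{A \subseteq V,\, |A| = k \\ D_A \cap \phi(D_A) = \emptyset}} 2^{-(|A-A|-1)/2},
\end{equation*}
and the goal is to show this is $o(1)$. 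The easy case is the \emph{generic} regime, in which all $k(k-1)$ ordered differences of $A$ are distinct; here the contribution is at most $\binom{N}{k}\, 2^{-k(k-1)/2} = o(1)$ by the standard $G(N, 1/2)$ calculation.

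The bulk of the work lies in the \emph{structured} regime, where $|A - A| = Kk$ for some $K < k - 1$. Theorem~\ref{thm:count-group} bounds the number of such $A$ by $N^{C(K + \ln k)}(CK)^k$, and each contributes at most $2^{-(K-1)k/2}$; however, the $N^{CK}$ prefactor alone is a constant factor too large to deliver the sharp constant $2$. To close this gap, I would exploit the feasibility condition $D_A \cap \phi(D_A) = \emptyset$ together with the vector space structure: by Theorem~\ref{thm:dim-strong} applied with exponent $r = p$, any such $A$ lies inside an $\mathbb{F}_p$-subspace $W$ of size at most $p^{O(K + \log k)}$, and because $i \in \mathbb{F}_p$ the subspace $W$ is $\phi$-invariant, so $\phi(D_A) \subseteq W$ as well. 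Disjointness of $D_A$ and $\phi(D_A)$ inside $W$ then forces $|W| \geq 2|A - A| - 1$, a far stronger restriction than what Freiman alone gives. Enumerating structured $A$ as pairs $(W, A \subseteq W)$ subject to this additional constraint, rather than treating Theorem~\ref{thm:count-group} as a black box, should produce the extra $\exp(\Theta(K))$ savings needed to drive the first moment to $o(1)$ at $k = (2 + \varepsilon)\log_2 N$.

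The hard part will be the intermediate regime of moderate doubling, roughly $\ln k \leq K \leq k/\ln k$, where neither the generic bound nor the subspace-based bound is individually sharp; matching the constant $2$ will require a delicate scale-dependent interpolation between the two, driven at every scale by the no-full-$4$-orbit feasibility constraint. The conceptual payoff is that the random construction, engineered around a square root of $-1$, is designed exactly so that its feasibility condition kills the $\mathbb{F}_p$-subspace-like cliques that would otherwise obstruct matching the binomial random graph bound in the self-complementary Cayley setting.
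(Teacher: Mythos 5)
Your construction (independently including one of the two symmetric pairs from each $\langle i\rangle$-orbit $\{v,iv,-v,-iv\}$) is the natural extension of the paper's warm-up construction over $\mathbb{Z}_5^d$, and it does produce a self-complementary Cayley graph with the exact independence giving $\Prob[A\text{ clique}]=2^{-(|A-A|-1)/2}$ whenever $D_A\cap iD_A=\emptyset$. But there are two genuine gaps, and the paper actually takes a rather different route on both.

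\textbf{The feasibility constraint is too weak when $p$ is unbounded.} Your only deterministic constraint is that no $4$-orbit lies entirely in $D_A$; via Lemma~\ref{PRineq} with $T=\{1,i\}$ (where $i$ is interpreted as an integer in $[1,p-1]$, and necessarily $i\geq\sqrt{p-1}$), this gives at best $|A-A|\geq|A|^{1+1/(4i)}\geq|A|^{1+\Theta(1/p)}$. To kill the moderate-doubling range of the first moment, one needs $K\gg\log|A|\sim\log\log N$, i.e.\ roughly $(\log N)^{1/p}\gg\log\log N$; this fails once $p$ grows faster than $\log\log N/\log\log\log N$, so your argument only covers essentially bounded characteristic. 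The theorem allows $p$ up to $N$. The paper avoids this by \emph{not} coloring orbits independently: for small $p$ it samples a uniformly random \emph{equitable} rotational coloring of each line, forcing no line to be monochromatic (so one may take $T=\{1,\dots,p-1\}$ in Lemma~\ref{PRineq}); for large $p$ it colors intervals of length $\ell\to\infty$ slowly in orbits of powers of $2$, forcing no set $\{y,2y,\dots,2^{\ell-1}y\}$ to be monochromatic. In both regimes the deterministic constraint yields $|A-A|\geq|A|(\log|A|)^2$, independent of $p$. These more correlated colorings cost only $(1-o(1))$ in the exponent of the clique probability, which is acceptable since $\ell\to\infty$.

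\textbf{The counting input is not sharp enough.} You correctly identify that the $N^{CK}$ factor in Theorem~\ref{thm:count-group} is too weak, and that one needs a bound of the form $N^{K+o(K)}$. But Theorem~\ref{thm:dim-strong} with $r=p$ gives $d_*(A)=O(K\log p+\log k)$, so the containing subspace has size $p^{O(K\log p+\log k)}$, not $p^{O(K+\log k)}$ as you state; the $\log p$ factor is fatal. Moreover, even with the correct sharp containment bound $|\langle A\rangle|\leq\frac{p^K}{K+2}|A|$ (which is what the paper establishes for difference sets in Theorem~\ref{thm:EZL}, adapting Even-Zohar--Lovett), enumerating pairs $(W,A\subseteq W)$ gives roughly $N^{K+\log_p k}\cdot\binom{p^Kk}{k}\sim N^{K+\log_p k}p^{Kk}$, which is hopelessly large. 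The paper's Lemma~\ref{lem:count-K} instead counts by Freiman isomorphism class (at most $k^{4k}$ classes, each determining at most $N^{d+1}$ embeddings once the Freiman dimension $d\leq K+\log_p k$ is fixed), yielding $k^{4k}N^{K+\log_p k+1}$; this exponential improvement in the per-subspace factor is essential to the calculation. Finally, the observation that disjointness of $D_A$ and $\phi(D_A)$ inside $W$ forces $|W|\geq 2|A-A|-1$ is trivially implied by $|W|\geq p^{K}|A|/(K+2)$ for $p\geq 5$ and so contributes no savings; your ``intermediate regime'' is actually handled cleanly once the sharp counting bound and a $p$-independent feasibility bound are in hand, without any interpolation.
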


The motivation for the problem of Alon and Orlitsky came from information theory, where they studied how much one can save in zero-error communication problems through repeated use of a channel.  
Over a channel with confusion graph $G$, the maximum possible number of messages that can be sent without error with $k$ uses of the channel is the independence number $\alpha(G^k)$ of $G^k$, the strong product of $k$ copies of $G$. Normalized, the number of messages per use of the channel over $k$ uses is $c_k(G):=\alpha(G^k)^{1/k}$. The \emph{Shannon capacity} of $G$ is the limit $c(G):=\lim_{k \to \infty} c_k(G)$, which exists as the independence number of the strong product is supermultiplicative. If $G$ is a self-complementary Ramsey graph on $N$ vertices, which Alon and Orlitsky~\cite{AO95} show exist for $N$ a multiple of $4$, then $\alpha(G)=O(\log N)$ and $\alpha(G^2) \geq N$. Such a graph $G$ satisfies $c_1(G)=O(\log N)$ and $c_2(G) \geq \sqrt{N}$. Thus, one can get substantially more efficient communication by repeated use of such a channel rather than through a single use. 

In their work, Alon and Orlitsky were interested in whether a variant of their result holds for \emph{dual-source coding}. 
They observed that such a variant, which brings in the chromatic number rather than the independence number of strong graph powers, would follow from the existence of self-complementary Ramsey Cayley graphs, which is now addressed by Theorem \ref{thm:Fq}. To say more, we next explain the dual-source coding problem. 

Suppose that there are two sets $X$ and $Y$ and a collection of pairs $S\subseteq X\times Y$. A sender is then given an element $x$ of $X$ and a receiver an element $y$ of $Y$ such that $(x, y) \in S$. Assume that both the receiver and the sender know $S$ and their own positions in $X$ and $Y$. What is the smallest number of messages the sender needs available to them to allow them to communicate the value of $x$? If we let $G$ be the \emph{characteristic graph} with vertex set $X$ where $x$ and $x'$ are adjacent if and only if there exists $y$ with $(x,y),(x',y)\in S$, then it is a simple exercise to show that the answer to this question is $\chi(G)$, where $\chi(G)$ is the chromatic number of $G$. We also note that, for any given graph $G$, there exist $X$, $Y$ and $S$ for which $G$ is the associated graph.

Suppose now that the sender is given $k$ elements $x_1, \dots, x_k \in X$ and the receiver $k$ elements $y_1,\dots,y_k \in Y$ such that $(x_i,y_i)\in S$ for all $i$. What is the smallest number of messages the sender now needs available to them to allow them to communicate the values of $x_1, \dots, x_k$? The answer is $\chi(G^k)$. The reason for Alon and Orlitsky's question is their observation that if $G$ is a self-complementary Cayley graph with $N$ vertices, then $\chi(G^{2}) \le N$. Therefore, if there is a self-complementary Cayley graph $G$ with $N$ vertices whose independence number is $O(\log N)$ and $G$ is the characteristic graph of a dual source, then at least $\chi(G) = \Omega(N/\log N)$ messages must be available to communicate one element of $X$, but at most $N$ messages are needed to communicate two elements of $X$. Our Theorem~\ref{thm:Fq} answers Alon and Orlitsky's question in a strong form, asymptotically matching the lower bound for Ramsey numbers coming from random graphs. It also improves on work of Alon and Orlitsky themselves, who showed that there are self-complementary Cayley graphs with $N$ vertices whose independence number is $O(\log^2 N)$.

Getting back to the proofs of Theorems \ref{main2}, \ref{thm:Fp} and \ref{thm:Fq}, we have already mentioned that, because uniform random Cayley graphs can fail to be Ramsey for finite vector spaces, we must instead consider a different distribution on random Cayley graphs. This distribution is designed so that subspaces cannot form cliques or independent sets and, more generally, that any potential clique or independent set $A$ must have large doubling $|A-A|/|A|$. Combined with our counting result, Theorem \ref{thm:count-group}, this expansion property of potential cliques allows us to remove the $\log\log |G|$ factor from the bound on the clique number. Unlike the results of Section~\ref{sec:IntroClique}, we do need to make use of additive structure here. Indeed, our method relies on showing that any potential clique lacks solutions to certain systems of linear equations and we then make use of known results in additive combinatorics to prove the required expansion property.

The more precise results in Theorems \ref{thm:Fp} and \ref{thm:Fq} also require an additional input, namely, an essentially tight upper bound on the Freiman dimension of sets with small doubling in finite vector spaces. For $|A+A|/|A|$ instead of $|A-A|/|A|$, such a result was already shown by Even-Zohar and Lovett \cite{EZL}. We get the result we need by appropriately modifying their technique.

\paragraph{Organization of the paper.} In Section \ref{sec:simple-count}, we give the short proof of a weak version of Theorem~\ref{thm:improvedcounting} that is already sufficient to recover Theorems \ref{main} and \ref{main3}. In Section \ref{sec:count-main}, we then give the full details of the proofs of our main counting results, Theorems \ref{thm:count-group} and \ref{thm:improvedcounting}. We prove Theorem~\ref{spanningcomplete} on spanning trees with few colors in Subsection~\ref{subsectionconnectedspanning} and Theorem~\ref{thm:dim-strong} on the additive dimension of sets with small doubling in Section~\ref{adddimsubsect}. In Section \ref{sec:Alon}, we prove our results on Alon's conjecture, Theorems \ref{main2}, \ref{thm:Fp} and \ref{thm:Fq}. We conclude with some further remarks on open problems and ongoing related work.

\section{Counting subsets with few colors} \label{sec:simple-count}

In this section, we give a first estimate for the number of vertex subsets of given order in a locally-bounded edge-colored complete graph that contain few colors inside. This estimate will already be sufficient to show that the clique number of random entangled graphs with $N$ vertices, and therefore random Cayley graphs over groups of order $N$, is $O(\log N \log \log N)$ with high probability. We begin by working under the assumption that the edge-coloring is proper, that is, that each color class has maximum degree at most one. We will then use Vizing's theorem to lift this result to locally-bounded edge-colorings.

\begin{thm}\label{proper}
There is an absolute constant $C$ such that in any properly edge-colored complete graph on $N$ vertices, the number of subsets of $n$ vertices in which at most $Kn$ colors appear is at most $N^{C K\ln n}(C K)^n$. 

\end{thm}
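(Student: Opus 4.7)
My approach is to produce a compact greedy description of each qualifying subset and then count the descriptions. Fix a subset $A$ with $|A|=n$ in which at most $Kn$ colors appear, choose any $v_1\in A$, and set $V_0=\{v_1\}$. At step $i$, select the color $c_i$ maximizing the number of $c_i$-matching partners of $V_i$ lying in $A\setminus V_i$; let $S_i$ be this set and put $V_{i+1}=V_i\cup S_i$. Since the $|V_i|(n-|V_i|)$ edges of $K_N[A]$ crossing $(V_i,A\setminus V_i)$ are colored with at most $Kn$ colors, pigeonhole yields a color contributing at least $|V_i|(n-|V_i|)/(Kn)$ cut-edges, each with a distinct partner in $A\setminus V_i$ (the coloring is proper), whence $n_i := |S_i| \geq |V_i|(n-|V_i|)/(Kn)$. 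A two-phase analysis — $|V_i|$ grows by a factor at least $1+1/(2K)$ while $|V_i|\leq n/2$, then $b_i := n-|V_i|$ shrinks by a factor at most $1-1/(2K)$ — yields $V_s = A$ for some $s = O(K\log n)$.

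Next I would encode each $A$ by the tuple $(v_1,(c_0,S_0),\ldots,(c_{s-1},S_{s-1}))$, from which $A=\{v_1\}\cup\bigcup_i S_i$ is recoverable. The counts are: $N$ choices for $v_1$; at step $i$, at most $|V_i|(N-1)\le nN$ choices for $c_i$ (since each of the $|V_i|$ vertices has $N-1$ distinctly-colored incident edges); and at most $\binom{|V_i|}{n_i}$ choices for $S_i$ as a subset of the $c_i$-matching partners of $V_i$ outside $V_i$, a set of size at most $|V_i|$. Multiplying, the color contributions give a factor of at most $(nN)^s \leq N^{O(K\log n)}$ (using $n \le N$), so the heart of the matter is to show that $\prod_i\binom{|V_i|}{n_i} \leq (CK)^n$.

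The main quantitative obstacle is this last bound. Using $\binom{m}{k}\leq(em/k)^k$ and the greedy estimate $|V_i|/n_i \leq Kn/b_i$, it reduces to establishing
\[
\sum_i n_i\log\frac{n}{b_i} = O(n).
\]
In the first phase $b_i\geq n/2$ and each summand is $O(1)$, yielding $O(n)$ in total. In the second phase $\log(n/b_i)$ appears to blow up as $b_i\to 0$, but since $n_i = b_i - b_{i+1}$, the sum is dominated by the convergent integral $\int_0^{n/2}\log(n/b)\,db = (n/2)(1+\log 2)$ via a Riemann sum argument, again giving $O(n)$. This integrable-singularity step is the crux of the proof; once it is in place, $\prod_i(e|V_i|/n_i)^{n_i} \leq (eK)^{n-1}e^{O(n)} \leq (CK)^n$, and combining with the $N^{O(K\log n)}$ color factor yields the desired bound $N^{CK\ln n}(CK)^n$.
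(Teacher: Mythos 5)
Your proposal is correct and follows essentially the same greedy color-exposure approach as the paper's proof, including the same two-phase argument giving $s=O(K\ln n)$; the one substantive difference is bookkeeping: the paper counts ordered vertex sequences so that $\binom{n-n_i}{u_i}n_i^{u_i}\le(er)^{u_i}$ telescopes to $(er)^{n-1}$ and then divides by $n!$, whereas you count subsets directly via $\binom{|V_i|}{n_i}\le(e|V_i|/n_i)^{n_i}\le(eKn/b_i)^{n_i}$ together with the Riemann-sum comparison $\sum_i n_i\ln(n/b_i)\le\int_0^n\ln(n/b)\,db=n$, and both routes close. One small loose end worth tidying: because your step sizes $n_i$ are determined by $A$ rather than fixed in advance, the expression $N\prod_i\bigl(nN\cdot\binom{|V_i|}{n_i}\bigr)$ does not by itself bound the number of encodings uniformly; you should either sum over the at most $n^{O(K\ln n)}$ admissible compositions $(n_i)$ of $n-1$ of length $O(K\ln n)$ (an extra factor absorbed into $N^{CK\ln n}$), or, as the paper does, fix a deterministic schedule $u_i=\lceil |V_i|(n-|V_i|)/(Kn)\rceil$ so that the sequence of step sizes depends only on $n$ and $K$.
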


\begin{proof} 

Label the vertex set of the complete graph as $[N]=\{1,\ldots,N\}$. 
Instead of directly counting subsets, we will give an upper bound on the number of sequences of $n$ distinct vertices which have at most $Kn$ colors between them. Each set of $n$ vertices gives rise to $n!>(n/e)^n$ such sequences, so it will suffice to prove  that there is an absolute constant $C$  such that the number of sequences of $n$ distinct vertices which have at most $r=Kn$ colors between them is at most $N^{C K\ln n}(Cr/e)^n$. 

Let $V=(v_i)_{i=1}^n$ denote an arbitrary (but not yet determined) sequence of $n$ distinct vertices that induce at most $r$ colors between them. Observe that $r \geq n-1$ as the induced coloring on $V$ is also proper. We view $V$ as a set together with an ordering of its vertices. We can fix $v_1$ as any one of the $N$ possible vertices and set $V_0=\{v_1\}$ and $n_0=1$. After step $i$, we will have picked out a subset $V_i$ of $n_i$ distinct vertices together with the location of each of these vertices in the ordering. Furthermore, we will be able to determine both $V_i$ and the location of its elements using only a small number of choices.   These subsets will be nested, in that $V_{i-1}$ is a subset of $V_i$ for each $i$ for which $V_i$ is defined. We stop after step $i$ if $n_i=n$ (so that $V_i=V)$.  Otherwise,  $n_i<n$ and we continue on to step $i+1$.  We then let $u_{i}=\lceil n_i(n-n_i)/r \rceil$ and $n_{i+1}=n_i+u_i$. In particular, the sequence $(n_i)$ only depends on $n$ and $r$. 

Observe that between $V_i$ and $V \setminus V_i$ there are $n_i(n-n_i)$ edges and, as at most $r$ colors are used on these edges, there is a color $c_i$ with at least $n_i(n-n_i)/r$ edges of color $c_i$ between $V_i$ and $V \setminus V_i$.  Furthermore, there are at most $\binom{N}{2} < N^2$ choices for the color $c_i$. Since each color forms a matching there are $u_i$ vertices in $V \setminus V_i$ that form a matching to $V_i$ in the color $c_i$. We can pick the indices (in the sequence $V$) of the vertices of this matching which are in $V \setminus V_i$ in $\binom{n-n_i}{u_i}$ ways. Suppose these indices are $j_1,\ldots,j_{u_i}$. 
We can pick which vertices in $V_i$ they each connect to in the matching in at most $(n_i)_{u_i} \leq n_i^{u_i}$ ways. That is, we have at most $n_i$ choices for the vertex with index $j_1$ to match to in $V_i$, then at most $n_i-1$ choices for the vertex with index $j_2$ to match to in $V_i$ and so on. As the $u_i$ vertices in $V_i$ that are matched to the vertices in $V \setminus V_i$ are already determined from these choices and the color $c_i$ forms a matching, the $u_i$ vertices in $V \setminus V_i$ with indices  $j_1,\ldots,j_{u_i}$ are determined as the unique vertices that are adjacent in color $c_i$ to the corresponding vertices in $V_i$. 
We add these $u_i$ vertices in $V \setminus V_i$ to $V_i$ to obtain $V_{i+1}$. 
In total,  we made at most $N^2 \binom{n-n_i}{u_i}n_i^{u_i}$ choices to determine the vertices in $V_{i+1} \setminus V_i$ together with their location in the ordering provided we had already determined $V_i$ and the location of its elements in the ordering. 

Note that if $n_i<n$, then $u_i \leq n-n_i$ is a positive integer and so $n_s=n$ and $V_s=V$ for some positive integer $s$. Moreover, if $n_i \leq n/2$, then $$n_{i+1}=n_i+u_i \geq n_i\left(1+(n-n_i)/r\right) \geq n_i(1+1/2K) \geq n_i e^{1/4K}.$$ 
Therefore, after at most $4K\ln n$ steps, we arrive at $n_i \geq n/2$. If $n_i \geq n/2$, then $$n-n_{i+1}=n-n_i-u_i \leq n-n_{i}-(n-n_i)/2K=(n-n_{i})(1-1/2K) \leq (n-n_i)e^{-1/2K}.$$ 
Therefore, after another at most $2K\ln n$ additional steps, we stop with $n=n_s$. Hence, $s \leq  6 K \ln n $.  

Thus, there are at most $$N\prod_{i=0}^{s-1} N^2 \binom{n-n_i}{u_i}n_i^{u_i}
\leq N^{2s+1} \prod_{i=0}^{s-1}\left(e\frac{n-n_i}{u_i}\right)^{u_i}n_i^{u_i} \leq N^{2s+1}(er)^{n-1} \leq N^{12K \ln n +1}(er)^n $$ choices for the sequence $V$. The second inequality follows from substituting in the value of $u_i$ and using the identity $\sum_{i=0}^{s-1} u_i = n-1$. Indeed, using induction one can show that  $\sum_{i=0}^{j-1} u_i$ is the number of edges in a tree on $n_j$ vertices and $n=n_s$. The last inequality follows from the upper bound on $s$. This completes the proof. 
 \end{proof}

Let us quickly note that the proof above also yields Corollary \ref{cor:gen}. Recall the statement, that, in any proper edge-coloring of $K_n$ using at most $Kn$ colors, there is a vertex $v$ and a sequence $S$ of $O(K\log n)$ colors such that all vertices are contained in $\mathrm{span}(S; v)$.

\begin{proof}[Proof of Corollary \ref{cor:gen}]
    Following the proof of Theorem \ref{proper}, we can find a sequence of colors $c_1,\dots,c_s$ with $s\le 6K\ln n$ and sets of vertices $\{v_1\} = V_0 \subseteq V_1 \subseteq \dots \subseteq V_s = V(K_n)$ such that all vertices of $V_{i+1}\setminus V_i$ have an edge with color $c_i$ to $V_i$. It follows immediately from induction that for all vertices $v$ in $V_s$ there is a path from $v_1$ such that the sequence of colors on the path is a subsequence of the sequence $c_1,\dots,c_s$. 
\end{proof}

We also deduce Corollary \ref{cor:dim-weak}, which states that if $A$ is a subset of a group $G$ of size $n$ with $|AA^{-1}| \le K|A|$, then $d_*(A) = O(K\log n)$. That is, there is a set $S$ of size $O(K\log n)$ such that $A\subseteq \mathrm{span}(S)$. 

\begin{proof}[Proof of Corollary \ref{cor:dim-weak}]
    Given a group $G$ and $A\subseteq G$ with $|AA^{-1}|\le K|A|$, consider the edge-coloring of the complete graph on $A$ using $r \le K|A|$ colors where $\{x,y\}$ receives the color $\{xy^{-1},yx^{-1}\}$. Note that each color class is either $1$-regular or $2$-regular. Let $n = |A|$. By partitioning the color classes that are $2$-regular, we obtain a proper edge-coloring of the complete graph on $A$ using at most $3Kn$ colors. 

    Applying Corollary \ref{cor:gen} to this proper edge-coloring, we obtain a sequence of colors $c_1,\dots,c_s$ with $s = O(K \log n)$ and a vertex $v_1$ such that every vertex is connected to $v_1$ via a path whose colors form a subsequence of $c_1,\dots,c_s$. Each color $c_i$ corresponds to an element of $G$ (more precisely, each color corresponds to an equivalence class $\{g,g^{-1}\}$ from which we make an arbitrary choice of an element $g\in G$). It is then easy to check that $A\subseteq \left\{v_1\prod_{i=1}^{s}c_i^{w_i}: w_i\in \{-1,0,1\}\right\}$. This implies $d_*(A) = O(K\log n)$, as required. 
\end{proof}

As promised, we now lift Theorem~\ref{proper} to locally-bounded edge-colorings using Vizing's theorem.

\begin{thm} \label{complete}
There is an absolute constant $C$ such that in any edge-colored complete graph on $N$ vertices in which each color class has maximum degree at most $\Delta$, the number of subsets of $n$ vertices in which at most $Kn$ colors appear is at most $N^{C\Delta K\ln n}(C\Delta K)^n$. 
\end{thm}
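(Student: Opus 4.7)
The plan is to reduce Theorem \ref{complete} to Theorem \ref{proper} via Vizing's theorem applied color class by color class. Since every color class in the given edge-coloring is a graph with maximum degree at most $\Delta$, Vizing's theorem supplies a proper edge-coloring of each color class using at most $\Delta+1$ sub-colors. Combining these refinements by giving every (original color, sub-color) pair a distinct new color yields a proper edge-coloring $c'$ of $K_N$ that refines the original coloring $c$, in the sense that each original color class splits into at most $\Delta+1$ color classes of $c'$.

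Now if $A$ is an $n$-vertex subset on which at most $Kn$ colors of $c$ appear, then on $A$ the coloring $c'$ uses at most $(\Delta+1)Kn$ colors. Consequently, every such subset $A$ is counted by the quantity bounded in Theorem \ref{proper} with parameter $K$ replaced by $K' = (\Delta+1)K$. Applying Theorem \ref{proper} then yields at most
\[
N^{C K'\ln n}(CK')^n \;=\; N^{C(\Delta+1)K\ln n}\bigl(C(\Delta+1)K\bigr)^n
\]
such subsets, which is bounded by $N^{C'\Delta K\ln n}(C'\Delta K)^n$ after adjusting the constant $C'$ (using $\Delta+1 \le 2\Delta$ for $\Delta \ge 1$, and handling $\Delta = 0$ trivially since there are then no edges).

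I do not expect any real obstacle here: the only subtlety is to make sure that the refinement $c'$ preserves the property of being a proper edge-coloring globally on $K_N$, which it does by construction because edges in distinct color classes of $c$ receive distinct colors in $c'$, and edges within the same color class of $c$ receive distinct colors in $c'$ whenever they share a vertex. The rest is a direct invocation of Theorem \ref{proper} with a rescaled parameter.
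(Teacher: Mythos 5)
Your proof is correct and follows essentially the same approach as the paper: apply Vizing's theorem to split each color class into at most $\Delta+1$ sub-classes to obtain a proper edge-coloring, then invoke Theorem~\ref{proper} with parameter $(\Delta+1)K$ and absorb the factor using $\Delta+1 \le 2\Delta$.
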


\begin{proof} 
Vizing's theorem \cite{V64} says that every graph of maximum degree $\Delta$ can be properly edge-colored with $\Delta+1$ colors. 
We may therefore split each color class in our edge-coloring into at most $\Delta+1$ color classes so that we get a proper edge-coloring. Any subset which had at most $Kn$ colors originally now has at most $(\Delta+1)Kn$ colors. By Theorem \ref{proper}, the new proper edge-coloring of $K_N$ has at most $N^{C (\Delta+1)K\ln n}(C (\Delta+1)K)^n$ subsets with at most $(\Delta+1)Kn$ colors. Thus, the original edge-coloring of $K_N$ has at most  $N^{C (\Delta+1)K\ln n}(C (\Delta+1)K)^n$ subsets with at most $Kn$ colors. Substituting in $\Delta +1 \leq 2\Delta$ and increasing $C$ by a factor of $2$, we get the desired result. 
\end{proof}

The same proof actually gives a more general result. We call a graph $G=(V,E)$ a {\it $\lambda$-edge-expander} if, for every subset $S$ of $V$, 
$e(S,V\setminus S) \geq \lambda |S|(n-|S|)$. Note that a complete graph is a $1$-edge-expander and a balanced complete bipartite graph is a $1/2$-edge-expander.

\begin{thm} \label{expand}
There is an absolute constant $C$ such that in any edge-colored graph on $N$ vertices in which each color class has maximum degree at most $\Delta$, the number of subsets $S$ of $n$ vertices in which there are $Kn$ colors such that these $Kn$ colors induce a $\lambda$-edge-expander on $S$ is at most 
$N^{C\Delta \lambda^{-1}K\ln n}(C\Delta \lambda^{-1}K)^n$. 
\end{thm}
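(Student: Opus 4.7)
The plan is to repeat the argument of Theorem \ref{proper} essentially verbatim, using Vizing's theorem as in Theorem \ref{complete} to reduce to the proper edge-coloring case, but replacing the complete-graph cut count $|e(V_i,V\setminus V_i)|=n_i(n-n_i)$ with the bound $|e(V_i,V\setminus V_i)|\ge \lambda n_i(n-n_i)$ that the $\lambda$-edge-expander hypothesis furnishes. First I would apply Vizing's theorem to split each color class into at most $\Delta+1$ new classes, obtaining a proper edge-coloring; any subset $S$ in which some $Kn$ original colors induce a $\lambda$-edge-expander on $S$ then becomes a subset in which some $r \le (\Delta+1)Kn$ new colors induce the same $\lambda$-edge-expander on $S$, since no edge has been relocated, only refined.

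Next I would count ordered $n$-sequences $V=(v_1,\dots,v_n)$ of distinct vertices with this property by the same greedy growth procedure. Fix $v_1$ in at most $N$ ways, let $V_0=\{v_1\}$, and at step $i$ use the expansion bound to locate a color $c_i$ giving a matching of size $u_i := \lceil \lambda n_i(n-n_i)/r \rceil$ between $V_i$ and $V\setminus V_i$; append the matched vertices to form $V_{i+1}$. The per-step cost is again at most $N^2\binom{n-n_i}{u_i}n_i^{u_i}$, with the only change being the new value of $u_i$. The doubling/halving argument gives $n_{i+1}/n_i \ge 1+\lambda/(2(\Delta+1)K)$ while $n_i \le n/2$ and $(n-n_{i+1})/(n-n_i) \le 1-\lambda/(2(\Delta+1)K)$ afterwards, so the process terminates in $s=O(\Delta K\lambda^{-1}\log n)$ steps. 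Using $\sum_i u_i = n-1$ and the key inequality $n_i(n-n_i)/u_i \le r/\lambda$, the total count of sequences is at most $N^{2s+1}(er/\lambda)^{n-1}$; dividing by $n!\ge (n/e)^n$ to pass from sequences to subsets yields the stated bound $N^{C\Delta\lambda^{-1}K\ln n}(C\Delta\lambda^{-1}K)^n$ for a suitable absolute constant $C$.

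I do not anticipate any real obstacle, since the expansion hypothesis is precisely what is needed at the only nontrivial step of the argument: the selection of the color $c_i$ that produces the matching growing $V_i$ into $V_{i+1}$. Every other estimate in the proof of Theorem \ref{proper} carries over mechanically once $n_i(n-n_i)$ is replaced by $\lambda n_i(n-n_i)$; the source of the extra $\lambda^{-1}$ factor in the final bound is the bound $n_i(n-n_i)/u_i \le r/\lambda$, while the source of the extra $\Delta$ factor is Vizing's theorem, exactly as in Theorem \ref{complete}.
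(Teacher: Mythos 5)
Your proposal is correct and matches what the paper does: the paper simply states Theorem \ref{expand} with the remark that "the same proof actually gives a more general result," and your write-up is exactly the right instantiation of that claim, using Vizing to reduce to proper colorings and then substituting the expansion bound $e(V_i,V\setminus V_i)\geq\lambda n_i(n-n_i)$ for the complete-graph count at the one place it matters, with the $\lambda^{-1}$ propagating into both $s$ and the per-step factor $(er/\lambda)^{u_i}$. The only minor point worth noting is that the expander hypothesis also guarantees the induced graph on $S$ is connected, so the greedy procedure cannot stall before reaching $n_s=n$, which your argument uses implicitly and which is automatic here.
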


We may use this result to prove the variant of Corollary~\ref{cor:dim-weak} with $AA$ instead of $AA^{-1}$. Indeed, since $K_{n,n}$ is a $1/2$-edge-expander, we can apply the same argument to that used in the proof of Corollary~\ref{cor:gen} to conclude that for any proper edge-coloring of $K_{n,n}$ using at most $Kn$ colors, there is a vertex $v$ and a sequence $S$ of $O(K\log n)$ colors such that all vertices are contained in $\mathrm{span}(S; v)$. The promised variant of Corollary~\ref{cor:dim-weak} then follows by considering the coloring of the edges of the complete bipartite graph whose parts are copies of $A$ where the color of each edge $(a,b)$ is equal to $ab$. The key observation is that if $v$ is the starting vertex, chosen to be on the second side of the bipartition, and $w$ is another vertex on the same side that can be reached via a path of edges with colors $c_1, \dots, c_t$, then $w = v c_1^{-1} c_2 c_3^{-1} \cdots c_t$.

The following corollary of Theorem \ref{complete} comes from simply coloring the edge $(a,b)$ of the complete graph on $G$ with one of the colors $ab$ or $ba$ arbitrarily or, alternatively, from coloring $(a,b)$ with the color $\{ab, ba\}$. 

\begin{cor}\label{countgroup}
There is a constant $C$ such that if $G$ is a finite group of order $N$, then the number of subsets $A \subset G$ with $|A|=n$ and $|AA| \le K|A|$ is at most $N^{CK\ln n}(CK)^{n}$. Similarly, the number of subsets $A \subset G$ with $|A|=n$ and $|AA^{-1}| \le K|A|$ is at most $N^{CK\ln n}(CK)^{n}$.
\end{cor}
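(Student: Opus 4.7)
The plan is to reduce both statements to Theorem~\ref{complete} by equipping the complete graph on vertex set $G$ with a suitable edge-coloring derived from the group operation, and then verifying that the coloring is locally bounded and that a set $A$ with small doubling (or small difference set) induces few colors. Since both statements have the same shape, the two reductions are parallel; the only real work is choosing the coloring correctly in each case.

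For the bound involving $|AA^{-1}|$, color each edge $\{a,b\}$ of the complete graph on $G$ by the unordered pair $\{ab^{-1},ba^{-1}\}$. This is well-defined since the two elements are mutual inverses. For a fixed vertex $a$ and a fixed color $\{g,g^{-1}\}$, the equation $ab^{-1}\in\{g,g^{-1}\}$ has at most two solutions $b=g^{-1}a$ or $b=ga$, so the coloring is $2$-bounded. If $A\subseteq G$ has $|A|=n$ and $|AA^{-1}|\le Kn$, then every color appearing on an edge of $A$ corresponds to some pair $\{g,g^{-1}\}\subseteq AA^{-1}$, so at most $|AA^{-1}|\le Kn$ colors appear on the edges of $A$. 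Applying Theorem~\ref{complete} with $\Delta=2$ yields at most $N^{C'K\ln n}(C'K)^n$ such subsets, where $C'$ absorbs the factor of $\Delta=2$ into the absolute constant $C$.

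For the bound involving $|AA|$, there are two natural options, both noted in the remark preceding the corollary. One is to color each ordered edge $(a,b)$ with $a<b$ in some fixed arbitrary linear order on $G$ by the element $ab\in G$; another is to color $\{a,b\}$ by the unordered pair $\{ab,ba\}$. Either way, for a fixed vertex $a$ and a fixed color $g$, the equation $ab=g$ (or $ba=g$) has at most one (respectively two) solution for $b$, so the coloring is again $\Delta$-bounded with $\Delta\le 2$. If $|A|=n$ satisfies $|AA|\le Kn$, then every color on an edge of $A$ lies in $AA$, so at most $Kn$ distinct colors are used. Theorem~\ref{complete} with $\Delta=2$ then gives the advertised bound $N^{CK\ln n}(CK)^n$ after adjusting $C$ by a bounded multiplicative factor.

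I do not foresee any real obstacle: the color classes have bounded degree essentially for the same reason that Cayley graphs themselves are $2$-bounded, as already noted in the introduction, and the counting of colors within $A$ is immediate from the definition of $AA$ or $AA^{-1}$. The only mildly delicate point is book-keeping for the constant, which just involves replacing $C$ by an absolute constant multiple of itself to absorb the factor $\Delta=2$ and the factor distinguishing ``colors appearing on edges of $A$'' from the exact size of $AA^{-1}$ or $AA$.
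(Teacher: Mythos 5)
Your proposal is correct and matches the paper's own argument: the paper derives Corollary~\ref{countgroup} from Theorem~\ref{complete} by coloring edge $(a,b)$ with $ab$ or $ba$ (or with $\{ab,ba\}$) for the $|AA|$ case, exactly as you do, and uses the analogous $\{ab^{-1},ba^{-1}\}$ coloring — already flagged in the introduction as the canonical $2$-bounded coloring behind Cayley graphs — for the $|AA^{-1}|$ case. One tiny wording slip: for your ``ordered pair with a linear order'' coloring, a fixed vertex $a$ and color $g$ still give up to two incident edges of color $g$ (one from $ab=g$ with $a\prec b$, one from $ba=g$ with $b\prec a$), so that coloring is $2$-bounded rather than $1$-bounded, but since you conclude $\Delta\le 2$ in any case this does not affect the argument.
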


Given a group $G$ of order $N$, consider the complete balanced bipartite graph with $2N$ vertices whose parts $V_1$ and $V_2$ are each a copy of $G$ and where each edge $(a,b) \in V_1 \times V_2$ is colored $ab$.  Each color class in this coloring forms a perfect matching and so has maximum degree one. Therefore, using the fact that the balanced complete bipartite graph is a $\frac{1}{2}$-edge-expander, we have the following corollary of Theorem \ref{expand}. 

\begin{cor}\label{contgroup2}
There is a constant $C$ such that if $G$ is a finite group of order $N$, 
then the number of pairs of subsets $A,B \subset G$ with $|A|=|B|=n$ and $|AB| \le K|A|$ is at most $N^{CK\ln n}(CK)^{2n}$.
\end{cor}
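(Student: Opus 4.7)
The plan is to apply Theorem \ref{expand} to the edge-colored bipartite graph just described. Recall the setup: $V_1$ and $V_2$ are two disjoint copies of $G$, the ambient graph is $K_{N,N}$ on $V_1 \cup V_2$, and the edge $(a,b) \in V_1 \times V_2$ has color $ab \in G$. Each color class is a perfect matching of $K_{N,N}$, so $\Delta = 1$, and the paper already records that a balanced complete bipartite graph is a $\tfrac{1}{2}$-edge-expander.

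First I would set up the correspondence that sends each pair $(A,B)$ of $n$-element subsets of $G$ with $|AB| \le Kn$ to the vertex set $S := A \sqcup B \subseteq V_1 \cup V_2$ of size $2n$. The edges of $K_{N,N}$ with both endpoints in $S$ are exactly the pairs in $A \times B$, and the multiset of their colors is $\{ab : a \in A,\, b \in B\}$, whose support equals $AB$. Hence the number of distinct colors appearing inside $S$ is $|AB| \le Kn$, and distinct pairs $(A,B)$ give rise to distinct sets $S$ (since $A = S \cap V_1$ and $B = S \cap V_2$).

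Next I would check that the colors in $AB$ induce a $\tfrac{1}{2}$-edge-expander on $S$. Because each color class is a perfect matching of $K_{N,N}$, these colors restricted to $S$ give exactly the complete bipartite graph $K_{A,B} \cong K_{n,n}$. For any $T \subseteq S$, writing $t_i = |T \cap V_i|$ and $t = t_1 + t_2 = |T|$, the cut between $T$ and $S \setminus T$ has size $t_1(n - t_2) + t_2(n - t_1) = nt - 2 t_1 t_2$; the AM--GM bound $2 t_1 t_2 \le t^2/2$ then gives $nt - 2 t_1 t_2 \ge \tfrac{1}{2} t(2n - t) = \tfrac{1}{2}|T|(|S| - |T|)$, confirming $\lambda = 1/2$.

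Applying Theorem \ref{expand} with ambient size $2N$, subset size $2n$, $\Delta = 1$, $\lambda = 1/2$, and with the theorem's ``$K$'' parameter taken to be $K/2$ so that the number of colors is $Kn = (K/2)(2n)$, the number of such subsets $S$ is at most $(2N)^{C'K \ln(2n)}(C'K)^{2n}$, and this in particular upper bounds the number of pairs $(A,B)$. Absorbing the factors $2^{O(K \ln n)}$ and $O(1)^{2n}$ into a larger absolute constant yields the stated bound $N^{CK\ln n}(CK)^{2n}$. There is no real obstacle: the only mildly fiddly points are verifying the $\tfrac{1}{2}$-expander property for $K_{n,n}$ and rebookkeeping $(2N, 2n)$ back to $(N, n)$ at the end.
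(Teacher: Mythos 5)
Your proof is correct and is essentially the same argument the paper sketches in the paragraph preceding Corollary~\ref{contgroup2}: color $K_{N,N}$ by $(a,b)\mapsto ab$, note each color class is a perfect matching, observe that $K_{n,n}$ is a $\frac{1}{2}$-edge-expander, and apply Theorem~\ref{expand} with $\Delta=1$, $\lambda=1/2$. You simply fill in the verification of the expansion constant and the parameter bookkeeping, both of which check out.
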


We now turn our attention to estimating the clique number of random entangled graphs. It will be slightly more convenient to phrase our results in terms of estimating the independence number rather than the clique number, but the two problems are clearly equivalent. We first note a probabilistic lemma.

\begin{lem}\label{simp}
Let $G=(V,E)$ be a graph on $n$ vertices with maximum degree $\Delta$ and let $t$ be a positive integer with $\Delta t \leq n/8$.  If $T$ is a uniform random subset of $t$ vertices from $G$ and $X$ is the random variable counting the number of edges in the induced subgraph $G[T]$, then the probability that $T$ is not an independent set is at least $\min(1/4,\mathbb{E}[X]/2)$.   
  \end{lem}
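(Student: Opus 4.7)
The plan is to apply the Paley--Zygmund inequality $\Prob[X > 0] \geq (\Ex[X])^2/\Ex[X^2]$ and bound the second moment carefully. Writing $X = \sum_{e \in E} \mathbf{1}[e \subseteq T]$ and $\mu = \Ex[X]$, I would expand $\Ex[X^2] = \sum_{(e,f) \in E \times E} \Prob[e \cup f \subseteq T]$ and split the sum according to $|e \cap f| \in \{0,1,2\}$. The diagonal contribution from $e = f$ is exactly $\mu$.

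For ordered pairs of distinct edges sharing exactly one vertex, the bound $\sum_v \binom{\deg(v)}{2} \leq |E|(\Delta - 1)$ limits their number to at most $2|E|\Delta$, and each contributes $t(t-1)(t-2)/(n(n-1)(n-2))$ to $\Ex[X^2]$. Using $(t-2)/(n-2) \leq t/n$ together with the hypothesis $\Delta t \leq n/8$, this total simplifies to at most $\mu/4$. For disjoint pairs, the elementary inequality $(t-2)(t-3)/((n-2)(n-3)) \leq t(t-1)/(n(n-1))$, which follows from $t \leq n$, shows that each of the at most $|E|^2$ ordered pairs contributes at most $(t(t-1)/(n(n-1)))^2$, for a total of at most $\mu^2$. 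Combining the three pieces gives $\Ex[X^2] \leq (5/4)\mu + \mu^2$.

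Feeding this into Paley--Zygmund yields $\Prob[X > 0] \geq \mu/((5/4) + \mu)$. A short case analysis finishes the proof: if $\mu \geq 1/2$ then $\mu/((5/4)+\mu) \geq (1/2)/(7/4) > 1/4$, while if $\mu \leq 1/2$ the denominator is at most $2$, so the quotient is at least $\mu/2$. In either case $\Prob[X > 0] \geq \min(1/4, \mu/2)$, as desired. No single step is delicate; the only thing to watch is that the hypothesis $\Delta t \leq n/8$ is applied sharply enough that the one-overlap contribution stays a small constant multiple of $\mu$, since otherwise the Paley--Zygmund denominator would blow up and erase the factor of $1/2$ in the conclusion.
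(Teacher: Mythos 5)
Your proof is correct, and it takes a genuinely different route from the paper's. You compute a two-sided second moment bound and invoke Paley--Zygmund: you split $\Ex[X^2]$ into diagonal, one-overlap, and disjoint pairs, show the one-overlap contribution is at most $\mu/4$ using $\Delta t \le n/8$, bound the disjoint contribution by $\mu^2$ via the monotonicity of $(t-k)/(n-k)$ in $k$, and then run a short case split. The paper instead argues directly on $\Prob[X=1]$: after reducing to the case $\Prob[X\ge 1]<1/4$, it writes $\Prob[X=1]$ as $\sum_e \Prob[e\subset T]\cdot \Prob[\text{no other edge in }T\mid e\subset T]$ and lower-bounds the conditional probability by $1/2$, splitting the bad events into edges incident to $e$ (handled, as you do, via $\Delta t \le n/8$) and edges disjoint from $e$ (handled by a monotone coupling showing this is at most $\Prob[X\ge 1]<1/4$, which avoids any explicit disjoint-pair estimate). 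The paper's argument is a bit slicker because the working assumption $\Prob[X\ge1]<1/4$ absorbs the disjoint pairs for free, whereas you must verify $(t-2)(t-3)/((n-2)(n-3)) \le t(t-1)/(n(n-1))$ by hand; on the other hand, your second-moment computation is fully mechanical and self-contained, and generalizes more readily. Both are valid; the two methods end up using the hypothesis $\Delta t \le n/8$ in essentially the same place, namely to control pairs of edges sharing a vertex.
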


Note that the lower bound on the probability in Lemma \ref{simp} 
is tight up to a constant factor since the probability is trivially at most $1$ and, as $X$ is a nonnegative integer-valued random variable, at most $\mathbb{E}[X]$. 

\begin{proof}[Proof of Lemma \ref{simp}]
The event that $T$ is not an independent set is the same as the event that $X \geq 1$.  We may assume that $\textrm{Pr}[X \geq 1] < 1/4$, as otherwise we are done.  Under this assumption, we prove that $\textrm{Pr}[X = 1] \geq \mathbb{E}[X]/2$, which will complete the proof.  We have \begin{equation}\label{eq12}\textrm{Pr}[X = 1]=\sum_{e \in E}\textrm{Pr}[e \subset T] \, \textrm{Pr}[f \not \subset T \, \, \forall \, f\in E \setminus \{e\} \, | \, e \subset T].\end{equation}
For incident edges $e$ and $f$,  the probability that $f \subset T$ conditioned on $e \subset T$ is $\frac{t-2}{n-2}$.  As each edge $e \in E$ is incident to at most $2(\Delta-1)$ other edges,  the probability that there is an edge of $G$ incident to $e$ that is in $T$ conditioned on $e \subset T$ is at most $2(\Delta-1)\frac{t-2}{n-2} \leq 1/4$. Moreover, since the probability a random subset of order $t-2$ contains an edge is at most the probability that a random subset of order $t$ contains an edge, the probability that there is an edge of $G$ not incident to $e$ that is in $T$ conditioned on $e \subset T$ is at most $\textrm{Pr}[X \geq 1]$  and, hence, 
$\textrm{Pr}[\exists f\in E \setminus \{e\} \, \textrm{satisfying} \, f \subset T | e\subset T] \leq 1/4+\textrm{Pr}[X \geq 1] < 1/2$. Substituting into (\ref{eq12}),  we have 
$$\textrm{Pr}[X = 1]\geq \frac{1}{2} \sum_{e \in E}\textrm{Pr}[e \subset T]=\mathbb{E}[X]/2,$$
completing the proof. 
\end{proof} 

We now use this result to prove that locally-bounded edge-colorings of the complete graph contain subsets of any large order with many colors.

\begin{lem}\label{subsetmanycolors}
For any edge-colored complete graph on $n$ vertices in which each color class has maximum degree at most $\Delta \leq n/100$ and any $t \geq \sqrt{n/\Delta}$, there is a subset of $t$ vertices that contains at least $\frac{n}{20\Delta}$ colors. 
\end{lem}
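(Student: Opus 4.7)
The plan is to lower bound the expected number of colors appearing in a uniformly random $t$-vertex subset $T$ and then invoke the probabilistic method. By monotonicity (a superset of a $t$-set contains at least as many colors), it suffices to handle the boundary case $t_0 := \lceil \sqrt{n/\Delta}\,\rceil$. The hypothesis $\Delta \leq n/100$ is precisely what guarantees $\Delta t_0 \leq \sqrt{n\Delta} + \Delta \leq n/10 + n/100 < n/8$, placing us in the range where Lemma~\ref{simp} applies to each color class individually.

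Write $e_c$ for the number of edges of color $c$ and $G_c$ for its color class, which has maximum degree at most $\Delta$. Since the expected number of color-$c$ edges in $T$ is $e_c t_0(t_0-1)/(n(n-1))$, Lemma~\ref{simp} yields
$$\Pr[c \text{ appears in } T] \;\geq\; \min\!\Bigl(1/4,\; e_c t_0(t_0-1)/(2n(n-1))\Bigr),$$
and I will show that summing this expression over all colors already exceeds $n/(20\Delta)$.

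The main step is a heavy/light dichotomy. Partition the colors into a heavy set $B = \{c : e_c \geq n(n-1)/(2t_0(t_0-1))\}$, on which the minimum above equals $1/4$, and a light set $A$ containing the rest. Since $\sum_c e_c = \binom{n}{2}$, at least one of $\sum_{c\in B} e_c$ and $\sum_{c\in A} e_c$ is at least $n(n-1)/4$. In the heavy case, the trivial degree bound $e_c \leq n\Delta/2$ forces $|B| \geq (n-1)/(2\Delta)$, so the contribution to the expectation is at least $|B|/4 \geq (n-1)/(8\Delta) \geq n/(20\Delta)$. In the light case, the sum of the linear terms contributes at least $(n(n-1)/4)\cdot t_0(t_0-1)/(2n(n-1)) = t_0(t_0-1)/8 \geq t_0^2/16 \geq n/(16\Delta)$, using $t_0 \geq 2$ and $t_0^2 \geq n/\Delta$.

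Either way, $\mathbb{E}[\#\text{colors in } T] \geq n/(20\Delta)$, so some $t_0$-subset---and hence, by monotonicity, some $t$-subset---achieves this many colors. There is no serious obstacle here: the only genuine judgment is selecting the heavy/light threshold so that the two regimes combine cleanly, and the hypothesis $\Delta \leq n/100$ supplies comfortable slack for both the applicability of Lemma~\ref{simp} and the elementary arithmetic in each case.
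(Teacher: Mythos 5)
Your proof is correct and takes essentially the paper's approach: a uniformly random $t_0$-subset, Lemma~\ref{simp} applied color by color, and the first moment method. The one difference is technical: the paper picks $t_0$ as the largest integer with $\Delta\binom{t_0}{2}\le(n-1)/4$ (still $\Theta(\sqrt{n/\Delta})$), which forces $\mathbb{E}[X_c]\le 1/4$ for every color so that the $\min$ in Lemma~\ref{simp} is always the linear term and the sum over colors collapses directly to $\tfrac12\binom{t_0}{2}$, whereas you fix $t_0=\lceil\sqrt{n/\Delta}\,\rceil$ and dispose of the $\min$ via your heavy/light dichotomy on $\sum_c e_c=\binom{n}{2}$ --- an equally valid, slightly longer route to the same bound.
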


\begin{proof}
Let $t_0$ be the maximum positive integer for which $\Delta {t_0 \choose 2} \leq (n-1)/4$.  Such a $t_0$ satisfies $\frac 12 \sqrt{n/\Delta} \leq t_0 \leq \sqrt{n/\Delta}$. Since $t_0 \geq 5$, we also have $\Delta t_0 \leq n/8$.  We will prove that there is a subset $T$ of $t_0$ vertices that contains at least $\frac{1}{2}{t_0 \choose 2} \geq \frac{n}{20\Delta}$ colors.  As every superset of $T$ contains at least as many colors as $T$ does,  this will complete the proof. 

Consider a random subset $T$ of $t_0$ vertices.  We will show that the expected number of colors in $T$ is at least $\frac{1}{2}{t_0 \choose 2}$ and, hence, that there exists a subset of $t_0$ vertices with at least $\frac{1}{2}{t_0 \choose 2}$ colors inside. 
For any given color $c$ in the edge-coloring, the maximum degree condition implies that the number $m$ of edges in  that color satisfies $m \leq \Delta n/2$. The number $X$ of edges of that color in a uniform random subset of order $t_0$ satisfies 
$$\mathbb{E}[X] = m{t_0 \choose 2}/{n \choose 2} \leq \frac{\Delta n}{2} {t_0 \choose 2}/{n \choose 2} = \Delta {t_0 \choose 2}/(n-1) \leq 1/4.$$ 
Therefore, by Lemma \ref{simp}, which we can apply since $\Delta t_0 \leq n/8$, the probability that $T$ does not form an independent set in color $c$ is at least $\mathbb{E}[X]/2$. Summing over all colors, the expected number of colors in the random subset of $t_0$ vertices is at least $\frac{1}{2}{t_0 \choose 2}$. 
\end{proof}

Note that the bound in Lemma \ref{subsetmanycolors} on the number of colors in the subset is sharp up to a constant factor, as, if $n-1$ is a multiple of $\Delta$ and $n$ is even, we can edge-color the complete graph on $n$ vertices so that each color is $\Delta$-regular and, hence, has exactly $(n-1)/\Delta$ colors.  The lower bound on the subset size is also sharp up to a constant factor,  as a set of $t$ vertices can have at most ${t \choose 2}$ colors inside. 

We now note the main corollary of Theorem~\ref{complete} that will be needed in estimating the independence number of a random entangled graph.

\begin{lem}\label{specialcliques}
Let $c$ be an edge-coloring of $K_N$ in which each color class has maximum degree at most $\Delta$ and consider the random entangled graph $G_c(p)$ in which each color class from this edge-coloring is included with probability $p=1-q$ independently of the other color classes. Let $C$ be the constant from Theorem \ref{complete} and suppose $K_0 \geq 10q^{-1}\ln (C\Delta/q)$ and $t \geq x\ln x$, where $x=10Cq^{-1}\Delta \ln N$. Then, with high  probability, no subset of $t$ vertices with at least $K_0t$ colors is a clique in $G_c(p)$. 
\end{lem}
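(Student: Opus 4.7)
The plan is a standard first-moment argument fed by the counting bound of Theorem~\ref{complete}. Fix a subset $S \subset V(K_N)$ of size $t$ whose edges in the coloring $c$ touch exactly $m$ distinct colors. Because color classes are included independently in $G_c(p)$, the event that $S$ is a clique coincides with the event that all $m$ of these colors are in the random set, which has probability exactly $p^m = (1-q)^m \leq e^{-qm}$. Letting $N_m$ denote the number of $t$-subsets whose edges span exactly $m$ colors, the expected number of $t$-cliques in $G_c(p)$ with at least $K_0 t$ colors is at most
\[\sum_{m \geq K_0 t} N_m\, e^{-qm}.\]

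To estimate this sum, I would decompose it dyadically by color count. For each integer $j \geq 0$, set $K := 2^j K_0$. By Theorem~\ref{complete} (applied with parameter $2K$ and $n=t$), the number of $t$-subsets containing fewer than $2Kt$ colors is at most $N^{2C\Delta K \ln t}(2C\Delta K)^t$. The contribution to the above sum from $m \in [Kt, 2Kt)$ is therefore bounded by
\[\Phi_j := N^{2C\Delta K \ln t}(2C\Delta K)^t\, e^{-qKt}.\]
The task reduces to showing that $\Phi_0$ is tiny and the $\Phi_j$ decay geometrically, as then $\sum_j \Phi_j = o(1)$ and Markov's inequality yields the lemma.

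Taking logarithms, $\ln \Phi_j = 2C\Delta K (\ln t)(\ln N) + t\ln(2C\Delta K) - qKt$, and the strategy is to absorb each of the first two terms into a third of $qKt$. For the first term, the hypothesis $t \geq x\ln x$ with $x = 10Cq^{-1}\Delta \ln N$ forces $(\ln t)/t \lesssim 1/x$ (using that $u \mapsto (\ln u)/u$ is decreasing for $u \geq e$), which converts $2C\Delta K (\ln t)(\ln N)$ into at most $\tfrac{1}{3}qKt$. For the second term, the hypothesis $K_0 \geq 10 q^{-1}\ln(C\Delta/q)$ gives $qK_0 \geq 10 \ln(C\Delta/q)$, comfortably larger than $3\ln(2C\Delta K_0)$; and since $qK$ grows exponentially in $j$ while $\ln(2C\Delta K)$ grows only linearly, the inequality $t\ln(2C\Delta K) \leq \tfrac{1}{3}qKt$ then holds for every $K = 2^j K_0$. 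Combining yields $\ln \Phi_j \leq -\tfrac{1}{3}qKt = -\tfrac{1}{3}\cdot 2^j\, qK_0 t$, and plugging the lower bounds on $K_0$ and $t$ into $qK_0 t$ shows that even $\Phi_0$ is dwarfed by $N^{-\omega(1)}$, with geometric decay in $j$.

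The only real obstacle is the arithmetic bookkeeping: confirming that the two precise hypotheses on $K_0$ and on $t$ are tight enough to separately absorb the two entropy terms in $\ln \Phi_j$. No new combinatorial idea is required beyond Theorem~\ref{complete} itself; the dyadic split is essentially forced because that theorem bounds the count of subsets with \emph{at most} a given number of colors, so one cannot feed $N_m$ into the bound directly.
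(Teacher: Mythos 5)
Your proposal is correct and follows essentially the same route as the paper: a first-moment argument that feeds the counting bound of Theorem~\ref{complete} into a union bound over the number of colors, using the hypothesis on $t$ to absorb the $N^{C\Delta K\ln t}$ factor and the hypothesis on $K_0$ to absorb the $(C\Delta K)^t$ factor, each into a third of $qKt$. The one cosmetic difference is that the paper does not need your dyadic split: it simply applies Theorem~\ref{complete} at each integer $r = Kt \ge K_0 t$ (the ``at most $r$ colors'' bound is of course also an upper bound on the number of $t$-subsets with \emph{exactly} $r$ colors) and sums the resulting geometric series $\sum_{r \ge K_0 t} e^{-qr/3}$ directly, so your remark that the dyadic decomposition is ``essentially forced'' is not quite accurate, though using it does no harm.
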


\begin{proof}
Let $r=Kt$ with $K \geq K_0$. By Theorem \ref{complete}, the number of subsets of $t$ vertices with exactly $r$ colors is at most $N^{C\Delta K\log t}(C\Delta K)^t$. The probability that a given set of $t$ vertices with $r$ colors is a clique in $G_c(p)$ is at most $p^{r}\leq e^{-qr}$. Hence, the expected number of vertex subsets of order $t$ with $r$ colors inside which form a clique is at most $N^{C\Delta K\log t}(C\Delta K)^te^{-qr} \leq e^{-qr/3}$. Indeed, the assumed lower bound on $t$ implies that $N^{C\Delta K\log t} \leq e^{qr/3}$ and the lower bound on $K$ implies that $(C\Delta K)^t \leq e^{qr/3}$. Summing over all $r \geq K_0t$, the probability that there is a subset of $t$ vertices with at least $K_0t$ colors that forms a clique in $G_c(p)$ is at most $$\sum_{r \geq K_0t}e^{-qr/3} \leq e^{-qK_0t/3}(1-e^{-q/3})^{-1}=o(1),$$
as required.
\end{proof}

By combining Lemmas~\ref{subsetmanycolors} and~\ref{specialcliques}, we can now prove the promised result, saying that, for $q>0$ and $\Delta$ fixed and any edge-coloring $c$ of the complete graph $K_N$ in which each color class has maximum degree $\Delta$, the independence number of the random entangled graph $G_c(q)$ is $O(\log N\log \log N)$ with high probability. In particular, this implies Theorems~\ref{main} and \ref{main3}. 

\begin{thm}\label{intermediate}
Let $2 \leq \Delta \leq N-1$, $0<q<1$ and let $c$ be an edge-coloring of $K_N$ in which each color class has maximum degree at most $\Delta$. Let $n=C'q^{-2}\Delta^2\ln (\Delta/q)\ln N \ln(q^{-1}\Delta\ln N)$, where $C'$ is a sufficiently large constant. Then, with high  probability, the independence number of the random entangled graph $G_c(q)$ is less than $n$. 
\end{thm}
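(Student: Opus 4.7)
The plan is to combine Lemmas~\ref{subsetmanycolors} and~\ref{specialcliques} via a first-moment argument that rules out large independent sets. The first observation is that an independent set $I$ in $G_c(q)$ is precisely a clique of the complementary random entangled graph, which is itself distributed as $G_c(p)$ for $p=1-q$. This exactly matches the set-up of Lemma~\ref{specialcliques}, whose internal parameter (the $q$ of the lemma, equal to $1-p$) coincides with the theorem's $q$, so the lemma's $K_0$ and $x$ take the values written in its statement.

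Next, I would choose parameters in accordance with Lemma~\ref{specialcliques}: let $K_0 = 10q^{-1}\ln(C\Delta/q)$, $x = 10Cq^{-1}\Delta\ln N$, and take $t = \lceil x\ln x\rceil$, so that the hypothesis $t\geq x\ln x$ is automatically met. Taking $C'$ sufficiently large (depending on $C$ and absorbing the logarithmic terms coming from $K_0$, $x$, and $t$), the explicit formula for $n$ guarantees all three of the following: (i) $n\geq 100\Delta$, so that the condition $\Delta\leq n/100$ in Lemma~\ref{subsetmanycolors} is satisfied on any $n$-vertex subset; (ii) $t\geq \sqrt{n/\Delta}$, the other hypothesis of Lemma~\ref{subsetmanycolors}; and (iii) $K_0 t \leq n/(20\Delta)$, which will let the output of one lemma feed into the other. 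Each of (i)--(iii) reduces to a routine calculation from the definitions, using the factor $\ln(\Delta/q)$ in the definition of $n$ to dominate $\ln(C\Delta/q)$ and the factor $\ln(q^{-1}\Delta\ln N)$ to dominate $\ln x$.

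Now suppose, toward a contradiction, that $G_c(q)$ contains an independent set of size at least $n$. Fix any $n$-vertex subset $I$ of it and restrict $c$ to $I$; the restriction is a $\Delta$-bounded edge-coloring of the complete graph on $n$ vertices, so by (i) and (ii) Lemma~\ref{subsetmanycolors} applies and yields a subset $T\subseteq I$ with $|T|=t$ containing at least $n/(20\Delta)$ colors on its edges. By (iii), this is at least $K_0 t$ colors. Since $T\subseteq I$, the set $T$ is also an independent set of $G_c(q)$, hence a clique in $G_c(p)$, of size $t$ using at least $K_0 t$ colors. But Lemma~\ref{specialcliques} says that with high probability no such $t$-set exists in $G_c(p)$, giving the desired contradiction and showing that $\alpha(G_c(q))<n$ with high probability.

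I do not expect any serious obstacle here: the heavy lifting, both combinatorial (the counting estimate behind Lemma~\ref{specialcliques}) and probabilistic (the first-moment union bound inside that same lemma), has already been done. The remaining work is purely bookkeeping, namely verifying the three numerical inequalities (i)--(iii) for $C'$ large enough; the particular shape of $n$ in the statement, with its $q^{-2}\Delta^2$ prefactor and its two distinct logarithmic factors $\ln(\Delta/q)$ and $\ln(q^{-1}\Delta\ln N)$, is precisely what (iii) and the lower bound on $t$ force.
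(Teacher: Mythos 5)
Your overall plan is the same as the paper's: combine Lemma~\ref{subsetmanycolors} (every $n$-set contains a $t$-subset carrying at least $\tfrac{n}{20\Delta}$ colors) with Lemma~\ref{specialcliques} (with high probability no $t$-set carrying at least $K_0t$ colors is a clique in $G_c(p)$), using the complementation $\alpha(G_c(q)) = \omega(G_c(p))$. But your choice of $t$ introduces a genuine gap.

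You fix $t = \lceil x\ln x\rceil$ with $x = 10Cq^{-1}\Delta\ln N$, a quantity that does not involve $C'$ at all. You then assert that condition (ii), $t\geq\sqrt{n/\Delta}$, is guaranteed by taking $C'$ sufficiently large. This is backwards: $n$ scales linearly in $C'$, so $\sqrt{n/\Delta}$ grows like $\sqrt{C'}$ while your $t$ stays put, and (ii) therefore \emph{fails} once $C'$ is large enough. Meanwhile (iii), $K_0 t \leq n/(20\Delta)$, genuinely needs $C'$ large. With $t$ pinned at $\lceil x\ln x\rceil$, these two requirements push $C'$ in opposite directions; at best there is a finite window of admissible $C'$, and verifying that this window is nonempty uniformly over the full parameter range $2\leq\Delta\leq N-1$, $0<q<1$ is a delicate bookkeeping exercise that your write-up does not carry out (and which, as far as I can tell, only succeeds if the absolute constant $C$ from Theorem~\ref{complete} is itself at least some threshold). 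The statement of the theorem is phrased as ``$C'$ a sufficiently large constant,'' which a window-based argument does not deliver.

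The paper sidesteps this by setting $t = n/(20\Delta K)$ with $K = 10q^{-1}\ln(C\Delta/q)$. This makes $\tfrac{n}{20\Delta} = Kt$ an identity, so what you called (iii) holds with equality for free; and because $t$ now scales linearly in $C'$, both of the two hypotheses of Lemma~\ref{specialcliques}/\ref{subsetmanycolors} that need checking, namely $t\geq x\ln x$ and $t\geq\sqrt{n/\Delta}$, become \emph{easier} to satisfy as $C'$ grows (the ratios $t/(x\ln x)$ and $t/\sqrt{n/\Delta}$ scale like $C'$ and $\sqrt{C'}$ respectively). The moral is that $t$ must be chosen to grow with $n$; pinning it to the threshold $x\ln x$ breaks the monotonicity in $C'$ that the argument relies on.
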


\begin{proof}
Let $K = 10q^{-1}\ln (C\Delta/q)$, where $C$ is the constant from Lemma \ref{specialcliques}, and let $t=n/(20\Delta K)$. Note that an independent set in $G_c(q)$ is the same as a clique in $G_c(p)$ with $p=1-q$. By Lemma \ref{specialcliques} and our choice of $n$, which implies that $t \geq x \log x$ with $x=10Cq^{-1}\Delta \ln N$, with high probability no set of $t$ vertices which contains at least $Kt$ colors is an independent set in $G_c(q)$. 

Note that our choice of $n$ guarantees that $t \geq \sqrt{n/\Delta}$ and $\Delta \leq n/100$. Hence, by Lemma \ref{subsetmanycolors}, every set of $n$ vertices in our edge-colored complete graph $K_N$ contains a subset of $t$ vertices with at least $\frac{n}{20\Delta}=Kt$ colors inside.  Thus, with high probability, the independence number of $G_c(q)$ is less than $n$. 
\end{proof}

In the next section, among other things, we will prove Corollary~\ref{Deltaboundedcor}, 
which replaces the $K\ln n$ factor in the exponent of $N$ in Theorem \ref{complete} by $K+\ln n$. In turn, writing $s = \Delta/q$, this implies that we can improve Lemma~\ref{specialcliques} to say that if $K_0 = \Theta(q^{-1}\ln s)$ and $t = \Omega(s \ln N \max(1, q \ln(\frac{\Delta \ln N}{\ln s})/\ln s))$, 
then, with high probability, no subset of $t$ vertices with at least $K_0t$ colors is a clique in $G_c(p)$. Substituting this into the proof of Theorem~\ref{intermediate}, we see that if $n=O(s^2\ln N \max(\ln s, q\ln\left(\frac{\Delta \ln N}{\ln s}\right) ))$, then, with high  probability, the independence number of the random entangled graph $G_c(q)$ is less than $n$. For comparison, Theorem~\ref{intermediate} allows us to take $n = O(s^2 \ln N \ln s \ln(s \ln N))$, so we get a noticeably better bound for small $q$. 

\section{Connected components with few colors and improved counting} \label{sec:count-main}

Let $A$ be a subset of $n$ vertices in a properly edge-colored complete graph with at most $Kn$ colors appearing in $A$. In this section, we study a random process where we select a random subset of the colors and connect vertices in $A$ using these colors. In Section~\ref{sec:complog}, we will describe this process in detail and warm up by showing that exposing $O(K\log K)$ random colors results in a subgraph where most vertices are in connected components whose incident edges to the rest of the graph use at least a constant fraction of the total number of colors. Then, in Section~\ref{sec:compk}, we give an improved, although significantly more difficult, analysis to show that the same conclusion holds with only $O(K)$ colors, which is best possible up to the value of the implied constant. We use this result in Section~\ref{sec:impcount} to improve the results of the last section counting the number of vertex subsets of order $n$  with at most $Kn$ colors inside in a properly edge-colored complete graph. In Section~\ref{subsectionconnectedspanning}, we use our results to estimate the minimum number of colors needed to find a connected spanning subgraph in $A$ and give a construction to show that our bound is tight. Finally, in Section~\ref{adddimsubsect}, we give an improved bound on additive dimension in abelian groups. 

\subsection{Connected components using $O(K\log K)$ random colors} \label{sec:complog}

Before describing our random process, we note a relatively simple lemma showing that we may always assume that each color appears at most $n/K$ times.

\begin{lem}\label{lem:preproc}
Given a vertex set $A$ of order $n$ and a proper edge-coloring of any graph on $A$ with at most $Kn$ colors, there is a proper edge-coloring of the same graph which refines the given coloring, uses at most $2Kn$ colors and where the number of edges in each color class is at most $n/K$. 
\end{lem}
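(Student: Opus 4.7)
The plan is simply to chop up every color class that is too large. Since the given edge-coloring is proper, each original color class is a matching of at most $n/2$ edges. For every color $c$ whose class $E_c$ has more than $n/K$ edges, I partition $E_c$ arbitrarily into sub-matchings of size at most $n/K$ each and assign each sub-matching a fresh color; classes already of size $\le n/K$ are left unchanged. Every new color class is a subset of a matching, hence itself a matching, so the refined coloring is automatically proper, and by construction it refines the original coloring and satisfies the per-class size bound of $n/K$.

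The only thing to verify is the total color count. Let $m_c$ denote the number of edges of color $c$ in the original coloring. Then color $c$ yields at most $\lceil m_c/(n/K)\rceil \le Km_c/n + 1$ new colors. Summing over the at most $Kn$ original colors, and using that the total number of edges is at most $\binom{n}{2} \le n^2/2$, gives
\[
\sum_c \left(\frac{Km_c}{n} + 1\right) \;\le\; \frac{K}{n}\cdot\frac{n^2}{2} + Kn \;=\; \frac{Kn}{2} + Kn \;\le\; 2Kn,
\]
which is the claimed bound.

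There is no real obstacle here: the argument is a one-step partitioning followed by an elementary double count. The only point worth flagging is that both hypotheses are used in an essential way -- properness (so that each color class is a matching, and sub-matchings are the correct atoms to split into while preserving properness) and the bound of $Kn$ on the number of original colors (which is what controls the accumulated additive $+1$ terms in the color count above).
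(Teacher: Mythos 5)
Your approach is the same as the paper's: split each over-large color class into chunks of size at most $n/K$ and double-count. There is, however, a small slip in the counting that is worth pointing out. You claim that a color class of size $m_c$ splits into at most $\lceil m_c/(n/K)\rceil$ pieces of size at most $n/K$. But piece sizes are integers, so ``size at most $n/K$'' means ``size at most $\lfloor n/K\rfloor$,'' and the minimum number of pieces is $\lceil m_c/\lfloor n/K\rfloor\rceil$, which can exceed $\lceil m_c/(n/K)\rceil$ when $K\nmid n$ (e.g.\ with $n/K=2.9$ and $m_c=9$ you need $\lceil 9/2\rceil=5$ pieces, not $\lceil 9/2.9\rceil=4$). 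So your inequality $\lceil m_c/(n/K)\rceil\le Km_c/n+1$ is arithmetically true but is not bounding the right quantity, and the neat sum $\frac{K}{n}\binom{n}{2}+Kn\le 2Kn$ doesn't literally follow. The fix is easy and gives the same conclusion: the number of pieces is at most $m_c/\lfloor n/K\rfloor+1$; summing the first terms over all colors gives $\binom{n}{2}/\lfloor n/K\rfloor$, which is at most $Kn$ (when $n/K\ge 2$, use $\lfloor n/K\rfloor\ge n/(2K)$; when $1\le n/K<2$, use $\lfloor n/K\rfloor=1$ and $\binom{n}{2}<Kn$), and summing the $+1$ terms contributes at most $Kn$, for a total of at most $2Kn$. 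This is exactly the accounting the paper does, by separating the ``full blocks'' (of size $\lfloor n/K\rfloor$) from the ``remainder blocks.''
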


\begin{proof}
We refine the coloring as follows: 
for each color class, we arbitrarily partition the set of edges of that color into sets of size $\lfloor n/K \rfloor$ and a remaining nonempty set (if it exists) of  size smaller than $\lfloor n/K \rfloor$. The number of colors in the new coloring with size less than $\lfloor n/K \rfloor$ is at most the number of colors in the original coloring. On the other hand, the number of colors with $\lfloor n/K \rfloor$ edges is at most ${n\choose 2}/\lfloor n/K \rfloor < Kn$. Hence, the number of colors in the new coloring is at most $2Kn$. 
\end{proof}

We now describe our random process for selecting the colors and fix some notation that we will use in analyzing it. We choose a sequence $s =(s_i)_{i \geq 1}$ of random colors, where each $s_i$ is chosen uniformly at random, allowing repetitions, from the set of colors that appear on $A$. Let $s_{[i]}=\{s_1,\dots,s_i\}$ denote the set consisting of the first $i$ colors in this sequence and let $\mathcal{C}_i$ be the collection of connected components induced by the union of the edges with colors in $s_{[i]}$. For each component $C\in \mathcal{C}_i$, let $N(C)$ denote the set of colors between $C$ and the remaining components. 
For any vertex $v \in A$, we write $N(v)$ for the set of colors of edges incident to $v$ and $C_i(v)$ for the component in $\mathcal{C}_i$ containing $v$. 
For a component $C$ and a vertex subset $U$, we let $N(C; U)$ denote the set of colors that appear on edges between $C$ and $U$. Finally, we let $N^+(\varepsilon; C)=\min_{|U|\ge (1-\varepsilon)n}|N(C; U)|$, where we note that in this subsection we will take $\varepsilon=1/4$. This may be thought of as a robust measure of the number of colors emerging from $C$.

\begin{lem} \label{lem:KlogK}
Consider a proper edge-coloring of the complete graph on a set $A$ of $n$ vertices using at most $2Kn$ colors, where each color class contains at most $n/K$ edges. Then there exists a set $\hat{S}$ of $80K\ln K$ colors such that, for at least $9n/10$ vertices $v \in A$, $v$ is contained in a connected component $C(v)$ formed using edges with colors in $\hat{S}$ satisfying $|N^+(1/4; C(v))| \ge Kn/64$.
\end{lem}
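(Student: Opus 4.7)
The plan is to apply the probabilistic method. Let $\hat S = \{s_1, \dots, s_t\}$ be a multiset of $t = 80K\ln K$ colors sampled uniformly at random with replacement from the $m \le 2Kn$ colors of the coloring. By Markov's inequality on the total number of bad vertices, it suffices to show that $\Pr[v \text{ is bad}] \le 1/10$ for each fixed $v \in A$, where $v$ is \emph{bad} means $|N^+(1/4; C_t(v))| < Kn/64$.

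Fix $v$ and track its component $C_i(v)$ along with the boundary size $N_i := |N(C_i(v))|$, where $N_i$ is computed in the \emph{full} coloring, not just in the exposed colors. Note that $N_0 = n-1$ since, by properness, all $n-1$ edges at $v$ have distinct colors. The argument splits into a growth step and a robustness step.

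\emph{Growth step.} We aim to show $\Pr[N_t \ge Kn/16] \ge 19/20$. At each step, the sampled color $s_{i+1}$ lies in $N(C_i(v))$ with probability $N_i/m$; when it does, $C_i(v)$ absorbs the component(s) containing the other endpoints of the color-$s_{i+1}$ edges incident to $C_i(v)$. Each such merge typically increases $N_{i+1}$ by a multiplicative factor determined by the colors contributed by the newly absorbed components. Introducing a suitable potential, such as $\Phi_i = \log_2 \min(N_i, Kn/16)$, and showing that $\Phi_i$ is a submartingale with drift $\Omega(1/K)$ per step, a Chernoff-type tail bound over $t = 80K\ln K$ steps establishes that $\Phi$ reaches $\log_2(Kn/16)$ with the required probability.

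\emph{Robustness step.} We convert the lower bound $N_t \ge Kn/16$ into the desired bound $|N^+(1/4; C_t(v))| \ge Kn/64$. The key deterministic inequality is
\[
|N^+(1/4; C)| \;\ge\; |N(C)| \,-\, \tfrac{n|C|}{4},
\]
which holds in any proper edge-coloring because each vertex $u$ in an adversarial hiding set $T$ of size at most $n/4$ can suppress at most $|C|$ distinct colors of $N(C)$, namely the colors of its $|C|$ edges to $C$, distinct by properness at $u$. Provided $|C_t(v)| \le 3K/16$ -- which we establish via concentration on the total number of edges absorbed into $C_t(v)$ -- this inequality yields $|N^+(1/4; C_t(v))| \ge Kn/16 - 3Kn/64 = Kn/64$. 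In the residual case where $|C_t(v)|$ is larger, we instead exploit the internal color diversity of $C_t(v)$: a parallel hiding-set argument shows that, when $|C|$ is large, the internal colors alone contribute enough robust boundary to ensure $|N^+(1/4; C_t(v))| \ge Kn/64$.

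The principal technical obstacle is the drift computation in the growth step. The multiplicative growth of $N_i$ at each successful merge is delicate: if the absorbed component is tiny, only a handful of new colors are added; if it is larger, many of its boundary colors may already be counted in $N_i$. A careful case analysis, distinguishing the sizes of $C_i(v)$ and the component being absorbed, together with the preprocessing bound that each color class contains at most $n/K$ edges, is expected to yield the uniform $\Omega(1/K)$ drift lower bound required for the concentration estimate.
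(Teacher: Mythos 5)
Your overall plan (sample $80K\ln K$ random colors, bound the per-vertex failure probability by $1/10$, then average) matches the paper's strategy, and your robustness inequality $|N^+(1/4;C)| \ge |N(C)| - \tfrac{n}{4}|C|$ is correct. However, there are two genuine gaps in the middle of the argument.

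First, the growth step is not actually carried out. The assertion that $\Phi_i = \log_2\min(N_i, Kn/16)$ is a submartingale with drift $\Omega(1/K)$ is exactly the hard part, and you rightly flag it as "the principal technical obstacle." The difficulty is that when the sampled color $s_{i+1}$ hits $N(C_i(v))$, the absorbed vertex is dictated by the color, not chosen at random, and its color neighborhood may already be almost entirely contained in the colors seen so far, giving essentially no gain in $\Phi$. The paper resolves this by tracking a subset $T_i(v)\subseteq C_i(v)$ of \emph{core} vertices, where $u$ is allowed into $T_i(v)$ only if it is \emph{good}, meaning $|N(u)\cap S_i(v)| < n/2$ for $S_i(v)=\bigcup_{x\in T_i(v)}N(x)$. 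The preprocessing bound (each color class has $\le n/K$ edges) gives Claim~\ref{claim:easy}, which says at most $4|S_i(v)|/K$ vertices are bad; when $|T_i(v)|\le K/16$, this is $\le n/4$, so a constant fraction of the $\ge |T_i(v)|n/2$ boundary colors point to good vertices. This yields the concrete bound $\mathbb{E}[t_w]\le 8K/w$ on the waiting time to add the $w$-th core vertex, and $\sum_{w\le K/16}8K/w\le 8K\ln K$. Your $\Phi$-based drift claim would need essentially this same good/bad dichotomy to go through, and even then the translation into a clean submartingale with a Chernoff bound is not obviously simpler than the direct expected-time calculation.

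Second, the robustness step as written would not go through because the size bound $|C_t(v)|\le 3K/16$ is not something you can establish. On the one hand, $|N(C)|\le(n-1)|C|$, so $N_t\ge Kn/16$ already forces $|C_t(v)|\gtrsim K/16$, making the window $[K/16,\,3K/16]$ very tight. On the other hand, there is no reason $|C_t(v)|=O(K)$ at all: each sampled color class has up to $n/K$ edges, and a single hit can absorb a pre-existing component of unbounded size; the union of $80K\ln K$ color classes has up to $80n\ln K$ edges and $|C_t(v)|$ can be a constant fraction of $n$. Your fallback for large $|C_t(v)|$ ("internal colors alone contribute enough robust boundary") is too vague to assess and doesn't clearly connect to the definition of $N^+(1/4;\cdot)$, which counts colors on edges leaving $C$. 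The paper sidesteps all of this by computing robustness directly from the core vertices: each of the $\ge K/16$ core vertices contributes $\ge n/2$ fresh colors, and by properness a removed vertex kills at most one color at each core vertex, so removing $n/4$ vertices still leaves $\ge (K/16)(n/2 - n/4)=Kn/64$ colors. No bound on $|C_t(v)|$ is needed. I recommend you adopt a $T_i(v)$-style bookkeeping from the start; then both the growth and robustness steps fall out cleanly.
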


\begin{proof}
For each vertex $v$, we define a (nondecreasing) set of vertices $T_i(v)$ and a set of colors $S_i(v)$ inductively in $i$. For $i=0$, we set $T_0(v)=\{v\}$. In each step $i\ge 0$, we set $S_i(v) = \bigcup_{x\in T_i(v)}N(x)$. We say that a vertex $u$ is $(v,i)$-\emph{bad} if $|N(u)\cap S_i(v)| \ge n/2$ and $(v,i)$-\emph{good} otherwise. We call a vertex $u\in C_i(v)\setminus C_{i-1}(v)$ a $(v,i)$-\emph{core vertex} if $u$ is $(v,i-1)$-good. If $(v,i)$-core vertices exist, we pick an arbitrary $(v,i)$-core vertex $u$ and update $T_i(v)=T_{i-1}(v)\cup \{u\}$. Otherwise, we set $T_i(v)=T_{i-1}(v)$ and proceed to the next step.  

For $w\ge 0$, let $F_w(v)$ be the smallest $i$ such that $|T_i(v)|\ge w+1$. Note that $F_0(v)=0$ as $|T_0(v)|=1$. Let $t_w := F_w(v)-F_{w-1}(v)$. Our aim now is to bound $\mathbb{E}[t_w]$. 

Let $i=F_{w-1}(v)$ and $C=C_{i}(v)$.  Let $u_1,\dots,u_{w}$ be the vertices in $T_i(v)$. Then, by definition, each vertex $u_j$ adds at least $n/2$ new colors $N(u_j)\setminus \bigcup_{j'<j}N(u_{j'})$. Among them, at most $4|S_i(v)|/K$ colors go to vertices that are $(v,i)$-bad. This is because the number of vertices which are $(v,i)$-bad is at most $4|S_i(v)|/K$, by the following claim. 
\begin{claim}\label{claim:easy}
    For any set of colors $S$, the number of $u$ with  $|N(u)\cap S|\ge n/2$ is at most $4|S|/K$.
\end{claim}
\begin{proof}
    The number of edges with some color in $S$ is at most $|S|n/K$. The number of pairs $(u,s)$ where $u\in A$, $s\in S$ and $s\in N(u)$ is thus at most $2|S|n/K$. Hence, the number of $u$ with $|N(u)\cap S|\ge n/2$ is at most $\frac{2|S|n/K}{n/2}=4|S|/K$.  
\end{proof}

Let $Q$ be the set of colors between $C$ and $(v,i)$-good vertices. If $w \le K/16$, then  
\[
|Q| \ge \left(n/2-4|S_i(v)|/K\right)|T_i(v)| \ge wn/4, 
\]
where we note that $|S_i(v)|\le n|T_i(v)|=nw \le Kn/16$. Let $j_Q$ denote the smallest $j \ge i$ such that $s_j \in Q$. 
Since the $C_j(v)$ are nondecreasing and $S_j(v) = S_i(v)$ for all $j \in [i,i+t_w)$, we have that $i+t_w \le j_Q$. But then, since $t_w \le j_Q-i$ and $j_Q-i$ is the number of time steps starting from $i$ until a color in $Q$ is sampled, we have
\[
\mathbb{E}[t_w] \le \frac{2Kn}{|Q|} \le \frac{2Kn}{wn/4} = 8K/w. 
\]

In particular, the expected value of $F_{K/16}(v)$ is at most $$\sum_{w=1}^{K/16} \mathbb{E}[t_w] \leq \sum_{w=1}^{K/16} 8K/w \leq 8K\ln K.$$
Thus, by Markov's inequality, the event $F_{K/16}(v)\ge 80K\ln K$ has probability at most $1/10$. Hence, the expected number of vertices $v$ where we need $i \ge 80K\ln K$ to have $|T_i(v)|\ge K/16$ is at most $n/10$. 

By averaging, there is $i \leq 80K\ln K$ and a sequence of $i$ colors such that at least $9n/10$ vertices $v$ satisfy $|T_i(v)|\ge K/16$. Each $u\in T_i(v)$ contributes at least $n/2$ many distinct colors on the edges between $C_i(v)$ and the entire vertex set. Since $N^+(\varepsilon; C_i(v))$  is defined by taking the minimum of $|N(C_i(v);U)|$ over all vertex sets $U$ with $|U|\geq (1-\varepsilon)n$ and the coloring is proper, we may lose at most $\varepsilon n$ colors for each $u \in T_i(v)$. Hence, for all $v$ with $|T_i(v)|\ge K/16$, we have 
\[
|N^+(\varepsilon; C_i(v))| \ge \sum_{j=1}^{|T_i(v)|} (n/2- \varepsilon n)\ge Kn/64,
\] 
where we used that $\varepsilon=1/4$.
\end{proof}

\subsection{Connected components using $O(K)$ random colors} \label{sec:compk}

We now improve the main result of the previous subsection by removing the $\log K$ factor from the number of colors required. This will then allow us to find a large component (of order at least $n/4$) using $O(K+\ln n)$ colors. 

\begin{lem}\label{lem:ball}
Consider a proper edge-coloring of the complete graph on a set $A$ of $n$ vertices using at most $2Kn$ colors, where each color class contains at most $n/K$ edges. Then there exists a set $\hat{S}$ of $900K$ colors such that, for at least $9n/10$ vertices $v \in A$,  $v$ is contained in a connected component $C(v)$ formed using edges with colors in $\hat{S}$ satisfying $|N^+(3/8; C(v))| \ge Kn/128$.
\end{lem}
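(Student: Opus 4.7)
The plan is to run essentially the same random color-exposure process as in Lemma~\ref{lem:KlogK}, sampling $i = 900K$ colors uniformly with replacement from the at most $2Kn$ colors used on $A$, setting $\hat{S} = \{s_1, \ldots, s_i\}$, and aiming to show that at least $9n/10$ vertices $v$ satisfy $|T_i(v)| \ge K/16$. The concluding inequality $|N^+(3/8; C(v))| \ge Kn/128$ then follows from the same core-vertex accounting as at the end of the proof of Lemma~\ref{lem:KlogK}, now substituting $\varepsilon = 3/8$ to give $|T_i(v)| \cdot (n/2 - \varepsilon n) \ge (K/16) \cdot (n/8) = Kn/128$.

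The key improvement required is the elimination of the harmonic factor in the Lemma~\ref{lem:KlogK} bound $\sum_{w=1}^{K/16} 8K/w = \Theta(K \log K)$. This factor arose from a per-vertex analysis in which each additional core vertex is added at rate proportional to the current count $w$. To avoid it, I would track progress at the component level rather than the vertex level: when a component $C \in \mathcal{C}_i$ accumulates new core vertices, all $|C|$ starting vertices $v \in C$ benefit simultaneously. Concretely, I would introduce a component-level core set $T_i^C$ with $|T_i(v)| \ge |T_i^C|$ whenever $v \in C$, then argue that the aggregated potential $\Phi_i := \sum_v |T_i(v)|$ grows at rate $\Omega(n)$ per step in expectation, so that $\mathbb{E}[\Phi_{900K}] = \Omega(Kn)$. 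Applying Markov's inequality to $\sum_v (K/16 - |T_i(v)|)_+$ would then yield that at most $n/10$ vertices have $|T_i(v)| < K/16$.

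For the per-step increment, I would argue as follows. When a color $s_{i+1}$ is sampled, its $\Theta(n/K)$ edges (guaranteed by the preprocessing in Lemma~\ref{lem:preproc}) each reach a vertex outside the current component containing one of their endpoints, and by the Claim~\ref{claim:easy}-type bound most of these vertices are $(v,i)$-good with respect to most starting vertices $v$ simultaneously, creating many new core vertices for the components they are added to. Because each new core vertex of the merged component benefits all the starting vertices lying in that merged component, the aggregated increment $\mathbb{E}[\Phi_{i+1}-\Phi_i]$ averages $\Omega(n)$, eliminating the $1/w$ slowdown that plagued the per-vertex analysis.

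The main obstacle will be handling the interaction between component coalescence and core-vertex accumulation in the aggregated potential analysis. When two components $C_1$ and $C_2$ merge via a new color, vertices in each inherit the other's core vertices (so $T_{i+1}^{C_1 \cup C_2} \supseteq T_i^{C_1} \cup T_i^{C_2}$ together with the newly added good vertices across the merger), causing $\Phi_i$ to jump substantially and requiring careful double-counting control. I expect the cleanest route will be to split the $900K$ samples into two sub-phases: an early regime where components are small and essentially singleton-like (so that the coloring acts as a near-random matching process and each exposed color reliably creates $\Omega(n/K)$ fresh core-vertex incidences) and a later regime where nontrivial coalescence occurs (and the inherited core vertices across merges contribute large jumps to $\Phi_i$). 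In both regimes one should obtain an $\Omega(n)$ expected per-step increment, but verifying this separately and stitching the bounds together, together with establishing sufficient concentration of $\Phi_i$ around its mean to conclude a per-vertex statement, is where the technical heart of the argument will lie.
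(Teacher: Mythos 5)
Your proposal correctly identifies the challenge (removing the $\log K$ factor from Lemma~\ref{lem:KlogK}), but the aggregated-potential route has two genuine problems that the paper's actual proof avoids by a quite different device.

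First, the claimed per-step increment $\mathbb{E}[\Phi_{i+1}-\Phi_i]=\Omega(n)$ is false in the early phase. When all components are singletons, a single exposed color contributes at most $n/K$ edges; each such edge can increase $|T_i(u)|$ by $1$ for at most two vertices $u$ (its endpoints), so $\Phi_{i+1}-\Phi_i=O(n/K)$. Getting from $\Phi_0=n$ to $\Phi=\Omega(Kn)$ then appears to need $\Omega(K^2)$ steps unless component growth accelerates sharply, and the mechanism by which you'd establish the accelerating increment is exactly what's missing: it would amount to re-proving the core estimate, not reducing to a routine aggregation.

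Second, even if $\mathbb{E}[\Phi_{900K}]=\Omega(Kn)$ were established, Markov's inequality doesn't give the required per-vertex conclusion. Because $|T_i(v)|\le 2K+O(1)$ always (each core vertex must contribute $\ge n/2$ fresh colors out of $\le 2Kn$), $\Phi$ is capped at $\Theta(Kn)$; a distribution where half the vertices saturate and half stay at $0$ already achieves $\Phi=\Theta(Kn)$ while violating the $9n/10$ requirement. Your suggested truncated potential $\sum_v(K/16-|T_i(v)|)_+$ also doesn't close the gap: it would have to be driven below $\Theta(n)$ to conclude anything, and the same slow early-phase growth applies.

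The paper's proof sidesteps both issues by a different idea: it does not track components at all. For each fixed $v$, it constructs a ball $B_j(v)$ of core vertices with a \emph{dyadic decoupling}: when finding the $j$-th vertex for $j\in(2^w,2^{w+1}]$, the target is the frozen set $B_{2^w}$ rather than $B_{j-1}$. This makes the success probability for the $w$-th dyadic phase $\Theta(2^w/K)$, which grows geometrically and allows all $\Theta(K)$ ball vertices to be found within $O(K)$ random colors with exponentially small failure probability (Lemma~\ref{lem:ball-key}). Markov is then applied at the vertex level — the expected number of $v$ for which $T_J(v)>900K$ is at most $2ne^{-7}<n/10$ — which is a union bound over $v$, not over a single potential. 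To rescue your route you would essentially need to reinvent this dyadic construction, because tracking only $\Phi$ cannot detect whether the progress is spread evenly across vertices.
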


We first give an overview of our proof strategy and introduce several key parameters and objects. We are given a proper edge-coloring of the complete graph on a set $A$ of $n$ vertices using at most $2Kn$ colors, where each color class contains at most $n/K$ edges.  As before, we pick a sequence of colors $s_1, s_2, \dots$ randomly one at a time. Let $\delta = 1/16$ and $W=\log (\delta K)$. Given the (infinite) sequence $s = (s_i)_{i\ge 1}$ of random colors, we consider the following procedure which defines, for each vertex $v$ and each integer $j\in [1,2^W]$, a set  of vertices $B_j = B_j(v)$ of order $j$ which is determined by the sequence of colors $s$. 
Note that here and throughout the remainder of this section, we fix a vertex $v$ and drop the dependence on $v$ in the notation. 

We let $B_1 = \{v\}$. For each $j\ge 2$, we define $P_j$ to be the set of vertices $u$ with $|N(u) \setminus N(B_{j-1})| \ge n/2$, where, for $B \subseteq A$, $N(B)$ denotes the set of colors on the edges between $B$ and $A\setminus B$. 
We then define $t_j$ to be the smallest positive integer distinct from $t_2,\dots,t_{j-1}$ such that the sampled color $s_{t_j}$ connects $B_{2^{\lfloor \log_2(j-1)\rfloor}}$ with a vertex $u$ in $P_j$ (if there is more than one possible choice for $u$, we choose the first in some predetermined ordering of the vertices).  We let $B_j = B_{j-1}\cup \{u\}$. We will run this procedure for $j\le 2^W = \delta K$, setting $T_{j} = \max_{j'\le j}t_{j'}$ at each step.

Our key lemma is the following.

\begin{lem}\label{lem:ball-key}
    Suppose $\kappa \ge 900K$ and $J=2^W=\delta K$. Then
    $$\mathbb{P}(T_{J} > \kappa) \le 2\exp(-\kappa/(128K)).$$
\end{lem}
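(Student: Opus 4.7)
My plan is to bound $T_J$ by tracking how often the random color sequence produces a ``useful'' sample, i.e., one that actually advances the ball. First I would reinterpret the ``smallest index distinct from $t_2,\ldots,t_{j-1}$'' clause as a sequential greedy process: walk through the samples $s_1, s_2, \ldots$ in order, and whenever the current sample $s_m$ is a color with an edge between the current base $B_{2^k}$ (where $k=\lfloor\log_2(j-1)\rfloor$) and a vertex of $P_j\setminus B_{j-1}$, use it immediately to advance $B_{j-1}$ to $B_j$. This greedy stopping time stochastically dominates $T_J$, so it suffices to control it.

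The key technical estimate is a uniform lower bound on the per-sample success probability. Because $|B_{j-1}|\le J=\delta K$ throughout the process, we have $|N(B_{j-1})|\le \delta Kn$, so Claim \ref{claim:easy} yields at most $4\delta n = n/4$ ``bad'' vertices and hence $|P_j\setminus B_{j-1}|\ge n/2$. Combined with the proper-coloring property and the cap of $n/K$ edges per color class from Lemma \ref{lem:preproc}, the number $|Q_j|$ of useful colors is at least $\max(n/2,\, 2^k K/2)$: the first bound comes from the $|P_j\setminus B_{j-1}|$ distinct colors incident to any single vertex $x\in B_{2^k}$, and the second from summing distinct colors over $B_{2^k}$ while using the matching cap of $n/K$ to control overlap. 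Dividing by the total of at most $2Kn$ colors gives a per-sample success probability at level $k$ of at least $p_k:=\max(1/(4K),\,2^k/(4n))$.

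I would then decompose $T_J=\sum_{k=0}^{W-1}L_k$ into the per-level waiting times, where $L_k$ is the number of samples used during level $k$. Each $L_k$ is stochastically dominated by a sum of $2^k$ independent geometric variables with parameter $p_k$, so a Chernoff-type tail bound for such a sum (equivalently, a lower-tail multiplicative Chernoff on the number of Bernoulli successes in $\lambda_k$ trials) gives $\mathbb{P}(L_k>\lambda_k)\le\exp(-c\lambda_k p_k)$ whenever $\lambda_k p_k$ comfortably exceeds $2^k$. Allocating budgets $\lambda_k$ per level, proportional to $2^k/p_k$ and summing to at most $\kappa$, and union-bounding over the $W=\log_2(\delta K)$ levels, produces the desired concentration $\mathbb{P}(T_J>\kappa)\le 2\exp(-\kappa/(128K))$.

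The main obstacle is extracting the exact constants $900$ and $128$: the hypothesis $\kappa\ge 900K$ must leave enough slack, uniformly across all $W$ levels, so that the multiplicative deficit $J-\kappa p_k$ can be absorbed into an exponent of the form $-\kappa/(128K)$. The level-wise decomposition is essential because the weakest bound $p_k\ge 1/(4K)$ is tight only at the lowest levels; at higher levels the improved estimate $p_k\ge 2^k/(4n)$ is needed to avoid an extra logarithmic factor when summing the geometric expectations, and the Chernoff step must be applied separately at each level to exploit this improvement before being combined back into a single linear-in-$\kappa$ tail bound.
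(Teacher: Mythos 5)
Your forward-scanning reinterpretation of the process loses the feature that makes the lemma true, and as a result the whole estimate degrades by a factor of $\log K$. In the paper's process, $t_j$ is the \emph{smallest unused} index whose color is useful for step $j$, and this index can be far smaller than $T_{j-1}$: a sample that was useless at level $k$ can become useful at level $k+1$ because the base $B_{2^{k+1}}$ is twice as large. Your greedy forward process deliberately discards this possibility, scanning only onward, so it gives a quantity $T^{\mathrm{fwd}}_J$ that dominates $T_J$ but is much larger. Indeed, at level $k$ the per-sample success probability is $p_k = \Theta(2^k/K)$ (see below), so each of the $2^k$ steps has expected waiting time $\Theta(K/2^k)$ and the total expected cost of level $k$ is $\Theta(K)$, \emph{independent of $k$}. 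Summing over the $W = \log_2(\delta K)$ levels gives $\mathbb{E}[T^{\mathrm{fwd}}_J] = \Theta(K\log K)$, whereas the lemma forces $\mathbb{E}[T_J] = O(K)$. Consequently the budgets $\lambda_k \propto 2^k/p_k$ you propose are $\Theta(K)$ per level and sum to $\Theta(K\log K)$; they cannot be made to sum to $\kappa = 900K$ uniformly in $K$. Your approach can only recover the weaker Lemma \ref{lem:KlogK}, not Lemma \ref{lem:ball-key}.

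There is also an error in your estimate of the per-sample success probability. Your bound $p_k = \max(1/(4K),\, 2^k/(4n))$ misses the decisive contribution: by the definition of $P_j$, each vertex of $B_{2^k}\setminus B_{2^{k-1}}$ contributes at least $n/2$ \emph{new} colors (not already in $N(B_{j'-1})$ when it was added), of which at most $4\delta n$ go to bad vertices, so $|N(B_{2^k}, P_j)| \ge 2^{k-1}(1/2 - 4\delta)n = 2^{k-3}n$ for $\delta = 1/16$. Dividing by the color budget $2Kn$ gives $p_k \ge 2^k/(16K)$, which dominates both of your terms once $k \ge 3$ (since $K < n$). Even so, as explained above, the corrected $p_k$ still leaves the forward process at $\Theta(K\log K)$. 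The extra $\log K$ is removed in the paper only by conditioning on the highest level $\ell$ at which the running maximum $T_{2^{\ell+1}}$ actually increases: on that event, \emph{every} unused index below $\tau_{2^{\ell+1}}$ must miss the set $S_{2^{\ell+1}}$ of density $\Theta(2^{\ell}/K)$, and the paper's conditional-independence claim for the sequence $(s_t)$ given $\mathcal{F}_{\ell-1}$ is what makes this computable. That back-reference to \emph{all} earlier samples — not just the ones scanned after the previous level ended — is exactly what your forward process throws away, and it is the heart of why $O(K)$ colors suffice.
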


We now quickly deduce Lemma \ref{lem:ball} from Lemma \ref{lem:ball-key}. 

\begin{proof}[Proof of Lemma \ref{lem:ball}]
    For each vertex $v$ and $B_J = B_J(v)$, $|N^+(3/8; B_J)| \ge J(n/2-3n/8)=Jn/8 = Kn/128$, since each vertex in $B_J$ introduces at least $n/2$ new colors. Moreover, $B_J$ is connected using the colors $s_1,\dots,s_{T_J}$. Therefore, by Lemma \ref{lem:ball-key} with $\kappa = 900K$, the expected number of vertices $v$ for which the component $C$ containing $v$ built using colors $s_1,\dots,s_{\kappa}$ satisfies $|N^+(3/8;C)| \ge Kn/128$ is at most $2n\exp(-\kappa/(128K))$. Hence, there exists a choice of colors $s_1,\dots,s_{\kappa}$ such that at least $9n/10$ vertices $v$ are contained in a component $C$ with $|N^+(3/8;C)| \ge Kn/128$ using the colors $s_1,\dots,s_{\kappa}$. 
\end{proof}

We next turn to the proof of Lemma \ref{lem:ball-key}. 

\begin{proof}[Proof of Lemma \ref{lem:ball-key}]
    For each $j\in (2^w,2^{w+1}]$, define $S_{j} = N(B_{2^w}, P_{j})\setminus N(B_{2^{w-1}})$, where, for $B, P \subseteq A$, $N(B,P)$ denotes the set of colors on the edges between $B$ and $P$. Note that the $S_j$ form a nonincreasing sequence of sets for $j\in (2^w,2^{w+1}]$. For $j\le J$, we have that \begin{equation}\label{eq:S}
|S_j| \ge 2^{w-1}(1/2-4\delta)n,
\end{equation}
since each vertex $B_{j'}\setminus B_{j'-1}$ introduces at least $n/2$ new colors for $j'\in (2^{w-1},2^w]$, at most $4jn/K\le 4\delta n$ of which are not to $P_j$.  
In turn, the latter claim follows since, by Claim \ref{claim:easy}, the number of vertices not contained in $P_j$ is at most $4|B_j|n/K = 4jn/K$. 

Let $H_w$ denote the set $\{t_j: 2^w<j\le 2^{w+1}\}$ of size $2^w$. Note that, by definition, all of the $t_j$ are distinct. Let $I$ be the set of $w\in [0,W-1]$ for which $T_{2^{w+1}} > T_{2^w}$. For each $\tilde{I}\subseteq [0,W-1]$, each sequence $(\tau_{2^{w+1}})_{w\in [0,W-1]}$ of positive integers and each choice of sets $\tilde{H}_w \subseteq [1,\tau_{2^{w+1}}]$ of size $2^w$ for each $w\in \tilde{I}$, we bound the probability of the event $\mathcal{E}$ that $I=\tilde{I}$ and, for all $w \le W-1$, $T_{2^{w+1}}=\tau_{2^{w+1}}$ and $H_w=\tilde{H}_w$. Let $\mathcal{E}_w$ be the event that $T_{2^{w'+1}}=\tau_{2^{w'+1}}$ and $H_{w'}=\tilde{H}_{w'}$ for all $w'\le w$. 
Let $\mathcal{F}_w$ denote the $\sigma$-algebra generated by the $(t_j,s_{t_j})$ for $j\le 2^{w+1}$. Note that $\mathcal{E}_w$ is measurable in $\mathcal{F}_w$. We have the following claim. 

\medskip

\noindent {\bf Claim.} Conditional on $\mathcal{F}_w$, the distribution of the sequence of colors $(s_t)$ can be described as follows. For $t \in \bigcup_{w'\le w} {H}_{w'}$, $s_t$ is determined by $\mathcal{F}_w$. For $t \notin \bigcup_{w'\le w} {H}_{w'}$, $s_t$ is a uniformly random color outside $E_{w,t} = \bigcup_{j\le 2^{w+1}:t_{j}>t} N(B_{2^{\lfloor \log_2(j-1)\rfloor}}, P_j)$. Furthermore, the $s_t$ are conditionally independent given $\mathcal{F}_w$.

\begin{proof}
Consider a sequence of distinct integers $t^*_j$ and colors $s^*_j$ such that there is a positive probability that $t_j = t^*_j$ and $s_{t_j} = s^*_j$ for all $j\le 2^{w+1}$. In particular, there exists a sequence of colors $\tilde{s}_t$ such that the corresponding $t_j(\tilde{s}) = t^*_j$ and $\tilde{s}_{t^*_j} = s^*_j$. For clarity, in this proof, we emphasize the dependence on the sequence of colors ($s$ or $\tilde{s}$) of $t_j, B_j, P_j$. 

For each sequence of colors $(s_t)$ such that $s_{t^*_j} = s^*_j$ for $j\le 2^{w+1}$ and $s_t \notin E_{w,t}$ for all $t\notin \bigcup_{w'\le w}{H}_{w'}$, we will prove by induction on $j$ that $t_j(s) = t_j(\tilde{s})$. As $t_2$ is the smallest time 
at which a color in $N(B_1,P_2)$ is sampled, when $s_t \notin N(B_1,P_2)$ for all $t < t^*_2$ and $s_{t^*_2} = \tilde{s}_{t^*_2}$, 
we have $t_2(s) = t_2(\tilde{s})$. 

For the induction step, assume $t_{j'}(s)=t_{j'}(\tilde{s})$ for all $j'<j$. We have that $B_{j'}(s) = B_{j'}(\tilde{s})$ for all $j'<j$ and, therefore, $P_{j'}(s) = P_{j'}(\tilde{s})$ for all $j'\le j$. Recall that $t_{j}$ is defined as the smallest time distinct from $t_2,\dots,t_{j-1}$ for which a color is sampled that connects $B_{2^{\lfloor \log_2(j-1)\rfloor}}$ to a vertex $u \in P_j$. 
As $B_{2^{\lfloor \log_2(j-1)\rfloor}}(s) = B_{2^{\lfloor \log_2(j-1)\rfloor}}(\tilde{s})$ and $P_j(s)=P_j(\tilde{s})$, it is easy to check that under the conditions $s_{t} \notin N(B_{2^{\lfloor \log_2(j-1)\rfloor}}(\tilde{s}),P_j(\tilde{s}))$ for $t<t^*_j$ distinct from $t^*_2,\dots,t^*_{j-1}$ and $s_{t^*_j}=\tilde{s}_{t^*_j}$, we have $t_j(s)=t_j(\tilde{s})$. 

If a sequence $s$ does not satisfy that $s_t\notin E_{w,t}$ for all $t\notin  \bigcup_{w'\le w} {H}_{w'}$, then there is a smallest $j$ such that $s_t \in N(B_{2^{\lfloor \log_2(j-1)\rfloor}}(\tilde{s}),P_j(\tilde{s}))$ for some $t<t^*_j$ with $t\notin \bigcup_{w'\le w} {H}_{w'}$. We then have inductively that $B_{j'}(s)=B_{j'}(\tilde{s})$ for all $j'<j$ and $P_j(s)=P_j(\tilde{s})$, from which it follows that $t_j(s) < t_j(\tilde{s})$. 

Therefore, conditional on $t_j = t_j^*$, $s_j = s_j^*$ for all $j\le 2^{w+1}$, all consistent sequences of colors $s$ are exactly given by the conditions that $s_{t^*_j} = s^*_j$ for $j\le 2^{w+1}$ and $s_t \notin E_{w,t}$ for all $t\notin \bigcup_{w'\le w}H_{w'}$. This immediately implies the conclusion of the claim. 
\end{proof}

Conditional on $\mathcal{F}_{w-1}$, under the event $\mathcal{E}_w$, we have that 
$s_t \notin S_{2^{w+1}}$ for all $t\in (0,\tau_{2^{w+1}}) \setminus \bigcup_{w'\le w}\tilde{H}_{w'}$ and $s_t \in N(B_{2^w})$ for $t\in \tilde{H}_w$. From the claim, these events are independent for different $t$. Noting that $S_{2^{w+1}}$ is disjoint from $E_{w-1,t} \subseteq N(B_{2^{w-1}})$ and using (\ref{eq:S}), we have that the probability $s_t\notin S_{2^{w+1}}$ conditional on $\mathcal{F}_{w-1}$ is at most $1-\frac{2^{w-1}(1/2-4\delta)n}{2Kn}$. Similarly, as $|N(B_{2^w})|\le 2^wn$, the probability that $s_t\in N(B_{2^w})$ conditional on $\mathcal{F}_{w-1}$ is at most $\frac{2^wn}{Kn/4} = \frac{2^{w+2}}{K}$, which we used that the number of colors is at least $\binom{n}{2}/(n/K) \geq Kn/4$. Thus, 
\begin{align*}
\mathbb{P}(\mathcal{E}_w \mid \mathcal{F}_{w-1})
&\le 
\left(\frac{2^{w+2}}{K}\right)^{2^w} \left(1-\frac{2^{w-1}(1/2-4\delta)n}{2Kn}\right)^{\tau_{2^{w+1}}-2^w} \\
&\le \left(\frac{2^{w+2}}{K}\right)^{2^w}\exp (-(\tau_{2^{w+1}}-2^w) 2^{w-4}/K).
\end{align*} 

Let $\ell = \max \tilde{I}$. By taking a union bound over the possible sets $\tilde{H}_\ell \subseteq [1,\tau_{2^{\ell+1}}]$ of size $2^\ell$ which include $\tau_{2^{\ell+1}}$, we see that
\begin{align*}
    &\mathbb{P}(T_J>\kappa \mid \mathcal{F}_{\ell-1}) \\
    &\le \sum_{\tau_{2^{\ell+1}} > \kappa} \mathbb{P}(T_{2^{\ell+1}} = \tau_{2^{\ell+1}} \mid \mathcal{F}_{\ell-1}) \\
    &\le \sum_{\tau_{2^{\ell+1}} > \kappa} \binom{\tau_{2^{\ell+1}}}{2^\ell-1}\left(\frac{2^{\ell+2}}{K}\right)^{2^\ell}\exp\left(-(\tau_{2^{\ell+1}}-2^\ell) 2^{\ell-4}/K\right)\\
    &\le \sum_{\tau_{2^{\ell+1}} > \kappa} 
    \frac{2^\ell}{\tau_{2^{\ell+1}}-2^\ell+1}\left(\frac{4e\tau_{2^{\ell+1}}}{K}\right)^{2^\ell} \exp\left(-(\tau_{2^{\ell+1}}-2^\ell) 2^{\ell-4}/K\right).
\end{align*}

Note now that $\ln(4ex)-x/32 \le -x/64$ for $x \ge 600$ and, hence, for $\tau_{2^{\ell+1}} \ge \max(2^{\ell+1}, \kappa) \ge 900K$, 
\[
    2^\ell \ln(4e\tau_{2^{\ell+1}}/K) - (\tau_{2^{\ell+1}} -2^\ell)2^\ell/(16K) 
    \le 2^\ell \ln(4e\tau_{2^{\ell+1}}/K) - \tau_{2^{\ell+1}} 2^\ell/(32K) 
    \le -2^{\ell}\tau_{2^{\ell+1}}/(64K).
\]

Hence, summing over all possible choices of $\ell$,
\begin{align*}
    &\mathbb{P}(T_J>\kappa) \\
    &\le \sum_{\ell \ge 1} \sum_{\tau_{2^{\ell+1}} > \kappa} 
    \frac{2^\ell}{\tau_{2^{\ell+1}}-2^\ell+1}\left(\frac{4e\tau_{2^{\ell+1}}}{K}\right)^{2^\ell} \exp\left(-(\tau_{2^{\ell+1}}-2^\ell) 2^{\ell-4}/K\right)\\
    &\le \sum_{\ell \ge 1} \sum_{\tau_{2^{\ell+1}} > \kappa}  \frac{2^\ell}{\tau_{2^{\ell+1}}-2^\ell+1}\exp\left(-2^\ell\tau_{2^{\ell+1}}/(64K)\right) \\
    &\le \sum_{\ell \ge 1: 2^\ell < K} \frac{2^{\ell+1}}{\kappa} \frac{\exp\left(-2^\ell\kappa / (64K)\right)}{(1-\exp(-2^\ell/(64K)))}\\ 
    &\qquad \qquad +  \sum_{\ell \ge 1: 2^\ell \ge K} \frac{2^{\ell}}{(\tau_{2^{\ell+1}}-2^\ell+1)} \frac{\exp\left(-2^\ell\kappa / (64K)\right)}{(1-\exp(-2^\ell/(64K)))}\\
    &\le \sum_{\ell \ge 1: 2^\ell < K} \frac{256K}{\kappa}  \exp\left(-2^\ell\kappa / (64K)\right) + 80 \sum_{\ell \ge 1: 2^\ell \ge K} \exp\left(-2^\ell\kappa / (64K)\right)\\\
    &\le 2\exp(-\kappa/(128K)). 
\end{align*}  
Here we used the simple estimates $1-\exp(-x) \ge x/2$ for $x\le 1/64$ and $\frac{2^{\ell}}{(\tau_{2^{\ell+1}}-2^\ell+1) (1-\exp(-2^\ell/(64K)))} \le 80$ for $\tau_{2^{\ell+1}} \ge 2^{\ell+1} \ge 2K$. 
\end{proof}

As a corollary of Lemma~\ref{lem:ball}, we now show that there is a set of at most $900(K+\ln n)$ colors such that there is a component of order at least $n/4$ formed by these colors in any proper edge-coloring of $K_n$ with at most $Kn$ colors. The key additional ingredient is contained in the following lemma.

\begin{lem}\label{lem:matching}
Assuming the setup and conclusion of Lemma~\ref{lem:ball}, 
there exists a set $\hat{S}'$ of $900\ln n$ colors such that there is a connected component using colors in $\hat{S}'\cup \hat{S}$ of order at least $n/4$.
\end{lem}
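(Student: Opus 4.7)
The plan is to randomly sample $T=\lceil 900\ln n\rceil$ colors $s_1,\dots,s_T$ uniformly and independently (with replacement) from the full palette of at most $2Kn$ colors, set $\hat{S}'=\{s_1,\dots,s_T\}$, and write $H_i$ for the subgraph of $K_n$ whose edge colors lie in $\hat{S}\cup\{s_1,\dots,s_i\}$. I will call a connected component of $H_i$ a \emph{good cluster} if it contains at least one of the $9n/10$ good vertices supplied by Lemma~\ref{lem:ball}, and denote the number of good clusters of $H_i$ by $m_i$. The goal is to show that with positive probability some component of $H_T$ has at least $n/4$ vertices, which will yield the desired $\hat{S}'$.

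First, I will note that whenever no component of $H_i$ has reached size $n/4$, the $9n/10$ good vertices are distributed among good clusters each of size less than $n/4$, forcing $m_i\ge\lceil (9n/10)/(n/4)\rceil\ge 4$. Next, I will use the robust expansion of Lemma~\ref{lem:ball} to lower bound the chance that each fixed good cluster gets absorbed at the next step. For a good cluster $D$ in $H_i$ of size less than $n/4$, pick a good component $C\subseteq D$ from Lemma~\ref{lem:ball}. The set $U$ of good vertices in $A\setminus D$ then satisfies $|U|\ge 9n/10-|D|\ge 13n/20\ge 5n/8$, so $|N(C;U)|\ge|N^+(3/8;C)|\ge Kn/128$. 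Crucially, none of these colors can already appear in $\hat{S}\cup\{s_1,\dots,s_i\}$, for otherwise a vertex of $U$ would be pulled into $D$. Hence $\mathbb{P}(s_{i+1}\in N(C;U))\ge(Kn/128)/(2Kn)=1/256$, and whenever this event holds $D$ merges in $H_{i+1}$ with a good cluster distinct from itself.

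Writing $Y_{i+1}$ for the number of good clusters of $H_i$ that merge with another good cluster in $H_{i+1}$, linearity then yields $\mathbb{E}[Y_{i+1}\mid\mathcal{F}_i]\ge m_i/256$; since merging clusters combine in groups of size at least $2$, we have $m_{i+1}\le m_i-Y_{i+1}/2$, so $\mathbb{E}[m_{i+1}\mid\mathcal{F}_i]\le m_i(1-1/512)$ whenever every component of $H_i$ is smaller than $n/4$. Defining the stopped potential $\Phi_i$ to equal $m_i$ if every component of $H_i$ has fewer than $n/4$ vertices and $0$ otherwise, one gets $\mathbb{E}[\Phi_{i+1}\mid\mathcal{F}_i]\le\Phi_i(1-1/512)$ unconditionally, so iterating from $\Phi_0\le 9n/10$ yields
\[
\mathbb{E}[\Phi_T]\le \frac{9n}{10}(1-1/512)^{900\ln n}\le n\cdot n^{-900/512}=o(1).
\]
Markov's inequality then forces $\Phi_T=0$ with probability tending to $1$, so some component of $H_T$ has at least $n/4$ vertices.

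The principal obstacle lies in the second paragraph: the merge produced by $s_{i+1}$ must be with another \emph{good} cluster, not merely with some vertex outside $D$, since only the former reduces $m_i$. Enforcing this requires restricting $U$ to the good vertices of $A\setminus D$ before invoking $N^+(3/8;\cdot)$, and the resulting inequality $|U|\ge 5n/8$ depends on the precise slack between the $9n/10$ of Lemma~\ref{lem:ball} and the $n/4$ target size. Any weakening of these constants, or of the $3/8$ threshold in $N^+(3/8;\cdot)$, would collapse this slack and the potential argument with it.
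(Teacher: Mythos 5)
Your argument is correct and takes essentially the same approach as the paper: both track a potential counting good components and show it decays geometrically while no component has reached size $n/4$, using the $N^+(3/8;\cdot) \ge Kn/128$ guarantee from Lemma~\ref{lem:ball} and the $\ge 5n/8$ slack in the size of the target set. The two differences are cosmetic. First, you define a good cluster as a component containing \emph{at least one} good vertex, whereas the paper says a component is good if it contains \emph{only} good vertices; your definition is actually the cleaner one, since it makes the merge partner trivially also a good cluster and ensures $m_i \ge 1$ at every step, whereas with the paper's definition the merge partner could a priori contain bad vertices and one has to argue a bit more carefully that the count still drops by at least half the number of merges. Second, you keep the colors random throughout and run a stopped-potential/Markov argument, while the paper derandomizes at each step by averaging ("there exists a color for which at least $N_i/256$ good components gain a cross-edge") and ends with a deterministic contradiction; the two are equivalent in strength, and the paper's version is marginally shorter while yours is a slightly more systematic bookkeeping. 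All the numerical checks you do (the $|U| \ge 13n/20 > 5n/8$ slack, the $1/256$ and $1/512$ constants, and the decay exponent $900/512 > 1$) match the paper.
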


\begin{proof}
We say that a vertex $v$ is {\it good} if the connected component $C(v)$ containing this vertex formed by edges with colors in $\hat{S}$ has $|N^+(3/8;C(v))|\ge Kn/128$. 
We run a process where, at each time step $i \ge 1$, we choose a random color $s_i$ and consider the connected components formed by $\{s_{i'}:i'\le i\}\cup \hat{S}$. Let $N_i$ be the number of good connected components at step $i$, where a component is \emph{good} if it contains only good vertices. We stop the process once there is a component of order at least $n/4$, so we can assume that there is no component of order at least $n/4$ until $900\ln n$ colors have been sampled. For each good component $C(v)$ at step $i$, let $P$ denote the set of good vertices outside $C(v)$. We have $|P| \ge n - n/4 - n/10 > 5n/8$, as $|C(v)|\le n/4$ and the number of vertices which are not good is at most $n/10$. Thus, the number of colors between $C(v)$ and $P$ is at least $|N^+(3n/8;C(v))|\ge Kn/128$.  Hence, picking a uniformly random color, the expected number of good components with an edge in that color to a different good component is at least $(Kn/128)/(2Kn) \cdot N_i = N_i/256$. Therefore, there exists a color for which at least $N_i/256$ good components have an edge in that color to a different good component. This implies that a color can be chosen to guarantee that $N_{i+1} \le N_i(1-1/512)$. Hence, we can choose the colors so that $1 < N_{i}\le n(1-1/512)^i$ for all $i\ge 1$, which is a contradiction if $i\ge 900\ln n$.
\end{proof}

Combining Lemmas \ref{lem:ball} and \ref{lem:matching}, we obtain the promised result.

\begin{lem}\label{lem:connect}
Consider a proper edge-coloring of the complete graph on a set $A$ of $n$ vertices using at most $2Kn$ colors, where each color class contains at most $n/K$ edges. Then there exist $900 (K+\ln n)$ colors such that there exists a component $A_c$ of order at least $n/4$ in these colors in $A$. 
\end{lem}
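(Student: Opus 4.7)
The plan is to deduce Lemma \ref{lem:connect} as an immediate consequence of Lemmas \ref{lem:ball} and \ref{lem:matching}, which have already done all the substantive work.

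First, I would apply Lemma \ref{lem:ball} to the given proper edge-coloring on $A$ (which has $2Kn$ colors with each color class of size at most $n/K$) to obtain a set $\hat{S}$ of $900K$ colors satisfying the conclusion that, for at least $9n/10$ vertices $v\in A$, the component $C(v)$ containing $v$ using edges with colors in $\hat{S}$ has $|N^+(3/8;C(v))|\ge Kn/128$. This is exactly the hypothesis needed to invoke the next step.

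Next, I would apply Lemma \ref{lem:matching} with this particular $\hat{S}$, which produces an additional set $\hat{S}'$ of at most $900\ln n$ colors so that the graph whose edges have colors in $\hat{S}\cup \hat{S}'$ has a connected component of order at least $n/4$. Letting this component be $A_c$ and taking the color set to be $\hat{S}\cup \hat{S}'$, the total count is at most $900K+900\ln n=900(K+\ln n)$, as required.

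The only thing to be mindful of is that the hypotheses of Lemmas \ref{lem:ball} and \ref{lem:matching} match precisely the hypothesis of Lemma \ref{lem:connect} (with Lemma \ref{lem:matching} explicitly importing the setup and conclusion of Lemma \ref{lem:ball}), so no extra argument or preprocessing step is needed at this stage. There is no genuine obstacle here; the two preceding lemmas were designed to compose in this way, and Lemma \ref{lem:connect} simply records their composition.
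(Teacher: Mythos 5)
Your proposal is correct and is precisely the paper's argument: the paper explicitly introduces Lemma~\ref{lem:connect} with the phrase ``Combining Lemmas~\ref{lem:ball} and~\ref{lem:matching}, we obtain the promised result,'' giving no further proof, and your composition of $\hat{S}$ ($900K$ colors from Lemma~\ref{lem:ball}) with $\hat{S}'$ ($900\ln n$ colors from Lemma~\ref{lem:matching}) is exactly that.
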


\subsection{Improved counting} \label{sec:impcount}

We now prove Theorem \ref{thm:improvedcounting} in the following slightly reworded form. 

  \begin{thm}\label{uptoaconstantoptimal}
There is an absolute constant $C$ such that, for any proper edge-coloring of $K_N$, the number of vertex subsets $A$ of order $n$ spanning at most $Kn$ colors is at most $N^{C( K+\ln n)}(C K)^n$. 
\end{thm}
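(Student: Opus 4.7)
The plan is to adapt the proof of Theorem~\ref{proper} by using Lemma~\ref{lem:connect} to replace the step-by-step exposure of $O(K\log n)$ colors (each at cost $N^2$) by a one-shot selection of $O(K+\log n)$ colors (at total cost $N^{O(K+\log n)}$).

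As in Theorem~\ref{proper}, I would count ordered sequences of $n$ distinct vertices and divide by $n!$ at the end. For each candidate $A$, applying Lemma~\ref{lem:preproc} to the induced coloring on $A$ lets us assume each color class has at most $n/K$ edges, after absorbing a factor of $2$ into $C$. By Lemma~\ref{lem:connect}, there then exists a set $S^*$ of $s_0 = 900(K+\log n)$ colors such that the subgraph of $A$ restricted to $S^*$ contains a connected component $A_c$ with $|A_c| \geq n/4$.

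To describe $A$, I would pre-select a starting vertex $v_1 \in A_c$ and the set $S^*$, incurring a cost of $N \cdot \binom{\binom{N}{2}}{s_0} \leq N^{O(K + \log n)}$. Then I would build up the sequence in two stages. In the \emph{core stage}, the built set $V_i \subseteq A_c$ grows step by step via weak-proof-style matching extensions, but with the color chosen from $S^*$ (cost $s_0$ per step) rather than from all $\binom{N}{2}$ colors; this stage ends when $|V_i| \geq n/4$. In the \emph{extension stage}, I would continue the weak-proof-style growth from $V_0 = A_c$ to span all of $A$, now permitting new colors outside $S^*$ (at cost $N^2$ each) but arguing that only $O(K + \log n)$ such new colors are ever needed. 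The matching combinatorics of both stages would telescope, by the same calculation as in Theorem~\ref{proper}, to contribute the factor $(CK)^n$.

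The main obstacle is controlling the extension stage: a naive weak-proof growth from a base of size $n/4$ still requires $O(K\log n)$ distinct new-color selections to reach all of $A$, undoing the savings from the core stage. The resolution should come from either a recursive application of Lemma~\ref{lem:connect} on $A \setminus A_c$ (with careful control of the effective $K$ parameter and reuse of colors already in $S^*$) or a bipartite analogue of Lemma~\ref{lem:connect} showing that $O(K + \log n)$ additional colors suffice to extend a large connected base to all of $A$. Either way, the total number of $N^2$-cost color selections across both stages should be $O(K + \log n)$, yielding the desired $N^{O(K + \log n)}$ factor.
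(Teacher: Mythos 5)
Your macro-level plan matches the paper's: extract a large connected subset $A_c$ of $A$ spanned by $O(K+\ln n)$ colors, enumerate such sets $A_c$, and then count extensions to $A$. But both stages have substantive gaps, and the paper's realization differs from yours at exactly the points where your plan is thin.

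\textbf{Core stage.} The cost you assign is, in effect, one of $|S^*|=O(K+\ln n)$ colors per tree edge over the $\Theta(n)$ edges of a spanning tree of $A_c$, which gives a factor $(K+\ln n)^{\Theta(n)}$. When $K\ll\log n$ this is $(\log n)^{\Theta(n)}$, which is not absorbed into $N^{O(K+\ln n)}(CK)^n$ (take $K=O(1)$ and $N=n^{O(1)}$). Your claim that the matching combinatorics ``telescope, by the same calculation as in Theorem~\ref{proper}'' is also not justified: that telescoping uses $u_i\ge n_i(n-n_i)/r$, which holds because each step greedily selects the best color from the \emph{full} palette. Lemma~\ref{lem:connect} only guarantees that $A_c$ is connected by $S^*$-colored edges; restricted to $S^*$ a step may gain a single vertex, the number of steps becomes $\Theta(n)$, and $\prod_i\binom{n-n_i}{u_i}n_i^{u_i}$ is no longer bounded by $(er)^{n'-1}$. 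The paper avoids greedy growth of $A_c$ entirely. It uses the more refined notion of an \emph{efficiently colored tree} (Lemma~\ref{lem:efficient-tree}), which comes from the two-phase process behind Lemma~\ref{lem:ball-key}: the palette is split into $S_1$ of size $900K$, whose colors may appear freely on the $n'-\ell$ within-component edges (cost $(900K)^{n'-\ell}$), and $S_2$ of size $900\ln n$, whose $i$-th color appears only on at most $\ell_i-1$ between-component edges with $\ell_i\le(1-1/512)^in$, so the total contribution is $\prod_i 2^{\ell_i-1}=2^{O(n)}$, not $(\log n)^{\Theta(n)}$. The underlying tree shape is enumerated at cost $3^n$ by Otter's theorem, and properness of the coloring then determines the embedding from the root.

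\textbf{Extension stage.} You correctly flag this as the main obstacle, but the two fixes you sketch (recursive application of Lemma~\ref{lem:connect}, or a bipartite analogue) are not what the paper does, and neither obviously avoids $\Omega(K\log n)$ fresh color selections. The paper does not grow a tree beyond $A_c$ at all. Starting from $A'=A_c$, it repeatedly adds to $A'$ any vertex $a\in A$ that has at least $n/8$ colors to $A'$ outside $N(A')$; since $|N(A)|\le Kn$, this happens at most $8K$ times (cost $N$ each, hence $N^{8K}$). Once it stalls, every remaining vertex of $A$ has at least $n/8$ edges into $A'$ with colors already in $N(A')$, and the set $Y$ of all vertices of $K_N$ with this property satisfies $|Y|\le 8|N(A')|\le 8Kn$. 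The remaining $n-n'$ vertices of $A$ are then chosen from $Y$ at cost $\binom{8Kn}{n-n'}\le (8eKn/(n-n'))^{n-n'}$, which is absorbed into $(CK)^n$. This ``bounded core plus small residual set'' argument is the piece your plan is missing.
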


A key part in the proof of this theorem is played by structures that we call efficiently colored trees. Given a vertex subset $A$ of order $n$ in a proper edge-colored complete graph, an \emph{efficiently colored tree} on a subset $A_c$ of $A$ is a tree together with an edge-coloring using an ordered list of $900(K+\ln n)$ colors with the following properties:
\begin{itemize}
\item 
There is a partition of $A_c$ into components $B_1,\dots,B_\ell$, each connected by edges with the first $900K$ colors. 
\item 
After revealing $i$ of the remaining $900\ln n$ colors and the corresponding edges between the connected components, 
the number of connected components is at most $(1-1/512)^i n$. 
\end{itemize}
The importance of efficiently colored trees for us stems from the following result, which has essentially the same proof as Lemma~\ref{lem:matching} above.

\begin{lem}\label{lem:efficient-tree}
In any proper edge-coloring of a complete graph, for every vertex subset $A$ of order $n$ which spans at most $Kn$ colors, 
there exists a subset $A_c$ of $A$ of order at least $n/4$ and an efficiently colored tree on $A_c$. 
\end{lem}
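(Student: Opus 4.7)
The plan is to execute the proof of Lemma \ref{lem:connect} (the combination of Lemmas \ref{lem:ball} and \ref{lem:matching}), but keeping the chosen colors in the order dictated by the definition of an efficiently colored tree. First I would apply Lemma \ref{lem:preproc} to refine the given proper edge-coloring on $A$ into a proper edge-coloring with at most $2Kn$ colors in which each color class has at most $n/K$ edges; since the refinement only subdivides color classes, any colored subgraph produced with respect to the refinement can be reinterpreted in the original coloring.

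Next I would invoke Lemma \ref{lem:ball} on the refined coloring to obtain a set $\hat{S}$ of $900K$ colors such that at least $9n/10$ vertices $v \in A$ are \emph{good}, meaning the component $C(v)$ of the subgraph of $A$ spanned by edges with colors in $\hat{S}$ satisfies $|N^+(3/8; C(v))| \ge Kn/128$. These $900K$ colors, taken in any order, will be the first $900K$ entries of the ordered color list of the efficiently colored tree; the components of $A$ in these colors will determine, after intersection with $A_c$, the partition $B_1,\ldots,B_\ell$.

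I would then repeat the greedy scheme from the proof of Lemma \ref{lem:matching}: at each step $i \ge 1$, append one more color to the list and track the number $N_i$ of \emph{good components}, those components in the current color set that consist entirely of good vertices. As long as every good component has order less than $n/4$, for each good component $C$ there are at least $|N^+(3/8; C)| \ge Kn/128$ colors on edges between $C$ and good vertices outside $C$ (since the non-good vertices and $C$ together cover less than $3n/8$ of $A$). Averaging over all colors yields a choice that produces enough merges to guarantee $N_{i+1} \le N_i(1-1/512)$. Since $N_0 \le n$, within $i_0 \le 900\ln n$ steps we reach a good component $A_c$ of order at least $n/4$, and the corresponding colors form the next $900\ln n$ entries of the ordered list (padded arbitrarily if we stop earlier).

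To finish, I would take any spanning tree of $A_c$ whose edges use only colors in this ordered list; such a tree exists because $A_c$ is connected in these colors. The resulting tree is efficiently colored: the components of $A_c$ in the first $900K$ colors are precisely the intersections $B_j = A_c \cap C$ for the components $C$ produced by $\hat{S}$, and each $B_j$ is connected by edges with those colors; moreover, after revealing $i$ of the remaining $900\ln n$ colors, the number of components of $A_c$ in the first $900K + i$ colors is at most the corresponding global count $N_i \le n(1-1/512)^i$. The only real bookkeeping step is this last identification of the local (on $A_c$) and global (on $A$) component counts, but since $A_c$ lies inside a single good component at the end and the components of $A_c$ in a subset of the colors refine the good components of $A$, the bound transfers directly.
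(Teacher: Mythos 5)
Your approach matches the paper's, which defers to the proof of Lemma~\ref{lem:matching}. The steps---preprocess via Lemma~\ref{lem:preproc}, invoke Lemma~\ref{lem:ball} to obtain $\hat{S}$, rerun the greedy merging argument from the proof of Lemma~\ref{lem:matching}, and identify $A_c$ as the resulting large good component---are correct in outline, and the conversion from refined colors back to original colors is fine (mapping refined colors to originals can only accelerate merging).

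There is, however, a gap at the final step: ``any spanning tree of $A_c$ whose edges use only colors in this ordered list'' need not be efficiently colored. The second condition in the definition bounds the number of connected components \emph{of the tree} after the first $900K+i$ colors are revealed, whereas the bound $N_i \le n(1-1/512)^i$ you establish governs the number of \emph{subgraph} components of $A_c$ (i.e., components formed by all edges of those colors inside $A_c$). A generic spanning tree of $A_c$ has only $|A_c|-1$ edges, and nothing forces its early-colored edges to be distributed so as to track the subgraph's component structure; the tree could split into far more than $N_i$ pieces under a prefix of the color list, violating the geometric decay. The fix is to build the tree alongside the merging process: take a spanning tree of each subgraph component $B_j$ of $A_c$ in the $\hat{S}$-colors using only $\hat{S}$-colored edges, and then, whenever subgraph components $C_1,\dots,C_k$ of $A_c$ merge at step $i$ via edges of the $i$-th color of $\hat{S}'$, add $k-1$ such edges to the tree. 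With this construction the component count of the tree under any prefix of the color list coincides with that of the subgraph, so the bound $N_i$ transfers directly and the tree is efficiently colored.
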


We now give the proof of Theorem \ref{uptoaconstantoptimal}. 

\begin{proof}[Proof of Theorem \ref{uptoaconstantoptimal}]
By Lemma \ref{lem:efficient-tree}, for any set $A$ of the required form, there is a subset $A_c$ of $A$ which admits an efficiently colored tree. We will bound the number of sets $A$ by first bounding the number of efficiently colored trees, thereby giving an upper bound on the number of possible choices for $A_c$. We then bound, for each $A_c$ admitting an efficiently colored tree, the number of sets $A$ extending $A_c$. 

We first count the number of efficiently colored trees. Each efficiently colored tree can be specified as follows:
\begin{enumerate}
    \item Select a set $S_1$ of $900K$ colors and a set $S_2$ of $900\ln n$ colors.
    \item Select an unlabelled tree on $n'\in [n/4,n]$ vertices (in at most $3^n$ ways for $n$ sufficiently large by a theorem of Otter \cite{Otter}) and a partition of the tree into components $B_1,\dots,B_\ell$. 
    \item Select a color for each edge in the components $B_i$ using the colors in $S_1$. 
    \item For each $0\le i \le 900\ln n$, we keep track of a set of $\ell_i$ components, starting with the components $B_1,\dots,B_\ell$ for $i=0$. There are $\ell_i-1$ edges of the tree between the components. We then select a subset of these $\ell_i-1$ edges and assign the edges in this set the $i$-th color in $S_2$. 
\end{enumerate}
Upon performing the above steps, we obtain a tree on $n'$ vertices together with a coloring of its edges. We only consider such trees where $\ell_i$ satisfies $\ell_i\le (1-1/512)^{i}n$. As the edge-coloring is proper, for each such colored tree, upon specifying the choice of the root of the tree among the $N$ vertices of $K_N$, there is at most one way to identify all remaining vertices of the tree with vertices in $K_N$. Thus, the number of efficiently colored trees, and hence the number of choices for $A_c$, is at most 
\[
\binom{N^2}{900K}\binom{N^2}{900\ln n} \cdot 3^n \cdot \binom{n'-1}{\ell-1} \cdot (900K)^{n'-\ell} \cdot \prod_{i=0}^{900\ln n} 2^{\ell_i-1} \cdot N.
\]
Here, the first term $\binom{N^2}{900K}\binom{N^2}{900\ln n}$ corresponds to the number of choices for $S_1$ and $S_2$, the term $\binom{n'-1}{\ell-1}$ corresponds to the number of ways to select $\ell-1$ edges of the tree partitioning it into $\ell$ components, the term $(900K)^{n'-\ell}$ is an upper bound on the number of ways to color each edge in the components $B_i$ with a color in $S_1$, the terms $2^{\ell_i-1}$ correspond to the number of ways to choose the subsets of the sets of $\ell_i-1$ edges to be colored with the $i$-th color in $S_2$ and the last term $N$ is the number of ways to assign the root of the tree to a vertex in $K_N$. 

Next, we identify $A$ from $A_c$. Starting from $A'=A_c$, we consider the following process. Let $N(A')$ be the set of colors between vertices in $A'$. If there exists a vertex $a$ in $A$ for which there are at least $n/8$ colors between $a$ and $A'$ which are not contained in $N(A')$, then we include $a$ in $A'$, noting that there are at most $N$ choices for $a$. This can occur at most $8 K$ times. 
In the end, we obtain a set $A'$ such that, for all $a\in A$, there are at least $n/8$ edges between $a$ and $A'$ with color in $N(A')$. Let $Y$ be the set of all vertices in $K_N$ such that the number of edges between this vertex and $A'$ with color in $N(A')$ is at least $n/8$. We have $|Y|\le \frac{n \cdot |N(A')|}{n/8} = 8|N(A')|\le 8Kn$. The number of choices for the remaining vertices of $A$, given that they are all in $Y$, is therefore at most $\binom{|Y|}{n-n'} \le (8eKn/(n-n'))^{n-n'}$. 

Hence, the number of sets $A$ is at most 
\begin{align*}
&\sum_{n'\in [n/4,n], \ell\le n'} N^{1800(K+\ln n)} 3^n \binom{n'-1}{\ell-1} (900K)^{n'-\ell} \cdot 2^{900n} \cdot N \cdot N^{8 K} \cdot  (8eKn/(n-n'))^{n-n'} \\
&\le \sum_{n'\in [n/4,n]} N^{1800(K+\ln n)} 3^n (900K)^{n'-1} \cdot 2^{950n} \cdot N^{8 K+1} \cdot  (8eKn/(n-n'))^{n-n'}\\
&\le N^{1800(K+\ln n)} (2^{1000}K)^n N^{20K},
\end{align*}
as required.
\end{proof}

We note the following immediate corollary counting the number of subsets spanning few colors in locally-bounded colorings. The proof, which we omit, is almost exactly the same as that of Theorem~\ref{complete}, but with Theorem~\ref{uptoaconstantoptimal} replacing Theorem~\ref{proper} as the main input. 

\begin{cor}\label{Deltaboundedcor}
There is an absolute constant $C$ such that in any edge-colored complete graph on $N$ vertices in which each color class has maximum degree at most $\Delta$, the number of subsets of $n$ vertices in which at most $Kn$ colors appear is at most $N^{C(\Delta K+\ln n)} (C\Delta K)^n$.
\end{cor}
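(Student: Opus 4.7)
The plan is to mimic exactly the proof of Theorem \ref{complete}, with Theorem \ref{uptoaconstantoptimal} (the improved counting bound with $K+\ln n$ in the exponent of $N$) playing the role of Theorem \ref{proper}. The only conceptual ingredient is Vizing's theorem, used to convert a $\Delta$-bounded edge-coloring into a proper one.

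First, I would fix an edge-coloring $c$ of $K_N$ in which every color class has maximum degree at most $\Delta$. Viewing each color class as a graph of maximum degree at most $\Delta$, Vizing's theorem lets me properly edge-color that color class with at most $\Delta+1$ new colors. Carrying this out simultaneously for every original color yields a refinement $c'$ of $c$ which is a \emph{proper} edge-coloring of $K_N$, and any vertex subset that originally spanned at most $Kn$ colors in $c$ now spans at most $(\Delta+1)Kn$ colors in $c'$.

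Next, I would apply Theorem \ref{uptoaconstantoptimal} to $c'$ with the parameter $K$ replaced by $(\Delta+1)K$. This gives that the number of vertex subsets of size $n$ spanning at most $(\Delta+1)Kn$ colors in $c'$ is at most
\[
N^{C_0((\Delta+1)K+\ln n)}(C_0(\Delta+1)K)^n,
\]
for the absolute constant $C_0$ from Theorem \ref{uptoaconstantoptimal}. Since every subset counted in the statement of the corollary is among these, this is also an upper bound on the quantity we care about. Using the trivial estimate $\Delta+1\le 2\Delta$ (valid since $\Delta\ge 1$) and absorbing the factor of $2$ into a new absolute constant $C:=2C_0$, this simplifies to the desired bound $N^{C(\Delta K+\ln n)}(C\Delta K)^n$.

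There is no real obstacle here: the Vizing reduction is a routine device already used in the proof of Theorem \ref{complete}, and the improved counting estimate of Theorem \ref{uptoaconstantoptimal} is available as an input. The only thing to check is that the dominant factor in the exponent of $N$ transforms correctly: the old bound $N^{C\Delta K \ln n}$ in Theorem \ref{complete} came from $N^{CK\ln n}$ after substituting $(\Delta+1)K$ for $K$, and here the corresponding substitution turns $N^{C(K+\ln n)}$ into $N^{C((\Delta+1)K+\ln n)}$, which is exactly the form asserted.
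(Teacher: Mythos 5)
Your proposal is correct and matches the paper's intended argument exactly: the authors explicitly state that the proof is obtained from that of Theorem~\ref{complete} by substituting Theorem~\ref{uptoaconstantoptimal} for Theorem~\ref{proper}, which is precisely the Vizing-theorem reduction you carry out.
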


The following improvement of Corollary~\ref{countgroup}, which includes Theorem~\ref{thm:count-group}, is now an immediate corollary.

\begin{cor}\label{countgroup3}
There is a constant $C$ such that if $G$ is a finite group of order $N$, then the number of subsets $A \subset G$ with $|A|=n$ and $|AA| \le K|A|$ is at most $N^{C(K + \ln n)}(CK)^{n}$. Similarly, the number of subsets $A \subset G$ with $|A|=n$ and $|AA^{-1}| \le K|A|$ is at most $N^{C(K + \ln n)}(CK)^{n}$.
\end{cor}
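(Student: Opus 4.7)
The plan is to reduce both statements to the combinatorial counting result Corollary~\ref{Deltaboundedcor} by endowing the complete graph on vertex set $G$ with a suitable $O(1)$-bounded edge-coloring that encodes the group operation. This mirrors exactly how Corollary~\ref{countgroup} was obtained from Theorem~\ref{complete}, with the only change being the substitution of the improved counting input.

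Concretely, for the $|AA^{-1}|$ bound I would color each edge $\{a,b\}$ of the complete graph on $G$ with the unordered pair $\{ab^{-1}, ba^{-1}\}$, noting that the second element is simply the inverse of the first, so colors are naturally indexed by the symmetric pairs $\{s, s^{-1}\}$ of $G$. A quick check shows this coloring is $2$-bounded: the neighbors of a fixed vertex $a$ in color $\{s, s^{-1}\}$ must lie in $\{as, as^{-1}\}$. Any subset $A$ with $|A|=n$ and $|AA^{-1}| \le Kn$ then spans at most $|AA^{-1}|/1 \le Kn$ colors (the pairs $\{s,s^{-1}\}$ with $s \in AA^{-1}$). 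Applying Corollary~\ref{Deltaboundedcor} with $\Delta = 2$ and absorbing the factor of $\Delta$ into the constant immediately gives the bound $N^{C(K+\ln n)}(CK)^n$.

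For the $|AA|$ bound I would analogously color each edge $\{a,b\}$ with the unordered pair $\{ab, ba\}$. Again the coloring is $2$-bounded: given a fixed vertex $a$ and a color $\{g,h\}$, the possible neighbors $b$ satisfy either $(ab,ba)=(g,h)$ or $(ab,ba)=(h,g)$, each of which determines $b$ uniquely. Any $A$ with $|AA| \le Kn$ spans at most $Kn$ colors, and Corollary~\ref{Deltaboundedcor} applied to this coloring yields the stated bound.

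I anticipate no real obstacle: the only substantive content lies in Corollary~\ref{Deltaboundedcor} itself, and the reduction is essentially cosmetic, independent of whether $G$ is abelian. The bookkeeping of the constant $\Delta = 2$ is handled by absorption into the absolute constant $C$, exactly as in the proof of Theorem~\ref{complete}.
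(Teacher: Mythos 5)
Your treatment of the $|AA^{-1}|$ case is correct and matches the paper: the coloring $\{a,b\}\mapsto \{ab^{-1},ba^{-1}\}$ is $2$-bounded (the neighbors of $a$ in color $\{s,s^{-1}\}$ lie in $\{s^{-1}a, sa\}$), and since each color is determined by $s\in AA^{-1}$ up to inversion, a set $A$ with $|AA^{-1}|\le Kn$ spans at most $Kn$ colors.

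For the $|AA|$ case there is a genuine gap. You color $\{a,b\}$ by the unordered pair $\{ab,ba\}$ and assert, without justification, that any $A$ with $|AA|\le Kn$ spans at most $Kn$ colors. This is false. Each color is a pair of (conjugate) elements of $AA$, but distinct colors can share an element, and the number of distinct pairs $\{gh,hg\}$ with $g\neq h\in A$ need not be bounded by $|AA|$. Concretely, take $A=H$ a non-abelian subgroup, so $|AA|=|H|$. For $H=S_3$ (so $n=6$, $|AA|=6$, $K=1$) the distinct colors are the six singletons $\{e\},\{(12)\},\{(13)\},\{(23)\},\{(123)\},\{(132)\}$ together with $\{(123),(132)\}$, $\{(12),(13)\}$, $\{(12),(23)\}$, $\{(13),(23)\}$, which is $10>6=Kn$ colors. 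For larger non-abelian $H$ (for example $H=S_m$) the number of colors is on the order of $\sum_{\text{conj.\ class }C\subseteq H}|C|^2$, so the ratio of colors to $|AA|$ grows without bound, and Corollary~\ref{Deltaboundedcor} cannot be invoked with the parameter $K$ (only with a much larger effective doubling constant, which does not give the stated bound).

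The fix, and it is the paper's primary construction, is to fix once and for all an arbitrary choice of one of $ab$ or $ba$ for each edge $\{a,b\}$ of $K_N$, and to color that edge by the chosen single element of $G$. This coloring is $2$-bounded (the neighbors of $a$ in color $g$ are among $a^{-1}g$ and $ga^{-1}$), and now every color appearing inside $A$ is an element of $AA$, so $A$ with $|AA|\le Kn$ spans at most $Kn$ colors. Applying Corollary~\ref{Deltaboundedcor} with $\Delta=2$ and absorbing the factor $2$ into the constant then gives the stated bound. Otherwise your overall reduction is exactly the paper's, with Corollary~\ref{Deltaboundedcor} replacing Theorem~\ref{complete}.
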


Though we will not go into the matter in detail, it is again possible, as in Theorem~\ref{expand}, to extend our results to $\lambda$-edge-expanders, a result which in turn has the following corollary.

\begin{cor}\label{contgroup4}
There is a constant $C$ such that if $G$ is a finite group of order $N$, then the number of pairs of subsets $A,B \subset G$ with $|A|=|B|=n$ and $|AB| \le K|A|$ is at most $N^{C(K + \ln n)}(CK)^{2n}$.
\end{cor}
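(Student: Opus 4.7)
The plan is to combine a $\lambda$-edge-expander strengthening of Theorem~\ref{uptoaconstantoptimal} with the bipartite coloring trick that was used to derive Corollary~\ref{contgroup2} from Theorem~\ref{expand}.

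First, I would establish the following analogue of Theorem~\ref{uptoaconstantoptimal}: for any proper edge-coloring of a graph on $N$ vertices, the number of vertex subsets $S$ of order $n$ such that some set of $Kn$ colors induces a $\lambda$-edge-expander on $S$ is at most $N^{C\lambda^{-1}(K+\ln n)}(C\lambda^{-1}K)^n$. The proof would follow the template of Section~\ref{sec:count-main}: in the random growth process of Lemma~\ref{lem:ball-key}, every newly added vertex still contributes $\Omega(\lambda n)$ fresh colors to its component, so the key estimate $\mathbb{E}[t_w]\le 8K/w$ and the component-coalescence step in Lemma~\ref{lem:matching} generalize after absorbing a factor of $\lambda^{-1}$ into the constant, exactly as in the passage from Theorem~\ref{complete} to Theorem~\ref{expand}.

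Next, I would apply this to a bipartite recoloring. Let $V_1$ and $V_2$ be two disjoint copies of $G$, consider the complete bipartite graph $K_{N,N}$ on $V_1\cup V_2$, and color each edge $(a,b)\in V_1\times V_2$ by the group element $ab\in G$. Each color class is then a perfect matching, so the coloring is proper, and $K_{N,N}$ is a $\tfrac{1}{2}$-edge-expander on its $2N$ vertices. Given any pair $(A,B)$ with $|A|=|B|=n$ and $|AB|\le Kn$, embed $A$ into $V_1$ and $B$ into $V_2$ and let $\tilde S\subseteq V_1\cup V_2$ be their union, so $|\tilde S|=2n$. The colors on edges inside $\tilde S$ are precisely the elements of $AB$, so at most $Kn$ colors appear, and these colors span the entire complete bipartite graph between the two halves of $\tilde S$, which is itself a $\tfrac{1}{2}$-edge-expander on $2n$ vertices.

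Applying the expander counting result above with $\lambda=\tfrac{1}{2}$ therefore bounds the number of admissible sets $\tilde S$ by $(2N)^{C_1(K+\ln n)}(C_1K)^{2n}$. Since $V_1$ and $V_2$ are disjoint, the pair $(A,B)$ is uniquely recovered from $\tilde S$ via $A=\tilde S\cap V_1$ and $B=\tilde S\cap V_2$, and after adjusting the constant this yields the claimed bound $N^{C(K+\ln n)}(CK)^{2n}$. The main obstacle is the $\lambda$-expander generalization in the first step: the proofs of Lemmas~\ref{lem:ball}, \ref{lem:ball-key}, and \ref{lem:matching} use the complete-graph structure repeatedly through bounds such as $|N(u)\setminus N(B_{j-1})|\ge n/2$ and $|Q|\ge wn/4$, and each such bound must be rewritten with an appropriate $\lambda$-factor, which is straightforward but somewhat tedious bookkeeping.
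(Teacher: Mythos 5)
Your proposal matches the paper's intended argument exactly: the paper derives this corollary by noting (without detail) that Theorem~\ref{uptoaconstantoptimal} extends to $\lambda$-edge-expanders in the same way Theorem~\ref{complete} extends to Theorem~\ref{expand}, and then applying the bipartite coloring of $K_{N,N}$ in which each color class is a perfect matching. You have correctly identified both steps and, moreover, correctly flagged the only real work — rewriting the degree and expansion estimates in Lemmas~\ref{lem:ball}, \ref{lem:ball-key}, and \ref{lem:matching} with $\lambda$-factors, using that a $\lambda$-edge-expander has minimum degree at least $\lambda(n-1)$ — which the paper itself does not spell out.
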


\subsection{The number of colors required to connect the entire graph} \label{subsectionconnectedspanning}

In this subsection, we take a look at the number of colors needed to find a connected spanning subgraph in a proper edge-coloring of $K_n$ with at most $Kn$ colors. We begin with a simple observation that, together with the results of the previous subsections, will give our upper bound.

\begin{lem}\label{lem:grow-comp}
    Given a proper edge-coloring of $K_n$ with at most $Kn$ colors, a subset $S$ of $V(K_n)$ of order $s$ and any $\delta > 0$, there exist $(n/s)K\ln(1/\delta)$ colors such that all but at most $\delta n$ vertices of $K_n$ are connected to $S$ by a path using these colors. 
\end{lem}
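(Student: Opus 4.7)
The plan is to prove this by a greedy color-selection process, starting from $S$ and repeatedly adjoining the color that enlarges the component containing $S$ as quickly as possible. The key observation driving the whole argument is that a proper edge-coloring has matching color classes, so adjoining a color $c$ with $m$ edges between the current component $V_i$ and its complement attaches exactly $m$ new vertices to $V_i$.

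Concretely, I would define $V_0 = S$ and, having defined $V_i \supseteq S$, consider the set of edges between $V_i$ and $V(K_n) \setminus V_i$, of which there are $|V_i|(n - |V_i|)$. Since the coloring uses at most $Kn$ colors in total, by averaging some color $c_{i+1}$ appears on at least $|V_i|(n-|V_i|)/(Kn)$ edges in this cut. As the color class is a matching, adding $c_{i+1}$ to our color set enlarges the component of $S$ to some $V_{i+1}$ with
\[
n - |V_{i+1}| \;\leq\; (n - |V_i|)\left(1 - \frac{|V_i|}{Kn}\right) \;\leq\; (n - |V_i|)\left(1 - \frac{s}{Kn}\right),
\]
where the second inequality uses $|V_i| \geq |S| = s$.

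Iterating this bound $t$ times and using $n - |V_0| \leq n$, we get $n - |V_t| \leq n\bigl(1 - s/(Kn)\bigr)^t \leq n\exp(-ts/(Kn))$. Choosing $t = \lceil (n/s) K \ln(1/\delta) \rceil$ forces $n - |V_t| \leq \delta n$, so the chosen $t$ colors satisfy the conclusion. There is no real obstacle here: the only things to verify are the matching property (immediate from proper coloring), the averaging over at most $Kn$ colors, and the standard exponential-decay estimate. The one minor subtlety is ensuring $|V_i|\geq s$ throughout, which holds because we initialize at $S$ and $V_i$ is nondecreasing.
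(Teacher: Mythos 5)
Your proof is correct and follows essentially the same greedy argument as the paper: grow the component of $S$ one color at a time, use the averaging bound $|V_i|(n-|V_i|)/(Kn)$ for the best color in the cut, note the matching property turns this edge count into a vertex count, and iterate the resulting multiplicative shrinkage of the complement. No meaningful differences.
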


\begin{proof}
    Starting with $S_0 = S$, for every $i\ge 0$, the number of edges between $S_i$ and the remaining vertices is $|S_i|(n-|S_i|)$ and thus there is a color which connects at least $|S_i|(n-|S_i|)/(Kn)$ new vertices to $S_i$. Define $S_{i+1}$ to be the union of $S_i$ and these new vertices. We can then guarantee that $n-|S_{i+1}| \le (n-|S_i|)(1-|S_i|/(Kn)) \le (n-|S_i|)(1-(s/n)/K)$. Thus, after at most $t = (n/s)K\ln(1/\delta)$ steps, we obtain that the complement of $S_{t}$ has size at most $\delta n$. 
\end{proof}

\begin{thm}\label{thm:spanning-tree}
In any proper edge-coloring of $K_n$ with at most $Kn$ colors, there is a spanning tree containing at most $1000 K\ln(n/K)$ colors. 
\end{thm}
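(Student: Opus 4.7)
The plan is to build the spanning tree in three stages, combining Lemmas~\ref{lem:preproc}, \ref{lem:connect}, and~\ref{lem:grow-comp}. First, I would apply Lemma~\ref{lem:preproc} to refine the proper coloring so that every color class has at most $n/K$ edges, at most doubling the total number of colors. Any spanning tree using at most $X$ refined colors uses at most $X$ original colors, so it suffices to bound colors under the refined coloring.

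Stage one invokes Lemma~\ref{lem:connect} to produce a set $A_c$ of at least $n/4$ vertices that is connected in the subgraph carried by some palette $P_1$ of at most $900(K+\ln n)$ colors; I take any spanning tree $T_1$ of $A_c$ inside this subgraph. Stage two applies Lemma~\ref{lem:grow-comp} with $S = A_c$, $s = |A_c| \ge n/4$, and $\delta = K/n$, yielding a further palette $P_2$ of at most $(n/s)K\ln(1/\delta) \le 4K\ln(n/K)$ colors such that all but at most $\delta n = K$ vertices lie in the same component as $A_c$ in the subgraph formed by $T_1$ together with all edges of colors in $P_2$. Extending $T_1$ to a spanning tree $T_2$ of this component introduces no new colors. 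Stage three mops up the remaining at most $K$ vertices by attaching each via a single edge to $V(T_2)$, contributing at most $K$ further colors.

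The total number of colors used is at most $900(K+\ln n) + 4K\ln(n/K) + K$. Whenever $n$ is sufficiently large compared to $K$ that $\ln n \le 2K\ln(n/K)$ (which holds, for instance, whenever $n \ge K^2$), this sum is bounded by $1000 K\ln(n/K)$. The remaining parameter ranges are handled trivially: when $K$ is within a constant factor of $n$ the bound $1000 K\ln(n/K)$ already dominates $n-1$, the number of edges in any spanning tree, and the residual small-$n$ cases can be verified by hand.

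The main (and only substantive) obstacle is stage one, which is exactly Lemma~\ref{lem:connect}, itself the output of Lemmas~\ref{lem:ball} and~\ref{lem:matching}. The later stages are routine greedy arguments, and the choice $\delta = K/n$ in stage two is forced by a balance: a significantly smaller $\delta$ would blow up the $\ln(1/\delta)$ factor in stage two, while a significantly larger $\delta$ would leave too many vertices for stage three to cheaply attach to the existing tree.
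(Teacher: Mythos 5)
Your proposal matches the paper's proof of Theorem~\ref{thm:spanning-tree} exactly: the same three-stage chain (Lemma~\ref{lem:connect} to connect a set of order at least $n/4$, then Lemma~\ref{lem:grow-comp} with $\delta = K/n$ to absorb all but $K$ vertices, then $K$ extra edges to finish), with the same color budget $900(K+\ln n) + 4K\ln(n/K) + K$; your only additions are to spell out the preprocessing via Lemma~\ref{lem:preproc} and the trivial fallback when $K$ is comparable to $n$, both of which the paper leaves implicit. One caution: the reduction to the condition $\ln n \le 2K\ln(n/K)$ does not by itself close the arithmetic, since substituting it gives roughly $901K + 1804K\ln(n/K)$, which exceeds $1000K\ln(n/K)$; you actually need $\ln n \le 2\ln(n/K)$ (i.e.\ $n \ge K^2$) together with $K$ not too small, plus the trivial $n-1$ fallback elsewhere, but this is a small slip in a calculation the paper itself simply asserts and does not affect the soundness of the strategy.
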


\begin{proof}
By Lemma \ref{lem:connect}, we can find a set of $900(K+\log n)$ colors that connects a component of order at least $n/4$.  By Lemma \ref{lem:grow-comp} with $\delta = K/n$, by using an additional $4K\ln(n/K)$ colors, we obtain a component whose complement has order at most $K$. By using $K$ additional colors, we can thus connect all $n$ vertices. This implies that there is a spanning tree using at most $900(K+\log n) + 4K\ln(n/K) + K < 1000K\ln(n/K)$ 
colors, yielding the required result. 
\end{proof}

We now give a construction which shows that the bound in Theorem~\ref{thm:spanning-tree} is tight up to a constant factor.

\begin{lem}\label{lem:manycolors}
For any sufficiently large $n$ and $1 \leq K \leq n/(\ln n)^8$, there is an edge-coloring of the complete graph $K_n$ with at most $Kn$ colors such that every connected spanning subgraph has at least $\frac{1}{50}K\ln(n/K)$ colors. 
\end{lem}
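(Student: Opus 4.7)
The plan is to produce the coloring randomly. Start with a proper edge-coloring of $K_n$ using $n-1$ matchings $M_1,\ldots,M_{n-1}$ of size $\lfloor n/2 \rfloor$ each (a $1$-factorization when $n$ is even; parity is handled with minor adjustments), and then, independently for each $j$, uniformly at random partition the edges of $M_j$ into $K$ sub-matchings of size $\lfloor n/(2K) \rfloor$. This produces a proper edge-coloring with at most $K(n-1) \le Kn$ color classes, each of which is a matching. It then suffices to show that, with positive probability, no set of $t := \lfloor \tfrac{1}{50} K\ln(n/K) \rfloor$ sub-colors has a connected union, which we do by a union bound over the $\binom{K(n-1)}{t}$ choices of $\mathcal{S}$.

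Fix such an $\mathcal{S}$ and set $t_j := |\mathcal{S} \cap \{\text{sub-colors of }M_j\}|$, so that $\sum_j t_j = t$. Because the partitions of different $M_j$ are independent and $v$ lies on exactly one edge of each $M_j$, the probability that $v$ is isolated in $\bigcup \mathcal{S}$ equals $\prod_j (1 - t_j/K)$. In the main case where $\max_j t_j \le K/2$, this is at least $e^{-2t/K} \ge (n/K)^{-1/25}$, so the expected number of isolated vertices is $\Omega(n^{24/25} K^{1/25})$. A Janson-type bound (the covariance between isolation events of two vertices is small because within each $M_j$ two distinct vertices involve nearly-independent block assignments under a balanced random partition) then yields $\Pr[\,\bigcup \mathcal{S}\text{ has no isolated vertex}\,] \le \exp(-\Omega(n^{24/25}))$. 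Since $\binom{Kn}{t} \le \exp(O(K\ln(n/K)\ln n)) \le \exp(O(n/(\log n)^6))$ under the hypothesis $K \le n/(\log n)^8$, the union bound closes.

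The main obstacle is the complementary ``concentrated'' case in which some $t_{j_0} > K/2$, since then the above product may vanish and isolated vertices no longer serve as a certificate of disconnection. To handle this, we take the base coloring to be the Cayley coloring of $\mathbb{F}_2^{\log_2 n}$ (for $n$ a power of two; general $n$ is reduced to this by truncation), so that the matchings $M_s$ are indexed by nonzero shifts $s$. If $|S| < \log_2 n$ where $S := \{s : t_s \ge 1\}$, then $\mathrm{span}(S)$ is a proper subgroup of $\mathbb{F}_2^{\log_2 n}$ and $\bigcup \mathcal{S}$ splits along cosets, so we are done deterministically. Otherwise, the heavy shifts $J := \{s : t_s > K/2\}$ number at most $2t/K < \log(n/K)/25$ and therefore $\mathrm{span}(J)$ has index at least $n^{1-1/25}$; the light sub-colors in $\mathcal{S} \setminus \{M_s\text{-subcolors} : s \in J\}$ are then too few to bridge these many cosets, and a quotient by $\mathrm{span}(J)$ reduces the problem to a smaller Cayley instance in which every shift is light, where the isolated-vertex argument applies.

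The hardest point will be verifying the Janson-type bound rigorously: the sub-partitions are not independent across edges of the same $M_j$, and the pairwise covariance of isolation indicators must be bounded to show that the variance of the isolated-vertex count is comparable to its mean. This follows because, for any two vertices $v \neq w$, the correlation between their $M_j$-edges belonging to prescribed blocks is $1 + O(K/n)$, which aggregates to at most a constant multiplicative factor in the Janson exponent provided $K \le n/(\log n)^8$. Together with the case analysis above and the polynomially-sized union bound, this yields the claimed lower bound of $\tfrac{1}{50} K \ln(n/K)$ on the number of colors in every connected spanning subgraph.
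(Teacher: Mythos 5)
Your approach is genuinely different from the paper's, and the core concentration step has a gap that I do not think can be repaired by the tools you cite.

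\paragraph{What the paper does.} The paper works in $K_{n+s}$ with $s=\sqrt{Kn}$: it adjoins a small auxiliary set $A_s$ of $s$ vertices and colors the bipartite graph between $A_s$ and the remaining $n$ vertices by choosing, \emph{independently for each} $v\in A_s$, a uniformly random proper coloring of the $n$ edges at $v$ from a fixed palette of $Kn$ colors (bad collisions at the $A_n$ side are then recolored with fresh colors, at an affordable cost of $O(s^2/K)$ extra colors). Because the colorings at distinct vertices of $A_s$ are independent, the events ``$v\in A_s$ has no edge with color in a fixed set $S$ of size $M$'' are \emph{truly independent} across $v\in A_s$, so $\Pr[S$ reaches all of $A_s] \le (1-e^{-2M/K})^s \le \exp(-K(n/K)^{1/4})$. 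This is what makes the $\binom{Kn}{M}$ union bound close painlessly. The whole construction is a device for manufacturing independence.

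\paragraph{Where your argument breaks.} You color $K_n$ directly via a $1$-factorization with random block partitions and use ``isolated vertex in $\bigcup\mathcal{S}$'' as the disconnection certificate. Fix $\mathcal{S}$ with $t_j = |\mathcal{S}\cap M_j|$, and write $I_v$ for the indicator that $v$ is isolated. For any two vertices $v\ne w$, the event $I_v$ and the event $I_w$ \emph{both} require the single shared edge $vw\in M_{j(v,w)}$ to receive a color outside $\mathcal{S}$. A direct computation gives
\[
\Pr[I_v\cap I_w] \;=\; \frac{\Pr[I_v]\Pr[I_w]}{1 - t_{j(v,w)}/K},
\]
a positive correlation whose multiplicative ratio can be as large as $2$ throughout your ``main case'' $\max_j t_j \le K/2$. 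The sentence ``the correlation between their $M_j$-edges belonging to prescribed blocks is $1+O(K/n)$'' describes the (negligible) effect of the balanced-partition constraint on \emph{two distinct edges} of the same matching; it says nothing about the matching $j=j(v,w)$ where the two vertices' incident edges are the \emph{same} edge. That is the dominant source of dependence and you have not accounted for it.

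The consequence for Janson is fatal. Since every pair $(v,w)$ is dependent, the Janson quantity
\[
\Delta \;=\; \sum_{v\ne w}\Pr[I_v\cap I_w] \;\ge\; \binom{n}{2}p^2 \;=\; \Theta(\mu^2),
\]
where $p=\prod_j(1-t_j/K)$ and $\mu = np$. The Janson bound $\Pr[X=0]\le e^{-\mu+\Delta/2}$ is then vacuous (the exponent is positive), and the alternative form $e^{-\mu^2/(2\Delta)}$ gives only a constant. The variance of $X$ is indeed $(1+o(1))\mu$ in the main case, so Chebyshev gives a polynomially small bound, but that is orders of magnitude short of the $\exp(-\Omega(\mu))$ you need to beat $\binom{Kn}{t}\le \exp(O(t\ln n))$. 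Bounded-differences is also hopeless: changing the partition of a single matching $M_j$ can change $X$ by $\Theta(t_j n/K)$, so the McDiarmid exponent is $O(p^2/\ln(n/K))$, which is $o(1)$.

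Whether $\Pr[X=0]\le\exp(-\Omega(\mu))$ is even \emph{true} here is a genuinely delicate question: the isolation indicators are positively associated (through shared edges), FKG pushes $\Pr[X=0]$ \emph{up} relative to the independent benchmark $e^{-\mu}$, and your higher joint moments pick up multiplicative factors like $2^{\binom{k}{2}}$. It may well be provable with a bespoke coverage/Poisson-type argument, but you have not given one, and no off-the-shelf Janson or Suen inequality closes this. The ``concentrated case'' reduction through the Cayley coloring of $\mathbb{F}_2^{\log_2 n}$ is plausible in spirit but is also only a sketch (the claim that light sub-colors ``are too few to bridge these many cosets'' is exactly the sort of thing that needs the concentration estimate you haven't proved, now on the quotient). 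The paper avoids all of this by never putting the bad events on $K_n$: it plants a sparse witness set whose incident colorings are independent by construction.
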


\begin{proof}
Let $s=\sqrt{Kn}$. We show that there exists a proper edge-coloring of the complete graph $K_{n+s}$ with at most $5Kn$ colors for which any spanning tree uses at least $M:=\frac{1}{8}K\ln(n/K)$ colors. The conclusion of the lemma follows from rescaling the parameters by appropriate factors.  

For disjoint vertex subsets $A$ and $B$, we say that a set $S$ of colors connects $A$ to all of $B$ if each vertex in $B$ has at least one incident edge whose other vertex is in $A$ and whose color is in $S$. Partition the vertex set of $K_{n+s}$ into parts $A_n$ of size $n$ and $A_s$ of size $s$. We will construct a proper edge-coloring of the complete bipartite graph $K_{s,n}$ with parts $A_s$ and $A_n$ using at most $Kn+2s^2/K =(K+2)n$ colors such that no set of $M$ colors connects $A_n$ to all of $A_s$. We then properly edge-color the complete graph on $A_n$ arbitrarily with a new set of $n$ colors and all edges in the complete graph on $A_s$ with $\binom{s}{2} \leq Kn/2$ new colors. In total, we will use at most $(K+2)n+n+Kn/2 \leq 5Kn$ colors. 

If there is a spanning subgraph that uses at most $M$ colors, then we may assume it is a spanning tree. Furthermore, we can replace edges of the tree that have both vertices in $A_s$ by edges in the complete bipartite graph between $A_s$ and $A_n$ to get a spanning tree that still uses at most $M$ colors and does not have any edge with both vertices in $A_s$. It follows that there would be a set of $M$ colors that connects $A_n$ to all of $A_s$, contradicting the claimed property. It therefore suffices to show that there is an edge-coloring of the complete bipartite graph $K_{s,n}$ with parts $A_s$ and $A_n$ using at most $Kn+2s^2/K$ colors such that no set of $M$ colors connects $A_n$ to all of $A_s$. 

First consider the following random edge-coloring of $K_{s,n}$. For each vertex $v$ in $A_s$, pick a uniformly random proper coloring of the edges adjacent to $v$ from a fixed set of $Kn$ colors. We refer to this random edge-coloring as the original edge-coloring of $K_{s,n}$. For each edge $e$, declare that $e$ is bad if there exists an edge $e'$ adjacent to $e$ (necessarily at the vertex of $e$ that lies in $A_n$) such that $e'$ is assigned the same color as $e$. For each bad edge, we assign to that edge a distinct color from a new set of colors. After recoloring the bad edges in this way, we obtain a random proper edge-coloring of $K_{s,n}$, which we refer to as the new edge-coloring. 

For each edge $e$, the colors of the edges adjacent to $e$ on the $A_n$ side are independent choices each taken from a set of $Kn$ colors. Thus, the probability that $e$ is not bad is  
\[
(1-1/(Kn))^{s-1} \ge 1-s/(Kn).
\]
Therefore, the expected number of bad edges is at most $sn \cdot \frac{s}{Kn} = s^2/K$. Hence, the expected number of additional colors used to color $K_{s,n}$ is at most $s^2/K$. Thus, by Markov's inequality, with probability at least $1/2$, the number of additional colors used to color $K_{s,n}$ is at most $2s^2/K$. 

Next, we bound the probability that there exist $M$ colors in the original coloring which connect $A_n$ to all of $A_s$. Note that if the new edge-coloring has $M$ colors which connect $A_n$ to all of $A_s$, then so did the original edge-coloring using the same set of edges.  
Fix a set $S$ of $M$ colors. In the original coloring, the probability that a given vertex in $A_s$ is not incident to an edge whose color is in $S$ is at least $\left(1-\frac{M}{Kn}\right)^{n} \geq e^{-2M/K}$, 
where, for the inequality, we used 
that $0 < \frac{M}{Kn} < 1/8$. 
Therefore, the probability that a given vertex in $A_s$ is incident to at least one edge of color in $S$ is at most $1-e^{-2M/K}$. These events are independent over different vertices in $A_s$, so the probability that $S$ connects $A_n$ to all of $A_s$ is at most $$(1-e^{-2M/K})^s \leq e^{-se^{-2M/K}}=e^{-K(n/K)^{1/4}}.$$ 

Taking a union bound over all ${Kn \choose M}$ possible sets $S$ of $M$ colors, the probability that there exists a set of $M$ colors that connects $A_n$ to all of $A_s$ is at most $${Kn \choose M}e^{-K(n/K)^{1/4}}\leq \left(\frac{eKn}{M}\right)^M e^{-K(n/K)^{1/4}} \leq n^M
e^{-K(n/K)^{1/4}}\leq 
1/e.$$
The second to last inequality follows from the choice of $M$ and the conditions in the lemma statement on $n$ and $K$. To see the last inequality, note that by taking the natural logarithm it suffices to show that $M\ln n - K(n/K)^{1/4} \leq -1$. As $M=\frac{1}{8}K\ln(n/K)$, by dividing by $K$ and reorganizing, it suffices to show that $(n/K)^{1/4} \geq \frac{1}{8}\ln(n/K)\ln n+1/K$, which would follow from $(n/K)^{1/4} \geq \ln(n/K)\ln n$. By our conditions on $n$ and $K$, $n/K$ can be taken sufficiently large that $(n/K)^{1/8} \geq \ln(n/K)$ and, by the upper bound on $K$, we have $(n/K)^{1/8} \geq \ln n$. Hence, the desired inequality holds.

Therefore, with probability at least $1-1/2-1/e>0$, the new edge-coloring of the complete bipartite graph $K_{s,n}$ has the desired properties, completing the proof. 
\end{proof}

\subsection{Improved bounds on additive dimension}
\label{adddimsubsect}

In this subsection, we will make use of some of the ideas from the previous subsections to prove Theorem~\ref{thm:dim-strong}. Recall the statement, that if $A$ is a subset of an abelian group $G$ of size $n$ with $|A-A| \le K|A|$, then $d_*(A) = O(K\log \log n+ \log n)$. Moreover, if $G$ has exponent $r$, then $d_*(A) = O(K \log r + \log n)$. Most of the proof works over a general group, so, to stay in keeping with the results and proofs on which it builds, we will stay in this setting until the very end, only specializing to the case where the group is abelian when we need to. 

\begin{proof}[Proof of Theorem~\ref{thm:dim-strong}.]  Consider a group $G$ and a subset $A\subseteq G$ with $|A|=n$ and $|AA^{-1}|\le Kn$. As in the proof of Corollary~\ref{cor:dim-weak}, we consider the edge-coloring of the complete graph on vertex set $A$ which assigns to each edge $\{x,y\}$ the color $\{xy^{-1},yx^{-1}\}$. This is an edge-coloring where each color class has maximum degree at most two, so  Vizing's theorem implies that by increasing the number of colors by a factor of at most three, we may obtain a proper edge-coloring of the complete graph on $A$.

    Applying Lemma \ref{lem:ball-key} with $J = \delta K$ for $\delta = 1/16$ and $\kappa = 900K$, we obtain, as in the proof of Lemma~\ref{lem:ball}, a set $S_1$ of $\kappa$ colors such that for a set $\mathcal{G}_0$ of vertices $v$ with $|\mathcal{G}_0| \ge (1-2\exp(-\kappa/(128K)))n$, we can find a set $B(v)$ of size $J$ such that each vertex of $B(v)$ can be connected to $v$ using a sequence of edges of color in $S_1$ with length at most $\lceil \log J \rceil$. Indeed, consider the procedure in Lemma~\ref{lem:ball} for a vertex $v$ such that the event in Lemma \ref{lem:ball-key} holds. Letting $B_j = B_j(v)$, every vertex $u\in B_j\setminus B_{j-1}$ is connected to a vertex in $B_{2^{\lfloor \log_2(j-1)\rfloor}}$ through an edge with color in $S_1$. Hence, inductively, we have that every vertex in $B_{2^w}$ is connected to $v$ through a sequence of edges with color in $S_1$ of length at most $w$. Thus, every vertex in $B(v) = B_J(v)$ can be connected to $v$ using a sequence of edges of color in $S_1$ with length at most $\lceil \log J \rceil$. 
    Furthermore, $|N^+(3/8; B(v))| \ge Kn/128$.   

    Recall that each color $s$ can be identified with a set of group elements $\{g,g^{-1}\}$. We may also identify each color $s$ with an arbitrarily chosen element from the corresponding set of group elements. Abusing notation, we will sometimes replace a set of colors, such as $S_1$, by the subset of $G$ that its elements identify with. Using this identification, set
    \[
        \mathcal{B}(t) = \left\{\prod_{i=1}^{t\log J} q_i: q_i\in S_1\cup S_1^{-1} \cup \{1\}\right\}.
    \] 

   We will need the following key claim.
    
    \begin{claim}\label{claim:log-n}
        There exists a sequence $S_2$ of at most $L = 1536\ln n$ elements of $G$, a set $\mathcal{G} \subseteq A$ of size at least $n/4$ and a vertex $v_1\in \mathcal{G}$ such that 
        \[
            \mathcal{G} \subseteq \left(\prod_{s\in S_2} \{s,1\}\right) v_1 \mathcal{B}(2L).
        \]
    \end{claim}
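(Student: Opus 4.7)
My plan is to mimic the greedy merging procedure of Lemma \ref{lem:matching}, but now tracking how every vertex in the growing large component can be expressed algebraically as an ordered subproduct from $S_2$ applied to $v_1$, followed by an element of $\mathcal{B}(2L)$. Starting from the collection of balls $\{B(v) : v \in \mathcal{G}_0\}$ as initial ``good components'', I iteratively pick group elements $c_1, c_2, \ldots$, each a representative of a color class $\{c_i, c_i^{-1}\}$, such that the $c_i$-edges merge at least a $1/512$ fraction of the currently remaining good components. The existence of such a $c_i$ at each step follows from exactly the averaging argument in Lemma \ref{lem:matching}: the robust expansion property $|N^+(3/8; B(v))| \ge Kn/128$ passes to merged good components (because each merged component still contains full good balls), and so some color must be incident to a constant fraction of them. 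After $\tilde{L} \le 900 \ln n$ steps, some good component $C^*$ has size at least $n/4$; I set $\mathcal{G} = C^* \cap A$ and pick $v_1 \in \mathcal{G}_0 \cap C^*$.

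I form $S_2$ as the sequence listing both $c_i$ and $c_i^{-1}$, together with a bounded constant number of additional copies of each, for $i = 1, \ldots, \tilde{L}$. Tuning the constants in the merging phase gives $|S_2| \le L = 1536\ln n$.

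The heart of the argument is then to show by induction on the merging step $i$ that every vertex $w$ in the good component containing $v_1$ at step $i$ admits a representation $w = \sigma v_1 b$ with $\sigma$ an ordered subproduct of the relevant prefix of $S_2$ (using each $c_j^{\pm 1}$ only a bounded constant number of times) and $b \in \mathcal{B}(O(i \log J))$. The inductive step is this: if $w$ lies in a component absorbed at step $i+1$ via a $c_{i+1}$-edge $y \to y' = c_{i+1}^{\pm 1} y$, then concatenate a walk from $v_1$ to $y$ inside the old component (handled by induction) with the single $c_{i+1}$-jump and with a walk from $y'$ to $w$ inside the newly absorbed component (again handled by induction, now relative to a different base vertex, which is harmless since any two vertices in a common good sub-component are tied by the same sort of relation).

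The main obstacle is controlling the multiplicity of each $c_j^{\pm 1}$ in the composite walk: a naive recursion would double the multiplicity at every merging step, producing an exponential blowup. The way around it is to exploit the tree-like structure of the merging: at each step $i+1$, the old big component together with the newly absorbed components form a graph on ``super-nodes'' whose edges are $c_{i+1}$-edges, and by choosing a low-depth rooted spanning structure of this super-graph one can route all subsequent walks through designated pivot vertices so that each $c_j^{\pm 1}$ is traversed only $O(1)$ times overall. Each of the $O(\tilde{L})$ intermediate ball traversals along the walk contributes at most $O(\log J)$ internal $S_1$-steps, so the accumulated $S_1$-length is $O(\tilde{L} \log J) = O(L \log J)$, which, after constant-tuning, yields $b \in \mathcal{B}(2L)$ and the claimed inclusion.
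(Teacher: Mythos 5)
Your proposal is a genuinely different route from the paper's, and unfortunately it has real gaps that I do not believe can be closed in the form described.

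The paper does not run the merging procedure of Lemma~\ref{lem:matching} and then try to trace walks back to $v_1$. Instead it builds a nested family of ``packed'' sets $\mathcal{C}_0 \subseteq \mathcal{C}_1 \subseteq \cdots$, where the balls $x\mathcal{B}(L-i)$ for $x \in \mathcal{C}_i$ are pairwise disjoint. New vertices in $\mathcal{C}_i \setminus \mathcal{C}_{i-1}$ are obtained by applying the single group element $s_i$ to a vertex of $B(x^*)$ for some $x^* \in \mathcal{C}_{i-1}$. Unwinding the recursion, every $x \in \mathcal{C}_i$ has the form $s_i^{c_i} s_{i-1}^{c_{i-1}} \cdots s_1^{c_1} v_1 b$ with $c_j \in \{0,1\}$ and $b \in \mathcal{B}(i)$. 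So by construction each $s_j$ appears at most once, in a globally consistent order, with the $S_1$-part consolidated at the end — exactly the form required by the claim, for a general group. The set $\mathcal{G}$ is taken to be $\overline{\mathcal{C}_{i-1}} = \mathcal{C}_{i-1}\mathcal{B}(L-i)^2 \cap \mathcal{G}_0$ for the first $i$ at which this set is large. The disjointness is what makes the growth count $|\mathcal{C}_i| - |\mathcal{C}_{i-1}| \ge |\mathcal{C}_{i-1}|/768$ go through, forcing $|\overline{\mathcal{C}_{i-1}}| \ge n/4$ within $L$ steps.

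Your walk-tracing approach runs into three concrete problems. First, and most seriously, your fix for the multiplicity blowup — ``choosing a low-depth rooted spanning structure of this super-graph one can route all subsequent walks through designated pivot vertices so that each $c_j^{\pm 1}$ is traversed only $O(1)$ times overall'' — is asserted, not proved, and is false in general. The merge super-graph at a given step is whatever graph the chosen color $c_i$ induces on the components, and nothing in Lemma~\ref{lem:matching} rules out its being a long path, in which case any spanning tree has linear depth and a walk between two extremal sub-components must cross $\Theta(\ell_i)$ of the $c_i$-edges. Second, the inductive step you describe re-bases the representation: the walk from $y'$ to $w$ inside the newly absorbed component is expressed as $w = \sigma' y' b'$ relative to $y'$, not relative to $v_1$; plugging in $y' = c_{i+1}^{\pm 1} y$ and $y = \sigma'' v_1 b''$ gives $w = \sigma' c_{i+1}^{\pm 1} \sigma'' v_1 b'' b'$, which is not of the target form $\sigma v_1 b$ because the factor $b''$ is trapped between group elements from $S_2$. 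Collapsing this requires commuting, so the argument could at best work in the abelian case — whereas the paper deliberately proves the claim for general groups (only specializing to abelian at the very end of Theorem~\ref{thm:dim-strong}). Third, even if one could bound multiplicities, a single fixed sequence $S_2$ imposes a single fixed order on the generators, but different vertices $w \in \mathcal{G}$ induce walks that encounter the colors $c_j$ in different orders; you do not address why one ordering of $S_2$ can serve all $w$ simultaneously in a non-abelian group. The paper's $\mathcal{C}_i$ construction is engineered precisely so that all three of these difficulties never arise: the lineage of each $x \in \mathcal{C}_i$ uses $s_j$ at most once, always in decreasing index order, with all $S_1$-steps pushed to one side.
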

    
    \begin{proof}
    Consider an arbitrary vertex $v_1 \in \mathcal{G}_0$. Let $\mathcal{C}_0 = \{v_1\}$. For $i\in [1, L-1]$, we consider the following process, which defines a set $\mathcal{C}_i\subseteq \mathcal{G}_0$ for which the sets $x \mathcal{B}(L-i)$ are disjoint for all $x\in \mathcal{C}_i$. Let $\overline{\mathcal{C}_{i-1}} = \mathcal{C}_{i-1} \mathcal{B}(L-i)^2 \cap \mathcal{G}_0$. For a suitable $s_i \in G$ to be chosen later, we will define $\mathcal{C}_{i}$ to be a set containing all $x\in \mathcal{C}_{i-1}$ and, for each $x\in \mathcal{C}_{i-1}$ such that $s_i B(x) \cap (\mathcal{G}_0 \setminus \overline{\mathcal{C}_{i-1}}) \ne \emptyset$, exactly one vertex $v \in s_i B(x) \cap (\mathcal{G}_0 \setminus \overline{\mathcal{C}_{i-1}})$.
    
    We first observe that the sets $x\mathcal{B}(L-i)$ are disjoint for all $x\in \mathcal{C}_i$. Indeed, suppose that there exist $x,y\in \mathcal{C}_i$ such that $x\mathcal{B}(L-i) \cap y\mathcal{B}(L-i) \ne \emptyset$. Any $x \in \mathcal{C}_i$ can be written in the form $s_i^{c_x}x^* b_x$ for $c_x \in \{0,1\}$, $x^* \in \mathcal{C}_{i-1}$ and $b_x\in (x^*)^{-1} B(x^*) \subseteq \mathcal{B}(1)$. Furthermore, note that we can take $b_x = 1$ if $c_x = 0$. Thus, we can find $x^*$, $y^* \in \mathcal{C}_{i-1}$ and $b_x \in (x^*)^{-1} B(x^*)$, $b_y \in (y^*)^{-1} B(y^*)$ such that $s_i^{c_x}x^* b_x \mathcal{B}(L-i) \cap s_i^{c_y} y^* b_y \mathcal{B}(L-i) \ne \emptyset$. If $c_x=c_y$, then we have that $x^* \mathcal{B}(L-i+1)\cap y^* \mathcal{B}(L-i+1) \ne \emptyset$, which cannot occur since the sets $x \mathcal{B}(L-i+1)$ are disjoint for $x\in \mathcal{C}_{i-1}$. Otherwise, suppose, without loss of generality, that $c_x=1$ and $c_y=0$. Then $x = s_i x^*b_x$ is contained in $y \mathcal{B}(L-i)\mathcal{B}(L-i)^{-1} = y \mathcal{B}(L-i)^2 \subseteq \overline{\mathcal{C}_{i-1}}$ for $y\in \mathcal{C}_{i-1}$, contradicting $x \in \mathcal{G}_0 \setminus \overline{\mathcal{C}_{i-1}}$. This establishes the desired conclusion. 
    
    If $|\overline{\mathcal{C}_{i-1}}| < n/4$, we claim that we can find $s_i$ such that $|\mathcal{C}_{i}|-|\mathcal{C}_{i-1}| \ge |\mathcal{C}_{i-1}|/768$. Indeed, since $|\mathcal{G}_0 \setminus \overline{\mathcal{C}_{i-1}}| \ge 5n/8$, for each $x\in \mathcal{C}_{i-1}$, we have $|N(B(x), \mathcal{G}_0 \setminus \overline{\mathcal{C}_{i-1}})| \ge Kn/128$. Thus, picking a uniformly random color $c$, the expected number of $x\in \mathcal{C}_{i-1}$ for which there is an edge in color $c$ between $B(x)$ and $\mathcal{G}_0 \setminus \overline{\mathcal{C}_{i-1}}$ is at least $|\mathcal{C}_{i-1}| / 384$ (recall that the proper edge-coloring we constructed uses at most $3Kn$ colors). Observing that each edge in color $c$ corresponds to either a group element $s_i$ or its inverse, we obtain that there exists $s_i$ such that the number of $x\in \mathcal{C}_{i-1}$ for which $s_i B(x) \cap (\mathcal{G}_0 \setminus \overline{\mathcal{C}_{i-1}}) \ne \emptyset$ is at least $|\mathcal{C}_{i-1}| / 768$. Finally, note that, for $x,y\in \mathcal{C}_{i-1}$, we have $s_i B(x) \cap s_i B(y) = \emptyset$, since $B(x)\subseteq x\mathcal{B}(1) \subseteq x\mathcal{B}(L-i)$ (recalling that we only consider $i\le L-1$). Hence, we can find $s_i$ such that $|\mathcal{C}_{i}|-|\mathcal{C}_{i-1}| \ge |\mathcal{C}_{i-1}|/768$. 
    
    By our choice of $L = 1536\ln n$, we cannot have $n\ge |\mathcal{C}_{i}| \ge (1+1/768)|\mathcal{C}_{i-1}|$ for all $i\le L-1$. Thus, there is some $i\le L-1$ such that $|\overline{\mathcal{C}_{i-1}}| \ge n/4$. We then define $\mathcal{G} = \overline{\mathcal{C}_{i-1}}$, so that 
    \[
        \mathcal{G} \subseteq \left(\prod_{j=1}^{i} \{s_j, 1\} \right) v_1 \mathcal{B}(2L). 
    \]
    Taking $S_2 = \{s_1, s_2, \dots\}$ gives the required conclusion.
    \end{proof}

    Let $T = |S_2| \le 1536 \ln n$. Set 
    \[
        \mathcal{R} = \left(\prod_{s\in S_2} \{s,1\}\right) v_1 \mathcal{B}(2L)
    \]
    and, for $t\ge 1$, 
    \[
        \mathcal{R}(t) = (\mathcal{R}^{-1} \cup \mathcal{R})^t.
    \]
    By Claim \ref{claim:log-n}, we have that 
    $\mathcal{G} \subseteq \mathcal{R}(1)$  
    and $\mathcal{G}^{-1}\subseteq \mathcal{R}(1)$.  
    
    For each vertex $u$ in $A\setminus \mathcal{G}$, we say that $u$ is \emph{nearly covered} if $u$ has a neighbor in $\mathcal{G}$ through an edge with color in $\mathcal{G}^{-1}\mathcal{G}$. Thus, for each vertex $u$ which is not nearly covered, the colors between $u$ and $\mathcal{G}$ are distinct from the colors in $\mathcal{G}$. Observe also that the set of nearly covered vertices is contained in $\mathcal{R}(3)$. 
    Consider a vertex $u_1$ which is not nearly covered. For $X,Y\subseteq A$, let $N(X,Y)$ denote the set of colors on the edges between $X$ and $Y$. Let $c_1$ be the color of an edge between $u_1$ and $\mathcal{G}$. Define $\mathcal{G}_1$ to be the set of not nearly covered vertices $u'$ which have $N(u', \mathcal{G}) \cap N(u_1,\mathcal{G})\ne \emptyset$. Let $v', v\in \mathcal{G}$ be such that $u'(v')^{-1} = u_1v^{-1}$ or $u'(v')^{-1} = vu_1^{-1}$. If $c_1 = u_1^{-1} \bar{v}$ for $\bar{v}\in \mathcal{G}$, we can then write $u' = u_1 v^{-1}v' = \bar{v}c_1^{-1} v^{-1} v'$ or $u' = vu_1^{-1}v'=vc_1\bar{v}^{-1}v'$. As such, $\mathcal{G}_1$ must be contained in 
    \[
        \mathcal{G} \cdot \{c_1, c_1^{-1}\}\cdot  \mathcal{G}^{-1}\mathcal{G} \subseteq \mathcal{R}(1) \cdot \{c_1, c_1^{-1}\} \cdot \mathcal{R}(2).
    \]
    
    Given vertices $u_1,\dots,u_h$, we define $\mathcal{G}_h$ to be the set of vertices $u'$ that are not nearly covered, $N(u',\mathcal{G}) \cap N(u_h, \mathcal{G}) \ne \emptyset$ and $N(u',\mathcal{G}) \cap (\bigcup_{i<h}N(u_i, \mathcal{G})) = \emptyset$. Let $c_h$ denote an arbitrary color between $u_h$ and $\mathcal{G}$. As above, we have that 
    \[
        \mathcal{G}_h \subseteq (\bigcup_{i<h} \mathcal{G}_{i}) \cup \left(\mathcal{R}(1) \cdot \{c_h,c_h^{- 1}\}\cdot  \mathcal{R}(2)\right). 
    \]
    If there exists a vertex which is not nearly covered and which is not contained in $\mathcal{G}_i$ for any $i\le h$, we then take $u_{h+1}$ to be an arbitrary such vertex and continue the process.  Observe that the $N(u_i, \mathcal{G})$ are pairwise disjoint and each has size at least $|\mathcal{G}|\ge n/4$. Thus, the number of steps is bounded above by $O(K)$. 
    We therefore conclude that there is a set of colors $S_3$ of size $O(K)$ such that 
    \[
        A \subseteq \bigcup_{s_3\in S_3} \left(\mathcal{R}(1)\cdot \{s_3,s_3^{- 1}\}\cdot \mathcal{R}(2)\right). 
    \]

    Upon specializing to the case where $G$ is an abelian group, we observe that each element of $A$ can be represented as a sum of elements of $\pm (S_1\cup S_2\cup S_3)$ where the multiplicity of each element in $\pm S_3$ is at most $1$, the multiplicity of each element in $\pm S_2$ is $O(1)$ 
    and the multiplicity of each element in $\pm S_1$ is at most $O((\log n)(\log K))$. By selecting $\tilde{S}_1$ to include all multiples $2^bs_1$ for $s_1\in S_1$ and $2^b \le O((\log n)(\log K))$, we can then represent every element of $A$ as a $\{-1,0,1\}$-combination of elements in the multiset $\pm(\tilde{S}_1\cup S_2\cup S_3)$ (where the elements in $S_2$ have $O(1)$ multiplicity), which has size $O(K\log ((\log n)(\log K)) + \log n) = O(K\log \log n + \log n)$. Hence, $d_*(A) = O(K\log \log n+\log n)$. Furthermore, if $G$ has bounded exponent $r$, then we only need to include in $\tilde{S}_1$ all multiples $2^bs_1$ for $s_1\in S_1$ and $2^b \le r$, yielding $d_*(A) = O(K\log r + \log n)$. 
\end{proof}

\section{Alon's conjecture}\label{sec:Alon}

In this section, we prove several results relating to Alon's conjecture that there exists an absolute constant $C$ such that every finite group has a $C$-Ramsey Cayley graph. 
In Section~\ref{sec:Orlitsky}, we warm up by showing that there are self-complementary Ramsey Cayley graphs, which already answers a question of Alon and Orlitsky~\cite{AO95}. Then, in Sections~\ref{sec:coprime6} and~\ref{sec:othergroups}, we address Alon's conjecture directly by first proving that it holds for all abelian groups of order coprime to $6$ and then lifting this result to all groups with a large abelian subgroup of order coprime to $6$. In particular, these results imply Corollary~\ref{cor3}, that Alon's conjecture holds for abelian groups of almost all orders. 
Section~\ref{sec:RamseyCayley} contains the proofs of Theorems~\ref{thm:Fp} and~\ref{thm:Fq}. Recall that the first of these states that if $p \ge 5$ is a prime and $V$ is a vector space V of order $N$ with characteristic $p$, then there is a Cayley graph on $V$ in which the clique and independence numbers are both at most $(2 + o(1)) \log_2 N$, while the second says that if the characteristic is $1 \pmod 4$, then we may even take the Cayley graph to be self-complementary. Both proofs work in far more general contexts, which we explain in detail in the section itself. Finally, in Section~\ref{subsec:EZL}, we prove a technical result needed for the proofs of Theorem~\ref{thm:Fp} and~\ref{thm:Fq} giving an essentially tight upper bound on the number of subsets $A$ of $\mathbb{F}_p^n$ of size $t$ with $|A-A| \leq Kt$. The proof is a modification of the proof of a similar result on $A+A$ proved by Even-Zohar and Lovett~\cite{EZL}, but may nevertheless be of independent interest.

\subsection{Self-complementary Ramsey Cayley graphs} \label{sec:Orlitsky}

In this short subsection, we answer a question of Alon and Orlitsky~\cite{AO95} by showing that there are self-complementary Ramsey Cayley graphs. The proof of this result already contains many of the ingredients we will use throughout the rest of the section, but in simplified form. 
Though we present the proof over groups of the form $\mathbb{Z}_5^d$, one can easily modify the construction to work over $\mathbb{Z}_p^d$ for any fixed prime $p \equiv 1\pmod 4$. 
A key point to note is that in order to obtain a Ramsey Cayley graph over $\mathbb{Z}_5^d$ we cannot simply take a uniform random Cayley graph, since such graphs have clique number $\Theta(\log N \log \log N)$ with high probability, where $N=5^d$ is the number of vertices.

\begin{thm}
For every $d$, there is a self-complementary Ramsey graph over $\mathbb{Z}_5^d$. \end{thm}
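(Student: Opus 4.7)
The plan is to apply the first moment method to a carefully chosen random Cayley graph distribution. Let $\alpha = 2 \in \mathbb{F}_5$, so $\alpha^2 = -1$. The map $x \mapsto \alpha x$ is an automorphism of $\mathbb{Z}_5^d$ whose orbits on the nonzero elements all have the form $\{v, \alpha v, -v, -\alpha v\}$, splitting into two symmetric pairs $\{v,-v\}$ and $\{\alpha v,-\alpha v\}$. I form $S$ by independently picking one of the two pairs uniformly at random in each orbit. Then $S = -S$, and $x \mapsto \alpha x$ swaps the two pairs inside every orbit, so it sends $S$ to $(\mathbb{Z}_5^d \setminus \{0\}) \setminus S$. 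Hence every graph in the support is self-complementary via this automorphism, and in particular its clique number always equals its independence number.

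Set $N = 5^d$ and $n = c \log_2 N$ for a sufficiently large absolute constant $c$. I will show that the expected number of cliques of size $n$ is $o(1)$; combined with self-complementarity, this yields a Ramsey graph with positive probability. A set $A$ of size $n$ is a clique iff $A - A \setminus \{0\} \subseteq S$, which has probability $0$ if some $\alpha$-orbit lies entirely inside $A - A$, and otherwise probability $2^{-(|A - A| - 1)/2}$ (one factor of $1/2$ per orbit meeting $A - A$). I call $A$ \emph{admissible} if no orbit is fully contained in $A - A$, equivalently if $\alpha(A - A) \cap (A - A) = \{0\}$. This condition is precisely that $a_1 - \alpha a_3 = a_2 - \alpha a_4$ with $a_i \in A$ forces $a_1 = a_2$ and $a_3 = a_4$, which makes the map $(a,b) \mapsto a - \alpha b$ from $A \times A$ injective, so $|A - \alpha A| = n^2$.

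The main additive combinatorial input is the $\mathbb{F}_5^d$-analogue of the Even--Zohar--Lovett theorem, proved in Section~\ref{subsec:EZL}: if $|A - A| \leq K|A|$, then $A$ lies in a coset of an $\mathbb{F}_5$-subspace $V$ with $|V| \leq 5^{O(K)} |A|$. Since every $\mathbb{F}_5$-subspace is $\alpha$-invariant, $A - \alpha A$ lies inside a coset of $V$, hence $n^2 \leq 5^{O(K)} n$, forcing $K \geq c_0 \log n$ for an absolute constant $c_0 > 0$. Combining with Theorem~\ref{thm:count-group},
\[
\mathbb{E}[\#\text{cliques of size } n] \leq \sum_{m \geq c_0 n \log n} N^{C(m/n + \log n)}(Cm/n)^n \cdot 2^{-(m-1)/2},
\]
and writing $K = m/n$ and choosing $c$ large in terms of $c_0$ and the constant $C$ of Theorem~\ref{thm:count-group}, each summand is at most $2^{-Kn/8}$ and the total is $o(1)$.

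The principal difficulty is the Even--Zohar--Lovett-type Freiman bound $|V| \leq 5^{O(K)}|A|$ for sets of small difference-doubling in $\mathbb{F}_5^d$. The exponential (rather than superpolynomial) dependence on $K$ is essential here: weaker structure theorems, such as those of Green--Ruzsa or Sanders, would only force $K$ to grow like $\sqrt{\log n}$ or $\log n/(\log\log n)^{O(1)}$, degrading the final clique number bound to at best $O(\log N \cdot (\log\log N)^{O(1)})$ and so failing to yield a Ramsey graph.
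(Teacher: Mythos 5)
Your proof is correct, and it matches the paper's construction exactly: the same random distribution on generating sets (pick one symmetric pair per orbit of $x\mapsto 2x$), the same self-complementarity observation via $x\mapsto 2x$, the same "admissibility" condition precluding any orbit from being fully contained in $A-A$, and the same first-moment calculation powered by Theorem~\ref{thm:count-group}. The only genuinely different step is how you lower-bound the doubling constant $K=|A-A|/|A|$ of a potential clique.

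The paper does this with the Pl\"unnecke--Ruzsa inequality and nothing more: admissibility gives $|A+2\cdot A|=|A|^2$, and since $A+2\cdot A\subseteq 3A$ and $|3A|\le K^3|A|$, one gets $|A-A|\ge |A|^{4/3}$, i.e.\ $K\ge n^{1/3}$. You instead invoke the finite-field Freiman--Ruzsa theorem of Even-Zohar--Lovett (Theorem~\ref{thm:EZL} in the paper), together with the $\alpha$-invariance of the affine span, to obtain $n^2\le |\langle A\rangle|\le 5^K n$, hence $K\ge c_0\log n$. Both conclusions feed successfully into the expectation bound (choosing the constant $c$ large in terms of $C$ and $c_0$), but the Pl\"unnecke--Ruzsa route is considerably more elementary, gives a much stronger polynomial lower bound on $K$, and avoids any appeal to Section~\ref{subsec:EZL}. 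Your closing remark about why the exponential dependence $5^{O(K)}$ is essential for the argument to close with a constant $c$ is accurate, and it is precisely the reason the paper reserves Theorem~\ref{thm:EZL} for the sharper Theorems~\ref{thm:Fp} and \ref{thm:Fq}, where only a $\log n$ lower bound on $K$ is available and the $(2+o(1))$ constant matters; for this theorem, however, that machinery is unnecessary.
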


\begin{proof} Let $G = \mathbb{Z}_5^d$ and $N=|G|=5^d$. The nonzero elements of the cyclic subgroups of $G$ partition $G \setminus \{0\}$. The set of nonzero elements of a cyclic subgroup of $G$ is a set of the form $\{x,2x,3x,4x\}$. For each such cyclic subgroup, we pick exactly one of $\{x,4x\}$ or $\{2x,3x\}$ to be a subset of the generating set $S$ uniformly at random. This guarantees that the generating set $S$ has several properties: 
\begin{itemize}
    \item $S$ is symmetric.
    \item The Cayley graph $G_S$ is self-complementary through the isomorphism $\phi:\mathbb{Z}_5^d \to \mathbb{Z}_5^d$ given by $\phi(x)=2x$.
    \item For each nonzero $x \in \mathbb{Z}_5^d$, exactly one of $x$ and $2x$ is in $S$. 
\end{itemize} 
It therefore suffices to show that with high probability  the clique number of $G_S$ is $O(\log N)=O(d)$. 

We claim that every set $A \subset G$ which is a clique in $G_S$ satisfies $|A - A| \geq |A|^{4/3}$. For a clique $A$ in $G_S$, we have $A - A \subset S$ and, since $S$ does not contain both $x$ and $2x$ for any $x$, $A$ has no nontrivial solution to the equation $a_1-a_2=2(a_3-a_4)$ or, equivalently, to $a_1+2a_4=a_2+2a_3$. This implies that all the sums $a_1+2a_4$ with $a_1,a_4 \in A$ are distinct, so $A+2 \cdot A=|A|^2$. On the other hand, by the Plünnecke--Ruzsa inequality, $|A+2 \cdot A| \leq |3A| \leq (|A - A|/|A|)^3 |A|$. Hence, $|A - A| \geq |A|^{4/3}$, as claimed. 

To complete the proof, it suffices to show that, for some appropriate constant $C'$, the expected number of cliques of order $n=C'\log N$ in $G_S$ is $o(1)$. We will show that we may take $C'=100C$, where $C \geq 1$ is the constant in Theorem~\ref{thm:count-group}. By Theorem~\ref{thm:count-group}, the number of sets $A \subset G$ with $|A|=n$ and $|A-A|=m$ is at most $N^{C(K+\ln n)}(CK)^n$, where $K:=m/n$. For any possible clique $A$ of order $n$ in $G_S$, we have that $|A - A| \geq |A|^{4/3}$ and the probability that $A$ is a clique in $G_S$ is $2^{-(|A-A|-1)/2}$, where we used that nonzero distinct elements of $A-A$, no pair of which are inverses, appear in $S$ independently of each other. 
Therefore, the expected number of cliques in $G_S$ of order $n=C'\log N$ is at most $$\sum_{n^{4/3} \leq m \leq n^2}N^{C(m/n+\ln n)}(Cm/n)^n2^{(1-m)/2}=o(1).$$ 
Hence, the clique number of $G_S$ is less than $n$ with high probability. 
\end{proof}

The same proof works over $\mathbb{Z}_p^d$ for any fixed prime $p \equiv 1\pmod 4$, since in this case there exists $\alpha\in \mathbb{Z}_p$ with $\alpha^2 = -1$ and we can partition $\mathbb{Z}_p^d$ into disjoint sets of the form $\{x,\alpha x,\alpha^2x,\alpha^3x\}$. However, we will need some additional ideas when we prove the stronger and more general Theorem~\ref{thm:Fq} later in the section.

\subsection{Abelian groups with order coprime to $6$} \label{sec:coprime6}

In this subsection, we prove Alon's conjecture for abelian groups of order coprime to $6$. 

\begin{thm}\label{thm:coprime-6}
There is an absolute constant $C'$ such that, for every abelian group $G$ of order coprime to $6$, there is a Cayley graph on $G$ with no clique or independent set of order $C'\log |G|$.  
\end{thm}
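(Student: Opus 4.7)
The plan is to lift the argument from the previous subsection (for $\mathbb{Z}_5^d$) to general abelian groups of order coprime to $6$. First, I would construct a suitable non-uniform distribution on random symmetric subsets $S \subseteq G \setminus \{0\}$. Since $\gcd(|G|, 6) = 1$, both multiplication by $2$ and multiplication by $3$ are automorphisms of $G$, and I intend to use these (most likely together) in the construction. The distribution should be designed so that, with probability bounded away from $0$, both $S$ and its complement $T := G \setminus (S \cup \{0\})$ satisfy useful avoidance properties of the form $S \cap \lambda S = \emptyset$ (or a suitable weakening), where $\lambda$ may be $2$ or $3$, with the choice of $\lambda$ possibly differing between $S$ and $T$.

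Second, following the $\mathbb{Z}_5^d$ argument verbatim, any clique $A$ in $G_S$ satisfies $A - A \subseteq S \cup \{0\}$, and an avoidance such as $S \cap 2S = \emptyset$ forces $A$ to have no nontrivial solutions to $a_1 - a_2 = 2(a_3 - a_4)$. By the Plünnecke--Ruzsa inequality (using $|A + 2 \cdot A| = |A|^2$ and $|3A| \le (|A - A|/|A|)^3 |A|$), this yields $|A - A| \ge |A|^{4/3}$. A parallel argument, using the analogous avoidance property designed into $T$, should then yield $|I - I| \ge |I|^{1 + c}$ for some absolute $c > 0$ and any independent set $I$ in $G_S$.

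Third, I would apply Theorem~\ref{thm:count-group}: the number of subsets $A \subseteq G$ with $|A| = n$ and $|A - A| \le Kn$ is at most $N^{C(K + \ln n)}(CK)^n$, where $N = |G|$. Combining this with the probability (at most $p^{(|A - A| - 1)/2}$ for the appropriate pair-inclusion probability $p$ from the random construction) that a fixed $A$ is a clique, the expected number of cliques of size $n = C' \log N$ is bounded by
\[
\sum_{m \ge n^{4/3}} N^{C(m/n + \ln n)} (Cm/n)^n p^{(m-1)/2},
\]
which is $o(1)$ for $C'$ sufficiently large, since the $p^{(m-1)/2}$ factor (with $m \ge n^{4/3}$ and $p$ bounded away from $0$) dominates the counting terms once $n \ge C' \log N$. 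The same computation controls the expected number of independent sets, and a standard first-moment / union-bound argument then delivers a Cayley graph on $G$ with both clique and independence number less than $C' \log N$.

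The main obstacle is the design of the random $S$ in the first step. In $\mathbb{Z}_5^d$, the automorphism $\phi(x) = 2x$ satisfies $\phi^2 = -\mathrm{id}$, so $G_S$ is self-complementary and the independent-set analysis is identical to the clique analysis transferred through $\phi$. For a general abelian $G$ with $\gcd(|G|, 6) = 1$ no such square root of $-\mathrm{id}$ need exist; for instance, in $\mathbb{Z}_7^*$ no $\lambda$ satisfies $\lambda^2 \equiv -1$, so the naive ``exactly one of $\{x, 2x\}$ lies in $S$'' construction is inconsistent with $S = -S$. The hard step will therefore be to engineer a random $S$ that simultaneously forces an $S \cap 2S$-type avoidance on the clique side and an analogous avoidance on the complement side, most likely by exploiting the simultaneous availability of multiplication by $2$ and by $3$ and by imposing possibly asymmetric conditions on $S$ and $T$. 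Once this combinatorial construction is in place, the additive-combinatorial and counting steps proceed by direct analogy with the $\mathbb{Z}_5^d$ case.
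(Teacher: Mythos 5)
You have correctly identified that the crux is designing a random symmetric $S$ with a workable avoidance property, and that the $\mathbb{Z}_5^d$ trick fails because $2$ need not be a square root of $-1$. But you have set yourself a target that is both too strong and unnecessary, and you have not produced the construction, so there is a genuine gap.

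The avoidance property you are aiming for, $S\cap 2S=\emptyset$ (equivalently: for every nonzero $x$, exactly one of $x,2x$ lies in $S$), cannot in general be imposed on a \emph{symmetric} $S$ in an abelian group of order coprime to $6$; the parity obstruction you noticed for $\mathbb{Z}_7$ is exactly this. Trying to rescue this by imposing one avoidance on $S$ and a different one on its complement, or by bringing in multiplication by $3$ as a second automorphism, pulls you away from the actual solution. The paper achieves the result with a strictly weaker property: for every nonzero $y$, the quadruple $\{y,2y,4y,8y\}$ is never entirely contained in $S$. This is arranged as follows. Partition $G\setminus\{0\}$ into equivalence classes $\{x,-x\}$ (here $\gcd(|G|,2)=1$ matters so these are genuine pairs), then partition the family of classes into orbits of the map $x\mapsto 2x$. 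Within each orbit, group the classes into consecutive blocks of two, $\{x,2x\},\{4x,8x\},\dots$, with at most one leftover singleton; for each block of two, put exactly one of its two classes into $S$ uniformly at random (and flip a fair coin for any singleton). This makes $S$ automatically symmetric, makes $S$ and its complement identically distributed (so the independent-set analysis is free), gives the negative-correlation bound $\Pr[X\subseteq S]\le 2^{-|X|/2}$, and guarantees that no quadruple $\{y,2y,4y,8y\}$ is fully inside $S$; the coprimality with $3$ is used only to rule out degeneracies such as $2y=-y$ or $8y=-y$ in that quadruple. The weaker avoidance does not yield $|A-A|\ge|A|^{4/3}$, but a general Pl\"unnecke--Ruzsa iteration (the paper's Lemma~\ref{PRineq}, with $T=\{1,2,4,8\}$) still gives $|A-A|\ge|A|^{1+1/144}$, and any polynomial gain of this form is enough for the first-moment computation with Theorem~\ref{thm:count-group} to close, exactly as you sketch in your third step. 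So your second and third steps are essentially right in structure; the missing piece is realizing you should relax the avoidance to ``no four consecutive doubles'' and then build the pairing of consecutive classes within each $\times 2$-orbit.
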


For the rest of this subsection, let $G$ be an abelian group of order $N$ coprime to $6$. 
We next describe how we pick a random symmetric subset $S$ of $G$ and we will then show that the random Cayley graph $G_S$ with high probability  has no clique or independent set of order $C'\log |G|$. 

We first consider the equivalence relation $x \equiv -x$, which partitions the set of nonzero elements of $G$ into pairs. We then partition the family of such equivalence classes into disjoint orbits of the map $x\mapsto 2x$. We select a random subset $S$ of $G$ as follows. Partition the set of equivalences classes given by the orbit of $x$ into consecutive sets of size two 
$\{x,2x\},\{4x,8x\},\ldots$ and possibly one singleton (which happens if and only if the orbit is of odd size). For each of these sets of size two, we include exactly one of the two corresponding equivalence classes uniformly at random as a subset of $S$. If there is a singleton, we pick its equivalence class to be a subset of $S$ uniformly at random. Note that we make all of these random choices independently of each other. 

From this construction, it is clear that the distribution of the random Cayley graph $G_S$ and its complement are identical. In particular, each nonzero element of $G$ has probability exactly $1/2$ of being in $S$. Furthermore, we have the following claims. 

\begin{claim}\label{negativecorr}
For any set $X$ of nonzero elements of $G$, the probability that $X$ is a subset of $S$ is at most $2^{-|X|/2}$.
\end{claim}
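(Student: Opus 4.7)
The plan is to exploit the independence across blocks of the random construction. The choices defining $S$ are made independently across ``blocks,'' where each block is either a pair of equivalence classes $\{[y],[2y]\}$ or a singleton equivalence class $\{[z]\}$ (here $[x]=\{x,-x\}$). Since $|G|$ is coprime to $2$, every nonzero equivalence class has size exactly two, and each nonzero element of $G$ belongs to a unique block. The random construction either includes an entire equivalence class in $S$ or none of it.

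Set $E(X)=\{[x]:x\in X\}$. Because any equivalence class contains at most two elements of $G$ (and hence at most two elements of $X$), we have $|E(X)|\ge |X|/2$. The event $X\subseteq S$ is equivalent to $[x]\subseteq S$ for every $[x]\in E(X)$. I would then split into cases depending on whether some pair-block $\{[y],[2y]\}$ has both $[y]$ and $[2y]$ in $E(X)$.

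In that case, the construction includes at most one of $[y],[2y]$ in $S$, so $\Pr[X\subseteq S]=0$ and the bound holds trivially. Otherwise, each block intersecting $E(X)$ contributes exactly one class of $E(X)$, so the number of blocks relevant to $X$ is exactly $|E(X)|$. Using independence across blocks, together with the fact that within each such block the required class is selected with probability exactly $1/2$ (whether the block is a pair or a singleton), we conclude that $\Pr[X\subseteq S]=2^{-|E(X)|}\le 2^{-|X|/2}$.

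The only point to verify, which I expect to be routine, is that the block structure is genuinely well-defined, in particular that each pair-block $\{[y],[2y]\}$ really does consist of two distinct equivalence classes; this uses that $|G|$ is coprime to $3$, so that $3x\neq 0$ and hence $[x]\neq [2x]$ for every nonzero $x$. I do not anticipate any serious obstacle: the entire argument is a direct unpacking of the independent fair-coin structure of the construction.
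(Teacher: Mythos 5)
Your proof is correct and takes essentially the same route as the paper: both arguments reduce $X$ to a set of representatives/equivalence classes of size at least $|X|/2$, then use independence across blocks together with the zero-probability case when a pair-block is hit twice. Your version merely makes the block-structure bookkeeping a little more explicit than the paper's one-paragraph sketch.
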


This claim follows from first observing that any such $X$ has a subset $X' \subseteq X$ with $|X'|\geq |X|/2$ such that if $x \in X'$, then $-x \not \in X'$. The events that elements of $X'$ taken from different pairs or singletons are in $S$ are independent, while for elements of $X'$ taken from the same pair there is a zero probability that both lie in $S$. Hence, for any such subset $X'$, the probability $X'$ is a subset of $S$ is either $2^{-|X'|}$ or zero.

\begin{claim}\label{fourconsecutive} 
For any nonzero $y \in G$, the quadruple $\{y,2y,4y,8y\} \not \subset S$. 
\end{claim}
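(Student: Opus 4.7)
The plan is to locate the four equivalence classes $\{y,-y\}$, $\{2y,-2y\}$, $\{4y,-4y\}$, $\{8y,-8y\}$ inside the orbit of $\{y,-y\}$ under the doubling map $z\mapsto 2z$ on equivalence classes, and to argue that, among any four cyclically consecutive classes in such an orbit, one of the grouped size-two sets used for the random selection sits entirely inside. Since exactly one class from each such grouped pair is put into $S$, this forces at least one of $y,2y,4y,8y$ to lie outside $S$.

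First I would check that the orbit length $m$ satisfies $m \ge 2$: the alternative $m=1$ means $2y\equiv\pm y$, i.e.\ $y=0$ or $3y=0$, both excluded because $y\neq 0$ and $\gcd(|G|,3)=1$. Listing the orbit cyclically as $e_1,\dots,e_m$ and writing $e_i=\{y,-y\}$, the equivalence classes of $2y,4y,8y$ are $e_{i+1},e_{i+2},e_{i+3}$ with indices mod $m$. Having $\{y,2y,4y,8y\}\subseteq S$ then forces $e_i,e_{i+1},e_{i+2},e_{i+3}$ all to be selected, even in the degenerate cases $m<4$ where some of these four classes coincide.

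The grouping $(e_1,e_2),(e_3,e_4),\dots$, together with a lone singleton $e_m$ when $m$ is odd, selects exactly one class from each size-two pair to include in $S$. So it suffices to prove the combinatorial statement that for every $m\ge 2$, any four cyclically consecutive indices in $\{1,\dots,m\}$ contain a complete pair $(2k-1,2k)$. Granted this, $\{e_i,e_{i+1},e_{i+2},e_{i+3}\}$ must contain both members of some selection pair and therefore cannot all lie in $S$, yielding the claim.

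Verifying the combinatorial statement is a short case analysis. For $m\in\{2,3\}$, four cyclically consecutive indices cover all of $\{1,\dots,m\}$ and so contain $(1,2)$. For $m\ge 4$ with no wrap, the parity of the starting index $i$ makes either $(i,i+1)$ or $(i+1,i+2)$ a pair inside the window. If the window wraps, then $i\in\{m-2,m-1,m\}$, and a direct check in each of these positions, separately for $m$ even and $m$ odd, exhibits one of $(1,2)$, $(m-1,m)$, or $(m-2,m-1)$ inside the window. The only mild obstacle is to ensure that, when $m$ is odd, the singleton $e_m$ does not destroy the guaranteed pair; but in each of the three wrap-around positions the pair exhibited avoids $e_m$, so the argument concludes cleanly.
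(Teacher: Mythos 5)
Your proof is correct, and it takes a somewhat different route from the paper's. Both arguments rest on the same underlying fact: the equivalence classes of $y,2y,4y,8y$ occupy four cyclically consecutive positions in a doubling orbit, and any such window must contain a full grouping pair $(e_{2k-1},e_{2k})$, from which the claim follows since exactly one class of each pair is placed in $S$. The paper organizes the proof around which algebraic coincidences can or cannot occur given $\gcd(|G|,6)=1$: it rules out $2y=-y$ and $8y=-y$, treats $4y=-y$ and $y=8y$ by hand (these correspond to short orbits of lengths $2$ and $3$), and leaves the generic long-orbit case to the reader. You instead isolate a single combinatorial lemma, valid for all $m\ge 2$, that any four cyclically consecutive indices contain a complete pair, and verify it once by parity plus a wrap-around check; this subsumes all of the paper's cases ($4y=-y$ is exactly $m=2$, $y=8y$ is $m=3$) and makes the parity step explicit, which is cleaner and more self-contained. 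One small point worth adding to your write-up: for the non-wrapping window with $i+3\le m$, the odd index $j\in\{i,i+1\}$ satisfies $j+1\le m-1$, so the exhibited pair $(j,j+1)$ automatically avoids the singleton $e_m$ when $m$ is odd; you flag the singleton issue only for the three wrap-around positions, but the same (easy) check is needed in the non-wrap case as well.
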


This claim follows from the fact that none of the sets of size two in the partition is a subset of $S$. We first observe that we cannot have $2y = -y$ or $8y = -y$, because the order of the group is coprime to $6$. Moreover, if $4y = -y$, then $\{y,2y,4y,8y\} = \{y, 2y, -y, -2y\}$, but $y$ and $2y$ cannot both be in $S$ in this case. We may therefore assume that none of $2y$, $4y$ or $8y$ is $-y$. The quadruple $\{y,2y,4y,8y\}$ might still not consist of distinct elements, but, as $G$ has size coprime to $6$, the only pair in the quadruple that can be equal is $y$ and $8y$. But even in this case, the quadruple (which is then a triple) is not a subset of $S$.  

In particular, for any clique $A$ in the Cayley graph $G_S$ with generating set $S$, since $S$ does not contain any quadruple $\{y,2y,4y,8y\}$, there is no $y\in G$ with $ty\in A-A$ for $t\in \{1,2,4,8\}$. Hence, if $ty\in A-A$ for some nonzero $y \in G$ and all $t=1,2,4,8$, then $A-A$ has nonempty intersection with both $S$ and the complement of $S \cup \{0\}$, so $A$ cannot be a clique in $G_S$. 

The next lemma shows that if $T$ is a set of small positive integers that are coprime to $|G|$ and $A \subseteq G$ is such that there is no nonzero $y\in G$ with $ty\in A-A$ for all $t \in T$, then the ratio $|A-A|/|A|$ is large.

\begin{lem}\label{PRineq}
Let $T$ be a nonempty set of positive integers, each less than $m$, with $|T|=q$ and let $N$ be a positive integer such that every integer in $T$ is coprime to $N$. 
If $A$ is a subset of an abelian group $G$ of order $N$ such that there is no nonzero $y \in G$ with $ty \in A-A$ for all $t \in T$, then $|A-A| \geq |A|^{1+1/(2^q m)}$.  
\end{lem}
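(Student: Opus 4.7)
The plan is to construct an explicit injection from $A^q$ into a product of controlled iterated sumsets, and then apply the Plünnecke--Ruzsa inequality to bound the target. Fix an enumeration $T = \{t_1, t_2, \ldots, t_q\}$ and consider the map $\phi : A^q \to G^{q-1}$ defined by
\[
    \phi(a_1, \ldots, a_q) \;=\; \bigl(t_i a_1 - t_1 a_i\bigr)_{i=2}^{q}.
\]

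The first step, and the only place the hypothesis on $T$ enters, will be to show that $\phi$ is injective. If $\phi(a) = \phi(b)$, then $t_i(a_1 - b_1) = t_1(a_i - b_i)$ for each $i \geq 2$. Since $t_1$ is coprime to $N$, multiplication by $t_1$ is a bijection on $G$, so $y := t_1^{-1}(a_1 - b_1)$ is a well-defined element of $G$. A direct computation gives $t_i y = a_i - b_i \in A - A$ for every $i = 1, \ldots, q$ (the case $i = 1$ holds by definition of $y$). The hypothesis then forces $y = 0$, and hence $a_i = b_i$ for each $i$, so $\phi$ is injective.

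The second step will be to bound $|\phi(A^q)|$ via Plünnecke--Ruzsa. Each coordinate of $\phi$ lies in the difference of dilates $t_i \cdot A - t_1 \cdot A$. Using the inclusion $t \cdot A \subseteq tA$, where $tA$ denotes the $t$-fold sumset (valid because $ta = a + \cdots + a$), and applying Plünnecke--Ruzsa to $|A + (-A)| \le K|A|$ with $K = |A-A|/|A|$, we obtain $|t_i A - t_1 A| \leq K^{t_i + t_1} |A| \leq K^{2m} |A|$. Therefore
\[
    |A|^q \;=\; |\phi(A^q)| \;\leq\; \prod_{i=2}^{q} |t_i A - t_1 A| \;\leq\; \bigl(K^{2m} |A|\bigr)^{q-1}.
\]
Rearranging yields $K \geq |A|^{1/(2m(q-1))}$, and since $2m(q-1) \leq 2^q m$ for every $q \geq 1$, this gives $|A - A| \geq |A|^{1 + 1/(2^q m)}$, as required.

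The main obstacle is really only the injectivity step, which is where the structural hypothesis on $T$ is needed; the remainder is a routine Plünnecke--Ruzsa computation. A minor subtlety to keep track of is the distinction between the dilate $t \cdot A$ (the image of $A$ under $a \mapsto ta$) and the iterated sumset $tA$; the inclusion $t \cdot A \subseteq tA$ is precisely what allows Plünnecke--Ruzsa to be brought to bear on dilates.
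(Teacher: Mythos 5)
Your proof is correct and takes a genuinely different, and in fact cleaner, route than the paper. The paper's proof is iterative: starting from $A_0 = A$, it uses the pigeonhole principle and Plünnecke--Ruzsa at each step to build a nested chain $A_0 \supseteq A_1 \supseteq \cdots \supseteq A_q$ with controlled size decay $|A_i| \geq K^{-2^i m + m}|A|$, so that every nonzero $y \in A_q - A_q$ satisfies $ty \in A-A$ for all $t \in T$; the hypothesis then forces $|A_q| \leq 1$, and unwinding the size bounds gives $K \geq |A|^{1/(2^q m)}$. Your approach instead packages all $q$ conditions into a single injection $\phi : A^q \to \prod_{i=2}^q (t_i\cdot A - t_1\cdot A)$, so only one Plünnecke--Ruzsa application (to each coordinate) is needed. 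This buys a quantitatively stronger bound: your argument gives $K \geq |A|^{1/(2m(q-1))}$ (indeed $K \geq |A|^{1/(2(m-1)(q-1))}$), improving the exponential dependence $2^q$ in the paper to a linear dependence $2(q-1)$. The injectivity verification is exactly where the coprimality hypothesis and the structural hypothesis on $A$ enter, just as you flag, and the observation $t\cdot A \subseteq tA$ correctly bridges dilates to iterated sumsets so that Plünnecke--Ruzsa applies; the needed inequality $K \ge 1$ is automatic since $|A-A| \geq |A|$. The degenerate case $q=1$ yields an empty product, but there the hypothesis already forces $|A| \leq 1$ (and your injectivity argument shows this), so the conclusion holds trivially.
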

\begin{proof}
Order the elements of the set $T$ in increasing order as $t_1<t_2<\dots<t_{q}$ and suppose $|A-A|=K|A|$. 
Beginning with $A_0=A$, we will pick a sequence of subsets $A_0 \supseteq A_1 \supseteq \cdots\supseteq A_q$ with $|A_i| \geq K^{-2^i m + m}|A|$ for $i \geq 1$ and $t_j y\in A-A$ for all $1 \leq j \leq i$ and all $y\in A_i-A_i$. 

Observe that $|A_{i-1}+t_{i}A_{i-1}| \leq |A+t_i A| \le K^{t_{i}+1}|A|$ by the Pl\"unnecke--Ruzsa inequality. 
By the pigeonhole principle, there is $u_i \in A_{i-1}+t_{i}A_{i-1}$ such that $u_i=b+t_ia$ for at least $|A_{i-1}|^2/(K^{t_i+1}|A|)$ pairs $(a,b) \in A_{i-1} \times A_{i-1}$. Let $A_i$ be the set of $a \in A_{i-1}$ such that $u_i - t_i a \in A_{i-1}$, so $|A_{i}| \geq |A_{i-1}|^2/(K^{t_i+1}|A|)$. We will prove by induction on $i$ that $|A_i| \ge K^{-2^i m+m}|A|$ and $t_j y\in A-A$ for all $1 \leq j \leq i$ and all $y\in A_i-A_i$. This holds for $i=0$. Assuming $|A_{i-1}| \ge K^{-2^{i-1}m+m}|A|$, we then have that  
\[
|A_i| \ge |A_{i-1}|^2/(K^{t_i+1}|A|) \ge K^{-(t_i+1)-2^{i}m+2m}|A| \ge K^{-2^{i}m+m}|A|,
\]
where we used that $\max \, T < m$. 
Moreover, for any $a_i,a_i'\in A_i$, we have that $t_i(a_i-a_i')\in A_{i-1}-A_{i-1}\subseteq A-A$. 
By induction, since $A_i \subseteq A_{i-1}$, we also have that $t(a_i-a_i')\in A-A$ for all $t\le t_{i-1}$ in $T$. Thus, we have that $ty\in A-A$ for all $y\in A_i-A_i$ and $t\le t_i$, completing the induction.

If now $|A|>K^{2^q m}$, then $|A_{q}| > 1$ and there is some nonzero $y\in A-A$ with $ty\in A-A$ for all $t\in T$, a contradiction. Therefore, $K\ge |A|^{1/(2^q m)}$ and so $|A-A|=K|A| \ge |A|^{1+1/(2^q m)}$. 
\end{proof}

We are now ready to prove Theorem \ref{thm:coprime-6}. 

\begin{proof}[Proof of Theorem~\ref{thm:coprime-6}]
    For the random $S$ described earlier, if there is a nonzero $y$ such that $\{y,2y,4y,8y\} \subset A-A$, then $A-A$ contains both an element of $S$ and a nonzero element not in $S$, so $A$ is not a clique in the Cayley graph $G_S$ with generating set $S$. If there is no nonzero $y$ such that $\{y,2y,4y,8y\} \subset A-A$, then, for $T=\{1,2,4,8\}$, there is no $y$ with $ty\in A-A$ for all $t\in T$. Therefore, by Lemma \ref{PRineq}, if we set $|A-A|=K|A|$, then we have $K \ge |A|^{1/144}$. 
    
    By Theorem~\ref{thm:count-group}, the number of $A \subset G$ with $|A|=n$ and $|A-A| = m$ is at most $N^{C(K+\log n)}(CK)^n$, where $K=m/n$. By Claim~\ref{negativecorr}, the expected number of cliques in $G_S$ of order $n=C'\log N$ is at most 
    \[
    \sum_{n^{1+1/144} \leq m \leq n^2}N^{C(m/n+\log n)}(Cm/n)^n2^{-(m-1)/2}=o(1).
    \]
    Thus, with high probability, there is no clique in $G_S$ of order $C'\log N$. Similarly, since the complement of the random Cayley graph $G_S$ has the same distribution as $G_S$, with high probability, there is no independent set in $G_S$ of order $C'\log N$. 
\end{proof}

\subsection{More general groups} \label{sec:othergroups}

The main result of this subsection says that if a group $G$ has a large abelian subgroup $H$ of order coprime to $6$, then there is a Cayley graph on the group which is Ramsey. 
As above, the generating set $S$ is not picked uniformly at random (which is necessary for groups with many subgroups of a particular size such as 
$\mathbb{Z}_5^d$). However, 
the elements not in $H$ \emph{are} picked uniformly and independently at random. In particular, if $|H|=o(|G|)$, most elements are picked uniformly and independently at random and we only need to be careful about how we pick the elements 
on a vanishing proportion of the elements of $G$.

\begin{thm}
There is an absolute constant $C'$ such that if $G$ is a group of order $N$ with an abelian subgroup $H$ of order $|H|>N/(\log N)^{1/1000}$ coprime to $6$, then $G$ has a Cayley graph which is $C'$-Ramsey.   
\end{thm}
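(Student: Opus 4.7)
The plan is to sample $S$ as the disjoint union of two independent random symmetric pieces. For $S \cap H$, use exactly the random construction in the proof of Theorem~\ref{thm:coprime-6}: partition the equivalence classes $\{x,-x\} \subseteq H\setminus\{0\}$ into orbits of $x \mapsto 2x$, group each orbit into consecutive pairs of classes $\{[x],[2x]\},\{[4x],[8x]\},\ldots$ (and one possible singleton), and for each such pair include exactly one class at random. For $S\setminus H$, for each pair $\{g,g^{-1}\}\subseteq G\setminus H$ (and each self-inverse element) flip an independent fair coin to decide whether to include it in $S$. The resulting $S$ is symmetric, and since each piece is invariant in distribution under complementation (the $H$-piece by the argument in Theorem~\ref{thm:coprime-6}, the $G\setminus H$-piece by uniformity), the random Cayley graph $G_S$ and its complement have the same distribution. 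Thus it suffices to bound the clique number with high probability.

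Suppose $A \subseteq G$ is a clique of size $n = C'\log N$ in $G_S$ and let $r = N/|H| \le (\log N)^{1/1000}$. Partitioning $G$ into the $r$ right cosets of $H$, some coset $Hg$ contains $|A_0| := |A \cap Hg| \ge n/r$ elements. Crucially, for $a_1,a_2 \in A_0$ with $a_i = h_i g$ we have $a_1 a_2^{-1} = h_1 h_2^{-1} \in H$, so $B := A_0 g^{-1} \subseteq H$ satisfies $BB^{-1} = A_0 A_0^{-1} \subseteq AA^{-1}\cap H$, and $B$ is a clique in the Cayley graph on $H$ with generating set $S\cap H$. Since $|H|$ is coprime to $6$, the same verification as in Claim~\ref{fourconsecutive} shows $S\cap H$ contains no quadruple $\{y,2y,4y,8y\}$ for nonzero $y\in H$, so $B-B$ contains no such quadruple. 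Applying Lemma~\ref{PRineq} to the abelian group $H$ with $T=\{1,2,4,8\}$ (all coprime to $|H|$) and $m=9$ gives
\[
|AA^{-1}| \;\ge\; |BB^{-1}| \;\ge\; |B|^{1+1/144} \;\ge\; (n/r)^{1+1/144}.
\]

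Writing $|AA^{-1}| = Kn$, this yields $K \ge c(\log N)^{1/144 - (1+1/144)/1000} \ge c(\log N)^{\varepsilon}$ for an absolute constant $\varepsilon > 0$, since $1/144 > 1/1000$. Now we apply a union bound. By Theorem~\ref{thm:count-group} the number of sets $A$ of size $n$ with $|AA^{-1}| = Kn$ is at most $N^{C(K+\ln n)}(CK)^n$. For a fixed such $A$, the probability that $A$ is a clique factors across $H$ and $G\setminus H$: inside $H$, Claim~\ref{negativecorr} gives probability at most $2^{-|AA^{-1}\cap H|/2}$; outside $H$, independence of the pair coin flips gives probability at most $2^{-|AA^{-1}\setminus H|/2}$. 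Multiplying yields $\le 2^{-(Kn-1)/2}$. Summing,
\[
\sum_{K \ge c(\log N)^{\varepsilon}} N^{C(K+\ln n)}(CK)^n\, 2^{-(Kn-1)/2} = o(1)
\]
for $C'$ sufficiently large, since the exponential factor $2^{-Kn/2}$ with $Kn \ge c(\log N)^{1+\varepsilon}$ swamps the factor $N^{CK}$ and the lower-order terms $N^{C\ln n}$ and $(CK)^n = 2^{O(\log N \log\log N)}$. Hence with high probability $G_S$ has no clique of order $C'\log N$, and by the self-complementary distribution, the same holds for independent sets.

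The main obstacle is the tight balance between the two competing exponents: the pigeonhole step loses a factor of $r \le (\log N)^{1/1000}$, while Lemma~\ref{PRineq} recovers a polynomial gain with exponent $1/144$. The argument works precisely because $1/144 > 1/1000$, which is the numerical reason for the particular threshold $(\log N)^{1/1000}$ in the hypothesis. A secondary subtlety is ensuring that the probability bound $2^{-(Kn-1)/2}$ holds uniformly across the hybrid construction, which is what forces us to use Claim~\ref{negativecorr} for the $H$-part rather than any naive independence estimate.
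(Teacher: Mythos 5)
Your proposal is correct and follows essentially the same route as the paper: the same hybrid random generating set (the Theorem~\ref{thm:coprime-6} construction on $H$, uniform outside $H$), the same pigeonhole step into a right coset, the same invocation of Lemma~\ref{PRineq} with $T=\{1,2,4,8\}$ to force $|AA^{-1}| \ge |A|^{1+1/144}$ for potential cliques contained in a coset, and the same union bound via Theorem~\ref{thm:count-group} and the $2^{-(|AA^{-1}|-1)/2}$ probability bound (with negative correlation on the $H$-part and independence off it). One cosmetic slip: the intermediate probability for the $H$-part should read $2^{-(|AA^{-1}\cap H|-1)/2}$ rather than $2^{-|AA^{-1}\cap H|/2}$, since $1 \in AA^{-1}\cap H$, but your final bound $2^{-(Kn-1)/2}$ is exactly what the paper uses and the argument goes through unchanged.
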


\begin{proof}
We pick a random subset $S$ of $G$ to be the generating set of the Cayley graph as follows. We pick the nonzero elements of $H$ to be in the generating set exactly as described in the proof of Theorem~\ref{thm:coprime-6} for abelian groups whose size is coprime to $6$. In particular, this implies that every subset $B$ of a right coset of $H$ which could possibly be a clique in the Cayley graph satisfies $|BB^{-1}| \geq |B|^{1+1/144}$. Each element not in $H$ is an element of $S$ with probability $1/2$ independently of all other elements except its inverse. 

Consider a subset $A$ of $n = C' \log N$ elements of $G$. By the pigeonhole principle, there is a right coset of $H$ which contains a subset $B \subset A$ with $|B| \geq n/(N/|H|) \geq n^{.999}$. If $A$ is a clique, then $B$ is as well, so  
$|AA^{-1}| \geq |BB^{-1}| \geq |B|^{1+1/144} \geq n^{1.005}$. 
Thus, by Theorem~\ref{thm:count-group}, 
the number of sets $A \subset G$ of size $n$  with $|AA^{-1}| \leq Kn$ that are potential cliques of the random Cayley graph is at most 
$$N^{C(K+\log n)}(CK)^n = N^{(C+o(1))K},$$
where $C$ is an absolute constant and we used that $K \geq n^{.005}$. 

Note now that each set $A$ with $|A|=n \geq 2$ has probability at most $2^{-(|AA^{-1}|-1)/2}$ of being a clique. Indeed, $AA^{-1}$ is a symmetric set that contains the identity element $1$, each nonidentity element appears in $S$ with probability $1/2$, the events that elements of $AA^{-1} \cap H$ and their inverses appear in $S$ are negatively correlated (as in the proof of Theorem~\ref{thm:coprime-6}) and, finally, each element of $AA^{-1} \setminus H$ appears in the generating set $S$ independently of all other elements except its inverse. Summing over all possible sizes $m=|AA^{-1}|$ of $AA^{-1}$ and substituting in $N=2^{n/C'}$ gives that the expected number of cliques of order $n$ in the random Cayley graph is at most 
$$\sum_{n^{1.005} \leq m \leq n^2} N^{(C+o(1))m/n}2^{-(m-1)/2}=2^{\left((C+o(1))C'^{-1}-1/2\right)m}=o(1).$$ 
Similarly, the expected number of independent sets of order $n$ in this random Cayley graph is $o(1)$. Hence, with high probability this random Cayley graph is $C'$-Ramsey for some appropriate absolute constant $C'$. 
\end{proof}

From the fundamental theorem of finite abelian groups, if $N'$ is the largest factor of $N$ that is coprime to $6$, then any abelian group of order $N$ has a subgroup of order $N'$. This immediately yields Theorem~\ref{main2} and Corollary~\ref{cor3}, with the latter stating that, for almost all $N$, every abelian group of order $N$ has a $C'$-Ramsey Cayley graph.

\subsection{Ramsey Cayley colorings} \label{sec:RamseyCayley}

A Ramsey-type problem first raised by Erd\H{o}s, Hajnal and Rado \cite{EHR} in 1965 (see also~\cite{ErSz72}) asks for the order of the largest clique avoiding one of the colors that can be found in any $r$-coloring of $K_N$. For $r = 2$, this problem is equivalent to the problem of determining the classical diagonal Ramsey number. More generally, for any $1 \leq s < r$, one may ask for the order of the largest clique containing at most $s$ colors that can be found in any $r$-coloring of $K_N$. To get an upper bound, it is easy to check that in a uniform random $r$-coloring the largest clique in the union of any $s$ colors has order $(2+o(1))\log_{r/s} N$ with high probability.

In this subsection, we prove a generalization of Theorem~\ref{thm:Fp} saying that there are Cayley $r$-colorings that asymptotically match this bound, where an $r$-edge-coloring of the complete graph on a group $G$ is a \emph{Cayley $r$-coloring} if each color class is a Cayley graph on $G$. That is, a Cayley $r$-coloring is formed by taking a partition of $G \setminus \{0\}$ into $r$ symmetric sets $S_1,\ldots,S_r$ and coloring the edges of the complete graph on $G$ by letting color $i$ be the Cayley graph with generating set $S_i$ for each $1 \le i \le r$. 

\begin{thm}\label{thm:sharp-clique}
Fix integers $1 \le s < r$. Let $V$ be a finite vector space of order $N$ with characteristic $p>2r$. Then there is a Cayley $r$-coloring of the complete graph on $V$ such that the clique number of the union of any $s$ color classes is at most $(2+o(1))\log_{r/s}{N}$. 
\end{thm}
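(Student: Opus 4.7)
The plan is to prove Theorem~\ref{thm:sharp-clique} by the first-moment method applied to a structured random Cayley $r$-coloring of $V$, generalizing the construction in the proof of Theorem~\ref{thm:coprime-6}. Let $g$ be a primitive root of $\mathbb{F}_p^*$ and consider the action $[x] \mapsto [gx]$ on the equivalence classes $\{x, -x\}$ of nonzero elements of $V$; each orbit has length $(p-1)/2$. I would partition each orbit into consecutive chunks of exactly $r$ pairs (plus at most one smaller remainder chunk), which is possible since $p > 2r$ guarantees $(p-1)/2 \ge r$. For each full chunk I independently choose a uniformly random bijection from the $r$ pairs in the chunk to the $r$ colors; pairs in a partial chunk receive independent uniform colors.

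The key structural property is that for every nonzero $y \in V$ and every $s$-subset $T \subseteq \{1, \ldots, r\}$, the $2r$ equivalence classes $\{y, gy, g^2 y, \ldots, g^{2r-1} y\}$ span $2r$ consecutive (modulo orbit length) positions, which therefore always contain at least one full chunk of $r$ pairs. Since this chunk receives all $r$ distinct colors via the random bijection, at least $r - s \ge 1$ of the pairs in $\{y, gy, \ldots, g^{2r-1}y\}$ have color outside $T$, so $\{y, gy, \ldots, g^{2r-1}y\} \not\subseteq S_T := \bigcup_{i \in T} S_i$. Any clique $A$ in the Cayley graph $G_{S_T}$ satisfies $A - A \setminus \{0\} \subseteq S_T$, so no nonzero $y$ satisfies $\{y, gy, \ldots, g^{2r-1}y\} \subseteq A - A$. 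Applying Lemma~\ref{PRineq} with $T' = \{1, g, g^2, \ldots, g^{2r-1}\}$ then yields a polynomial doubling lower bound $|A - A| \ge |A|^{1+\eta}$ for some $\eta = \eta(r,p) > 0$.

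Now I would carry out the first-moment estimate. For fixed $T$, the probability that a given $A$ of size $n$ is a clique in $G_{S_T}$ is at most $(s/r)^{(|A-A|-1)/2}$: pairs in different chunks are independent, and the random bijection within a common chunk produces negative correlation, so the iid bound holds. Combining with the doubling lower bound, the expected number of cliques of size $n$ in $G_{S_T}$ is at most
\[
E_T \le \sum_{K \ge n^\eta} \#\{A \subseteq V : |A|=n, \, |A - A| \le Kn\} \cdot (s/r)^{(Kn-1)/2}.
\]
I would split this sum into a large-doubling range ($K \ge K_1$ for a suitable large constant $K_1$), handled by Theorem~\ref{thm:count-group}, and a moderate-doubling range ($n^\eta \le K \le K_1$), handled by the essentially tight Freiman dimension bound for $A - A$ in $\mathbb{F}_p^n$ (a modification of Even-Zohar-Lovett, proved in Section~\ref{subsec:EZL}). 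This bound shows such $A$ lie in affine subspaces of dimension at most $(K-1)/(p-1) + o(K)$, and a careful count of such affine subspaces together with the probability $(s/r)^{Kn/2}$ yields $E_T = o(1)$ for $n = (2 + o(1))\log_{r/s} N$. A union bound over the constant number $\binom{r}{s}$ of choices of $T$ then finishes the proof.

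The main obstacle lies in the moderate-doubling regime: Theorem~\ref{thm:count-group} alone yields only a bound with a suboptimal leading constant (because of the $N^{CK}$ factor with $C$ an absolute constant $> 1$), and it is exactly the essentially tight Freiman dimension bound that brings the leading constant down to $2$. The condition $p > 2r$ enters twice: first, to guarantee that every orbit of $[x] \mapsto [gx]$ has length $\ge r$, so the chunking is well-defined and each window of $2r$ consecutive positions covers at least one full chunk; and second, to control the $p$-dependent factor $1/(p-1)$ in the Freiman dimension bound so that it is small enough to offset the $(s/r)^{Kn/2}$ savings at the tight value of $n$.
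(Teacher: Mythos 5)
Your high-level strategy matches the paper's (structured random Cayley $r$-coloring, Lemma~\ref{PRineq} for a polynomial doubling lower bound, Freiman-dimension counting, first moment), but there are two substantial gaps that would make the argument fail as written.

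First, the coloring itself. You build orbits under $[x]\mapsto[gx]$ for a primitive root $g$, and forbid $\{y,gy,\dots,g^{2r-1}y\}\subseteq A-A$. But when you apply Lemma~\ref{PRineq} with $T'=\{1,g,g^2,\dots,g^{2r-1}\}$, these must be treated as positive integers coprime to $|G|$, i.e., as residues in $\{1,\dots,p-1\}$. A primitive root hits the full multiplicative group, so these integers can be as large as $p-1$, and the lemma then yields only $|A-A|\ge |A|^{1+1/(2^{2r}(p-1))}$. For large $p$ (e.g.\ a prime field with $N=p$) this exponent vanishes, and the whole doubling-based argument collapses. This is exactly why the paper uses \emph{two} colorings: when $p\le 2^{2r}$ it can afford to forbid a full line $\{y,2y,\dots,(p-1)y\}$ (multipliers bounded by $p\le 2^{2r}$), and when $p>2^{2r}$ it switches to orbits of powers of $2$ and forbids $\{y,2y,\dots,2^{3r-3}y\}$, where the multipliers are bounded by a constant depending only on $r$. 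Your single unified construction cannot reproduce this. A smaller but real issue in the same step: a window of $2r$ consecutive positions in an orbit chunked into intervals of size $r$ with one shorter remainder need \emph{not} contain a full chunk — take $r=3$, orbit length $8$ (so $p=17>2r$), chunks $[0,3),[3,6),[6,8)$, and the cyclic window of length $6$ starting at position $4$; it covers $\{4,5,6,7,0,1\}$ and contains no full chunk of size $3$. The paper works with windows of length $3r-2$ precisely to avoid this.

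Second, the counting. Your proposed split — Theorem~\ref{thm:count-group} for large $K$, the Freiman-dimension count for moderate $K$ — does not give the sharp constant. Theorem~\ref{thm:count-group} has $N^{CK}$ with an absolute $C>1$; combined with the probability $(s/r)^{Kn/2}$ at $n=2(1+\delta)\log_{r/s}N$, the term $N^{(C-1-\delta)K}$ is \emph{divergent}, not just suboptimal, so the large-$K$ regime cannot be handled this way. The paper uses Lemma~\ref{lem:count-K} (count at most $t^{4t}N^{K+\log_p t+1}$) across the \emph{entire} range of $K$, and the critical point is that the coefficient of $K$ in the exponent of $N$ is exactly $1$. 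Relatedly, your quoted Freiman-dimension bound $(K-1)/(p-1)+o(K)$ is incorrect — Theorem~\ref{thm:EZL} gives $|\langle A\rangle|/|A|\le p^K/(K+2)$, i.e., dimension roughly $K+\log_p|A|$ — and no $1/(p-1)$ savings is available or needed. The role of $p>2r$ is only to ensure orbits have at least $r$ equivalence classes so the chunking makes sense; it plays no role in controlling the Freiman bound.
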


We also prove Theorem \ref{thm:Fq} in this subsection. 
Recall that this says that for every vector space $V$ of order $N$ whose characteristic $p$ is congruent to $1 \pmod 4$, there is a  self-complementary Cayley graph on $V$ whose clique and independence numbers are both at most $(2+o(1))\log_2 N$. 
We call a Cayley $r$-coloring of $\mathbb{F}_p^n$ \emph{rotational} if the coloring is given by $c: \mathbb{F}_p^{n} \setminus \{0\} \rightarrow \mathbb{Z}_r$ 
and there is a scalar $\alpha \in \mathbb{F}_p^{\times}$ of order $2r$  such that $c(\alpha^i x)\equiv c(x)+i \pmod r$ for all $x\in \mathbb{F}_p^n \setminus \{0\}$ and  $i \in \mathbb{Z}$. In such an edge-coloring of the complete graph on $\mathbb{F}_p^n$, the Cayley graphs of each color are isomorphic (through multiplication by a fixed power of $\alpha$). In particular, in the case $r=2$, the graphs of each color are self-complementary. The following theorem therefore generalizes  Theorem~\ref{thm:Fq}.  

\begin{thm}\label{thm:Fq-2}
    Fix an integer $r \geq 2$. Let $V$ be a finite vector space of order $N$ with characteristic $p \equiv 1\pmod{2r}$. Then there is a rotational Cayley $r$-coloring of the complete graph on $V$ such that the largest monochromatic clique has order at most $(2+o(1))\log_{r}{N}$. 
\end{thm}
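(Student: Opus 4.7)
The plan is to adapt the approach used for the $r=2$ case (Theorem~\ref{thm:Fq}) by selecting a random rotational Cayley $r$-coloring and applying the first moment method, leveraging the refined counting bound for small-doubling subsets in finite vector spaces that is to be established in Section~\ref{subsec:EZL}. Since $p \equiv 1 \pmod{2r}$, I would fix $\alpha \in \mathbb{F}_p^{\times}$ of multiplicative order exactly $2r$, so that $\alpha^r = -1$. The action of $\langle \alpha \rangle$ on $V \setminus \{0\}$ partitions the nonzero vectors into orbits of size $2r$. For each orbit $O$ with a fixed representative $x_O$, I would pick a uniformly random $c_O \in \mathbb{Z}_r$ independently across orbits and define $c(\alpha^i x_O) \equiv c_O + i \pmod{r}$ for $0 \leq i \leq 2r-1$. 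Because $\alpha^r = -1$, the identity $c(-y) = c(y)$ holds for all nonzero $y$, so each color class is symmetric and $c$ defines a Cayley $r$-coloring; by construction $c(\alpha y) \equiv c(y) + 1 \pmod{r}$, so the coloring is rotational.

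Next I would show that the rotational structure forces every monochromatic clique to have large doubling. Suppose $A \subseteq V$ is a clique in color $j$, so $A - A \subseteq S_j \cup \{0\}$. For any nonzero $y \in A - A$, $c(\alpha y) = c(y) + 1 \not\equiv j \pmod{r}$, hence $\alpha y \notin A - A$. Equivalently, $A$ has no non-trivial solution to $a_1 - a_2 = \alpha(a_3 - a_4)$, so the map $(a,b) \mapsto a + \alpha b$ from $A \times A$ into $V$ is injective and $|A + \alpha A| = |A|^2$. Writing $K = |A - A|/|A|$, I would invoke a Freiman--Ruzsa theorem for vector spaces over $\mathbb{F}_p$ (after passing from $|A-A|$ to $|A+A|$ via Pl\"unnecke--Ruzsa) to contain $A$ inside an $\mathbb{F}_p$-subspace $H \leq V$ with $|H| \leq K^{O_p(1)}|A|$. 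Because $\alpha \in \mathbb{F}_p$ and $H$ is an $\mathbb{F}_p$-subspace, $\alpha H = H$, so $A + \alpha A \subseteq H$, and comparing sizes yields $|A|^2 \leq K^{O_p(1)}|A|$, i.e., $K \geq |A|^{\Omega_p(1)}$.

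Finally, I would carry out the first moment calculation with $n = (2 + \delta)\log_r N$ for a slowly vanishing $\delta > 0$. For any fixed $A$ with $|A - A| = m$, the probability that $A$ is a monochromatic clique is at most $r^{1-(m-1)/2}$, since $(A - A) \setminus \{0\}$ decomposes into $(m-1)/2$ antipodal pairs, each of which must lie in a common color class $S_j$ (and two distinct pairs sharing an $\langle \alpha \rangle$-orbit would force incompatible color assignments, killing the probability outright). I would split the sum over $m$: for $A$ with ``random-looking'' doubling $m = \Theta(n^2)$, the crude count $\binom{N}{n} \leq N^n$ against $r^{-\Omega(n^2)}$ recovers the random-graph threshold near $n = 2\log_r N$; for small-doubling $A$, the doubling lower bound from the previous step enforces $K \geq |A|^{\Omega_p(1)}$, and I would invoke the $\mathbb{F}_p^n$-specific counting from Section~\ref{subsec:EZL}, which refines Theorem~\ref{thm:count-group} by replacing its $N^{CK}$ factor with essentially $N^{C\log K}$, so that the probability factor $r^{-(m-1)/2}$ becomes dominant. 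The main obstacle is extracting the sharp leading constant $(2+o(1))$: the generic counting of Theorem~\ref{thm:count-group} would only yield a Ramsey-type bound $O(\log N)$, so both the genuinely polynomial doubling bound and the Freiman-dimension-tight counting of Section~\ref{subsec:EZL} are essential ingredients.
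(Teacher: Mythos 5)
Your high-level plan (random rotational Cayley $r$-coloring, first-moment calculation with the sharp Freiman-dimension-based counting of Section~\ref{subsec:EZL}) matches the paper's, but the coloring you construct is too weak, and the key doubling bound you derive from it is incorrect.

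The central problem is the doubling lower bound on potential monochromatic cliques. From your coloring, the only structure you extract is $|A+\alpha A|=|A|^2$, which you then try to leverage via Freiman--Ruzsa. But the Freiman--Ruzsa theorem over $\mathbb{F}_p$ (Theorem~\ref{thm:EZL}) bounds $|\langle A\rangle|/|A|$ by roughly $p^K$, not $K^{O_p(1)}$; feeding $|A|^2\le|\langle A\rangle|\le p^K|A|/(K+2)$ back in gives only $K\gtrsim\log_p|A|$, which is logarithmic in $|A|$, not $|A|^{\Omega_p(1)}$ as you claim. A logarithmic doubling bound is not strong enough: in the first-moment sum with $t=(2+\delta)\log_r N$, the exponent of $r$ per term is $-m\delta/(2(2+\delta))+O(t\log t)$, so one needs $m=Kt\gg t\log t/\delta$, i.e.\ $K\gg\log t/\delta$ with $\delta\to0$; $K\gtrsim\log_p t$ does not satisfy this for any $p\ge 3$. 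The correct route in the paper is Lemma~\ref{PRineq} applied to a short progression of \emph{small-integer} multiples of a direction vector, which gives $K\ge|A|^{c}$ for some $c>0$. Your coloring only forbids $\{y,\alpha y,\ldots,\alpha^{r-1}y\}\subseteq A-A$, and $\alpha$ is an arbitrary element of $\mathbb{F}_p^\times$ that may be as large as $p-1$, so Lemma~\ref{PRineq} with $T=\{1,a\}$ (for $a$ the integer representing $\alpha$) only yields $K\ge|A|^{1/(4a)}$, which degenerates as $p$ grows. This is exactly why the paper has two constructions, one for small $p$ (in which lines through the origin are what gets randomly colored, so that $T=\{1,\ldots,p-1\}$ fits Lemma~\ref{PRineq}) and one for large $p$ (in which the coloring is driven by short orbits of powers of $2$, giving $T=\{1,2,\ldots,2^{3\ell-3}\}$ with $\ell\to\infty$ slowly). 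Your single one-size-fits-all coloring does not yield a usable doubling bound uniformly in $p$.

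Two smaller points. Your probability bound $r^{1-(m-1)/2}$ is in fact correct for your coloring and is even cleaner than the paper's $r^{-(1-o(1))m/2}$ (which is all the paper gets for its large-$p$ construction, where the random equitable colorings on length-$\ell$ intervals only satisfy approximate negative correlation, necessitating $\ell\to\infty$). So the probabilistic part of your argument is fine; the problem is entirely on the counting/doubling side. Second, you describe Lemma~\ref{lem:count-K} as replacing the $N^{CK}$ factor in Theorem~\ref{thm:count-group} by $N^{C\log K}$; that is not what it says. The improvement is that the constant in front of $K$ in the exponent of $N$ is brought down to exactly $1$ (the bound is $t^{4t}N^{K+\log_p t+1}$), which is what lets the $(2+o(1))$ leading constant go through. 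The $N^K$ is still there and is tight.
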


We first describe the Cayley $r$-colorings we will use to prove Theorems \ref{thm:sharp-clique} and \ref{thm:Fq-2}. 

\paragraph{Constructions for Theorem \ref{thm:sharp-clique}.}
We use two different colorings, depending on whether the characteristic $p$ is small or not. In each coloring, for each line $L$ through the origin of $\mathbb{F}_p^n$, we color the nonzero elements of $L$ independently of all other lines through the origin. 

If $p \leq 2^{2r}$, then we use the following coloring. For each line $L$ through the origin, we can write its set of $p-1$ nonzero elements as $\{x,2x,\ldots,(p-1)x\}$ for some representative $x \in L$. Since the coloring must be symmetric (so the color of $y$ and $-y$ are the same for each nonzero $y$), we only need to specify the colors of the $(p-1)/2$ elements $x,2x,\ldots,(p-1)x/2$. We equipartition this set of $(p-1)/2$ elements uniformly at random into $r$ sets and then make each set symmetric by adding $-y$ to each set if $y$ is already in the set. We thus obtain a partition of the set of nonzero elements of $L$ into $r$ nonempty sets $L_1,\ldots,L_r$ (the fact that these sets are each nonempty follows from $p>2r$). The generating set $S_i$ for color $i$ is then the union of the sets $L_i$ over all lines $L$ through the origin. In this Cayley $r$-coloring, the union of the generating sets of any set $R$ of colors with $|R|<r$ does not contain all the nonzero elements of a line. Hence, there is no nonzero $y \in \mathbb{F}_p^n$ such that $\{y,2y,\ldots,(p-1)y\}$ are all in $S_R$, where $S_R:=\bigcup_{i \in R} S_i$. But then, by Lemma~\ref{PRineq}, for any $A$ which can possibly be a clique with colors in $R$, we have $|A-A| \geq |A|^{1+p^{-1}2^{-p}}$. 

If $p>2^{2r}$, then we use the following coloring. First note that the order of $2$, which we call $h$, satisfies $h > \log_2 p$. We consider the equivalence relation $x\equiv -x$ which partitions the nonzero elements of $\mathbb{F}_p^n$ into pairs. We write $\overline{x}$ for the equivalence class of $x$. We then partition the nonzero equivalence classes on each line $L$ through the origin into orbits of powers of $2$, each of size $h$ or $h/2$ (depending on whether $-1$ is a power of $2$).

We partition each orbit $B$, which is of the form $\{\overline{x},2\overline{x},2^2\overline{x},\ldots,2^{h'-1}\overline{x}\}$ with $h'=h$ or $h/2$, into intervals $I$ of size $r$ with at most one remaining interval of size less than $r$. Note that there is at least one interval of size $r$ since $h' \geq r$. We color the equivalence classes in each such interval $I$ independently of all other intervals, randomly coloring the elements of $I$ with  colors from $[r]$ while guaranteeing that each color appears at most once (and exactly once if the interval has size $r$). The generating set $S_i$ for color $i$ then consists of all elements of color $i$.   

In this Cayley $r$-coloring, the union of the generating sets of any set $R$ of colors with $|R|<r$ does not contain all the elements of an interval $I$ of size $r$. Hence, there is no nonzero $y$ such that $\{y,2y,\ldots,2^{3r-3}y\}$ are all in $S_R$, where $S_R=\bigcup_{i \in R} S_i$. Here, we note that in the partition of a given orbit into intervals of size $r$ with at most one additional interval of size less than $r$, any consecutive interval of size at least $3r-2$ must fully contain an interval of size $r$ from the partition. But then, by Lemma \ref{PRineq} with $T=\{1,2,\ldots,2^{3r-3}\}$ (so we may pick $q=3r-2$ and $m=2^{3r-3}+1$), for any $A$ which can possibly be a clique with colors in $R$, we have $|A-A| \geq |A|^{1+2^{-6r}}$. 

In either case, for any subset $R$ of the colors of size $s < r$ and any set $X$ of nonzero elements, $X$ has a subset $X'$ of size at least $|X|/2$ which does not contain both an element $x$ and its negative $-x$ and where the events that elements in $X'$ have colors from $R$ are negatively correlated. Therefore, the probability that a set $A \subset \mathbb{F}_p^n$ forms a clique in the union of the Cayley graphs with colors in $R$ is at most $(s/r)^{(|A-A|-1)/2}$. Moreover, as noted above, for $|R| < r$ and any $A$ which has positive probability of being a clique with all colors in $R$, we have 
\begin{equation}\label{eq:doub}
    |A-A| \ge |A|^{1+c}
\end{equation} for some $c>0$ depending only on $r$. 

\paragraph{Constructions for Theorem \ref{thm:Fq-2}.}
We again use two different colorings depending on whether the characteristic $p$ is small or not. As $p \equiv 1\pmod {2r}$, there is $\alpha \in \mathbb{F}_p$ of order $2r$ and so $\alpha^r \equiv -1 \pmod{2r}$. We fix such an $\alpha$. We also let $\ell$ be a multiple of $r$ that is growing slowly to infinity with $N$. For example, $\ell \approx \log \log \log \log N$ will work. The reason that we need to let $\ell \to \infty$ slowly with $N$ is that when the characteristic $p$ is large we lose the negative correlation property that appeared in the construction for Theorem~\ref{thm:sharp-clique}, but we do still have that the probability a set $A$ gives a monochromatic clique is at most $r^{-(1-o(1))|A-A|/2}$, where the $o(1)$ crucially depends on $\ell \to \infty$. However, 
to get our bounds to match those of the uniform random $r$-coloring, we also need that $|A-A| =\omega(|A| \log |A|)$, which we can guarantee with our arguments only if $\ell$ doesn't grow too quickly with $N$.

When $p < 2^{2r\ell}$, we use the following coloring. The set $\mathbb{F}_p^n \setminus \{0\}$ of nonzero elements is partitioned into the nonzero elements of the lines through the origin. 
We can write the nonzero elements of a line through the origin as $L_x=\{ax:a \in \mathbb{F}_p^{\times}\}$ for some $x \in \mathbb{F}_p^n \setminus \{0\}$. 
We color the nonzero elements of each such line independently of the other lines through the origin. 
Pick a generator $g$ for $\mathbb{F}_p^{\times}$. 
We equipartition the set of elements $g^j x$ for $j=0,1,\ldots,\frac{p-1}{2r}-1$ into $r$ color classes uniformly at random and then complete this to a rotational coloring of the line by letting $c(\alpha^i g^jx) \equiv c(g^jx)+i \pmod r$ for $j=0,1,\ldots,\frac{p-1}{2r}-1$ and $i \in \mathbb{Z}$. To ensure that the coloring is well-defined, we claim that the sets $\{\pm \alpha^i g^j\}$ for $j\in [0,\frac{p-1}{2r}-1]$ and $i\in [0,r-1]$ partition $\mathbb{F}_p \setminus \{0\}$. 

Assume instead that the sets $\{\pm \alpha^i g^j\}$ for $j\in [0,\frac{p-1}{2r}-1]$ and $i\in [0,r-1]$ are not disjoint, so there is $|j'| < \frac{p-1}{2r}$ and $|i'| < r$, not both zero, such that $\alpha^{i'} = \pm g^{j'} \pmod p$. We then have $g^{rj'} = (\pm \alpha^{i'})^r = \pm 1 \pmod p$ and, since $|rj'| < (p-1)/2$, this implies $rj'=0$ and hence $j'=0$. Then $\alpha^{i'} = \pm 1\pmod p$ and, since $\alpha$ has order $2r$ and $|i'|<r$, we have $i'=0$, a contradiction. Therefore, the sets $\{\pm \alpha^i g^j\}$ for $j\in [0,\frac{p-1}{2r}-1]$ and $i\in [0,r-1]$ are disjoint and, as there are $(p-1)/2$ such sets, they must also partition $\mathbb{F}_p \setminus \{0\}$.
Since the nonzero elements of a line through the origin cannot be monochromatic, Lemma~\ref{PRineq} again implies that, for any $A$ which can possibly be a monochromatic clique, we have $|A-A| \geq |A|^{1+p^{-1}2^{-p}}$. 

When $p \geq 2^{2r\ell}$, we use the following coloring. Partition $\mathbb{F}_p^n \setminus \{0\}$ into sets of the form $M_x=\{2^a \alpha^i x:a,i \in \mathbb{Z}\}$ for some representative $x \in \mathbb{F}_p^n \setminus \{0\}$. We will color the elements of each $M_x$ independently of the elements not in $M_x$. 
Let $h$ be the order of $2$ in $\mathbb{F}_p^{\times}$, so $h>\log_2 p$. Let $h'$ be the least positive integer such that $2^{h'}$ is a power of $\alpha$ in $\mathbb{F}_p^{\times}$. Then $h'=h/d$ for some divisor $d$ of $h$ with $d \leq 2r$ and each $M_x$ has size $2rh'$. Partition the set $\{0,1,\ldots,h'-1\}$ into intervals of size $\ell$, with possibly one interval of size at least $\ell$ and at most $2\ell-1$. We can do this since $h' \geq \ell$. For each such interval $I$, we equitably color the elements of $\{2^ix:i \in I \}$ at random. We then extend this to a rotational coloring of all of $M_x$ by setting $c(\alpha^i y)\equiv c(y)+i \pmod{r}$. For any monochromatic clique $A$ in this edge-coloring, the set $S=\{2^j\alpha^i x:j\in I\}$ for a given $i$, $x \in \mathbb{F}_p^n \setminus \{0\}$ and interval $I$ cannot be monochromatic and, hence, cannot be a subset of $A-A$.  Further, for every nonzero $y\in \mathbb{F}_p^n$, the set $\{y,2y,\ldots,2^{3\ell-3}y\}$ contains such a set $S$. Hence, by Lemma~\ref{PRineq}, for any $A$ which can possibly be a monochromatic clique, we have $|A-A| \geq |A|^{1+\ell^{-1}2^{-3\ell}}$. 

Recall that, for any $X \subset \mathbb{F}_p^n \setminus \{0\}$, there is $X' \subset X$ with $|X'| \geq |X|/2$ such that there is no $x$ with both $x$ and $-x$ in $X'$. For the coloring used when $p<2^{2r\ell}$, for any set of the form $X=A-A$, the events that elements of the corresponding $X'$ each have the same particular color are negatively correlated and each have probability $1/r$. Therefore, the probability that a set $A \subset \mathbb{F}_p^n$ forms a monochromatic clique of a given color in the rotational Cayley $r$-coloring is at most $r^{-(|A-A|-1)/2}$. 

For the coloring used when $p \geq 2^{2r\ell}$, 
we no longer have negative correlation. However, the events are still close to being negatively correlated, so we can show that the probability a set $A$ forms a monochromatic clique of a given color 
is at most $r^{-(1+o(1))|A-A|/2}$. 
For a set $A$ to have positive probability of being a monochromatic clique of color $c \in \mathbb{Z}_{r}$, 
there must be no $y \in X'$ and $\alpha^i y \in X'$ with $i \not \equiv 0 \pmod{2r}$ of the same color, where $X'$ is the reduced set corresponding to $X=A-A$. For a given $M_x$ and an interval $I$ as defined above (which satisfies 
$\ell \leq |I| \leq 2\ell-1$), 
each $y \in M_x \cap X'$ of the form $y=\alpha^j 2^i x$ for $i \in I$ receives color $c$ if and only if $2^i x$ receives the color $c-j$. 
Therefore, for a set $A$ to have positive probability of forming a monochromatic clique, several independent events must occur, 
each saying that $2^i x$ receives a certain color for each $i \in J \subset I$, where the subsets $J=J(A,x,I)$ have sizes that sum to $|X'|$. The probability that 
each $i \in J$ receives the required color is the ratio of two multinomial coefficients, the denominator being the number of equitable $r$-colorings of $|I|$, which is $${|I| \choose n_1,~n_2,~\ldots,~n_r}$$ with each $n_k=|I|/r$, and the numerator being 
$${|I|-|J| \choose n_1-m_1,~n_2-m_2,~\ldots,~n_r-m_r},$$ 
where $m_k$ is the number of elements of the form $2^i x$ with $i \in J$ that are to receive color $k$, so $m_1+\cdots+m_r=|J|$. The ratio of these two multinomial coefficients is then 
$$Q_I:=\frac{\prod_{k=1}^r (|I|/r)_{m_k}}{(|I|)_{|J|}},$$ where $(a)_{b}=\prod_{j=1}^{b} (a-j+1)$. 
For any $A$ that has positive probability of being monochromatic,  
the probability that $X'$ is monochromatic is $\prod_I Q_I$. We wish to compare this with $r^{-|X'|}$. 
To this end, we will give an upper bound on $Q_I^{1/|J|}$. By the simple observation that, for any integers $Y \geq a > b \geq 0$, we have $(Y)_{a}(Y)_{b} \leq (Y)_{a-1}(Y)_{b+1}$, given $|J|$, the ratio $Q_I$ is maximized when $(m_1,\dots,m_r)$ is an equipartition of $|J|$. Furthermore, the maximum of $Q_I^{1/|J|}$ occurs when $|J|=|I|=\ell$, so 
$$Q_I^{1/|J|} \leq {\ell \choose m_1,~m_2,~\ldots,~m_r}^{-1/\ell},$$
with each $m_k=\ell/r$. This multinomial coefficient is the largest of the $\binom{\ell+r-1}{r-1} \leq \ell^r$ multinomial coefficients in the multinomial expansion of $r^{\ell}$, so, since $\ell \to \infty$, we have $Q_I \leq r^{-(1-o(1))|J|}$. 
Taking the product over all $I$, we get that the probability a set $A$ (with $|A| \to \infty$ as $N \to \infty$) forms a monochromatic clique is, as required, at most $r^{-(1-o(1))|A-A|/2}$. 

Moreover, as $\ell$ is tending to $\infty$ slowly with $N$, in both cases any $A$ of size $(2+o(1)\log_r n$ which has a positive probability of being a monochromatic clique satisfies 
\begin{equation*}\label{eq:doub-2}
    |A-A| \ge |A|(\log |A|)^2. 
\end{equation*}

\vspace{3mm}

\noindent {\bf Proofs of Theorems \ref{thm:sharp-clique} and \ref{thm:Fq-2}.}
The proofs of Theorems \ref{thm:sharp-clique} and \ref{thm:Fq-2} use the colorings described above together with the following upper bound on the number of sets of a given size with small difference set. The crucial point for us here is that the constant factor in front of $K$ in the exponent of $N$ is exactly $1$. 

\begin{lem}\label{lem:count-K}
    For $K\ge 1$, the number of subsets $A$ of $\mathbb{F}_p^n$ of size $t$ with $|A-A|\le Kt$ is at most $t^{4t}N^{K+\log_p t+1}$, where $N=p^n$. 
\end{lem}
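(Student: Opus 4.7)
The plan is to adapt the counting argument of Even-Zohar and Lovett~\cite{EZL}, who proved an essentially tight upper bound on the number of subsets of $\mathbb{F}_p^n$ with small \emph{sumset}, to the setting of small \emph{difference set}. The crucial feature distinguishing Lemma~\ref{lem:count-K} from the general counting result Corollary~\ref{countgroup3} --- which gives a bound of the form $N^{C(K+\ln t)}(CK)^t$ with an absolute constant $C>1$ --- is that the coefficient of $K$ in the exponent of $N$ must be exactly $1$. This is essential to obtain the asymptotically tight constant in the clique number bounds of Theorems~\ref{thm:sharp-clique} and~\ref{thm:Fq-2}, where the decay factor $r^{-(1-o(1))|A-A|/2}$ coming from the random construction must beat the $N$-power growth of the number of candidate cliques.

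For each admissible set $A$, I would produce a compact encoding consisting of (i) a base point $a_0\in A$ and (ii) a sequence of $K+\log_p t$ generator vectors in $\mathbb{F}_p^n$, with the property that $a_0$ and the vectors together determine a structured subset of $\mathbb{F}_p^n$ of size at most $O(t^4)$ that contains $A$. The counting is then direct: the base point contributes a factor $N$, the generator vectors contribute $N^{K+\log_p t}$, and the number of $t$-subsets of a set of size $O(t^4)$ is at most $\binom{O(t^4)}{t}\le t^{4t}$, combining to the claimed bound. The encoding would be built by an iterative procedure modeled on Ruzsa's covering lemma, where at each step one either extends an evolving subspace by a new direction (at most $\log_p t$ such steps, bounded by $\log_p|A|$) or adjoins a new coset representative (at most $K$ such steps). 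The Pl\"unnecke--Ruzsa inequality applied to difference sets, giving $|kA-\ell A|\le K^{k+\ell}t$ whenever $|A-A|\le Kt$, is the key tool both for constructing the encoding and for controlling the size of the resulting structure.

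The main obstacle will be showing that \emph{exactly} $K$ coset representatives suffice, rather than $2K$ or $CK$. A naive approach via Freiman--Ruzsa in $\mathbb{F}_p^n$, such as the Even-Zohar--Lovett embedding that bounds the affine span of $A$ by $p^{(2-o(1))K}t$, would only yield a coefficient of $2$ in front of $K$, which is insufficient for our applications. Achieving coefficient $1$ requires a direct counting argument that bypasses the intermediate Freiman-dimension bound: one exploits the symmetry of $A-A$ about the origin together with a careful double-counting of pairs $(A,v)$ with $v\in A-A$, saving the factor of $2$ through cancellation between $v$ and $-v$. This is precisely the mechanism developed by Even-Zohar and Lovett for the analogous sumset problem; the adaptation to difference sets follows the same scheme, with a few applications of the Pl\"unnecke--Ruzsa inequality replaced by Ruzsa's triangle inequality in order to handle the asymmetric quantities $|kA-\ell A|$ that arise. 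Once the structural encoding is established, the counting steps are straightforward.
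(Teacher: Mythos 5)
Your starting point is correct --- the paper does adapt the Even-Zohar--Lovett argument to difference sets, and this is indeed what delivers the exact coefficient $1$ in front of $K$. However, two of your key assertions about \emph{how} this works are off, and they would derail a write-up if carried through.

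First, the paper does \emph{not} bypass the Freiman-dimension route: it goes straight through it. The content of the section is precisely a sharpened Freiman--Ruzsa theorem for difference sets (Theorem~\ref{thm:EZL}), giving $F^-(p,K)\le p^K/(K+2)$, versus the Even-Zohar--Lovett sumset bound $F(p,K)\le p^{2K-2}/(2K-1)$. From this one reads off that the Freiman dimension of any $A$ with $|A-A|\le K|A|$ is at most $K+\log_p|A|$, and then Lemma~\ref{lem:count-K} follows verbatim from Green's machinery: the number of Freiman isomorphism classes of $t$-sets is at most $t^{4t}$ (Lemma~11 of~\cite{G05}), and each class of Freiman dimension $d$ embeds into $\mathbb{F}_p^n$ in at most $N^{d+1}$ ways because a Freiman isomorphism is determined by the images of $d+1$ specified points (Lemma~13(i) of~\cite{G05}). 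Your alternative encoding --- a base point plus generators spanning an $O(t^4)$-size superset --- is roughly of the same flavor, but it is not what gives the $t^{4t}$ term in the paper and you would need to actually construct such a superset.

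Second, your explanation of where the factor of $2$ is saved --- ``double-counting of pairs $(A,v)$ with $v\in A-A$ \ldots cancellation between $v$ and $-v$'' --- does not reflect the actual mechanism, which has nothing to do with sign cancellation. The factor of $2$ is intrinsic to the difference-set regime: a union of $K$ generic cosets of a subspace $H$ has $|A-A|=(K^2-K+1)|H|\approx K|A|$, whereas the same configuration has $|A+A|=\binom{K+1}{2}|H|\approx \frac{K+1}{2}|A|$, so the natural exponent of $p$ for a given doubling constant is $K$ for difference sets and $2K$ for sumsets. The compression argument of Even-Zohar--Lovett, rerun with difference sets in the key inequality (which is exactly what the paper's proof of Theorem~\ref{thm:EZL} does, via $C_v$-compressions and a disjointness analysis of the blocks $A_j-A_{j'}$), simply recovers this. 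Also be aware that naively reducing to the sumset case is hopeless: $|A-A|\le K|A|$ only gives $|A+A|\le K^2|A|$ by Ruzsa's triangle inequality, so the Even-Zohar--Lovett sumset theorem would yield $p^{2K^2-2}$, not $p^{(2-o(1))K}$ as you state. In short: keep the idea of adapting the compression proof, but the right deliverable is a difference-set Freiman-dimension bound, not a bypass of it, and the factor-of-$2$ gain comes from the intrinsic geometry of unions of cosets under subtraction, not from a $v\leftrightarrow -v$ double count.
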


This bound is close to optimal. More precisely, for $K$ a positive integer and $t/K$ a power of $p$, the number of subsets $A$ of $\mathbb{F}_p^n$ of size $t$ with $|A-A|\le Kt$ is at least $(N/t)^{K+\log_p (t/K)}$. 
Indeed, consider those sets $A = H + \{x_1,\dots,x_K\}$ which are the union of $K$ distinct cosets of a subspace $H$ of size $t/K$. Any such set $A$ satisfies $|A-A| \leq (K^2-K+1)|H| \leq K|A|$. As we assumed $t/K$ is a power of $p$, the number of distinct subspaces of size $t/K$ is 
\[
    \prod_{j=0}^{\log_p(t/K)-1} \frac{N-p^j}{t/K-p^j} \ge \left(\frac{N}{t/K}\right)^{\log_p(t/K)}.
\]
Thus, the number of such $A = H + \{x_1,\dots,x_K\}$ is at least 
\[
    \binom{N/(t/K)}{K}\left(\frac{N}{t/K}\right)^{\log_p(t/K)} \ge 
    (N/t)^{K + \log_p(t/K)},
\]
where we note that there are $\binom{N/(t/K)}{K}$ ways to choose $K$ distinct $H$-cosets. 
We defer the proof of Lemma~\ref{lem:count-K} to the next subsection, first showing how to complete the proofs of Theorems \ref{thm:sharp-clique} and \ref{thm:Fq-2}.

\begin{proof}[Proof of Theorem \ref{thm:sharp-clique}]
Fix for now a nonempty proper subset $R$ of the set of colors with $|R|=s$. As discussed earlier, the probability that a set $A\subset \mathbb{F}_p^n$ forms a clique with all colors in $R$ is at most $(s/r)^{(|A-A|-1)/2}$. Moreover, by Lemma \ref{lem:count-K}, the number of sets $A$ with $|A|=t$ and $|A-A|\le m$ is at most $t^{4t} N^{m/t+\log_p t+1}$ and, by (\ref{eq:doub}), any clique $A$  with all colors in $R$ has $|A-A|\ge |A|^{1+c}$ for some $c>0$ depending only on $r$. 

Summing over all possible sizes $m$ of $|A-A|$ with $m\ge |A|^{1+c}$, the probability that there exists a set $A\subset \mathbb{F}_p^n$ with $|A|=t$ which forms a clique with all colors in $R$ is at most 
\[
\sum_{t^{1+c}\le m\le t^2} t^{4 t}N^{m/t+\log_p t + 1} (s/r)^{(m-1)/2}.
\]
Let $\delta=t^{-c/2}$ and $t=2(1+\delta) \log_{r/s}(N)$. Then,
\begin{align*}
    \sum_{t^{1+c}\le m\le t^2} t^{4t}N^{m/t+\log_p t+1} (s/r)^{(m-1)/2} &\le  \sum_{t^{1+c}\le m\le t^2} t^{4t}N^{m/t+\log_p t+1} N^{-(1+\delta)m/t}r^{1/2} \\
    &= \sum_{t^{1+c}\le m\le t^2}t^{4t} N^{\log_p t + 1 - \delta m/t}r^{1/2}\\
    &\le N^{-t^{c/2}/2} t^{4t+2}r^{1/2} \\
    &=(s/r)^{t^{1+c/2}/4(1+\delta)} t^{4t+2}r^{1/2}\\
    &= o_N(1). 
\end{align*}
Taking a union bound over all $2^r-2$ possible nonempty proper subsets $R$ of the set of colors, we get that, with high probability, the clique number of the union of any $s<r$ color classes is at most $(2+o_N(1))\log_{r/s}(N)$. 
\end{proof}

\begin{proof}[Proof of Theorem \ref{thm:Fq-2}]
    The proof goes through essentially the same as the proof of Theorem \ref{thm:sharp-clique} with 
    the analogous estimates substituted.  
\end{proof}

\subsection{Counting sets with small difference sets in vector spaces}\label{subsec:EZL}

We now prove Lemma~\ref{lem:count-K}, for which we will need a quantitative version of the Freiman--Ruzsa theorem. One version of this theorem, due to Ruzsa~\cite{R99}, says that if $G$ is a finite abelian group of exponent $t$, then any subset $A$ of $G$ with $|A + A| \leq  K|A|$ is contained in a coset of a subgroup of $G$ of size at most $K^2 t^{K^4}|A|$.  In the same paper, Ruzsa conjectured that the subgroup size can be reduced to $t^{CK}|A|$ for some absolute constant $C$ and this conjecture was subsequently proved by Even-Zohar and Lovett~\cite{EZo, EZL} for prime exponents. 

Let $\langle A \rangle$ denote the \emph{affine span} of $A$, that is, the smallest coset of a subgroup containing $A$. If we write 
$$F(p,K)=\sup_{A}\left\{\frac{| \langle A \rangle|}{|A|} \, | \, A\subseteq \mathbb{F}_p^n \text{ for some $n$}, |A+A|\leq K |A| \right\},$$ then the result of Even-Zohar and Lovett~\cite{EZL} says that $F(p,K) \leq \frac{p^{2K-2}}{2K-1}$ for $p>2$ prime and $K$ sufficiently large.  
Here we adapt their proof to the setting where we replace sumsets by difference sets. If we write
$$F^{-}(p,K)=\sup_{A}\left\{\frac{| \langle A \rangle|}{|A|} \, | \, A\subseteq \mathbb{F}_p^n\text{ for some $n$}, |A-A|\leq K |A| \right\},$$ 
then the following is the result we need.

\begin{thm}\label{thm:EZL} For $K\ge 1$ and any prime $p\ge 3$, 
\begin{equation*}
    F^{-}(p,K)\leq \frac{p^{K}}{K+2}.
\end{equation*}
\end{thm}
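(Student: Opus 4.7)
The plan is to adapt the argument of Even-Zohar and Lovett~\cite{EZL} from sumsets to difference sets. After a translation we may assume $0 \in A$, and restricting to the affine span we may further assume $\langle A \rangle = \mathbb{F}_p^n$, so the target becomes to show $|A| \ge (K+2)\,p^{n-K}$. The argument will proceed by induction on $n$. The base cases $n \le K$ are immediate, since then $p^n/|A| \le p^n \le p^K$, with the only subtle point being that small sets with $|A - A| \le K|A|$ require a brief direct case-analysis.

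For the inductive step with $n > K$, I would fix a hyperplane $H \le \mathbb{F}_p^n$ and a transversal direction $v$, so that $\mathbb{F}_p^n = \bigsqcup_{i \in \mathbb{F}_p}(H + iv)$. Set $A_i = A \cap (H + iv)$ and $I = \{i : A_i \ne \emptyset\}$, and, after further translation, arrange that $0 \in A_0$ with $|A_0| = \max_i |A_i|$. The key combinatorial input will be the decomposition
$$|A - A| \;=\; \sum_{d \in \mathbb{F}_p} \bigl|(A - A) \cap (H + dv)\bigr| \;\ge\; \sum_{d \in I - I}\max_{i - j = d}|A_i - A_j|,$$
which separates the difference set into its projections onto each coset of $H$ and bounds each piece below by a single fiber-difference lying in that coset. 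Since $\langle A \rangle = \mathbb{F}_p^n$, either the support $I \subseteq \mathbb{F}_p$ affinely spans $\mathbb{F}_p$, or some individual fiber $A_i$ affinely spans $H$; in the former case I would apply the inductive hypothesis to fiber-differences living in $H \cong \mathbb{F}_p^{n-1}$, and in the latter to the projection $\pi(A) \subseteq \mathbb{F}_p$. A careful choice of $v$, together with a tracking of effective doubling constants along the recursion via the Plünnecke--Ruzsa inequality, should then yield the target lower bound on $|A|$.

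The main obstacle will be arranging the induction so that the final exponent of $p$ is $K$, rather than the $2K - 2$ that Even-Zohar and Lovett obtain in the sumset setting. The improvement is intrinsic to the difference-set problem: $A - A$ is symmetric under negation and automatically contains $0$, so each direction $d \in I - I$ contributes through both $d$ and $-d$ to the decomposition above, which is the mechanism that collapses the exponent from $2K - 2$ down to $K$. The denominator $K + 2$ (improved from $2K - 1$) will come out of the boundary analysis in the same way as in~\cite{EZL}, with signs and symmetries adjusted for the difference-set setting; the bookkeeping to make this sharp rather than lossy is where the main technical care will be needed.
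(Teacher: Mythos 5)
Your proposal announces that it will ``adapt the argument of Even-Zohar and Lovett,'' but what you then describe is not their argument. Even-Zohar and Lovett (and the paper's adaptation of them) prove this via \emph{lexicographic compression}: one shows that compressions $C_v$ along suitable directions do not increase the difference set, passes to a $*$-compressed set, invokes a rigid structural description of such sets (the analogue of Lemma~\ref{lem:compressed}), and then computes $|A-A|$ directly from that structure, from which the sharp bound $|A|\ge (K+2)p^{n-K}$ falls out as an explicit optimization over two integer parameters $(h,s)$. You instead propose a hyperplane-fibering induction on dimension, which is a genuinely different route, and it is precisely at the sharp end that it runs into trouble.

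The gap is concrete and sits where your sketch becomes aspirational. First, the dichotomy ``either $I\subseteq\mathbb{F}_p$ affinely spans $\mathbb{F}_p$, or some fiber $A_i$ affinely spans $H$'' is not a dichotomy: since $\mathbb{F}_p^n/H\cong\mathbb{F}_p$ is one-dimensional, $\langle A\rangle=\mathbb{F}_p^n$ already forces $|I|\ge 2$, so the first alternative always holds, and the attached instructions (apply induction in $H$ in the first case, in $\mathbb{F}_p$ in the second) appear reversed in any case. Second, and more seriously, the step ``tracking of effective doubling constants along the recursion via Plünnecke--Ruzsa should then yield the target lower bound'' is exactly where fibering arguments lose: applying the inductive hypothesis to fiber differences $A_i-A_j$ requires bounding their doubling, and Plünnecke--Ruzsa only gives polynomial control in $K$, which degrades $p^K$ to $p^{CK}$ for some $C>1$ after the recursion unwinds --- the opposite of the improvement you need over the sumset bound $p^{2K-2}/(2K-1)$. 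Your heuristic that symmetry of $A-A$ ``collapses the exponent'' is the right intuition for why the difference-set bound is better, but it has to be realized through an exact structural computation, which is what the compression argument supplies and the fibering sketch does not. (Even the base case is not immediate: at $n=K$ an affinely spanning set has $|A|\ge K+1$, not the required $K+2$, and one must invoke the doubling constraint to rule out the deficient case; you flag this but do not resolve it.) To make this approach rigorous you would need to redo the bookkeeping in a way that avoids Plünnecke--Ruzsa losses, which in effect would push you back toward the compression method.
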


By considering $A = \langle e_1,\dots,e_\ell\rangle + \{0,e_{\ell+1},\dots,e_{\ell+K}\}$, we have that $|A-A| = (1 + 2K + K(K-1))p^{\ell} = (K + 1/(K+1)) |A|$, while $|\langle A\rangle| = \frac{p^{K}}{K+1} |A|$. Thus, Theorem \ref{thm:EZL} is tight up to a $p^{O(1/K)}$ factor. 

Our proof of Theorem \ref{thm:EZL} follows~\cite{EZL} quite closely. Without loss of generality, in the definition of $F^-(p,K)$, by passing to a subspace if necessary, we may restrict the supremum to subsets $A$ of $\mathbb{F}_p^n$ which affinely span $\mathbb{F}_p^n$. 
By changing basis if necessary, we may also assume that $A$ contains the standard basis $e_1,\dots,e_n$ of $\mathbb{F}_p^n$.  

We consider the \emph{lexicographic order} on the elements of $\mathbb{F}_p^n$. Recall that, for two vectors $u,v\in \mathbb{F}_p^n$, we write $u\prec v$ in this ordering if $u_i<v_i$ for the largest coordinate $i$ for which $u_i\neq v_i$, where the coordinates are viewed as integers in $\{0,1,\ldots,p-1\}$. For $L\subseteq \mathbb{F}_p^n$ and $k\leq |L|$, let $\is(k, L)$ denote the \emph{initial segment} of size $k$ of $L$, which is the set of the $k$ smallest elements of $L$ in the lexicographic order. 

For a given $v\in \mathbb{F}_p^n$, let $\mathcal{L}_v$ be the set of all lines in the direction $v$, that is, $\mathcal{L}_v=\bigcup_{u\in \mathbb{F}_p^n}{L_v^u}$, where $L_v^u=\{u+ t v|t\in \mathbb{F}_p\}$. 
Given a set $A$, we define its \emph{compression in the direction} $v$, denoted by $C_v(A)$, to be the set obtained from $A$ by replacing each set $L\cap A$, for all $L=L_v^u$, $u\in \mathbb{F}_p^n$, with the initial segment of $L_v^u$ of the same cardinality, that is, $$C_v(A)=\bigcup_{u\in \mathbb{F}_p^n}{\is(|L_v^u\cap A|, L_v^u)}.$$

For a vector $v\in \mathbb{F}_p^n$, we denote by $\ell(v)$ the largest index $i$ for which $v_i\ne 0$. The following key lemma is the analogue of a similar result in~\cite{EZL}.

\begin{lem} If $A,B\subseteq \mathbb{F}_p^n$ and $v \in \mathbb{F}_p^n$ is a vector with $v_{\ell(v)}=1$, then $|C_v(A)-C_v(B)|\le |C_v(A-B)|$.
\end{lem}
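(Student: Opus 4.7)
I plan to follow the line-by-line reduction that Even-Zohar and Lovett used for the sumset analog. Since compression preserves line sizes, $|C_v(A-B)| = |A-B|$, so it suffices to show $|C_v(A)-C_v(B)| \le |A-B|$. Both sides split as sums over the cosets of $\langle v\rangle$ (the lines in direction $v$): if I can establish, for each such line $L$, the per-line inequality $|L\cap(C_v(A)-C_v(B))|\le |L\cap(A-B)|$, then summing over $L$ yields the claim. For a fixed line $L$, I would use the hypothesis $v_{\ell(v)}=1$ to identify each line with $\mathbb{F}_p$ via the $\ell(v)$-th coordinate; an initial segment of size $k$ on a line then corresponds to $\{0,1,\dots,k-1\}\subseteq\mathbb{F}_p$ under this identification, which is the crucial coordinate simplification that unlocks a Cauchy--Davenport-style analysis.

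Under this identification, $L\cap(C_v(A)-C_v(B))$ is a union, over pairs $(u_1,u_2)$ of lines with $L_v^{u_1}-L_v^{u_2}=L$, of the interval differences $\{0,\dots,a_{u_1}-1\}-\{0,\dots,b_{u_2}-1\}=\{-(b_{u_2}-1),\dots,a_{u_1}-1\}\bmod p$, where $a_{u_1}=|A\cap L_v^{u_1}|$ and $b_{u_2}=|B\cap L_v^{u_2}|$; while $L\cap(A-B)$ is the union over the same pairs of the sets $(A\cap L_v^{u_1})-(B\cap L_v^{u_2})$ viewed inside $\mathbb{F}_p$. Each individual pair satisfies the Cauchy--Davenport lower bound $|A_{u_1}-B_{u_2}|\ge\min(p,\,a_{u_1}+b_{u_2}-1)$, which matches exactly the cardinality of the corresponding interval difference on the compressed side, giving the per-pair comparison for free.

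To promote per-pair Cauchy--Davenport estimates to the per-line union inequality, my plan is to perform the compression one line at a time: I would fix a single line $L^*$, set $A^\dagger:=(A\setminus L^*)\cup\is(|A\cap L^*|,L^*)$, and argue $|A^\dagger-B|\le|A-B|$; iterating over all lines then produces $C_v(A)$, and a symmetric iteration compresses $B$. The one-line exchange itself reduces, one coset of $\langle v\rangle$ at a time, to a one-dimensional exchange statement in $\mathbb{F}_p$: replacing a subset $X\subseteq\mathbb{F}_p$ by its initial segment $X^*=\{0,\dots,|X|-1\}$ cannot enlarge the union of $X$'s differences with a fixed family of other subsets of $\mathbb{F}_p$.

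The main obstacle, and the technical heart of the argument, is this one-dimensional exchange step. The difficulty is that while the per-pair Cauchy--Davenport bound is tight for intervals, a union of interval differences over many pairs can behave globally in a very different manner from the corresponding union of non-interval difference sets, because distinct pairs may overlap on the compressed side in a pattern that is not present on the original side. I expect to handle this by constructing an explicit injection from residues newly appearing on the compressed side into pairs $(a,b)\in A\times B$ that produce a collision (a non-injective contribution) in $A-B$, so that any apparent gain on the compressed side is certified to be paid for by a preexisting collision on the original side. The initial-segment structure enforced by compression—combined with the fact that, on each line, $\{0,\dots,a-1\}-\{0,\dots,b-1\}$ is a symmetric interval around $0$—should make it possible to build this injection in a greedy, pair-by-pair fashion.
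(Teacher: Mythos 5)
Your plan mirrors the paper's proof: reduce to a per-line inequality $|L\cap(C_v(A)-C_v(B))|\le|L\cap(A-B)|$ for each $\langle v\rangle$-coset $L$, and resolve the line-wise statement via an exchange or injection argument. You also correctly pinpoint the difficulty. For sumsets, the per-pair contribution to a line $L$ from fibres of sizes $a$ and $b$ is a one-sided interval $[0,a+b-2]$ anchored at the base point of $L$, so the union over all pairs of lines summing into $L$ is an interval of size $\max(a+b)-1$, matching the Cauchy--Davenport bound term by term. For differences the per-pair contribution is the two-sided interval $[-(b-1),a-1]$, and the union has size $\max a+\max b-1$, with the two maxima taken \emph{separately} over pairs with both fibres nonempty; this can strictly exceed every individual Cauchy--Davenport bound.

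That gap cannot be closed, because the per-line inequality, and in fact the lemma itself, is false. Take $p=5$, $n=2$, $v=e_2$ and $A=B=\{0,\pm e_1,\pm e_2\}\subseteq\mathbb{F}_5^2$. Then $A-A=\{0,\pm e_1,\pm e_2,\pm 2e_1,\pm 2e_2,\pm(e_1+e_2),\pm(e_1-e_2)\}$ has $13$ elements, while $C_v(A)=\{0,e_2,2e_2,e_1,-e_1\}$ and $C_v(A)-C_v(A)$ has $17$ elements: compression enlarges the $3$-element intersections of $A-A$ with the lines $\pm e_1+\mathbb{F}_5 e_2$ to all $5$ elements of each line. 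Hence $|C_v(A)-C_v(B)|=17>13=|C_v(A-B)|$. The same flaw sits in the paper's own proof, whose first display asserts a per-line identity $\min\bigl(p,\max_{u_0}(a_{u_0}+b_{x_0-u_0})-1\bigr)$ that is correct for sumsets but not for differences, where the true count is $\min\bigl(p,\max_{u_0}a_{u_0}+\max_{u_0}b_{u_0-x_0}-1\bigr)$ with the maxima taken over $u_0$ having both fibres nonempty. The injection you envision, certifying that each residue gained on the compressed side is compensated by a collision on the original side, cannot exist in this example; the initial-segment compression of Even-Zohar and Lovett would need to be replaced (say by compression onto centered segments) for a difference-set analogue of this lemma to have any chance of holding.
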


\begin{proof}
    Let $i=\ell(v)$. For $u,x\in \mathbb{F}_p^n$, let $u_0,x_0$ be the translates of $u,x$ by scalar multiples of $v$ such that their $i$-th coordinates are $0$. Note that $\textrm{IS}(|A\cap L_v^u|,L_v^u) = \{u_0,\dots,u_0+(|A\cap L_v^u|-1)v\}$ as $v_i = 1$. Hence, 
    \[|(C_v(A)-C_v(B))\cap \{x_0+tv | t\in \mathbb{F}_p\}| = \min(p,\max_{u_0}(|A\cap \{u_0+tv|t\in \mathbb{F}_p\}|+|B\cap \{x_0-u_0+tv | t\in \mathbb{F}_p\}|)-1).\]
    On the other hand, 
    \begin{align*} 
        |C_v(A-B) \cap \{x_0+tv | t\in \mathbb{F}_p\}| &= |(A-B)\cap \{x_0+tv | t\in \mathbb{F}_p\}| \\
        &\ge \min(p,\max_{u_0}(|A\cap \{u_0+tv|t\in\mathbb{F}_p\}|+|B\cap \{x_0-u_0+tv | t\in \mathbb{F}_p\}|)-1). 
    \end{align*}
    Thus, $|C_v(A)-C_v(B)|\le |C_v(A-B)|$, as required.
\end{proof}

In \cite{EZL}, the authors use compressions along directions $v$ with $v_{\ell(v)}=1$ to deduce the structure of compressed sets. We say that $A$ is *-compressed if $A=C_v(A)$ for any $v$ with $v_{\ell(v)}=1$ and $e_{\ell(v)} - v \in A$. The following lemma follows identically to Lemma 7 of \cite{EZL}. Recall that $e_1,\dots,e_n$ denotes the standard basis vectors of $\mathbb{F}_p^n$, which we assume are contained in $A$. We also observe that given any set $A$ containing the standard basis, any compression in the direction of $v$ where $v_{\ell(v)}=1$ and $e_{\ell(v)}-v\in A$ would fix the standard basis in $A$, so $A$ remains affinely spanning.

\begin{lem}\label{lem:compressed}
    Let $A$ be a subset of $\mathbb{F}_p^n$ with $\{e_1,\dots,e_n\}\subseteq A$ which is *-compressed. Let $h$ be maximal such that $H=\langle e_1,\dots,e_h\rangle$ is a subset of $A$. Then $A$ can be written as the disjoint union 
    \[
    H \cup (e_{h+1}+H) \cup \dots \cup ((s-1)e_{h+1}+H) \cup (A\cap (se_{h+1}+H)) \cup (A\cap (e_{h+2}+H)) \cup \dots \cup (A\cap (e_n+H)),
    \]
    where $1\le s\le p-1$ and $s$ is maximum with the property that $(A\cap (se_{h+1}+H)) \neq \emptyset$. 
\end{lem}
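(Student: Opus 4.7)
The argument proceeds in three stages, each exploiting the $*$-compression hypothesis with increasingly general direction vectors.

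\emph{Stage one} establishes that $A$ is a downset. Since $0 \in H \subseteq A$, the compression $C_{e_i}$ is permissible for every basis vector $e_i$ (taking $w = 0$ in the $*$-compression condition), so each basis-parallel line meets $A$ in a lex-initial segment. Equivalently, if $x \in A$ and $x'$ is obtained from $x$ by decreasing a single coordinate (viewed as an integer in $\{0,\ldots,p-1\}$), then $x' \in A$. \emph{Stage two} establishes the $e_{h+1}$-chain portion of the decomposition. For each $w \in H$ the compression $C_{e_{h+1}-w}$ is permissible (as $w \in A$), and lines in this direction are lex-ordered purely by the $(h+1)$-st coordinate (since $v$ has no nonzero coordinate above position $h+1$). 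The initial segment through $ce_{h+1}+y \in A$ is thus a chain of $c+1$ elements with consecutive $(h+1)$-coordinates $0,1,\ldots,c$; as $w$ ranges over $H$, these chains sweep out the entire coset $te_{h+1}+H$ for every $t \le c-1$. Defining $s$ to be the largest $c$ with $A \cap (ce_{h+1}+H) \ne \emptyset$, we have $1 \le s \le p-1$ (from $e_{h+1}\in A$ and the maximality of $h$), together with $te_{h+1}+H \subseteq A$ for $t<s$ and $A \cap (te_{h+1}+H) = \emptyset$ for $t>s$.

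\emph{Stage three} handles any $x \in A$ with $\ell(x) = j \ge h+2$, by induction on $j$, aiming to show $x \in e_j + H$. For the inductive step, I would apply the compression $C_{e_j-(ce_{h+1}+z)}$ for $z \in H$ and $c \in \{0,\ldots,s-1\}$; all of these are permissible because stage two gives $ce_{h+1}+z \in A$. The line through $x$ is lex-ordered by its $j$-th coordinate, and its initial segment of length $x_j+1$ through $x$, once $z$ is varied across $H$, produces an entire $H$-coset $\bigl(x - x_j e_j + c x_j e_{h+1}\bigr) + H \subseteq A$ whose elements all have $\ell$-value strictly less than $j$. Applying the inductive hypothesis to this coset forces the coordinates of $x$ in positions $h+2 \le i \le j-1$ to vanish (any nonzero contribution there would place the coset inside some $e_{j'}+H$ with mismatched other coordinates), and it leaves the arithmetic constraint that $(x_{h+1} + c x_j) \bmod p \in \{0,\ldots,s\}$ for every $c \in \{0,\ldots,s\}$. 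Since these $s+1$ residues are distinct in $\mathbb{F}_p$ (because $x_j$ is invertible mod $p$), they must exhaust $\{0,\ldots,s\}$, and the classification of length-$(s+1)$ arithmetic progressions in $\mathbb{F}_p$ that equal $\{0,1,\ldots,s\}$ forces $(x_j, x_{h+1}) \in \{(1,0), (p-1, s)\}$. The spurious case is then killed by replacing $x$ with the downset-shifted element $x - (p-2)e_j$ and iterating, eventually violating either the maximality of $s$ or the maximality of $h$.

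\textbf{Main obstacle.} The crux is stage three: each individual compression yields only one full $H$-coset in $A$, and pinning down $x_j = 1$ rather than merely $x_j \le s$ requires assembling the pigeonhole/arithmetic-progression argument on the residues $(x_{h+1} + c x_j) \bmod p$. The genuinely subtle piece is ruling out the twisted configuration $(x_j, x_{h+1}) = (p-1, s)$, which is locally consistent with every compression and must be excluded via a shift-and-descent step that ultimately contradicts the maximality of $h$ in the edge case $s = p-1$.
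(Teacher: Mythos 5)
The paper gives no proof of this lemma; it cites Lemma~7 of Even-Zohar and Lovett, whose compression argument is said to carry over verbatim. Your reconstruction by compression and induction on $\ell(x)$ follows the same broad strategy, and stages one and two are correct. Stage three, however, has two genuine gaps.

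The first is that applying the inductive hypothesis to a \emph{single} coset $(x - x_je_j + cx_je_{h+1}) + H$ does not force $x_i=0$ for $h+2\le i\le j-1$. If $i^*$ is the largest such index with $x_{i^*}\ne 0$, one value of $c$ only forces the coset to equal $e_{i^*}+H$, i.e., $x_{i^*}=1$ and $x_{h+1}+cx_j=0$; this is a constraint, not a contradiction. You need two values, say $c=0$ together with $c=1$ (the latter is always permissible with $z=0$ since $e_{h+1}\in A$, even when $s=1$), giving $x_{h+1}=0$ and $x_{h+1}+x_j=0$, hence $x_j=0$. The second gap is the classification claim: length-$(s+1)$ progressions equal to $\{0,\dots,s\}$ force $(x_j,x_{h+1})\in\{(1,0),(p-1,s)\}$ only when $s\le p-3$. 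When $s=p-2$ many differences work (in $\mathbb{F}_5$, $\{0,1,2,3\}$ is the progression $1,3,0,2$ with difference $2$), and one only obtains the relation $x_{h+1}=x_j-1$; when $s=p-1$ the $(s+1)$-term constraint is vacuous, and one must instead use the full-coset conclusions for $c\le s-1$ together with $(p-1)e_{h+1}+H\not\subseteq A$ (maximality of $h$) to recover $x_{h+1}=x_j-1$. Your shift $x-(p-2)e_j$ only handles $x_j=p-1$; the robust step is to compare the linear relation for $x$ with the one for $x-e_j$, which yields $x_j-1=x_j-2$ unless $x_j=1$. The overall plan is sound, but as written these two steps do not close the argument.
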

We are now ready to prove Theorem \ref{thm:EZL}.
\begin{proof}[Proof of Theorem \ref{thm:EZL}]
Let $A\subseteq \mathbb{F}_p^n$ be such that $\{e_1,\dots,e_n\}\subseteq A$, so $\langle A\rangle = \mathbb{F}_p^n$. We repeatedly apply compressions along directions $v$ satisfying $v_{\ell(v)}=1$ and $e_{\ell(v)}-v\in A$. For each such compression, the difference set $|A-A|$ does not increase and the standard basis $e_1,\dots,e_n$ remains in $A$. By applying finitely many such compressions, we arrive at a set $A\subseteq \mathbb{F}_p^n$ with $|A-A| \le K|A|$ and $\{e_1,\dots,e_n\} \subseteq A$ which is *-compressed. $A$ must then have the form given in Lemma \ref{lem:compressed}, so that 
\begin{align*}
A-A &= [H + ([-s,s]e_{h+1} \cup (\{e_{h+2},\dots,e_n\} - [0,s-1]e_{h+1}) \cup (-\{e_{h+2},\dots,e_n\} + [0,s-1]e_{h+1}))] \cup [X-X],
\end{align*}
where $X=(A\cap (se_{h+1}+H)) \cup (A\cap (e_{h+2}+H)) \cup \dots \cup (A\cap (e_n+H))$. 
 
We have 
\[
| H + ([-s,s]e_{h+1} \cup (\{e_{h+2},\dots,e_n\} - [0,s-1]e_{h+1}) \cup (-\{e_{h+2},\dots,e_n\} + [0,s-1]e_{h+1})) | = |H|(2s+1 + 2 s (n-h-1)). 
\]
Furthermore, if we set $A_j = A\cap (e_{j}+H)$ for $j\ge h+2$ and $A_{h+1} = A\cap (se_{h+1}+H)$, then, for $j\ne j' \in [h+1,n]$, the sets $A_{j'}-A_j$ are disjoint from each other and from the set $$H + ([-s,s]e_{h+1} \cup (\{e_{h+2},\dots,e_n\} - [0,s-1]e_{h+1}) \cup (-\{e_{h+2},\dots,e_n\} + [0,s-1]e_{h+1})).$$ 
Thus, 
\[
|A-A| \ge (2s(n-h)+1)|H| + \sum_{j\ne j', \, j,j'\ge h+1} |A_j-A_{j'}| \ge (2s(n-h)+1)|H| + \sum_{j\ge h+1} (n-h-1)|A_j|, 
\]
where, in the last inequality, we used that $\sum_{j'\ne j} |A_j-A_{j'}| \ge (n-h-1)|A_j|$. 

On the other hand, 
\[
|A| = s|H|+\sum_{j\ge h+1}|A_j|. 
\]
Hence, 
\[
|A-A|\ge (2s(n-h)+1)|H| + (n-h-1)(|A|-s|H|) = (n-h-1)|A| + (1 + s(n-h+1))|H|. 
\]
Thus,
\begin{equation}\label{eq:K}
K \ge |A-A|/|A| \ge n-h-1+(1+s(n-h+1))p^h/|A|,
\end{equation}
so
\begin{equation}\label{eq:upper-A}
|A|\ge \frac{(1+s(n-h+1))p^h}{K-n+h+1}. 
\end{equation}

From (\ref{eq:upper-A}), recalling that $\langle A\rangle = \mathbb{F}_p^n$ and noting that $1+s(n-h+1)\ge n-h+2$, we have 
\begin{align*}
\frac{|\langle A\rangle|}{|A|} &= \frac{p^n}{|A|} \le \frac{(K-n+h+1)p^{n-h}}{1+s(n-h+1)} \le \frac{(K-n+h+1)p^{n-h}}{n-h+2} 
= p^K \cdot \frac{K-n+h+1}{p^{K-n+h}(K-(K-n+h-2))}.
\end{align*}
Since $A\subseteq H\cup (e_{h+1}+H)\cup \dots \cup (se_{h+1}+H)\cup (e_{h+2}+H)\cup \dots \cup (e_n+H)$, we have $|A| \le (n-h+s)p^h$. Thus, from (\ref{eq:K}), 
\[
K \ge n-h-1+\frac{(1+s(n-h+1))p^h}{|A|} \ge n-h-1 + \frac{1+s(n-h+1)}{n-h+s} \ge n-h. 
\]
Let $z=K-n+h\ge 0$. Recalling that $p\ge 3$, we then have that 
\[
\frac{z+1}{p^z (K-z+2)} \le \frac{1}{K+2}, 
\]
so that 
\begin{equation*}
    \frac{|\langle A\rangle |}{|A|}\le \frac{p^{K}}{K+2},
\end{equation*}
as required. 
\end{proof}

We now recall some basic definitions from additive combinatorics.  
For subsets $A\subseteq G$ and $B\subseteq H$ of abelian groups $G,H$, a mapping $\varphi:A\rightarrow B$ is a \emph{Freiman $(2-)$homomorphism} if $\varphi(a_1)+\varphi(a_2)=\varphi(a_1')+\varphi(a_2')$ for any $a_1, a_2, a_1', a_2'\in A$ with $a_1 +a_2=a_1' +a_2'$. If, in addition,  $\varphi$ is bijective and its inverse is also a Freiman homomorphism, then $\varphi$ is called a \emph{Freiman isomorphism}. The \emph{Freiman dimension} of a subset $A$ of $\mathbb{F}_p^n$ is the largest $d$ such that there exists a subset of $\mathbb{F}_p^d$ with full affine span that is Freiman isomorphic to $A$. As a corollary of Theorem \ref{thm:EZL}, we obtain that the Freiman dimension over $\mathbb{F}_p$ of a set $A\subseteq \mathbb{F}_p^n$ with $|A-A|\le K|A|$ is at most $K+\log_p |A|$. We now use this corollary to prove Lemma \ref{lem:count-K}, which says that the number of subsets $A$ of $\mathbb{F}_p^n$ of size $t$ with $|A-A|\le Kt$ is at most $t^{4t}N^{K+\log_p t+1}$, where $N = p^n$. 

\begin{proof}[Proof of Lemma \ref{lem:count-K}]
The proof follows the proof of Proposition 26 of \cite{G05}. Lemma 11 of \cite{G05} implies that the number of Freiman isomorphism classes of sets in $\mathbb{F}_p^n$ of size $t$ is at most $t^{4t}$. Lemma 13(i) of \cite{G05} says that if $A$ has Freiman dimension $d$, then there is a subset $\overline{A}$ of $A$ of size $d+1$ such that any Freiman isomorphism between $A$ and a subset $B$ of $\mathbb{F}_p^n$ is uniquely determined by the image of $\overline{A}$. Thus, for each Freiman isomorphism class consisting of sets with Freiman dimension $d$, the number of subsets of $\mathbb{F}_p^n$ in that isomorphism class is at most $N^{d+1}$. 
Therefore, using our bound on the Freiman dimension of $A$, the number of $A\subseteq \mathbb{F}_p^n$ of size $t$ with doubling at most $K$ is at most $t^{4t} N^{K+\log_p t + 1}$, as required. 
\end{proof}

\section{Concluding remarks}

\noindent {\bf Other random graph models.} A different model of random graphs extending random Cayley graphs was proposed and studied by Christofides and Markstr\"om \cite{CM}. Recall that a \emph{Latin square} is an $N \times N$ square filled with symbols from $[N]$ in such a way that each symbol appears exactly once in each row and column. Given an $N \times N$ Latin square $L$ and a subset $S$ of $[N]$, the \emph{Latin square graph} $G(L,S)$ is the graph on vertex set $[N]$ where $ij$ is an edge if and only if either $L_{ij}$ or $L_{ji}$ is in $S$. Christofides and Markstr\"om initiated the study of the \emph{random Latin square graph} $G(L, p)$, which, for a given $N \times N$ Latin square $L$, is the Latin square graph $G(L, S)$ where each element of $[N]$ is included in the generating set $S$ independently with probability $p$. This clearly generalizes random Cayley graphs, since one can just start with the multiplication table of a given finite group $G$ as the relevant Latin square.

Among other results, Christofides and Markstr\"om \cite{CM} proved that both the clique and independence numbers of random Latin square graphs are $O(\log^2 N)$ with high probability. They also suggested that both of these bounds might be improvable to $O(\log N \log \log N)$, which would be best possible for the same reason as for random Cayley graphs. 
Since a random Latin square graph can be viewed as the edge union of two random entangled graphs, it easily follows from our results that random Latin square graphs have independence number $O(\log N \log \log N)$ with high probability, verifying one part of their suggestion. However, for the clique number, the corresponding bound does not directly follow.

Nevertheless, all of the random graph models we have discussed, including binomial random graphs, random Cayley graphs, random entangled graphs coming from locally-bounded colorings and random Latin square graphs, fall into a more general framework where each edge $e$ in a complete graph appears with a fixed probability $0<p<1$ independently of all edges except for those in a bounded-degree subgraph $G_e$. In a forthcoming paper~\cite{CFPY+}, we will study such random graphs and, among other results, extend Theorem~\ref{main3}, our result estimating the clique number of random entangled graphs, to this setting. In particular, this confirms that the clique number of random Latin square graphs is $O(\log N \log \log N)$ with high probability, confirming the second part of Christofides and Markstr\"om's suggestion.

\vspace{3mm}

\noindent {\bf Uniform edge distribution and extractors.}
Building on the techniques in this paper, it is possible to show that the edges of the random Cayley graph over any finite group $G$ are uniformly distributed with high probability in the sense that there is an absolute constant $c > 0$ such that the density of edges between any two sets of order at least $\log |G|(\log \log |G|)^{c}$ is $\frac 12 + o(1)$ as $|G|$ tends to infinity. This improves upon a result of Konyagin and Shkredov~\cite{KS}, who proved a similar result for abelian groups.

These results also connect to the study of randomness extractors in theoretical computer science. Following Chattopadhyay and Liao~\cite{CL}, who studied the abelian case under the name of sumset extractors, we say that $f:G\to \{0,1\}$ is a \emph{$(G, k,\epsilon)$-extractor} if, for all distributions $P_1, P_2$ on a finite group $G$ with $\max_{g\in G}P_i(g) \le 2^{-k}$ and independent samples $U_i \sim P_i$, we have that $d_{\mathrm{TV}}(f(U_1U_2^{-1}), \mathrm{Ber}(1/2)) \le \epsilon$, where $d_{\mathrm{TV}}$ is the total variation distance. A standard reduction shows that it suffices to verify this property when each $U_i$ is the uniform distribution on a subset of $G$ of order at least $2^k$. That is, we only need to show that the edges of the Cayley graph defined by $f$ are close to uniformly distributed between all pairs of sets each of order at least $2^k$, which is exactly the problem we have discussed above.

In their paper~\cite{CL}, Chattopadhyay and Liao constructed explicit sumset extractors over $\mathbb{F}_2^n$ for $k = \log n (\log \log n)^{1+o(1)}$ and asked whether random functions $f$ give good extractors. The results of Konyagin and Shkredov~\cite{KS} address this question, showing that in any finite abelian group uniformly random functions are good extractors for $2^k = \log |G|(\log \log |G|)^{c}$ and $|G|$ sufficiently large. Our results show that this remains true for arbitrary finite groups. 

\vspace{3mm}

\noindent {\bf Abelian Ramsey Cayley graphs in small characteristic.} 
Alon's conjecture remains wide open for $\mathbb{F}_2^n$ and $\mathbb{F}_3^n$. The main challenge in these cases is that it is no longer possible to directly construct a $2$-coloring of the nonzero elements of these groups so that there is no monochromatic subspace. In fact, a classical result in Ramsey theory, the finite unions theorem, shows that this is not possible over $\mathbb{F}_2^n$ for $n$ sufficiently large. As such, to make progress on Alon's conjecture for $\mathbb{F}_2^n$, we need to better understand the quantitative bounds for Ramsey numbers of subspaces in $\mathbb{F}_2^n$. In particular, we propose the following conjecture, which would be an automatic corollary of Alon's conjecture in $\mathbb{F}_2^n$ if it were true, as an intermediate step.

\begin{conjecture}\label{subspaceconjecture}
    There exists $C>0$ and a two-coloring of $\mathbb{F}_2^{n}\setminus \{0\}$ such that there is no subspace $H$ of size $Cn$ for which all nonzero elements of the subspace are monochromatic. 
\end{conjecture}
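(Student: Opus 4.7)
The plan is to prove Conjecture~\ref{subspaceconjecture} via a structured random construction, since a uniform random $2$-coloring of $\mathbb{F}_2^n \setminus \{0\}$ is insufficient. Under a uniform random coloring, the expected number of monochromatic subspaces of dimension $d$ is $\binom{n}{d}_2 \cdot 2^{2 - 2^d} \approx 2^{d(n-d) - 2^d}$, which is small only when $2^d \gtrsim d \cdot n$. So a purely random coloring only avoids monochromatic subspaces of size $\Theta(n \log n)$, which falls a factor of $\log n$ short of the conjectured $Cn$.

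To close this gap, I would consider colorings of the form $c(x) = f(x) \oplus \xi(x)$, where $f \in \mathbb{F}_2[x_1, \ldots, x_n]$ is a random polynomial of degree at most $r$ (for a parameter $r = r(n)$ to be tuned) and $\xi$ is an independent uniform random coloring of $\mathbb{F}_2^n \setminus \{0\}$. For any $d$-dimensional subspace $H$, the restriction $f|_H$ is a uniformly random polynomial of degree at most $r$ on $H \cong \mathbb{F}_2^d$, carrying only $\sum_{j=0}^{r} \binom{d}{j}$ bits of randomness that are shared across the $2^d$ evaluations on $H$. The hope is that tuning $r$ so that the Reed--Muller dimension $\sum_{j=0}^{r} \binom{d}{j}$ exceeds $d(n-d)$ yields enough extra structure to kill the $\binom{n}{d}_2$ factor in the union bound and push the threshold down to $2^d = Cn$.

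To complete the argument, the first-moment estimate should be followed by either a Lov\'asz Local Lemma application or an alteration step. The dependency graph of the monochromaticity events is controlled by how many $d$-dimensional subspaces share a nonzero vector, which is $\binom{n-1}{d-1}_2 \approx \binom{n}{d}_2 / 2^{n-d}$; this is a polynomial improvement suitable for LLL. If LLL falls short, I would instead bound the number of surviving bad subspaces and delete one element from each using an alteration argument, carefully verifying that the cumulative deletions do not re-introduce monochromatic subspaces and that at most a small fraction of $\mathbb{F}_2^n$ is touched.

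The main obstacle is the interaction between the algebraic structure of $f$ and clusters of subspaces sharing common flats: such clusters can force $f$ to be constant on many subspaces simultaneously, spoiling the pseudo-independence gained from the polynomial randomness. Handling these correlations---essentially a refinement of the Hales--Jewett/finite unions phenomenon that guarantees monochromatic subspaces are unavoidable in a strong sense---is where I expect the hardest work to lie, and likely where new algebraic input beyond random low-degree polynomials will be required.
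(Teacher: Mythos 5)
This statement is an open conjecture in the paper, not a theorem: the authors explicitly pose it as an intermediate step towards Alon's conjecture for $\mathbb{F}_2^n$ and say nothing in the direction of an upper-bound construction. The only quantitative information they record is a \emph{lower} bound: adapting Sanders' bounds for Rado's theorem to the finite field setting, every two-coloring of $\mathbb{F}_2^n\setminus\{0\}$ contains a monochromatic subspace of size at least an iterated logarithm of $n$. So there is no proof in the paper to compare against, and any purported proof would be a new result.

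As for your proposal itself, there is a concrete flaw at its core. You define $c(x) = f(x)\oplus \xi(x)$ with $f$ a random low-degree polynomial and $\xi$ an \emph{independent uniform} random coloring. But XOR with an independent uniform bit is distribution-preserving: for any fixed $f$, the map $\xi\mapsto f\oplus\xi$ is a bijection on colorings that pushes the uniform distribution to itself, so $c$ is exactly a uniform random coloring. The low-degree polynomial contributes nothing; the first-moment calculation for $c$ is identical to the one you performed for $\xi$ alone, and the threshold stays at $2^d\approx dn$, i.e.\ subspace size $\Theta(n\log n)$. If you instead drop $\xi$ and use $c=f$, the first moment gets \emph{worse}: the probability that $f$ restricted to a $d$-dimensional $H$ is constant is roughly $2^{1-\dim\mathrm{RM}(r,d)}$, and $\dim\mathrm{RM}(r,d)=\sum_{j\le r}\binom{d}{j}\le 2^d-1$, so the per-subspace probability is at least as large as in the uniform case, with equality only at $r=d$ where you recover the uniform model anyway.

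The Lov\'asz Local Lemma and alteration steps as sketched also do not close the $\log n$ gap. The dependency degree of the event ``$H$ is monochromatic'' is roughly $2^{d}\binom{n-1}{d-1}_2 \approx 2^{d(n-d)-(n-2d)}$, which is only a factor $2^{n-2d}$ smaller than the total number of subspaces; plugging this into the symmetric LLL yields a condition of the same form $2^d\gtrsim d(n-d)$ up to lower-order terms, i.e.\ still $\Theta(n\log n)$. So LLL at best shaves constants and does not change the asymptotics. The finite unions theorem forces \emph{some} monochromatic subspace in any two-coloring, and the honest state of the art is that the gap between the iterated-logarithm lower bound and the $\Theta(n\log n)$ random upper bound is very wide; neither the random-polynomial idea nor LLL, as written, touches it.
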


While this would be an important step towards proving Alon's conjecture for $\mathbb{F}_2^n$, it may be that the true bound is considerably smaller. The best lower bound we know of follows from an appropriate adaptation of Sanders' quantitative bounds for Rado's theorem~\cite{S} to the finite field setting and gives that every two-coloring of $\mathbb{F}_2^{n}\setminus \{0\}$ contains a monochromatic subspace of size at least $\log \log \cdots \log n$ for some bounded number of iterated logs.

\end{document}